\renewcommand{\leq}{\leqslant}
\renewcommand{\geq}{\geqslant}
\newcommand{\R}{\mathbbm{R}} 
\newcommand{\N}{\mathbbm{N}}
\newcommand{\Z}{\mathbbm{Z}}
\providecommand{\ab}[1]{\vert #1\vert} 
\providecommand{\abs}[1]{\biggl\vert #1 \biggr\vert}
\providecommand{\norma}[1]{\Vert #1 \Vert}
\providecommand{\Norma}[1]{\Bigl\Vert #1 \Bigr\Vert}
\newcommand{\Sh}{\mathcal{S}}
\newcommand{\la}{\lambda}
\newcommand{\La}{\Lambda}
\newcommand{\eps}{\varepsilon}
\newcommand{\Sph}{\mathbb{S}}
\newcommand{\supp}{\operatorname{supp}}
\newcommand{\cp}{\mathcal{C}}
\newcommand{\eval}{\Big\vert}
\newcommand{\te}{\theta}
\newcommand{\vphi}{\varphi}
\newcommand{\ds}{\displaystyle}
\providecommand{\suma}[2]{\sum\limits_{#1}^{#2}}
\newcommand{\esssup}{\operatornamewithlimits{ess\,sup}}
\providecommand{\NOrma}[1]{\biggl\Vert #1 \biggr\Vert} 
\newcommand{\one}{\mathbbm{1}}
\newcommand{\pvector}[1]{
	\begin{pmatrix}
		#1
	\end{pmatrix}}
\newcommand{\ddirac}[1]{\,\boldsymbol{\delta}\!\pvector{#1}\!} 
\renewcommand{\d}{\,{\rm d}} 
\newcommand{\sphcp}{\mathscr{C}}
\newcommand{\Hyp}{{\overline{\mathcal{H}}}^3}
\newcommand{\hyp}{\mathcal H^3}
\numberwithin{equation}{section}
\theoremstyle{plain}
\newtheorem{teo}{Theorem}[section]
\newtheorem{cor}[teo]{Corollary}
\newtheorem{lemma}[teo]{Lemma}
\newtheorem{prop}[teo]{Proposition}
\theoremstyle{definition}
\newtheorem{define}[teo]{Definition}
\newtheorem{remark}[teo]{Remark}
\def\@tocline#1#2#3#4#5#6#7{\relax
	\ifnum #1>\c@tocdepth 
	\else
	\par \addpenalty\@secpenalty\addvspace{#2}%
	\begingroup \hyphenpenalty\@M
	\@ifempty{#4}{%
		\@tempdima\csname r@tocindent\number#1\endcsname\relax
	}{%
	\@tempdima#4\relax
}%
\parindent\z@ \leftskip#3\relax \advance\leftskip\@tempdima\relax
\rightskip\@pnumwidth plus4em \parfillskip-\@pnumwidth
#5\leavevmode\hskip-\@tempdima
\ifcase #1
\or\or \hskip 1em \or \hskip 2em \else \hskip 3em \fi%
#6\nobreak\relax
\dotfill\hbox to\@pnumwidth{\@tocpagenum{#7}}\par
\nobreak
\endgroup
\fi}
\begin{document}
\title[Existence of extremals for the one-sheeted hyperboloid]{Existence of extremals for a Fourier 
restriction inequality on the one--sheeted hyperboloid}
\author{Ren\'e Quilodr\'an}
\address{Ren\'e Quilodr\'an.}
\email{rquilodr@dim.uchile.cl}
\date{July, 2022}

\subjclass[2010]{42B10, 42B37, 51M16}
\keywords{Sharp Fourier Restriction Theory, maximizers, convolution of singular measures.}
\begin{abstract}
We prove the existence of functions that extremize the endpoint $L^2$ to $L^4$ adjoint Fourier restriction 
inequality on the one-sheeted
hyperboloid in Euclidean space $\R^4$ and that, taking symmetries into consideration, any extremizing sequence has a subsequence that converges to an extremizer.
\end{abstract}
\maketitle
\setcounter{tocdepth}{1}
\begin{center}
	\begin{minipage}[t]{0.85\linewidth}
		\tableofcontents
	\end{minipage}
\end{center}

\section{Introduction}
In seminal paper \cite{Str} R. Strichartz addressed the adjoint restriction problem of the Fourier transform to $d-1$ dimensional quadric submanifolds of Euclidean space $\R^d$, establishing the necessary and sufficient conditions on $p$ such that an $L^2\to L^p$ estimate holds. Recently, there has been interest in studying the existence of extremizers and the sharp $L^2\to L^p$ estimates for adjoint Fourier restriction operators and progress has been made in the case of quadric curves and surfaces:  the paraboloid and parabola \cites{Fo,HZ}, the cone \cites{Ca,Fo,Ra}, the sphere and circle \cites{CFOT,CaOeS,CS,Fo2,FLS,OQ19a,Scircle}, the two-sheeted hyperboloid and hyperbola \cites{COS,COeSS18,RQ2} and the saddle \cites{COS0,DMPS}. The study of such sharp $L^2$ to $L^p$ estimates 
is intimately related to 
the study of extremizers and sharp constants for Strichartz estimates for classical partial 
differential equations, such as the Schr\"odinger, hyperbolic Schr\"odinger, wave and Klein--Gordon equations. In this note we address the case of the one-sheeted hyperboloid in $\R^4$, which is related to the so called Klein--Gordon equation with imaginary mass.

Let $\mathcal H^3$ denote the upper half of the three dimensional one-sheeted hyperboloid in $\R^4$,
\[\mathcal H^3=\bigl\{(x,\sqrt{\ab{x}^2-1})\colon x\in\R^3,\, \ab{x}\geq 1\bigr\},\]
equipped with the measure $\mu$ with density $\one_{\{\ab{x}> 1\}}(x)\ddirac{t-\sqrt{\ab{x}^2-1}}\frac{\d t\d 
x}{\sqrt{\ab{x}^2-1}}$, defined by duality as
\begin{equation}\label{def-measure-mu}
\int_{\mathcal H^3} g(x,t)\d\mu(x,t)=\int_{\{y\in\R^3:\ab{y}> 
	1\}}g(y,\sqrt{\ab{y}^2-1})\frac{\d y}{\sqrt{\ab{y}^2-1}}
\end{equation}
for all $g\in \Sh(\R^4)$.

A function $f:\mathcal H^3\to\R$ can be identified with a function from $\R^3$ to $\R$, using the orthogonal projection from\footnote{Strictly speaking, it is identified with a function with domain $\{x\in\R^4\colon \ab{x}\geq 1\}$ but we will ignore this minor point and, whenever necessary, it will be understood that $f$ is extended to be equal to zero inside the unit ball. We could have chosen to write our operator as acting on a weighted $L^2$ space of Euclidean space, but we will take this geometric point of view instead.} $\hyp$ to $\R^3\times\{0\}$,  and in what 
follows we do so. We
denote the $L^p(\mathcal H^3,\mu)$ norm of a function $f$ on $\mathcal H^3$ by 
$\norma{f}_{L^p(\mathcal H^3)}$, $\norma{f}_{L^p(\mu)}$ or simply $\norma{f}_{L^p},\,\norma{f}_{p}$ if it is clear 
from context.

The Fourier extension operator on the hyperboloid $\mathcal H^3$, also known as the adjoint Fourier restriction operator, is given by 
\begin{equation}
 \label{fourier-extension-operator}
 Tf(x,t)=\int_{\{y\in\R^3:\ab{y}> 1\}} e^{ix\cdot y} e^{it\sqrt{\ab{y}^2-1}} 
 f(y,\sqrt{\ab{y}^2-1})\frac{\d y}{\sqrt{\ab{y}^2-1}},
\end{equation}
where $(x,t)\in\R^3\times \R$ and $f\in\mathcal S(\R^4)$. Note that 
$Tf(x,t)=\widehat{f\mu}(-x,-t)$, with the Fourier
transform in $\R^{4}$ defined by $\hat g(x,t)=\int_{\R^4}e^{-i(x\cdot y+ts)}g(y,s)\d y\d s$.

Strichartz proved in \cite{Str} that for all $\frac{10}{3}\leq p\leq 4$ there exists 
$C_p<\infty$ such that for all
$f\in L^2(\mathcal H^3)$ the
following estimate for $Tf$ holds
\begin{equation}
 \label{adjoint-restriction-one-sheeted}
 \norma{Tf}_{L^p(\R^4)}\leq \mathbf{H}_{p}\norma{f}_{L^2(\mathcal H^3)},
\end{equation}
where $\mathbf{H}_{p}<\infty$ denotes the best constant in \eqref{adjoint-restriction-one-sheeted},
\begin{equation}\label{D-sharp-constant}
\mathbf{H}_{p}=\sup_{0\neq f\in L^2(\mathcal 
H^3)}\frac{\norma{Tf}_{L^p(\R^4)}}{\norma{f}_{L^2(\mathcal H^3)}}.
\end{equation}
The (full) one-sheeted hyperboloid is defined by
\[\overline{\mathcal H}^3:=\{(x,t)\in\R^3\times \R\colon t^2=\ab{x}^2-1,\, \ab{x}\geq 
1\},\]
and we endow it with the Lorentz invariant measure $\bar
\mu$ with density
\begin{align*}
\d\bar
\mu(x,t)&=\one_{\{\ab{x}>1 \}}\ddirac{t-\sqrt{\ab{x}^2-1}}\frac{\d t\d x}{\sqrt{\ab{x}^2-1}}
+\one_{\{\ab{x}>1 \}}\ddirac{t+\sqrt{\ab{x}^2-1}}\frac{\d t\d x}{\sqrt{\ab{x}^2-1}}\\
&=:\d\mu_+(x,t)+\d\mu_-(x,t).
\end{align*} 
Here $\mu_+$ equals $\mu$ as in \eqref{def-measure-mu}, and $\mu_-$ equals the reflection of $\mu$ via the reflection map $(x,t)\mapsto(-x,-t)$. The adjoint Fourier restriction operator on $\overline{\mathcal H}^3$ is  

\begin{align} \label{full-fourier-extension-operator}
\begin{split}
 \overline{T}f(x,t)=\widehat{f\bar\mu}(-x,-t)&=\int_{\Hyp} e^{i(x\cdot y+ts)}
f(y,s)\d\bar{\mu}(y,s)\\ 
&=\int_{\{y\in\R^3:\ab{y}> 1\}} e^{ix\cdot y} 
 e^{it\sqrt{\ab{y}^2-1}}
 f_{+}(y)\frac{\d y}{\sqrt{\ab{y}^2-1}}\\
 &\quad+\int_{\{y\in\R^3:\ab{y}> 1\}} e^{ix\cdot y} e^{-it\sqrt{\ab{y}^2-1}}
 f_{-}(y)\frac{\d y}{\sqrt{\ab{y}^2-1}},
\end{split}
\end{align}
where $f=f_++f_-$, the function $f_+$ is supported on the upper half of the one-sheeted hyperboloid, 
$\mathcal H^3$, and the function $f_-$, on the
lower half, $-\mathcal H^3$, and we have identified their domains with $\R^3$ via the orthogonal projection as stated before. We see that $\overline{T}f(x,t)=Tf_+(x,t)+Tf_-(x,-t)$.

The triangle inequality and \eqref{adjoint-restriction-one-sheeted} imply that for 
$\frac{10}{3}\leq 
p\leq 4$ the following estimate holds
\begin{equation}\label{sharp-double-sheeted}
\norma{\overline{T}f}_{L^p(\R^4)}\leq \overline{\mathbf 
	H}_{p}\norma{f}_{L^2(\overline{\mathcal{H}}^3)},
\end{equation}
where $\overline{\mathbf{H}}_{p}<\infty$ is the sharp constant
\begin{equation}\label{eq:endpoint_nonendpoint_full_hyp}
\overline{\mathbf{H}}_{p}=\sup_{0\neq f\in L^2(\overline{\mathcal 
	H}^3)}\frac{\norma{\overline{T}f}_{L^p(\R^4)}}{\norma{f}_{L^2(\overline{\mathcal H}^3)}}.
\end{equation}

The Lorentz group on $\R^4$, denoted $\mathcal L$, preserves $\overline{\mathcal{H}}^3$, $\bar{\mu}$, and 
acts on functions on $\overline{\mathcal{H}}^3$ by composition: $L^\ast f(x,t):=f(L(x,t))$, 
$L\in\mathcal L$ (see Section \ref{sec:lorentz_invariance} for more details). In particular, we have $\norma{f}_{L^q(\overline{\mathcal H}^3)}=\norma{L^\ast f}_{L^q(\overline{\mathcal H}^3)}$ and $\norma{Tf}_{L^p(\R^4)}=\norma{T(L^\ast f)}_{L^p(\R^4)}$, for all $p,q\in[1,\infty]$.

\begin{define}\label{def:extremizer}
An \textit{extremizer} (or \textit{maximizer}) for \eqref{adjoint-restriction-one-sheeted} is a function $0\neq f\in L^2(\hyp)$ that satisfies $\norma{Tf}_{L^p(\R^4)}=\mathbf{H}_p\norma{f}_{L^2(\hyp)}$. An $L^2$-normalized \textit{extremizing sequence} for \eqref{adjoint-restriction-one-sheeted} $\{f_n\}_n\subset L^2(\hyp)$ is such that $\norma{f_n}_{L^2(\hyp)}=1$ and $\norma{Tf_n}_{L^p(\R^4)}\to \mathbf{H}_p$, as $n\to\infty$. A corresponding definition holds for extremizers and extremizing sequences for \eqref{sharp-double-sheeted}.
\end{define}

This paper is devoted to the study of the sharp instances of \eqref{adjoint-restriction-one-sheeted} and \eqref{sharp-double-sheeted} in the endpoint case $p=4$, that is, the inequalities
\begin{eqnarray}
\label{sharp_L4_hyp}
\norma{{T}f}_{L^4(\R^4)}&\leq {\mathbf 
	H}_{4}\norma{f}_{L^2({\mathcal{H}}^3)},\\
\label{sharp_L4_double_hyp}
\norma{\overline{T}g}_{L^4(\R^4)}&\leq \overline{\mathbf 
	H}_{4}\norma{g}_{L^2(\overline{\mathcal{H}}^3)},
\end{eqnarray}
and our main results concern the existence of extremizers as well as the precompactness of extremizing sequences.  The statements of the main results of this paper are as follows.

\begin{teo}
 \label{thm:main-theorem}
 There exists an extremizer in $L^2(\mathcal H^3)$ for inequality 
 \eqref{sharp_L4_hyp}. Moreover,
for every $L^2$-normalized complex valued extremizing sequence $\{f_n\}_{n}$ for \break\eqref{sharp_L4_hyp}, there exist a subsequence $\{f_{n_k}\}_{k}$ and a sequence 
$\{(x_k,t_k)\}_{k}\subset
\R^4$ such that $\{e^{ix_k\cdot y}e^{it_k\sqrt{\ab{y}^2-1}}f_{n_k}\}_k$ is 
convergent in $L^2(\mathcal H^3)$.
\end{teo}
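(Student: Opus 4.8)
The plan is to run the standard concentration-compactness / profile-decomposition machinery adapted to the symmetry group of the problem, which here is generated by the modulations $f\mapsto e^{ix_k\cdot y}e^{it_k\sqrt{|y|^2-1}}f$ (translations on the Fourier side) together with the Lorentz group $\mathcal L$. The engine behind everything will be a refinement of the Strichartz inequality \eqref{sharp_L4_hyp}: because $p=4$ is an even exponent, $\|Tf\|_{L^4}^4=\|\widehat{f\mu}\|_{L^4}^4=\|(f\mu)\ast(f\mu)\|_{L^2}^2$, so the $L^4$ norm is governed by the convolution measure $\mu\ast\mu$. The first step is therefore to understand $\mu\ast\mu$: compute it explicitly (or at least its support and the size/blow-up of its density), show it is absolutely continuous with a locally bounded density away from a lower-dimensional set, and extract from this a bilinear/Strichartz estimate with a gain — typically an estimate of the form $\|Tf_1\, Tf_2\|_{L^2}\lesssim \|f_1\|_2\|f_2\|_2$ with improved behaviour when $f_1,f_2$ have separated or non-parallel Fourier supports, plus an $L^2$-based refinement (à la Moyua–Vargas–Vega / Bégout–Vargas) saying $\|Tf\|_4\lesssim \|f\|_2^{1-\te}\bigl(\sup_{\tau}\|T(f\one_\tau)\|_4\bigr)^{\te}$ where the sup is over caps $\tau$ of a suitable dyadic family.

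**Second**, feed this refinement into the concentration-compactness scheme. Given an $L^2$-normalized extremizing sequence $\{f_n\}$, the refined estimate forces, after passing to a subsequence, a nontrivial cap $\tau_n$ on which $\|T(f_n\one_{\tau_n})\|_4$ is bounded below; rescaling that cap to a fixed size by an element of the symmetry group (this is where the non-compactness of $\mathcal L$ and the modulation parameters get used up, and where one must check that the caps of the relevant dyadic family really are orbits of $\mathcal L$ up to bounded distortion — for the one-sheeted hyperboloid the Lorentz action is transitive on $\hyp$ and the geometry at infinity is hyperbolic, so this needs care near the "light-cone at infinity" $|y|\to\infty$) produces a sequence that converges weakly to some $g\neq 0$. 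One then iterates to obtain a profile decomposition $f_n=\sum_{j=1}^{J}g_j\circ(\text{symmetry}_n^j)+r_n^J$ with asymptotic orthogonality of the $L^2$ norms and an asymptotic Pythagorean expansion of $\|Tf_n\|_4^4$ in terms of the profiles (the cross terms vanish because the symmetry parameters diverge from one another, and here the even-exponent/convolution structure makes the decoupling of cross terms genuinely manageable rather than merely formal).

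**Third**, conclude by the usual convexity/superadditivity argument: from $\sum_j\|g_j\|_2^2\leq 1$ and $\mathbf H_4^4=\lim\|Tf_n\|_4^4\leq \sum_j\|Tg_j\|_4^4\leq \mathbf H_4^4\sum_j\|g_j\|_2^4\leq \mathbf H_4^4\bigl(\sum_j\|g_j\|_2^2\bigr)^2\leq \mathbf H_4^4$, equality forces a single nonzero profile $g_1$ with $\|g_1\|_2=1$, it forces $\|Tg_1\|_4=\mathbf H_4$ so $g_1$ is an extremizer, and it forces the remainder and all other profiles to vanish, which upgrades the weak convergence of the modulated/Lorentz-transformed subsequence to strong $L^2$ convergence. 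The only subtlety at this stage is that the profile decomposition as I set it up allows Lorentz transformations as symmetries, whereas the theorem only claims convergence after modulations $e^{ix_k\cdot y}e^{it_k\sqrt{|y|^2-1}}$; one must check that the Lorentz parameters in the surviving single profile can in fact be taken trivial (bounded), i.e. that a genuine loss of compactness by "Lorentz boosts to infinity" cannot occur for an extremizing sequence — this should follow from the fact that such boosts send mass off to the degenerate region where the convolution density and hence the contribution to $\|Tf\|_4$ vanishes, so the extremality hypothesis rules it out.

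**The main obstacle** I expect is the first step: precisely controlling the convolution measure $\mu\ast\mu$ and the associated bilinear estimate, particularly the behaviour of the density of $\mu\ast\mu$ near the boundary $|x|=1$ of the hyperboloid and near the asymptotic light cone, since the Gaussian-curvature of $\hyp$ is negative and does not vanish but the measure $\mu$ has a nonintegrable-looking weight $1/\sqrt{|y|^2-1}$ at the rim $|y|=1$; getting a clean bilinear estimate with an explicit, usable gain there — and correspondingly ruling out concentration of extremizing sequences at the rim and at infinity — is the technical heart of the argument. Everything after that is the by-now-standard (but still lengthy) bookkeeping of a Lions-type concentration-compactness proof.
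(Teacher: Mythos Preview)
Your plan has a conceptual error at its foundation: the Lorentz group $\mathcal L$ does \emph{not} preserve the upper half $\mathcal H^3$. Lorentz boosts mix the upper and lower sheets of $\overline{\mathcal H}^3$, so they are symmetries only of the full one-sheeted hyperboloid (this is precisely why Theorem~\ref{thm:main-theorem-2} requires the extra $L_k\in\mathcal L$ in its statement while Theorem~\ref{thm:main-theorem} does not). On $\mathcal H^3$ the noncompact symmetry group is generated by modulations alone, and there is no group element available to rescale a cap near infinity back to unit scale. Your step ``rescaling that cap to a fixed size by an element of the symmetry group'' therefore fails for $\mathcal H^3$, and with it the profile decomposition as you describe it cannot be carried out: mass escaping to $|y|\to\infty$ is a genuine failure mode that cannot be undone by a symmetry.

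The paper's proof is accordingly much shorter and more direct than what you propose, and uses neither profile decomposition nor bilinear refinements. The entire argument for $\mathcal H^3$ is: (a) establish the strict inequality $\mathbf H_4>\mathbf C_4$ between the hyperboloid and cone best constants, by computing $\mu\ast\mu$ explicitly and testing against the family $f_a(y)=e^{-\frac{a}{2}\sqrt{|y|^2-1}}$; (b) observe via Cauchy--Schwarz and the pointwise bound $\mu\ast\mu(\xi,\tau)\le 2\pi(1+2/\tau)$ that any sequence concentrating at infinity satisfies $\limsup\|f_n\mu\ast f_n\mu\|_2^2\le 2\pi$, contradicting (a); (c) once concentration at infinity is excluded, a soft argument (interpolation to get an $L^\infty$ lower bound on $Tf_n$ on a bounded region, Arzel\`a--Ascoli, then Proposition~\ref{prop:key-prop-from-fvv} of Fanelli--Vega--Visciglia) produces the modulations and the strong limit directly. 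So the ``technical heart'' is not a bilinear estimate with gain but the strict lower bound $\mathbf H_4>(2\pi)^{5/4}$; your proposal never identifies this as the key step. Your concern about the rim $|y|=1$ is also misplaced: the measure $\mu$ is locally finite there and concentration at the rim causes no difficulty.
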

\begin{teo}
 \label{thm:main-theorem-2}
 There exists an extremizer in $L^2(\Hyp)$ for inequality 
 \eqref{sharp_L4_double_hyp}. Moreover,
for every $L^2$-normalized complex valued extremizing sequence $\{f_n\}_{n}$ for \eqref{sharp_L4_double_hyp}, there exist a subsequence $\{f_{n_k}\}_{k}$ and 
sequences $\{\xi_k\}_{k}\subset
\R^4$ and $\{L_k\}_k\subset \mathcal L$ such that $\{e^{i\xi_k\cdot \xi}L_k^\ast f_{n_k}\}_k$ is 
convergent in $L^2(\overline{\mathcal H}^3)$.
\end{teo}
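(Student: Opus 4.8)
\section*{Proof proposal}

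The plan is to run a concentration--compactness argument in $L^2(\overline{\mathcal H}^3)$ adapted to the symmetry group $G$ of inequality \eqref{sharp_L4_double_hyp}: the modulations $M_\xi$, $\xi\in\R^4$, with $(M_\xi f)(\zeta)=e^{i\xi\cdot\zeta}f(\zeta)$ (which translate $\overline{T}f$ in $\R^4$), together with the action $L^\ast$, $L\in\mathcal L$, of the Lorentz group; each generator is an $L^2(\overline{\mathcal H}^3)$ isometry preserving $\norma{\overline{T}f}_{L^4(\R^4)}$. Since $p=4$ is even, the starting point is the convolution identity
\[
\norma{\overline{T}g}_{L^4(\R^4)}^{4}=(2\pi)^{4}\Norma{(g\bar\mu)\ast(g\bar\mu)}_{L^2(\R^4)}^{2},
\]
which turns the problem into a variational one for a nonnegative quartic form. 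The key analytic input is a sharp description of $\bar\mu\ast\bar\mu$ --- absolute continuity together with pointwise bounds on its density, the delicate region being a neighborhood of the light cone $\{t^2=\ab x^2\}$, where $\overline{\mathcal H}^3$ degenerates to its asymptotic cone. This reproves \eqref{sharp_L4_double_hyp} and, after polarizing and inserting cut-offs, yields a \emph{refined} Strichartz estimate controlling $\norma{\overline{T}g}_{L^4}$ by $\norma g_{L^2}$ times a weaker, ``cap-localized'' quantity. Theorem~\ref{thm:main-theorem} is proved in parallel by the same scheme; the difference is that $\mathcal H^3$, unlike $\overline{\mathcal H}^3$, is not preserved by the Lorentz group (boosts change the sign of $t$), so there the relevant group is generated by modulations and spatial rotations only --- which explains the absence of $L_k^\ast$ in that statement --- while the identity $\overline{T}f=Tf_++Tf_-(\cdot,-\cdot)$ lets one pass information between the two theorems, modulo the cross terms between the sheets.

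The core of the proof is a linear profile decomposition: after passing to a subsequence, any $L^2$-normalized sequence $\{f_n\}$ can be written
\[
f_n=\sum_{j=1}^{J}g_n^{j}\phi^{j}+\sum_{k=1}^{K}\Psi_n^{k}+r_n^{J,K},
\]
with $g_n^{j}\in G$ having pairwise-diverging parameters, $\phi^{j}$ fixed in $L^2(\overline{\mathcal H}^3)$, each $\Psi_n^{k}$ a \emph{cone bubble} (a sequence running to infinity along $\overline{\mathcal H}^3$ which, at the scale where curvature is felt, is asymptotically a rescaled copy of a fixed function $\psi^{k}$ on the asymptotic cone), all parameters diverging from one another, $L^2$ norms asymptotically orthogonal, and $\limsup_n\norma{\overline{T}r_n^{J,K}}_{L^4(\R^4)}\to 0$ as $J,K\to\infty$. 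The construction is the usual iteration driven by the refined estimate: whenever a remainder still carries positive $L^4$ mass of its extension, one applies a suitable element of $G$ \emph{or} a rescaling towards the cone so as to extract a nonzero weak limit whose $L^2$ norm is bounded below in terms of that mass, subtracts it, and repeats. Two structural points justify that these are all the profiles that occur. First, $p=4$ exceeds the Stein--Tomas exponent $\frac{10}{3}$ of $\overline{\mathcal H}^3$, so mass concentrating at a fixed point --- or flattening at any sub-curvature scale --- forces $\norma{\overline{T}f_n}_{L^4}/\norma{f_n}_2\to 0$ and never arises along an extremizing sequence. Second, since $\sqrt{\ab y^2-1}=\ab y+O(\ab y^{-1})$, far out the hyperboloid is, at the scale where its curvature is seen, indistinguishable from its light cone (a ruling line times a paraboloid), so a family of caps running to infinity with scale matched to height converges after rescaling to a function on the cone --- and this escapes $G$, which contains boosts carrying mass to infinity but no dilations. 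Combined with the Brezis--Lieb lemma applied to $(f_n\bar\mu)\ast(f_n\bar\mu)$ in $L^2(\R^4)$ and with the decay of $\bar\mu\ast\bar\mu$ (which kills all cross terms), this yields the $L^4$-orthogonality
\[
\lim_n\norma{\overline{T}f_n}_{L^4}^{4}=\sum_{j}\norma{\overline{T}\phi^{j}}_{L^4}^{4}+\sum_{k}\norma{\mathcal C\psi^{k}}_{L^4}^{4},
\]
where $\mathcal C$ is the extension operator of the light cone in $\R^4$.

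To conclude, feed $\norma{\overline{T}f_n}_{L^4}\to\overline{\mathbf{H}}_{4}$ into the last identity and apply \eqref{sharp_L4_double_hyp} to each $\phi^{j}$, the sharp cone inequality $\norma{\mathcal C\psi}_{L^4}\le\mathbf{C}_{4}\norma\psi_2$ to each $\psi^{k}$, and $\sum_j\norma{\phi^j}_2^2+\sum_k\norma{\psi^k}_2^2\le 1$:
\begin{multline*}
\overline{\mathbf{H}}_{4}^{4}=\sum_{j}\norma{\overline{T}\phi^{j}}_{L^4}^{4}+\sum_{k}\norma{\mathcal C\psi^{k}}_{L^4}^{4}\le\overline{\mathbf{H}}_{4}^{4}\sum_{j}\norma{\phi^{j}}_2^{4}+\mathbf{C}_{4}^{4}\sum_{k}\norma{\psi^{k}}_2^{4}\\
\le\max\bigl(\overline{\mathbf{H}}_{4}^{4},\mathbf{C}_{4}^{4}\bigr)\Bigl(\sum_{j}\norma{\phi^{j}}_2^{2}+\sum_{k}\norma{\psi^{k}}_2^{2}\Bigr)^{2}\le\overline{\mathbf{H}}_{4}^{4},
\end{multline*}
$\mathbf{C}_{4}$ being the sharp $L^2\to L^4$ cone constant in $\R^4$, whose value and extremizers are known (note $p=4$ is the upper endpoint of Strichartz's range for the cone as well). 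Granting the strict inequality $\overline{\mathbf{H}}_{4}>\mathbf{C}_{4}$, equality throughout the display forces all cone bubbles to vanish and exactly one hyperboloid profile $\phi^{1}$ to survive, with $\norma{\phi^1}_2=1$, hence $\norma{\overline{T}\phi^1}_{L^4}=\overline{\mathbf{H}}_{4}$: thus $\phi^1$ is an extremizer and $(g_n^{1})^{-1}f_n\to\phi^1$ in $L^2(\overline{\mathcal H}^3)$, which is the asserted precompactness modulo $G$. Existence follows by running this with an arbitrary extremizing sequence.

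I expect the strict comparison $\overline{\mathbf{H}}_{4}>\mathbf{C}_{4}$ (and its analogue $\mathbf{H}_{4}>\mathbf{C}_{4}$ for Theorem~\ref{thm:main-theorem}) to be the main obstacle. The inequality $\overline{\mathbf{H}}_{4}\ge\mathbf{C}_{4}$ is automatic --- it reflects that $\overline{\mathcal H}^3$ carries near-copies of the cone at infinity --- so everything hinges on a strict gain from the lower order terms of $\sqrt{\ab y^2-1}$ relative to $\ab y$, i.e.\ from the curvature that distinguishes the hyperboloid from its asymptotic cone. The natural approach is to transplant a known cone extremizer onto $\overline{\mathcal H}^3$, embed it in a one-parameter family degenerating to that extremizer, and expand $\norma{\overline{T}f}_{L^4}/\norma f_{L^2}$ to show the first non-vanishing correction has the right sign; a cleaner variant is to test directly with a Lorentz-spread-out function and compare numerically with the cone value. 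Carrying this out, alongside the precise analysis of $\bar\mu\ast\bar\mu$ near the light cone, the proof that rescaled hyperboloid extension operators converge to the cone extension operator, and the bookkeeping of cone bubbles inside the profile decomposition, is where the bulk of the work lies. A last point --- not an obstruction to the argument --- is whether the extremizer obtained is supported on a single sheet of $\overline{\mathcal H}^3$, equivalently whether $\overline{\mathbf{H}}_{4}$ is attained with $f_-=0$; the scheme produces an extremizer in any case.
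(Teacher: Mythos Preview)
Your overall strategy---profile decomposition with two kinds of profiles (hyperboloid profiles modulo the symmetry group $G$, and cone bubbles from mass escaping to infinity), followed by the strict-superadditivity argument---is sound and parallels the paper closely in spirit. The paper does not build a full profile decomposition; instead it runs Lions' trichotomy (Lemma~\ref{lem:concentration-compactness}): dichotomy is ruled out by bilinear estimates at separated dyadic scales (Section~\ref{sec:no-dichotomy}), vanishing by a dyadic refinement of \eqref{sharp_L4_double_hyp} (Section~\ref{sec:no-vanishing}), and the compactness alternative is analyzed via cap refinements lifted from $\Sph^2$ (Section~\ref{sec:lifting_the_sphere}) together with a scaling-plus-Lorentz argument that either traps mass in a bounded region or exhibits the sequence as a near-cone sequence (Sections~\ref{sec:conv-to-cone}--\ref{sec:concluding_compactness}). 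Your route packages the same ingredients into the profile-extraction machinery; either works, and both ultimately rest on the same strict cone comparison.

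There is, however, a genuine gap in your endgame: you are comparing with the wrong cone constant. The asymptotic cone of $\Hyp$ is the \emph{double} cone $\overline{\Gamma}^3$, not the forward cone $\Gamma^3$, and your cone bubbles $\psi^k$ must be allowed to live on $\overline{\Gamma}^3$: a sequence on $\Hyp$ concentrating at infinity can carry mass on both halves, and Lorentz boosts mix the two halves rather than separate them. The sharp bound to feed into your display is therefore $\norma{\overline{T}_c\psi}_{L^4}\leq\overline{\mathbf{C}}_4\norma{\psi}_2$, and the strict inequality you need is
\[
\overline{\mathbf{H}}_4>\overline{\mathbf{C}}_4=\Bigl(\tfrac{3}{2}\Bigr)^{1/4}\mathbf{C}_4,
\]
not the weaker $\overline{\mathbf{H}}_4>\mathbf{C}_4$ that you state (and which you correctly identify as sufficient for Theorem~\ref{thm:main-theorem}). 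This is exactly Proposition~\ref{prop:comparison-double-cone} in the paper, and it is genuinely harder than the single-cone comparison: the paper obtains it by testing $\overline{T}$ against the \emph{symmetric} family $g_a(\xi,\tau)=ce^{-a\ab{\tau}/2}$ and exploiting positivity of the cross terms between the sheets, which recovers the factor $(3/2)^{1/4}$ on the hyperboloid side as well. Your proposed test---transplant a forward-cone extremizer onto $\Hyp$ and expand---will at best reach $\mathbf{C}_4$, which is not enough; you need a trial function that sees both halves.
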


In the statement of the theorems we are writing $e^{ix_k\cdot y}e^{it_k\sqrt{\ab{y}^2-1}}f_{n_k}$ for the function $y\mapsto e^{ix_k\cdot y}e^{it_k\sqrt{\ab{y}^2-1}}f_{n_k}(y)$ and $e^{i\xi_k\cdot \xi}L_k^\ast f_{n_k}$ for the function $\xi\mapsto e^{i\xi_k\cdot \xi}f_{n_k}(L_k\xi)$.

\begin{remark}
	Note the qualitative difference regarding existence of extremizers between the one-sheeted 
	hyperboloid and the two-sheeted hyperboloid (or its upper sheet) equipped with its Lorentz invariant measure, which are defined respectively by 
	\[\{(x,t)\in\R^4\times\R\colon t^2=\ab{x}^2+1\},\, \bigr(\delta(t-\sqrt{\ab{x}^2+1})+\delta(t+\sqrt{\ab{x}^2+1})\bigr)\frac{\d t\d x}{\sqrt{\ab{x}^2+1}},\]
	both of which can be considered as "perturbations" of the cone. It was shown in \cite{RQ2} that for the $L^2$ to 
	$L^4(\R^4)$ 
	inequality on the two-sheeted hyperboloid and on its upper sheet, extremizers do not exist and the best constant was calculated explicitly. On the other hand, for the $L^2$ to 	$L^4(\R^4)$ inequality on the cone, extremizers exist, their exact form was obtained and the best constant was calculated (see \cite{Ca}). 

	We note that the results in \cite{FVV2} do not apply to the case of the hyperboloid due to the lack of scale invariance, but information can be obtained from the arguments therein, although we will not go that route. See the discussion in \cite{RQ2}*{Section 2} for some details in the related context of the two-sheeted hyperboloid.

	We take this opportunity to indicate a \textbf{correction} to \cite{RQ2}*{Theorem 1.2 \& Proposition 7.5}, where the best constant for the two-sheeted hyperboloid in $\R^2$ in the $L^2\to L^6$ adjoint Fourier restriction inequality, there denoted $\bar{\mathbf{H}}_{2,6}$, is incorrect. Details can be found in version 3 of \cite{RQ2} available at \url{www.arxiv.org}.
\end{remark}

The convolution form of inequalities \eqref{sharp_L4_hyp} and \eqref{sharp_L4_double_hyp}, obtained via Plancherel's theorem, tells us that in both cases,  $\mathcal H^3$ and 
$\overline{\mathcal H}^3$, there exist nonnegative real valued extremizers, 
and the symmetrization method 
used in \cite{Fo2}, or the one in \cite{OeSQ18}, can be adapted to show that if a function $f$ is a nonnegative real valued extremizer for $\overline{T}$  
on $\overline{\mathcal{H}}^3$ then $f$ is 
necessarily an even function: $f(x,t)=f(-x,-t)$, for $\bar{\mu}$-a.e. $(x,t)\in\Hyp$. We discuss the details in Section \ref{sec:lorentz_invariance}.

It would be of interest to treat the endpoint $p=\frac{10}{3}$ as well, and more generally to study the existence of extremizers at the endpoint and non-endpoint cases for all\footnote{When  $d=1$ the one-sheeted hyperboloid coincides with the two-sheeted hyperboloid after a $90^\circ$ rotation, and the later has been studied in \cite{COS}. They consider only one of the two branches but it is not difficult to see that the existence argument in the non-endpoint case carries through to the two branches. On the other hand, an argument is needed to settle the endpoint $p=6$ for two branches (this is also the case when $d=2$ and $p=6$ as clarified in the {correction} to \cite{RQ2} alluded to before).} $d\geq 2$, as was recently done for non-endpoint cases of the two-sheeted hyperboloid in \cites{COS,COeSS18}. Our analysis here extends the known results on sharp Fourier extension inequalities for quadric manifolds as studied in Strichartz paper \cite{Str}.

\subsection{Organization of the paper and outline of the proofs of the main theorems}

From now on, references to the sharp inequalities \eqref{adjoint-restriction-one-sheeted} and \eqref{sharp-double-sheeted} are understood with $p=4$, unless it is explicitly said otherwise. 

An important tool in our proofs is \cite{FVV}*{Proposition 1.1} which we include next for the convenience of the reader.

\begin{prop}
\label{prop:key-prop-from-fvv}
Let $\mathbbm H$ be a Hilbert space and $S:\mathbbm{H}\to L^p(\R^d)$ be a continuous linear operator, for some $p\in (2,\infty)$. Let $\{f_n\}_{n}\subset \mathbbm{H}$ be such that:
\begin{itemize}
 \item [(i)]$\norma{f_n}_{\mathbbm{H}}=1$,
 \item [(ii)]$\lim\limits_{n\to\infty}\norma{Sf_n}_{L^p(\R^d)}=\norma{S}_{\mathbbm H\to L^p(\R^d)}$,
 \item [(iii)]$f_n\rightharpoonup f$ and $f\neq 0$,
 \item [(iv)]$Sf_n\to Sf$ a.e. in $\R^d$.
\end{itemize}
Then $f_n\to f$ in $\mathbbm H$. In particular, 
$\norma{f}_{\mathbbm H}=1$ and $\norma{Sf}_{L^p(\R^d)}=\norma{S}_{\mathbbm H\to L^p(\R^d)}.$
\end{prop}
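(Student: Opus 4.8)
The plan is to run the standard ``missing mass'' argument based on the Brézis--Lieb lemma. Write $a:=\norma{f}_{\mathbbm H}^2$, which by hypotheses (i) and (iii) (and weak lower semicontinuity of the norm) satisfies $0<a\leq 1$, and set $g_n:=f_n-f$. Since $f_n\rightharpoonup f$ we have $g_n\rightharpoonup 0$, hence $\langle g_n,f\rangle\to 0$, and the Pythagoras-type expansion
\[
\norma{f_n}_{\mathbbm H}^2=\norma{f}_{\mathbbm H}^2+\norma{g_n}_{\mathbbm H}^2+2\operatorname{Re}\langle g_n,f\rangle
\]
together with (i) yields $\norma{g_n}_{\mathbbm H}^2\to 1-a$. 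The goal is to show $a=1$.

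Next I would transfer the splitting to the image side. By linearity $Sg_n=Sf_n-Sf$, so hypothesis (iv) gives $Sg_n\to 0$ a.e.\ in $\R^d$; moreover $\{Sf_n\}_n$, and therefore $\{Sg_n\}_n$, is bounded in $L^p(\R^d)$ because $S$ is continuous and $\norma{f_n}_{\mathbbm H}=1$. The Brézis--Lieb lemma applied to $Sf_n=Sg_n+Sf$ then gives
\[
\norma{Sf_n}_{L^p(\R^d)}^p-\norma{Sg_n}_{L^p(\R^d)}^p-\norma{Sf}_{L^p(\R^d)}^p\longrightarrow 0,
\]
and combining this with hypothesis (ii) shows $\norma{Sg_n}_{L^p}^p\to\norma{S}^p-\norma{Sf}_{L^p}^p$.

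Now I would bound each piece by the operator norm: $\norma{Sf}_{L^p}\leq\norma{S}\,a^{1/2}$ and $\limsup_n\norma{Sg_n}_{L^p}\leq\norma{S}\,(1-a)^{1/2}$. Feeding these into the limit identity above gives $\norma{S}^p\leq\norma{S}^p\bigl(a^{p/2}+(1-a)^{p/2}\bigr)$, i.e.\ $1\leq a^{p/2}+(1-a)^{p/2}$ (using $\norma{S}>0$). Since $p/2>1$, one has $s^{p/2}<s$ and $(1-s)^{p/2}<1-s$ for every $s\in(0,1)$, so $a^{p/2}+(1-a)^{p/2}<1$ unless $a\in\{0,1\}$; as $a>0$ we conclude $a=1$. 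Hence $\norma{f}_{\mathbbm H}=1$ and $\norma{g_n}_{\mathbbm H}\to 0$, which is exactly $f_n\to f$ in $\mathbbm H$; continuity of $S$ then gives $\norma{Sf}_{L^p}=\lim_n\norma{Sf_n}_{L^p}=\norma{S}$, completing the argument.

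The only genuinely delicate point is the invocation of the Brézis--Lieb lemma, which requires precisely the a.e.\ convergence of (iv) together with the uniform $L^p$-boundedness of $\{Sf_n\}_n$; once that is in place the rest is the weak-convergence bookkeeping and the elementary scalar inequality. One should also note that the statement tacitly uses $\norma{S}_{\mathbbm H\to L^p(\R^d)}>0$, which is harmless here since in our applications $S$ is $T$ or $\overline T$ and $Tf\not\equiv 0$ for $f\neq 0$.
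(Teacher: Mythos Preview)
Your proof is correct and is precisely the standard Br\'ezis--Lieb ``missing mass'' argument of Fanelli--Vega--Visciglia. The paper does not give its own proof of this proposition; it simply quotes it as \cite{FVV}*{Proposition 1.1}, and your argument is exactly the one found there (the same chain of inequalities appears verbatim in the paper's proof of Lemma~\ref{lem:scaling-convergence-cone}, which adapts the FVV argument to a variable-surface setting). Your remark that one tacitly needs $\norma{S}>0$ is accurate and worth keeping.
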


To prove Theorem \ref{thm:main-theorem} we apply Proposition \ref{prop:key-prop-from-fvv} with $p=d=4$, $\mathbbm{H}$ equals to $L^2(\hyp)$ and $S$ equals $T$. We need 
to verify (iii) and (iv), as (i) and (ii) are immediate for a normalized extremizing sequence. We
 handle (iv) as in \cite{RQ1}*{Prop. 8.3}, \cite{FVV2}. To prove (iii) we will see that the only way it can fail, the failure being that a weak limiting function equals zero, is that the extremizing sequence
concentrates at infinity, which is defined as follows for $\hyp$, with an analogous definition for $\Hyp$.

\begin{define}\label{def:conv-inf}
	We say that the sequence $\{f_n\}_{n}\subset L^2(\mathcal H^3)$ concentrates at infinity if
	$\inf\limits_n\norma{f_n}_{L^2(\mathcal H^3)}>0$ and for every $\eps,R>0$ there
	exists $N\in\N$
	such  that for all $n\geq N$ 
	\[\norma{f_n\one_{\{\ab{y}\leq R\}}}_{L^2(\hyp)}<\eps,\]
	where, as mentioned before, we are identifying a function on $\mathcal H^3$ with a function on $\{y\in\R^3:\ab{y}\geq 1\}$.
\end{define}

\noindent Finally, to discard the possibility of concentration at infinity we will use a comparison argument with the cone.

In the case of the full one-sheeted hyperboloid $\Hyp$ there is the addition of Lorentz invariance, and our proof will require additional steps when compared to the case of the upper half, $\hyp$. Because of this, in addition to the use of Proposition \ref{prop:key-prop-from-fvv} and a comparison to the double cone, we will use a concentration-compactness argument to be able to discard concentration at infinity.

More in detail, the organization of the paper is as follows. In Section \ref{sec:calculation_convolution} we explicitly calculate the double convolution $\mu\ast\mu$ which we use in Section \ref{sec:compare_cone} to prove the \textit{strict} lower bounds
\begin{equation}\label{a_priori_bounds}
\mathbf{H}_4>(2\pi)^{5/4},\qquad \overline{\mathbf{H}}_4>\Bigl(\frac{3}{2}\Bigr)^{1/4}(2\pi)^{5/4}, 
\end{equation}
which tell us that the best constant for the adjoint Fourier restriction operator on the (resp. full) one-sheeted hyperboloid is strictly greater than that for the (resp. double) cone.

In Section \ref{sec:upper-half} we prove Theorem \ref{thm:main-theorem} by collecting the necessary ingredients to use Proposition \ref{prop:key-prop-from-fvv}. Here the first inequality in \eqref{a_priori_bounds} is used to show that the $L^2$ mass of an extremizing sequence can not tend to infinity (i.e. there is no concentration at infinity).

From Section \ref{sec:full_hyperboloid} onward we focus on the full one-sheeted hyperboloid $\Hyp$. The existence of Lorentz invariance adds complexity to the proof of Theorem \ref{thm:main-theorem-2}, compared to the much simpler proof of Theorem \ref{thm:main-theorem}. We will use a concentration-compactness type argument that we discuss in Section  \ref{sec:concentration_compactness}. In short, given an $L^2$ normalized extremizing sequence $\{f_n\}_n$ for $\overline{T}$, three possibilities hold (possibly after passing to a further subsequence): \textit{compactness}, \textit{vanishing} or \textit{dichotomy}. In Section \ref{sec:no-dichotomy} we prove bilinear estimates at dyadic scales and show that they imply that \textit{dichotomy} can not occur. In Section \ref{sec:no-vanishing} we obtain a dyadic refinement of \eqref{sharp-double-sheeted} and used it to show that \textit{vanishing} can not occur.

To treat the \textit{compactness} case, it will be necessary to study so called "cap bounds" or refinements of the $L^2\to L^4$ estimate for the adjoint Fourier restriction operators $T$ and $\overline{T}$ and this we achieve in Section \ref{sec:lifting_the_sphere} by "lifting" to the hyperboloid the results for the sphere in $\R^3$, as proved in \cite{CS}, and recalled in Section \ref{sec:Tomas_Stein_sphere}. 

In Section \ref{sec:concluding_compactness} we show that if an extremizing sequence satisfies \textit{compactness} and does not concentrate at infinity then it is precompact in $L^2(\Hyp)$, modulo multiplication by characters and composition with Lorentz transformations.

Finally, in Section \ref{sec:conv-to-cone} we study some limiting relationships between the hyperboloid and the cone. The results of this section together with the second strict inequality in \eqref{a_priori_bounds} are used to show that, in the case of $\Hyp$, the $L^2$ mass of an extremizing sequence satisfying \textit{compactness} does not tend to infinity. When this is done, the proof of Theorem \ref{thm:main-theorem-2} is complete.

\subsection{Notation and some definitions.} The set of natural numbers is \break$\N=\{0,1,2,\dotsc\}$ and 
$\N^\ast=\{1,2,3,\dotsc\}$.

 For $s>0$, we let $\mathcal H^3_s:=\{(x,t)\colon x\in\R^3,\, 
t=\sqrt{\ab{x}^2-s^2}\}$, equipped with the measure
\begin{equation}\label{measure-mu-s}
\d\mu_s(x,t)=\one_{\{\ab{x}>s \}}\ddirac{t-\sqrt{\ab{x}^2-s^2}}\frac{\d x\d t}{\sqrt{\ab{x}^2-s^2}},
\end{equation}
and adjoint Fourier restriction operator $T_s$, $T_sf(x,t)=\widehat{f\mu_s}(-x,-t)$. There are corresponding definitions of $\Hyp_s,\,\bar{\mu}_s$ and $\overline{T}_s$ in analogy with the case $s=1$.
 
The cone in $\R^3$ is denoted $\Gamma^3:=\{(y,\ab{y}):y\in\R^3\}$ which comes with its Lorentz and scale invariant measure 
$\sigma_c$,
\[ \int_{\Gamma^3}f\d\sigma_c=\int_{\R^3}f(y,\ab{y})\frac{\d y}{\ab{y}}. \]
The adjoint Fourier restriction operator on the cone, $T_c$, is given by the expression
\begin{equation}
\label{fourier-extension-operator-cone}
T_cf(x,t)=\int_{\R^3} e^{ix\cdot y} e^{it\ab{y}} f(y)\frac{\d y}{\ab{y}}.
\end{equation}
which acts, a priori, on functions $f\in\Sh(\R^3)$.
The adjoint Fourier restriction operator on the double cone, 
$\overline{\Gamma}^3:=\Gamma^3\cup-\Gamma^3$, denoted by $\overline{T}_c$, 
is given by the expression
\begin{equation}
\label{eq:fourier-extension-operator-dcone}
\overline{T}_cf(x,t)=\int_{\R^3} e^{ix\cdot y} e^{it\ab{y}} f(y,\ab{y})\frac{\d y}{\ab{y}}
+\int_{\R^3} e^{ix\cdot y} e^{-it\ab{y}} f(y,-\ab{y})\frac{\d y}{\ab{y}},
\end{equation}
$f\in \Sh(\R^4)$. We let $\mathbf{C}_{4},\overline{\mathbf{C}}_{4}<\infty$ denote the best constants in the inequalities
\[ \norma{T_c f}_{L^4(\R^4)}\leq \mathbf{C}_{4}\norma{f}_{L^2(\Gamma^3)},\quad  \norma{\overline{T}_c f}_{L^4(\R^4)}\leq \overline{\mathbf{C}}_{4}\norma{f}_{L^2(\overline{\Gamma}^3)},\]
respectively. We sometimes use the alternative notation $\norma{T}=\mathbf{H}_4$, $\norma{\overline{T}}=\overline{\mathbf{H}}_4$, $\norma{T_c}=\mathbf{C}_{4}$ and $\norma{\overline{T}_c}=\overline{\mathbf{C}}_{4}$.

 The sphere of radius $r>0$ on $\R^3$ is $\Sph_r^2:=\{y\in\R^3:\ab{y}=r\}$. The sphere of radius $1$ 
is denoted simply $\Sph^2$. On $\Sph_r^2$ we consider the measure 
$\sigma_r$,
\begin{equation}\label{eq:sigma_r}
\int_{\Sph_r^2}f\d\sigma_r=\int_{\Sph^2}f(r\omega)r\d\sigma(\omega),
\end{equation}
where $\sigma$ is the surface measure on $\Sph^2$. With this choice, 
$\sigma_r(\Sph_r^2)=r\sigma(\Sph^2)$, for all $r>0$. For $r>0$ and a function $f:\R^3\to\mathbb C$ we set $f_r:\Sph^2\to\R$ by $f_r(\cdot)=f(r\,\cdot)$.

We let $\mathbf{S}$ denote the best constant 
in the convolution form of the Tomas--Stein inequality for the sphere $\Sph^2$,
\begin{equation*}
\norma{f\sigma\ast f\sigma}_{L^2(\R^3)}\leq \mathbf{S}^2\norma{f}_{L^2(\Sph^2)}^2.
\end{equation*}

We also use the following convention. For $f:\overline{\mathcal H}^3\to\R$ we 
write $f=f_++f_-$, where $f_+$ is supported on $\hyp$ and $f_-$ on the reflection of $\hyp$ with respect to the origin, $-\hyp$, and we further identify their domains with $\R^3$ via the orthogonal projection. For $A\subseteq 
\R^3$
we define
\[ \int_A f\d\mu:=\int_{\{(x,t)\in\hyp\colon x\in A\}}f\d\mu, \]
$f\in L^1(\hyp)$, while for $\Hyp$,
\[ \int_{A} f d\bar\mu:=\int_{\{(x,t)\in\Hyp\colon x\in A\}} f\d\bar\mu, \]
$f\in L^1(\Hyp)$, so that in both cases the integral over $A\subset\R^3$ equals to the integral over the "lift" of $A$ to $\hyp$ or $\Hyp$, as it corresponds.

An element $R\in SO(4)$ that preserves the $t$-axis, $R(0,0,0,1)=(0,0,0,1)$, is canonically identified with an element of $SO(3)$, and as such we will just write $R\in SO(3)$.

We let 
$\psi_s(r)=\sqrt{r^2-s^2}\one_{\{r\geq s\}}$, $\phi_s(t)=\psi_s^{-1}(t)=\sqrt{t^2+s^2}\one_{\{t\geq 0\}}$. The (closed) ball of radius $r>0$ centered at $y\in\R^3$ is $B(y,r)$. For a set $A$, $\one_A$ denotes the characteristic function of $A$ and $A^\complement$, the complement of $A$ with respect to a set containing $A$ that will be understood from context, usually $\R^3$, $\hyp$ or $\Hyp$. We sometimes slightly abuse notation and use $\ab{A}$ to denote the measure of a set $A$, where the measure used must be understood from context, for instance, if $A$ is an interval it refers to the Lebesgue measure, if $A\subseteq \Sph^2$, it refers to the surface measure, etc. The support of a function $f$ is denoted $\supp(f)$.

We will use the usual asymptotic notation $X\lesssim Y,\,Y\gtrsim X$ if there exists a constant $C$ (independent of $X,Y$) such that $\ab{X}\leq CY$; we use $X\asymp Y$ if $X\lesssim Y$ and $Y\lesssim X$; when such constants depend on parameters of the problem that we want to make explicit, such as $\alpha,\dotsc$ etc., we write $\lesssim_{\alpha,\dotsc},\gtrsim_{\alpha,\dotsc}$ and $\asymp_{\alpha,\dotsc}$. At times we will use the common asymptotic notation $o(\cdot)$ and $O(\cdot)$. Thus, $g_n=o(f_n)$ if $g_n/f_n\to0$ as $n\to\infty$, while $g_n=O(f_n)$ if $\ab{g_n}\leq C\ab{f_n}$ for all $n$. If there is more than one parameter, say $n\in\N$ and $s>0$, then $g_n(s)=o_n(f_s(s))$ means the limit of $g_n/f_n\to0$ is taken with respect to $n$ and is uniform in $s$, that is $\sup_s\ab{g_n(s)}/\ab{f_n(s)}\to0$ as $n\to\infty$.

\section{Lorentz invariance, symmetrization and caps}\label{sec:lorentz_invariance}
\subsection{Lorentz invariance}
Recall that the Lorentz group on $\R^4$, denoted $\mathcal L$, is defined as the group of 
invertible linear 
transformations in $\R^4$ that preserve the bilinear form
\[ B(x,y)=x_{4}y_{4}-x_{3}y_{3}-x_2y_2-x_1y_1, \]
for $x=(x_1,x_2,x_3,x_4)\in\R^4$ and $y=(y_1,y_2,y_3,y_4)\in\R^4$. If $L\in\mathcal L$ then 
$\ab{\det L}=1$. Given that we can write $\Hyp=\{(x,t)\in\R^{3+1}\colon B((x,t),(x,t))=-1\}$ it is 
direct that $\mathcal L$ preserves the hyperboloid: $L(\Hyp)=\Hyp$, for every 
$L\in\mathcal L$. 
Moreover, $\mathcal L$ preserves the measure $\bar{\mu}$, in the sense that for every $f\in 
L^1(\Hyp)$ and $L\in\mathcal{L}$
\begin{equation}\label{eq:lorentz-invariance}
\int_{\Hyp} f(x,t)\d\bar{\mu}(x,t)=\int_{\Hyp} f(L(x,t))\d\bar{\mu}(x,t).
\end{equation}
To see this note that a simple calculation shows that we can write 
\[ \bar{\mu}(x,t)=\ddirac{t^2-\ab{x}^2+1}\d x\d t \]
so that 
\[ \int_{\R^4} f(x,t) \d\bar\mu(x,t)= \int_{\R^4} f(x,t)\ddirac{t^2-\ab{x}^2+1}\d t\d x.\]
Then, if $L$ is a Lorentz transformation  
and $f\in L^1(\overline{\mathcal{H}}^3)$, \eqref{eq:lorentz-invariance} can be seen to hold by the change of 
variable formula. 

For $t\in (-1,1)$ the Lorentz boost $L^t\in\mathcal L$ is the linear map
\begin{equation}\label{eq:lorentz_boost}
L^t(\xi_1,\xi_2,\xi_3,\tau)=\biggl(\frac{\xi_1+t\tau}{\sqrt{1-t^2}},\xi_2,\xi_3,\frac{t\xi_1+\tau}{\sqrt{1-t^2}}\biggr),
\end{equation}
while $L_t$ denotes the rescaling $L_t:=(1-t^2)^{1/2}L^t$, so that $(L_t)^{-1}=(1-t^2)^{-1/2}L^{-t}$.

\subsection{Convolution form} With the Fourier transform in $\R^d$ normalized as\break $\widehat{F}(x)=\int_{\R^d}e^{-ix\cdot y}F(y)\d y$ we have the identities
\[ \widehat{F\ast G}=\widehat{F}\,\widehat{G},\quad \norma{\widehat{F}}_{L^2(\R^d)}=(2\pi)^{d/2}\norma{F}_{L^2(\R^d)}, \]
so that using $Tf(x,t)=\widehat{f\mu}(-x,-t)$ and $\overline{T}g(x,t)=\widehat{g\bar{\mu}}(-x,-t)$ we find the equalities
\begin{equation}\label{eq:convolution-form}
\norma{Tf}_{L^4(\R^4)}=2\pi\norma{f\mu\ast f\mu}_{L^2(\R^4)}^{1/2},\quad \norma{\overline{T}g}_{L^4(\R^4)}=2\pi\norma{g\bar\mu\ast g\bar\mu}_{L^2(\R^4)}^{1/2}.
\end{equation}
Using this \textit{convolution form} of the $L^4$ norm and the triangle inequality we see that $\norma{Tf}_{L^4(\R^4)}\leq \norma{T\ab {f}}_{L^4(\R^4)}$ and $\norma{\overline{T}g}_{L^4(\R^4)}\leq \norma{\overline{T}\ab {g}}_{L^4(\R^4)}$, so that if $f$ is an extremizer for \eqref{adjoint-restriction-one-sheeted} (resp. $g$ for \eqref{sharp-double-sheeted}), then so is $\ab{f}$ (resp. $\ab{g}$), showing that if extremizers exist then there are nonnegative real valued extremizers.

\subsection{Symmetrization} Let $f\in L^2(\Hyp)$ be a complex valued 
function. Denote the reflection of $f$ by $\widetilde 
f(x,t)=f(-x,-t)$ and the nonnegative $L^2$-symmetrization of $f$ by
\[f_\sharp(x,t)=\biggl(\frac{\ab{f(x,t)}^2+\ab{f(-x,-t)}^2}{2}\biggr)^{1/2}.\]
Regarding the relationship between $f$ and $f_\sharp$ we have the following lemma.
\begin{lemma}\label{lem:symmetrization-lemma}
	Let $f\in L^2(\Hyp)$ be a complex valued function. Then
\begin{equation}\label{eq:ineq_even}
\norma{f\bar{\mu}\ast f\bar{\mu}}_{L^2(\R^4)}\leq \norma{f_\sharp\bar{\mu}\ast 
		f_\sharp\bar{\mu}}_{L^2(\R^4)}.
\end{equation}
\end{lemma}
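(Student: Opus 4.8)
The plan is to expand both sides in terms of $f$, its reflection $\widetilde f$, and the measures $\mu_+,\mu_-$, and then reduce the inequality to a pointwise (in Fourier space, equivalently convolution) comparison that follows from Cauchy--Schwarz together with the nonnegativity of the convolutions of nonnegative measures. Write $f = f_+ + f_-$ on $\Hyp$ and recall $\bar\mu = \mu_+ + \mu_-$, with $\mu_-$ the pushforward of $\mu_+ = \mu$ under $(x,t)\mapsto(-x,-t)$. Then $f\bar\mu = f_+\mu_+ + f_-\mu_-$, and $f\bar\mu \ast f\bar\mu$ splits into three pieces: the "$++$" term $f_+\mu_+\ast f_+\mu_+$, the "$--$" term $f_-\mu_-\ast f_-\mu_-$, and twice the cross term $f_+\mu_+\ast f_-\mu_-$. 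By the reflection symmetry of $\bar\mu$, the "$--$" term is the reflection of $(\widetilde f)_+\mu_+ \ast (\widetilde f)_+\mu_+$ where I am using that $\widetilde f$ restricted to $\hyp$ corresponds to $f_-$ reflected; being careful with this bookkeeping is the first task. The key point is that $f_\sharp$ is by construction an \emph{even} function, so $f_\sharp\bar\mu\ast f_\sharp\bar\mu = (f_\sharp)_+\mu_+\ast(f_\sharp)_+\mu_+ + (\text{reflection}) + 2(f_\sharp)_+\mu_+\ast(f_\sharp)_-\mu_-$, and the evenness makes the "$++$" and "$--$" contributions equal after reflection.

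Next I would pass to the convolution kernels. For fixed $\xi\in\R^4$, the "$++$" convolution $f_+\mu_+\ast f_+\mu_+(\xi)$ is an integral of $f_+(y)f_+(z)$ over the set $\{(y,z): y,z\in\hyp,\ y+z=\xi\}$ against a nonnegative density (the density of $\mu_+\ast\mu_+$ disintegrated along that fiber), and similarly for the cross term and for $f_\sharp$. Denote by $g(y) = |f_+(y)|$ and $h(y) = |f_-(y)| = |f(-\cdot,-\cdot)|$ appropriately identified, so that $(f_\sharp)_\pm$ has modulus $\sqrt{(g^2+h^2)/2}$ (same value for $+$ and $-$ by evenness). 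Using $|f\bar\mu\ast f\bar\mu(\xi)| \le |f|\bar\mu\ast|f|\bar\mu(\xi)$ pointwise, the problem reduces to showing, for each $\xi$, that the $\xi$-fiber integral built from $g,h$ is dominated by the one built from $\sqrt{(g^2+h^2)/2}$. The crucial elementary inequality is: for nonnegative reals,
\[
g_1 g_2 + h_1 h_2 \le \sqrt{\tfrac{g_1^2+h_1^2}{2}}\,\sqrt{\tfrac{g_2^2+h_2^2}{2}} + \sqrt{\tfrac{g_1^2+h_1^2}{2}}\,\sqrt{\tfrac{g_2^2+h_2^2}{2}} = 2\sqrt{\tfrac{g_1^2+h_1^2}{2}}\sqrt{\tfrac{g_2^2+h_2^2}{2}},
\]
which is just $2(g_1g_2+h_1h_2)\le (g_1^2+h_1^2)(g_2^2+h_2^2)/\big(\sqrt{\cdots}\big)$... more cleanly: $g_1g_2+h_1h_2 \le \sqrt{g_1^2+h_1^2}\sqrt{g_2^2+h_2^2}$ by Cauchy--Schwarz, and the right side equals $2\sqrt{(g_1^2+h_1^2)/2}\sqrt{(g_2^2+h_2^2)/2}$. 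This handles the "$++$" plus "$--$" pair versus the symmetrized pair. The cross terms require the companion inequality $g_1 h_2 + h_1 g_2 \le \sqrt{g_1^2+h_1^2}\sqrt{g_2^2+h_2^2}$, again Cauchy--Schwarz. Integrating these pointwise bounds over the $\xi$-fibers against the (common) nonnegative densities, and then summing the three groups of terms, yields $|f\bar\mu\ast f\bar\mu|(\xi)\le f_\sharp\bar\mu\ast f_\sharp\bar\mu(\xi)$ pointwise; taking $L^2(\R^4)$ norms finishes the proof.

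The main obstacle I anticipate is bookkeeping rather than analysis: one must verify that the disintegration densities along each $\xi$-fiber genuinely coincide for the $f$-expression and the $f_\sharp$-expression — that is, that the measures $\mu_+\ast\mu_+$, $\mu_+\ast\mu_-$, $\mu_-\ast\mu_-$ have the "matching fibers" needed so that the pointwise elementary inequalities can be applied fiberwise and integrated. This works precisely because $f_\sharp$ has the same modulus on $\hyp$ and on $-\hyp$, so the four convolution structures (which depend only on the geometry of $\mu_\pm$, not on $f$) line up termwise; the reflection $(x,t)\mapsto(-x,-t)$ identifies the "$--$" fiber data at $\xi$ with the "$++$" data at $-\xi$, and taking $L^2$ norms is reflection-invariant. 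Once this identification is set up carefully, the inequality is a fiberwise Cauchy--Schwarz plus integration, with no further subtlety. An alternative, essentially equivalent route avoids explicit disintegration: write everything via the extension operator $\overline T$ and use $\|\overline T f\|_4 \le \| |\widehat{f\bar\mu}| \|_4$ together with the observation that $\widehat{(f_\sharp)_+\mu_+}$ and $\widehat{(f_\sharp)_-\mu_-}$ have equal moduli; this reduces to the same elementary inequalities for the four products of moduli of the relevant oscillatory integrals. I would present whichever version keeps the notation lightest, likely the convolution-fiber version since the density $\mu\ast\mu$ is computed explicitly in Section~\ref{sec:calculation_convolution}.
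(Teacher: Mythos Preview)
There is a genuine gap: the pointwise bound you aim for,
\[
\ab{f}\bar\mu\ast\ab{f}\bar\mu(\xi)\leq f_\sharp\bar\mu\ast f_\sharp\bar\mu(\xi),
\]
is \emph{false}. Take $f$ supported only on the upper sheet $\hyp$, so $f_-\equiv 0$. Then $f_\sharp=\ab{f_+}/\sqrt{2}$ on $\hyp$ and is the reflection of this on $-\hyp$. At a point $\xi=(0,\tau)$ with $\tau$ large, only the ``$++$'' piece contributes on either side, and one gets $\ab{f}\bar\mu\ast\ab{f}\bar\mu(\xi)=\ab{f_+}\mu_+\ast\ab{f_+}\mu_+(\xi)$ while $f_\sharp\bar\mu\ast f_\sharp\bar\mu(\xi)=\tfrac12\ab{f_+}\mu_+\ast\ab{f_+}\mu_+(\xi)$. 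The reason your fiberwise pairing breaks down is that the ``$++$'' fiber at $\xi$ is $\{(p,q)\in\hyp^2:p+q=\xi\}$, whereas the ``$--$'' fiber at $\xi$, after reflecting to $\hyp$, is $\{(p',q')\in\hyp^2:p'+q'=-\xi\}$. These are different sets unless $\xi=0$, so the Cauchy--Schwarz inequality $g_1g_2+h_1h_2\leq 2f_\sharp(p)f_\sharp(q)$ cannot be applied with $g_i=\ab{f(p_i)}$ and $h_i=\ab{f(-p_i)}$ on a common fiber. Your remark that reflection matches the ``$--$'' data at $\xi$ with the ``$++$'' data at $-\xi$ only yields $A(\xi)+A(-\xi)\leq 2B(\xi)$ for $A=\ab{f}\bar\mu\ast\ab{f}\bar\mu$ and $B=f_\sharp\bar\mu\ast f_\sharp\bar\mu$; this does \emph{not} imply $\norma{A}_{L^2}\leq\norma{B}_{L^2}$.

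The paper's fix is short but essential: rather than $f\bar\mu\ast f\bar\mu$, work with $f\bar\mu\ast\widetilde f\bar\mu$. These have the same $L^2$ norm (reduce first to nonnegative $f$, for which $\widehat{f\bar\mu}(-\xi)=\overline{\widehat{f\bar\mu}(\xi)}$, hence $\int\ab{\widehat{f\bar\mu}}^4=\int\ab{\widehat{f\bar\mu}(\xi)\widehat{f\bar\mu}(-\xi)}^2\d\xi$). Now the convolution integrand is $f(p)\widetilde f(q)=f(p)f(-q)$, and the swap symmetry $p\leftrightarrow q$ of the \emph{same} fiber over $\xi$ produces the symmetrized integrand $\tfrac12\bigl(f(p)f(-q)+f(-p)f(q)\bigr)$, to which Cauchy--Schwarz applies directly:
\[
\ab{f(p)f(-q)+f(-p)f(q)}\leq 2f_\sharp(p)f_\sharp(q).
\]
This gives the genuine pointwise bound $\ab{f\bar\mu\ast\widetilde f\bar\mu}(\xi)\leq f_\sharp\bar\mu\ast f_\sharp\bar\mu(\xi)$ and hence the lemma. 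The missing idea in your proposal is precisely this replacement of one factor by its reflection before convolving.
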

\begin{proof}
	As in \cite{Fo2}*{Proof of Prop. 3.2} we write
	\begin{align*}
	f\bar{\mu}\ast \widetilde f\bar{\mu}(\xi,\tau)
	&=\int 
	f(y,s)f(-x,-t)\ddirac{(\xi,\tau)-(y,s)-(x,t)}\d\bar\mu(y,s)\d\bar\mu(x,t)\\
	&=\frac{1}{2}\int 
	(f(y,s)f(-x,-t)+f(-y,-s)f(x,t))\\
	&\qquad\qquad\cdot \ddirac{(\xi,\tau)-(y,s)-(x,t)}\d\bar\mu(y,s)\d\bar\mu(x,t),
	\end{align*}
	and apply the Cauchy-Schwarz inequality
	\[ \ab{f(y,s)f(-x,-t)+f(-y,-s)f(x,t)}\leq 2f_\sharp(y,s)f_\sharp(x,t) \]
	to obtain that for all $(\xi,\tau)\in\R^4$
	\[ \ab{f\bar{\mu}\ast \widetilde f\bar{\mu}(\xi,\tau)}\leq f_\sharp\bar{\mu}\ast f_\sharp\bar{\mu}(\xi,\tau). \]
	Then
	\[ \norma{f\bar{\mu}\ast f\bar{\mu}}_{L^4(\R^4)}=\norma{f\bar{\mu}\ast \widetilde 
		f\bar{\mu}}_{L^4(\R^4)}\leq \norma{f_\sharp\bar{\mu}\ast f_\sharp\bar{\mu}}_{L^4(\R^4)}. \]
\end{proof}

\noindent Since we also have
\begin{equation*}
\norma{f}_{L^2(\bar{\mu})}=\norma{f_\sharp}_{L^2(\bar{\mu})},
\end{equation*}
it follows that there exist real valued extremizers for $\overline{T}$ which are nonnegative even functions on 
$\overline{\mathcal{H}}^3$. Moreover, any nonnegative real valued extremizer is \textit{necessarily} even. This can be explained by studying the cases of equality in \eqref{eq:ineq_even} by following the proof of the inequality (see \cite{CaOeS} for a detailed discussion in the case of the sphere) or, alternatively, by using the same method as in the proof of \cite{OeSQ18}*{Lemma 6.1} where a different approach to symmetrization is used and the cases of equality were studied. Therefore, we have the following result.
\begin{prop}\label{prop:symmetric-complex}
	Let $f\in L^2(\Hyp)$ be a nonnegative real valued extremizer for 
	\eqref{sharp-double-sheeted}, then 
	$f(x,t)=f(-x,-t)$ for $\bar\mu$-a.e. $(x,t)\in \Hyp$.
\end{prop}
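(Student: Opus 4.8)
The plan is to extract the conclusion from the cases of equality in Lemma~\ref{lem:symmetrization-lemma}, by retracing the proof of that inequality. First, let $f\in L^2(\Hyp)$ be a nonnegative real valued extremizer for \eqref{sharp-double-sheeted}. By the convolution form \eqref{eq:convolution-form} and the identity $\norma{f}_{L^2(\bar\mu)}=\norma{f_\sharp}_{L^2(\bar\mu)}$ we get $\norma{f_\sharp\bar\mu\ast f_\sharp\bar\mu}_{L^2(\R^4)}\le\norma{f\bar\mu\ast f\bar\mu}_{L^2(\R^4)}$, and combining with \eqref{eq:ineq_even} this must be an equality (in particular $f_\sharp$ is also an extremizer). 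It remains to turn this equality into the statement $f=\widetilde f$ $\bar\mu$-a.e.

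Retracing the proof of Lemma~\ref{lem:symmetrization-lemma}, I would set $h:=f\bar\mu\ast\widetilde f\bar\mu$ and $H:=f_\sharp\bar\mu\ast f_\sharp\bar\mu$, both of which lie in $L^2(\R^4)$ because $\overline{T}f,\overline{T}f_\sharp\in L^4(\R^4)$, and insert the intermediate quantity
\[ \Phi(\xi,\tau):=\tfrac12\int\ab{f(y,s)f(-x,-t)+f(-y,-s)f(x,t)}\ddirac{(\xi,\tau)-(y,s)-(x,t)}\d\bar\mu(y,s)\d\bar\mu(x,t). \]
The triangle inequality gives $\ab{h}\le\Phi$, the pointwise Cauchy--Schwarz inequality used in that proof gives $\Phi\le H$ (so $\Phi$ has a locally integrable density), and the reality of $f$ gives $\norma{h}_{L^2(\R^4)}=\norma{f\bar\mu\ast f\bar\mu}_{L^2(\R^4)}$. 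Hence $\norma{h}_{L^2(\R^4)}=\norma{H}_{L^2(\R^4)}$ together with $\ab{h}\le\Phi\le H$ forces $\Phi=H$ a.e. Now $H-\Phi\ge 0$ is the Lebesgue density of the pushforward, under $\bigl((y,s),(x,t)\bigr)\mapsto(y,s)+(x,t)$, of the nonnegative measure $\bigl(f_\sharp(y,s)f_\sharp(x,t)-\tfrac12\ab{f(y,s)f(-x,-t)+f(-y,-s)f(x,t)}\bigr)\d\bar\mu(y,s)\d\bar\mu(x,t)$; since $H-\Phi=0$ a.e., this measure has total mass zero, hence vanishes. Therefore the Cauchy--Schwarz inequality
\[ \ab{f(y,s)f(-x,-t)+f(-y,-s)f(x,t)}\le 2f_\sharp(y,s)f_\sharp(x,t) \]
holds with equality for $(\bar\mu\times\bar\mu)$-a.e.\ pair; for the real vectors $\bigl(f(y,s),\widetilde f(y,s)\bigr)$ and $\bigl(f(-x,-t),f(x,t)\bigr)$ in $\R^2$ this is the vanishing of a $2\times2$ determinant, that is $f(y,s)f(x,t)=\widetilde f(y,s)\widetilde f(x,t)$ for $(\bar\mu\times\bar\mu)$-a.e.\ $\bigl((y,s),(x,t)\bigr)$.

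To conclude, I would integrate this identity in the variable $(x,t)$ against $f\one_E$, where $E\subset\Hyp$ is chosen with $\bar\mu(E)<\infty$ and $c_1:=\int_E f^2\d\bar\mu>0$: with $c_2:=\int_E f\widetilde f\d\bar\mu\ge0$ this yields $c_1 f=c_2\widetilde f$ $\bar\mu$-a.e. Since $f\not\equiv 0$ we must have $c_2>0$, so $f=\lambda\widetilde f$ for the constant $\lambda:=c_2/c_1>0$; applying the reflection once more gives $\widetilde f=\lambda f$, whence $\lambda^2=1$ and $\lambda=1$, i.e.\ $f=\widetilde f$ $\bar\mu$-a.e. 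The step I expect to be the main obstacle is the middle one, upgrading the $L^2$ equality $\norma{h}_{L^2(\R^4)}=\norma{H}_{L^2(\R^4)}$ to equality in the pointwise Cauchy--Schwarz bound for $(\bar\mu\times\bar\mu)$-a.e.\ pair; this is exactly where one must control the disintegration of $\bar\mu\times\bar\mu$ along the sum map, and the pushforward/total-mass argument above is a clean way around it. Computations of this kind are by now standard in sharp restriction theory (cf.\ \cite{Fo2}, \cite{CaOeS} and \cite{OeSQ18}*{Lemma 6.1}).
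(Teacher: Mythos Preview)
Your argument is correct and follows precisely the first of the two routes the paper indicates (tracking the cases of equality in Lemma~\ref{lem:symmetrization-lemma}, as in \cite{CaOeS}); the paper itself does not spell out the details but refers to that analysis and to the alternative method of \cite{OeSQ18}*{Lemma~6.1}. The only step worth making slightly more explicit is that the pushforward of the nonnegative integrand $G\,\d\bar\mu\!\times\!\bar\mu$ under the sum map is absolutely continuous with density $H-\Phi$ (which follows from the delta-calculus/Fubini computation already used in the paper), after which $H-\Phi=0$ a.e.\ indeed forces $G=0$ $(\bar\mu\times\bar\mu)$-a.e.\ by monotone convergence against an increasing sequence of nonnegative test functions.
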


There are some interesting problems that we do not address in this article:
\begin{itemize}
\item[(i)] the nonnegativity of \textit{all} real valued extremizers,
\item[(ii)] the relationship between complex and real valued extremizers,
\item[(iii)] the smoothness of extremizers.
\end{itemize}
We provide the following comments in the context of the $L^2(\Sph^{d-1})\to L^p(\R^d)$ adjoint Fourier restriction inequality on the sphere. Christ and Shao \cite{CS2} showed that for the case of the the sphere $\Sph^2$ in $\R^3$ and $p=4$ each complex valued extremizer is of the form $ce^{ix\cdot \xi}F(x)$, for some $\xi\in\R^3$, some $c\in\mathbb{C}$ and some nonnegative extremizer $F$, and that extremizers are of class $C^{\infty}$; this results were later expanded to all dimensions $d\geq 2$ and even integers $p$ in \cite{OQ19a}*{Lemma 2.2 and Theorem 1.2} and \cites{OQ19b}. Note that the answer obtained for (ii) resolves (i). By using the outline in \cites{CS2,OQ19a,OQ19b}, the Euler--Lagrange equation, which can be obtained as in \cite{CQ14}, and the results in \cite{Cha} we expect similar relationships for the case of $\hyp$ and $\Hyp$, but have not investigated the extent to which the arguments would need to be changed. 

A related question is that of the rate of decay at infinity of an extremizer for which the argument in \cite{HS12} gives a possible route; see also \cite{OeSQ18}.\\

We remark that Theorems \ref{thm:main-theorem} and \ref{thm:main-theorem-2} are stated for general (possibly complex valued) extremizing sequences, that is, we do not assume nonnegativity and/or symmetry. 

\subsection{Caps}
A (closed) spherical cap $\sphcp\subseteq \Sph^2$ is a set of the form 
$\sphcp=\{x\in\Sph^2\colon \ab{x-x_0}\leq t\}$ for some $x_0\in\Sph^2$ and $t>0$. If we want to be explicit about the dependence on $x_0$ and $t$ we write $\sphcp(x_0,t)$.

A cap $\cp$ of $\mathcal H_s^3$ is a set of the form
\begin{equation}\label{eq:definition-cap}
\cp=\{(r\omega,\sqrt{r^2-s^2}):r\in [a,b],\,\omega\in \sphcp \},
\end{equation}
where $s\leq a<b\leq \infty$ and $\sphcp\subseteq \Sph^2$ is a spherical cap. 
When $a=s\,2^k$ and 
$b=s\,2^{k+1}$ 
for some $k\in\Z$ we say that $\cp$ is a dyadic cap. We identify a cap $\cp$ as before with its 
orthogonal projection to $\R^3\times \{0\}$, 
and moreover we use spherical coordinates and write the cap in \eqref{eq:definition-cap} as 
$\cp=[a,b]\times \sphcp$, where the hyperboloid it belongs to will be understood from context. A 
cap 
$\cp$ of $\overline{\mathcal H}_s^3$ is such that either $\cp\subseteq\hyp_s$ or its reflection with respect to the origin $(-\cp)\subseteq \hyp_s$ is a cap on $\mathcal H_s^3$.

The $\mu_s$-measure of a cap is easily calculated
\begin{equation}\label{eq:measure_cap}
\mu_s(\cp)=\int_{\cp}\d\mu_s=\sigma(\sphcp)\int_{a}^b \frac{r^2}{\sqrt{r^2-s^2}} \d r
=\frac{\sigma(\sphcp)}{2}\Bigl({s}^{2}\ln\bigl(r+\sqrt{{r}^{2}-{s}^{2}}\bigr) 
+r\sqrt{{r}^{2}-{s}^{2}}\Bigr)\eval_a^b.
\end{equation}

\noindent For a cap $\cp=[a,b]\times \sphcp$ in 
$\mathcal H^3_s$
and $t>0$ we define the rescaled cap $t\cp=[ta,tb]\times \sphcp$ as the cap in $\hyp_{ts}$ given by
\[ t\cp=\{(r\omega,\sqrt{r^2-(ts)^2})\colon r\in[ta,tb],\,\omega\in\sphcp\}, \] 
and note that
\begin{equation}\label{eq:rescaling_Measure}
\mu_{ts}(t\cp)=t^2\mu_s(\cp).
\end{equation}
We also note that for such a cap $\cp\subset\hyp_s$ there exist $R\in SO(3)$ and $\eps\in[0,\pi]$ such that
\begin{multline}\label{eq:rotatedHypCap}
R^{-1}(\cp)=\{(r\omega,\sqrt{r^2-s^2})\colon a\leq r\leq b,\\
\omega=(\cos\vphi,\cos\te\sin\vphi,\sin\te\sin\vphi),\, \te\in[0,2\pi],\,\vphi\in[0,\eps] \}.
\end{multline}
Keeping this notation in mind for the rest of the section we study the use of Lorentz transformations and scaling in the regimes when $\mu(\cp)$ is large and small. 
The following two lemmas will be useful in later sections. 
\begin{lemma}\label{scaling-cap-hyp}
	Let $s\leq \frac{1}{2}$, $\sphcp\subseteq\Sph^2$ be a spherical cap and $\cp= [1,2]\times \sphcp$ be a cap in the hyperboloid 
	$\mathcal H^3_s$. Let $R$ and $\eps$ be as in \eqref{eq:rotatedHypCap} and suppose that $\eps\in[0,\frac{\pi}{2}]$ and $s^{-2}\sin^2\eps\geq 8$. Then there 
	exist $0\leq 
	t<1$ such that $L_t^{-1}R^{-1}(\cp)\subset \mathcal H^3_{\frac{s}{\sqrt{1-t^2}}}$ satisfies
	\[ \mu_{\frac{s}{\sqrt{1-t^2}}}(L_t^{-1}R^{-1}(\cp))\geq \tfrac{\pi}{2}\text{ and } L_t^{-1}R^{-1}(\cp)\subseteq 
	[\tfrac{7}{16},\tfrac{33}{16}]\times\Sph^2. \]
	Moreover, if $\eps\in[0,\frac{\pi}{3}]$, we can take $t=\cos\eps$, while if $\eps\in(\frac{\pi}{3},\frac{\pi}{2}]$ we can take $t=0$.
\end{lemma}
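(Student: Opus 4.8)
The plan is to unwind the definitions explicitly and reduce everything to an elementary estimate on the radial variable after a Lorentz boost. First I would record that the cap $R^{-1}(\cp)$ is, by \eqref{eq:rotatedHypCap}, the set of points $(r\omega,\sqrt{r^2-s^2})$ with $r\in[1,2]$ and $\omega$ in the spherical cap $\{\vphi\in[0,\eps]\}$ around the pole $e_1=(1,0,0)$. Applying the inverse Lorentz boost $L_t^{-1}=(1-t^2)^{-1/2}L^{-t}$ to such a point, and using \eqref{eq:lorentz_boost}, one computes that the first coordinate becomes $(1-t^2)^{-1}(r\cos\vphi - t\sqrt{r^2-s^2})$, the second and third coordinates are scaled by $(1-t^2)^{-1/2}$, and the time coordinate becomes $(1-t^2)^{-1}(\sqrt{r^2-s^2} - tr\cos\vphi)$. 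A direct check (squaring and using $r^2\cos^2\vphi - (r^2-s^2) = r^2\sin^2\vphi - s^2$ modulo the factors) confirms that the image lies on $\mathcal H^3_{s/\sqrt{1-t^2}}$, as it must since Lorentz boosts preserve the hyperboloid and the scaling $L_t$ rescales $s$ by $(1-t^2)^{-1/2}$; so the containment statement is automatic and only the quantitative bounds require work.

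Next I would split according to the two regimes for $\eps$. In the main regime $\eps\in(\pi/3,\pi/2]$ we take $t=0$, so $L_t^{-1}R^{-1}(\cp)=R^{-1}(\cp)$ and there is nothing to boost: the cap already sits in $[1,2]\times\Sph^2\subseteq[\tfrac{7}{16},\tfrac{33}{16}]\times\Sph^2$, and by \eqref{eq:measure_cap} its $\mu_s$-measure equals $\sigma(\sphcp)\int_1^2 \frac{r^2}{\sqrt{r^2-s^2}}\,\d r\geq \sigma(\sphcp)\int_1^2 r\,\d r = \tfrac32\sigma(\sphcp)$, and since $\eps>\pi/3$ the spherical cap $\sphcp$ has $\sigma(\sphcp)\geq$ a fixed positive constant (the cap of half-angle $\pi/3$), which one checks exceeds $\tfrac{\pi}{3}$, giving $\mu_s(\cp)\geq\tfrac{\pi}{2}$. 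In the regime $\eps\in[0,\pi/3]$ we set $t=\cos\eps\in[\tfrac12,1)$. Then $\sqrt{1-t^2}=\sin\eps$, and the hypothesis $s^{-2}\sin^2\eps\geq 8$ says exactly that the rescaled parameter $s/\sqrt{1-t^2}=s/\sin\eps\leq 1/\sqrt8$ is small; also $s\leq\tfrac12$ ensures $\sqrt{r^2-s^2}\in[\tfrac{\sqrt3}{2}r,r]$ for $r\in[1,2]$. I would then estimate the radial range of the image: for a point with angular coordinate $\vphi\in[0,\eps]$ the new radius squared is $\rho^2 = (1-t^2)^{-2}\big[(r\cos\vphi - t\sqrt{r^2-s^2})^2 + (1-t^2)r^2\sin^2\vphi\big]$, and using $t=\cos\eps$, $\cos\vphi\in[\cos\eps,1]$, $r\in[1,2]$, and the bounds on $\sqrt{r^2-s^2}$, one gets two-sided control placing $\rho\in[\tfrac{7}{16},\tfrac{33}{16}]$; the constants here are generous enough that crude bounds suffice once $s^{-2}\sin^2\eps\geq 8$ is used.

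For the measure lower bound in the regime $\eps\in[0,\pi/3]$ I would use \eqref{eq:rescaling_Measure} together with Lorentz invariance of $\bar\mu$ (hence of $\mu$ on the relevant piece) in the form \eqref{eq:lorentz-invariance}: the map $L^{-t}R^{-1}$ preserves the hyperboloid measure, and the rescaling $L_t = \sqrt{1-t^2}\,L^t$ contributes a factor $(1-t^2)=\sin^2\eps$ via \eqref{eq:rescaling_Measure}, so $\mu_{s/\sin\eps}(L_t^{-1}R^{-1}(\cp)) = (\sin\eps)^{-2}\mu_s(\cp)$. Hmm — more carefully, $L_t^{-1}=(1-t^2)^{-1/2}L^{-t}$ is a dilation by $(1-t^2)^{-1/2}$ composed with a measure-preserving Lorentz transformation, so by \eqref{eq:rescaling_Measure} applied with dilation factor $(1-t^2)^{-1/2}=1/\sin\eps$ we get $\mu_{s/\sin\eps}(L_t^{-1}R^{-1}(\cp)) = (\sin\eps)^{-2}\mu_s(R^{-1}(\cp)) = (\sin\eps)^{-2}\mu_s(\cp)$. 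Now $\mu_s(\cp)\geq \tfrac32\sigma(\sphcp)$ as above, and $\sigma(\sphcp)$ for a cap of half-angle $\eps$ is $2\pi(1-\cos\eps)\asymp \eps^2$; so $\mu_{s/\sin\eps}(L_t^{-1}R^{-1}(\cp)) \gtrsim \eps^2/\sin^2\eps \geq 1$, and tracking the explicit constants ($1-\cos\eps \geq \tfrac{\eps^2}{2}-\tfrac{\eps^4}{24}$, $\sin^2\eps\leq\eps^2$) yields the clean bound $\tfrac{\pi}{2}$. The main obstacle I anticipate is purely bookkeeping: verifying that the explicit numerical constants $\tfrac{7}{16}$, $\tfrac{33}{16}$, and $\tfrac{\pi}{2}$ actually come out on the nose, which forces one to be somewhat careful with the two-sided radial estimates rather than using only $\lesssim$/$\gtrsim$; but no genuinely new idea is needed beyond the boost computation and the hypothesis $s^{-2}\sin^2\eps\geq 8$.
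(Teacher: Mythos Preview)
Your approach is essentially the same as the paper's: split on whether $\eps\le\pi/3$ or $\eps>\pi/3$, take $t=\cos\eps$ or $t=0$ respectively, and use the scaling relation $\mu_{s/\sin\eps}(L_t^{-1}R^{-1}(\cp))=(\sin\eps)^{-2}\mu_s(\cp)$ for the measure bound. Two tactical simplifications the paper uses that you may want to adopt: for the measure bound, the algebraic identity $\sin^2\eps=(1-\cos\eps)(1+\cos\eps)$ gives $(\sin\eps)^{-2}\cdot\pi(1-\cos\eps)=\pi/(1+\cos\eps)\geq\pi/2$ directly without any Taylor approximation; and for the containment, it is cleaner to bound the \emph{time} coordinate $\tau=(1-t^2)^{-1}(\sqrt{r^2-s^2}-tr\cos\vphi)\in[\tfrac{7}{16},2]$ first and then read off the radial bound from the hyperboloid relation $\rho=\sqrt{\tau^2+(s/\sin\eps)^2}$ together with $s/\sin\eps\leq 1/\sqrt{8}$, rather than estimating $\rho$ directly.
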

\begin{proof}
	With $R\in SO(3)$ and $\eps\in[0,\frac{\pi}{2}]$ satisfying \eqref{eq:rotatedHypCap},
	note that $L_t^{-1}R^{-1}(\cp)=(1-t^2)^{-1/2}L^{-t}R^{-1}(\cp)\subseteq \mathcal H^3_{s(1-t^2)^{-1/2}}$, for every $t\in(-1,1)$. 
	According to \eqref{eq:measure_cap}, the $\mu_s$-measure of $\cp$ satisfies
	\begin{align*}
	 \mu_s(\cp)&=2\pi(1-\cos\eps)\Bigl(\frac{{s}^{2}}{2}\ln\left( \sqrt{{r}^{2}-{s}^{2}}+r\right) 
	 +\frac{r}{2}\sqrt{{r}^{2}-{s}^{2}}\Bigr)\eval_{1}^2\\
	 &\geq \pi(1-\cos\eps)(\sqrt{4-s^2}-\sqrt{1-s^2})\geq \pi(1-\cos\eps),
	\end{align*}
	so that in what follows we can assume $\cos\eps\geq 
	1/2$, otherwise we are done by taking $t=0$. From \eqref{eq:rescaling_Measure}, for $t\in(0,1)$,
	\[ \mu_{\frac{s}{\sqrt{1-t^2}}}(L_t^{-1}R^{-1}(\cp))=(1-t^2)^{-1}\mu_s(\cp), \] 
	so that choosing $t=\cos\eps$ gives 
	$\mu_{s(1-t^2)^{-1/2}}(L_t^{-1}R^{-1}(\cp))\geq \frac{\pi}{1+\cos\eps}\geq \frac{\pi}{2}$.
		On the other hand, we have
	\begin{multline*}
	L_t^{-1}R^{-1}(\cp)=\Bigl\{(1-t^2)^{-1/2}\Bigl(\frac{r\cos\vphi-t\sqrt{r^2-s^2}}{(1-t^2)^{1/2}},\\
	r\cos\te\sin\vphi
	,r\sin\te\sin\vphi,
	\frac{\sqrt{r^2-s^2}-tr\cos\vphi}{(1-t^2)^{1/2}}\Bigr)\colon r\in [1,2],\te\in[0,2\pi],\vphi\in[0,\eps]\Bigr\},
	\end{multline*}
	and since $\cos\vphi\geq 
	\cos\eps$ and $1\leq r\leq 2$ we obtain that the fourth coordinate of any point in $L_t^{-1}R^{-1}(\cp)$ is bounded as follows
	\begin{align*}
	\frac{\sqrt{r^2-s^2}-tr\cos\vphi}{1-t^2}=\frac{r(\sqrt{1-(s/r)^2}-t\cos\vphi)}{1-t^2}\leq 2\frac{1-\cos^{2}\eps}{1-\cos^2\eps}= 2
	\end{align*}
	and
	\begin{align*}
	\frac{r(\sqrt{1-(s/r)^2}-t\cos\vphi)}{1-t^2}&= \frac{r}{\sin^2\eps}(\sqrt{1-(s/r)^2}-\cos\eps\cos\vphi)\\
	&\geq \frac{r}{\sin^2\eps}(\sqrt{1-(s/r)^2}-\cos\eps)\\
	&=\frac{r}{\sqrt{1-(s/r)^2}+\cos\eps}\Bigl(1-\frac{1}{r^2s^{-2}\sin^2\eps}\Bigr)\\
	&\geq \frac{r}{2}\Bigl(1-\frac{1}{8r^2}\Bigr)\geq \frac{7}{16}.
	\end{align*}
	Therefore 
	\[ L_t^{-1}R^{-1}(\cp)\subseteq [\phi_{\frac{s}{\sqrt{1-t^2}}}(\tfrac{7}{16}),\phi_{\frac{s}{\sqrt{1-t^2}}}(2)]\times\Sph^2. \]
	Now, from the definition of $t$ and the assumption that $s^{-2}\sin^2\eps\geq 8$ we obtain
	\begin{equation*}
	\frac{s}{\sqrt{1-t^2}}=\frac{s}{\sin\eps}\leq\frac{\sqrt{2}}{4},
	\end{equation*}
	so that the following inequalities hold
	\[r\leq \phi_{\frac{s}{\sqrt{1-t^2}}}(r)=\sqrt{r^2+s^2(1-t^2)^{-1}}\leq \sqrt{r^2+1/8},\]
	from where $\phi_{\frac{s}{\sqrt{1-t^2}}}(\tfrac{7}{16})\geq \frac{7}{16}$ and $\phi_{\frac{s}{\sqrt{1-t^2}}}(2)\leq \frac{33}{16}$
	and then we find
	$L_t^{-1}R^{-1}(\cp)\subseteq [\tfrac{7}{16},\tfrac{33}{16}]\times\Sph^2$.
\end{proof}
As noted in \cite{COS}*{Lemma 4} for the two-sheeted hyperboloid, a Lorentz transformation can map  caps of uniformly bounded measure into a bounded ball. This we record in the next lemma.
\begin{lemma}\label{lem:measure_bounded_cap}
	Let $s>0$, $k\in\N$ and $\cp_k\subset\Hyp_s$ be a dyadic cap of the form $\cp_k=[s2^k,s2^{k+1}]\times\sphcp_k$, for some spherical cap $\sphcp_k\subseteq\Sph^2$. Let $R$ and $\eps$ be associated to $\cp_k$ as in \eqref{eq:rotatedHypCap}, then
	\begin{enumerate}
		\item[(i)] The $\bar\mu_s$-measure of $\cp_k$ satisfies
		\begin{equation}\label{eq:measure_asymp}
		\begin{split}
		\bar\mu_s(\cp_k)&=3\pi s^2 (1+o_k(1))2^{2k}(1-\cos\eps)\\
		&=\frac{3\pi s^2}{1+\cos\eps}(1+o_k(1))2^{2k}\sin^2\eps.
		\end{split}
		\end{equation}
		\item[(ii)] Suppose $\eps\in[0,\frac{\pi}{2}]$. Then, there exists $t\in [0,1)$ such that the orthogonal projection of $L^{-t}R^{-1}(\cp_k)\subset\Hyp_s $ to $\R^3$ is contained in a ball of $\R^3$ of radius comparable to $s+s^{-1}\bar\mu_s(\cp_k)+\bar\mu_s(\cp_k)^{1/2}$.
	\end{enumerate}
\end{lemma}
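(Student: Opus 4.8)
The plan is to treat (i) and (ii) separately, with (i) a direct asymptotic computation and (ii) a consequence of Lemma \ref{scaling-cap-hyp} after a preliminary rescaling. For part (i), I would start from the exact formula \eqref{eq:measure_cap} applied to the cap $\cp_k=[s2^k,s2^{k+1}]\times\sphcp_k$, using $\sigma(\sphcp_k)=2\pi(1-\cos\eps)$ and doubling (since $\bar\mu_s=\mu_s^++\mu_s^-$ and on a cap contained in one sheet only one term contributes, so in fact $\bar\mu_s(\cp_k)=\mu_s(\cp_k)$ — one should be slightly careful here, but for a cap lying in $\hyp_s$ the $\bar\mu_s$-measure equals $\mu_s(\cp_k)$). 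Then $\bar\mu_s(\cp_k)=\pi(1-\cos\eps)\bigl(s^2\ln(r+\sqrt{r^2-s^2})+r\sqrt{r^2-s^2}\bigr)\big|_{s2^k}^{s2^{k+1}}$; factoring $s^2$ out and writing $r=s2^k\rho$ with $\rho\in[1,2]$, the bracket becomes $s^2\bigl(r\sqrt{r^2-s^2}+O(s^2\ln r)\bigr)\big|=s^2\cdot s^2 2^{2k}\bigl(2\sqrt{4-2^{-2k}}-\sqrt{1-2^{-2k}}+o_k(1)\bigr)$. Since $2\sqrt4-\sqrt1=3$, this gives $\bar\mu_s(\cp_k)=3\pi s^2 2^{2k}(1-\cos\eps)(1+o_k(1))$, which is \eqref{eq:measure_asymp}; the second form follows from $1-\cos\eps=\sin^2\eps/(1+\cos\eps)$. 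The $o_k(1)$ is uniform in $s$ and in $\eps$ since it comes only from the $r$-integral after rescaling.

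For part (ii), the idea is to reduce to Lemma \ref{scaling-cap-hyp} by first scaling the cap down to unit dyadic block. Apply the rescaling $x\mapsto 2^{-k}x$: by \eqref{eq:rescaling_Measure} this maps $\cp_k\subset\hyp_s$ to a cap $\cp'=[1,2]\times\sphcp_k$ in $\hyp_{s2^{-k}}$ with $\mu_{s2^{-k}}(\cp')=2^{-2k}\mu_s(\cp_k)$, same spherical cap, hence same associated $R$ and $\eps$ from \eqref{eq:rotatedHypCap}. Write $s'=s2^{-k}$. If $s'\le\frac12$ and $s'^{-2}\sin^2\eps\ge 8$, Lemma \ref{scaling-cap-hyp} gives $t\in[0,1)$ with $L_t^{-1}R^{-1}(\cp')\subseteq[\tfrac{7}{16},\tfrac{33}{16}]\times\Sph^2$, so its projection to $\R^3$ lies in $B(0,\tfrac{33}{16})$; undoing the scaling (which commutes with the Lorentz boost $L^{-t}$ up to the scalar), the projection of $L^{-t}R^{-1}(\cp_k)$ lies in a ball of radius $\lesssim 2^k$. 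It remains to check that in this regime $2^k\lesssim s+s^{-1}\bar\mu_s(\cp_k)+\bar\mu_s(\cp_k)^{1/2}$: from \eqref{eq:measure_asymp}, $\bar\mu_s(\cp_k)\asymp s^2 2^{2k}\sin^2\eps/(1+\cos\eps)$, and the hypothesis $s'^{-2}\sin^2\eps\ge 8$ reads $2^{2k}\sin^2\eps\ge 8 s^{-2}\cdot s^2\cdot\ldots$ — more precisely $\sin^2\eps\ge 8 s'^2=8s^2 2^{-2k}$, i.e. $2^{2k}\sin^2\eps\ge 8s^2$, which combined with $\sin^2\eps\le2(1-\cos\eps)$ and the measure formula shows $\bar\mu_s(\cp_k)\gtrsim s^2$ is not quite what we want; rather one uses $s^{-1}\bar\mu_s(\cp_k)\asymp s 2^{2k}\sin^2\eps/(1+\cos\eps)\gtrsim s\cdot s^2 2^{-2k}\cdot 2^{2k}\cdot 2^{2k}\cdot(\ldots)$ — I will organize this so that the comparison $2^k\lesssim s^{-1}\bar\mu_s(\cp_k)$ follows directly from $2^{2k}\sin^2\eps\gtrsim s^2$ together with $\bar\mu_s(\cp_k)\asymp s^22^{2k}\sin^2\eps$ modulo the $(1+\cos\eps)$ factor.

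The remaining cases are the easy ones. If $s'>\frac12$, i.e. $2^k<2s$, then the cap $\cp_k$ already has projection inside $B(0,s2^{k+1})\subset B(0,4s^2)$ when $s\ge\tfrac12$, or inside a fixed ball when $s<\tfrac12$; in either case the radius is $\lesssim s+s^2\lesssim s+s^{-1}\bar\mu_s(\cp_k)$ after noting $\bar\mu_s(\cp_k)\gtrsim s^2$ is false in general, so here I instead bound directly: the projection of $R^{-1}(\cp_k)$ (no boost needed, $t=0$) lies in $B(0,s2^{k+1})$ and $s2^{k+1}\lesssim s$ when $2^k\lesssim 1$, i.e. when $s\gtrsim 2^k$. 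If instead $s'\le\frac12$ but $s'^{-2}\sin^2\eps<8$, then $\sin^2\eps<8s'^2=8s^22^{-2k}$ is small; take $t=0$, and the cap $R^{-1}(\cp_k)$ has projection with angular width $\eps$ and radial extent $[s2^k,s2^{k+1}]$, hence is contained in a ball of radius comparable to $s2^k\cdot\sin\eps + (\text{radial thickness around the axis})$; using $\sin\eps\lesssim s 2^{-k}$ the transverse size is $\lesssim s2^k\cdot s2^{-k}=s^2$, and again $s^2\lesssim s^{-1}\bar\mu_s(\cp_k)$ — wait, one needs a lower bound on $\bar\mu_s(\cp_k)$; since $\bar\mu_s(\cp_k)\asymp s^22^{2k}\sin^2\eps$ could be tiny, I will instead note that in this regime the relevant bound is simply that a neighborhood of the axis segment $[s2^k,s2^{k+1}]\times\{0\}$ sits in $B(0, C(s2^{k+1}))$ only if $s2^k$ is controlled, which it is not; so the correct move, as in \cite{COS}*{Lemma 4}, is that $t\to1$ pushes the center of the cap toward the origin — the boost can still be applied (with $t$ possibly close to $1$) to bring the radial center near the origin, reducing to a small transverse ball of radius $\asymp\bar\mu_s(\cp_k)^{1/2}$ plus $\asymp s$. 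I expect this last regime — making the bound uniform when $\bar\mu_s(\cp_k)$ is small and the cap sits far out along the axis — to be the main obstacle, and I would handle it exactly as in the cited Lemma 4 of \cite{COS}: a Lorentz boost translating the cap's "center of mass" to a bounded region, after which its diameter is controlled by its measure.
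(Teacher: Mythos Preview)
Your treatment of part (i) is fine and matches the paper: both use the exact formula \eqref{eq:measure_cap} with $a=s2^k$, $b=s2^{k+1}$, note that the logarithmic term is $O_k(1)$ while $r\sqrt{r^2-s^2}\big|_{s2^k}^{s2^{k+1}}=s^2 2^{2k}(3+o_k(1))$, and conclude.

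For part (ii), however, there is a genuine gap: the reduction to Lemma~\ref{scaling-cap-hyp} does not produce the claimed bound. After rescaling $\cp_k$ to $\cp'\subset\hyp_{s'}$ with $s'=s2^{-k}$ and applying Lemma~\ref{scaling-cap-hyp}, the boost parameter you obtain is $t=\cos\eps$. Undoing $L_t^{-1}=(1-t^2)^{-1/2}L^{-t}$ and the rescaling, the projection of $L^{-t}R^{-1}(\cp_k)$ lands in a ball of radius $\asymp 2^k\sin\eps$, not $\asymp 2^k$ as you wrote. Since $\bar\mu_s(\cp_k)\asymp s^2 2^{2k}\sin^2\eps$ for $\eps\in[0,\tfrac{\pi}{2}]$, this radius is $\asymp s^{-1}\bar\mu_s(\cp_k)^{1/2}$, which is \emph{not} in general controlled by $s+s^{-1}\bar\mu_s(\cp_k)+\bar\mu_s(\cp_k)^{1/2}$: take for instance $s=10^{-3}$, $\bar\mu_s(\cp_k)=10^{-2}$, where $s^{-1}\bar\mu_s(\cp_k)^{1/2}=100$ but $s+s^{-1}\bar\mu_s(\cp_k)+\bar\mu_s(\cp_k)^{1/2}\approx 10$. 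Your attempts to verify the comparison in the text already reflect this difficulty.

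The paper's proof avoids this by choosing a \emph{different} boost parameter, $t=\sqrt{1-2^{-2(k+1)}}$, which depends only on $k$ and not on $\eps$. No case analysis is needed. With this $t$ one computes directly from \eqref{eq:LorentzRotatedCap}: the first coordinate satisfies
\[
\Bigl|\tfrac{r\cos\vphi-t\sqrt{r^2-s^2}}{\sqrt{1-t^2}}\Bigr|
=2^{k+1}r\,\bigl|\cos\vphi-\sqrt{1-2^{-2(k+1)}}\sqrt{1-(s/r)^2}\bigr|
\le 2^{2(k+1)}s(1-\cos\eps)+s
\lesssim s^{-1}\bar\mu_s(\cp_k)+s,
\]
while the second and third coordinates are bounded by $r\sin\eps\le s2^{k+1}\sin\eps\lesssim\bar\mu_s(\cp_k)^{1/2}$. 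The point is that Lemma~\ref{scaling-cap-hyp} and the present lemma serve different purposes: the former inflates the \emph{measure} of a cap to unit size (hence $t=\cos\eps$), whereas the latter compresses the \emph{radial location} to unit scale (hence $\sqrt{1-t^2}\asymp 2^{-k}$). The two tasks require different boosts, and the first does not imply the second.
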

\begin{proof}
	Without loss of generality, we may assume that $\cp_k$ is contained in the upper half $\hyp_s$. For part (i), \eqref{eq:measure_cap} implies that the $\bar\mu_s$-measure of $\cp_k$ is given by the expression
\[ \bar\mu_s(\cp_k)=\pi s^2(1-\cos\eps)\Bigl(\ln\Bigl(\frac{2^{k+1}+\sqrt{2^{2(k+1)}-1}}{2^k+\sqrt{2^{2k}-1}}\Bigr) 
		+2^{k+1}\sqrt{2^{2(k+1)}-1}-2^k\sqrt{2^{2k}-1}\Bigr). \]
	The expression involving the logarithm converges to $\ln(2)$ as $k\to\infty$, while
	\begin{align*}
	2^{k+1}\sqrt{2^{2(k+1)}-1}-2^k\sqrt{2^{2k}-1}&=\frac{2^{2(k+1)}(2^{2(k+1)}-1)-2^{2k}(2^{2k}-1)}{2^{k+1}\sqrt{2^{2(k+1)}-1}+2^k\sqrt{2^{2k}-1}}\\
	&=2^{2k}\frac{15-3\cdot 2^{-2k}}{4\sqrt{1-2^{-2(k+1)}}+\sqrt{1-2^{-2k}}}\\
	&=3\cdot 2^{2k}(1+o_k(1)).
	\end{align*}
	
	For part (ii), let $R\in SO(3)$ and $\eps\in[0,\frac{\pi}{2}]$ be such that \eqref{eq:rotatedHypCap} holds. 
	The image of $R^{-1}(\cp_k)$ under the Lorentz boost $L^{-t}$ is
	\begin{multline}\label{eq:LorentzRotatedCap}
	L^{-t}R^{-1}(\cp_k)=\Bigl\{\Bigl(\frac{r\cos\vphi-t\sqrt{r^2-s^2}}{(1-t^2)^{1/2}},r\cos\te\sin\vphi
	,r\sin\te\sin\vphi,\frac{\sqrt{r^2-s^2}-tr\cos\vphi}{(1-t^2)^{1/2}}\Bigr)\colon\\
	r\in[s2^k,s2^{k+1}],\theta\in[0,2\pi],\vphi\in[0,\eps]\Bigr\}.
	\end{multline}
	Let $t=\sqrt{1-2^{-2(k+1)}}$, so that the first coordinate of a point in the set on the right hand side of \eqref{eq:LorentzRotatedCap} is bounded as follows
	\begin{align*} \abs{\frac{r\cos\vphi-t\sqrt{r^2-s^2}}{(1-t^2)^{1/2}}}&=2^{k+1}r\ab{\cos\vphi-\sqrt{1-2^{-2(k+1)}}\sqrt{1-(s/r)^2}}\\
	&\leq 2^{2(k+1)}s(1-\cos\vphi)+2^{2(k+1)}s(1-(1-2^{-2(k+1)}))\\
	&= 2^{2(k+1)}s(1-\cos\eps)+s\\
	&\lesssim \frac{\bar\mu_s(\cp_k)}{s}+s,
	\end{align*}
	where in the last line we used \eqref{eq:measure_asymp}. The second and third coordinates are bounded as follows
	\[ \ab{r\cos\te\sin\vphi},\,\ab{r\sin\te\sin\vphi}\leq 2^{k+1}s\sin\eps\lesssim \sqrt{\bar\mu(\cp_k)}. \]
	Then $L^{-t}R^{-1}(\cp_k)$ is contained in the set
	\[ \Bigl\{(x,t)\in\Hyp_s:\ab{x}\leq C\Bigl(\sqrt{\bar\mu_s(\cp_k)}+\frac{\bar\mu_s(\cp_k)}{s}+s\Bigr) \Bigr\}, \]
	for some constant $C$ independent of $k$ and $s$.
\end{proof}

\section{Calculation of a double convolution}\label{sec:calculation_convolution}

In previous studies of quadric surfaces and curves and their perturbations it has become clear 
the importance of the 
double or triple, and more generally the $n$-th fold, convolution of the underlying measure. Its properties may determine existence 
or nonexistence of extremizers and in some cases it can be used to find their explicit 
form and/or the value of the best constant in the corresponding adjoint Fourier 
restriction inequality. In the case of the one-sheeted hyperboloid and its upper half, the double 
convolution will be used to prove that extremizing sequences do not concentrate at infinity.

Let $\mu_s\ast\mu_s$ denote the double convolution of $\mu_s$ with itself, defined by duality
\[ \langle \mu_s\ast\mu_s,f\rangle=\int_{(\R^4)^2} f(x+x',t+t') \d\mu_s(x,t)\d\mu_s(x',t'), \]
for all $f\in \Sh(\R^4)$.
It is not difficult to see that $\mu_s\ast\mu_s$ is absolutely continuous with respect to the 
Lebesgue measure in $\R^4$, indeed this follows from \eqref{adjoint-restriction-one-sheeted} since 
$e^{-\tau}(\mu_s\ast\mu_s)\in L^2(\R^4)$, it being the (inverse) Fourier transform of the 
$L^2(\R^4)$ 
function $(\widehat{e^{-\tau}\mu_s})^2$ (see also \cite{OSQ}*{Proposition 2.1}). 
In what follows we identify $\mu_s\ast\mu_s$ with its Radon--Nicodym 
derivative with respect to the Lebesgue measure in $\R^4$.
\begin{prop}
\label{prop:formula-double-convolution}
 Let $\mu_s$ be the measure on $\mathcal H^3_s$ defined in \eqref{measure-mu-s}. Then
 \begin{itemize}
 	\item[(i)] The support of the convolution measure $\mu_s\ast\mu_s$ is
 	\[ \supp(\mu_s\ast\mu_s)=\{(\xi,\tau)\in\R^4\colon \tau\geq 0,\,\ab{\xi}\leq 
 	\sqrt{\tau^2+s^2}+s\}. \]
 	\item[(ii)] For every $(\xi,\tau)\in \R^4$ with $\tau\geq 0$ we have the formula
 	\begin{equation}\label{eq:alternative-conv-formula}
 	\begin{split}
 	\mu_s*\mu_s(\xi,\tau)&=\frac{2\pi}{\ab{\xi}}\biggl(
 	\ab{\xi}\Bigl(1+\frac{4s^2}{\tau^2-\ab{\xi}^2}\Bigr)^{\frac{1}{2}}\one_{\{\ab{\xi}<\sqrt{\tau^2+s^2}-s\}}+\tau
 	\one_{\{\sqrt{\tau^2+s^2}-s\leq \ab{\xi}\leq \sqrt{\tau^2+(2s)^2}\}}\\
 	&\quad\quad\quad+\Bigl(\tau-\ab{\xi}\Bigl(1+\frac{4s^2}{\tau^2-\ab{\xi}^2}\Bigr)^{\frac{1}{2}}\Bigr)
 	\one_{\{\sqrt{\tau^2+(2s)^2}< \ab{\xi}\leq \sqrt{\tau^2+s^2}+s\}} \biggr).
 	\end{split}
 	\end{equation}
 \end{itemize}
\end{prop}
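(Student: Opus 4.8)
The plan is to compute $\mu_s \ast \mu_s$ directly from the definition by exploiting the rotational symmetry in the spatial variables and reducing to a one-dimensional integral. First I would write, for a test function $f$,
\[
\langle \mu_s \ast \mu_s, f\rangle = \int_{\{|y|>s\}}\int_{\{|z|>s\}} f\bigl(y+z,\psi_s(|y|)+\psi_s(|z|)\bigr)\,\frac{\d y}{\psi_s(|y|)}\,\frac{\d z}{\psi_s(|z|)},
\]
pass to polar coordinates $y = \rho\omega$, $z = r\nu$ with $\rho,r > s$ and $\omega,\nu \in \Sph^2$, and use the classical identity for the pushforward of $\sigma_\rho \ast \sigma_r$ (convolution of two sphere measures in $\R^3$): the density of $\sigma \ast \sigma_{(\rho,r)}$ at a point of modulus $m$ is supported on $|\rho - r| \le m \le \rho + r$ and equals a constant times $m^{-1}$ there (this is the standard "$\widehat{\sigma}$ decays, $\sigma\ast\sigma$ is a bounded multiple of $1/|\xi|$ on an annulus" computation). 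After integrating out the angular variables one is left with
\[
\mu_s\ast\mu_s(\xi,\tau) = \frac{c}{|\xi|}\int\!\!\int_{D(\xi,\tau)} \frac{\rho\, r}{\psi_s(\rho)\psi_s(r)}\,\d\rho\,\d r,
\]
where $D(\xi,\tau)$ is the region in the $(\rho,r)$-plane cut out by the two constraints $\psi_s(\rho)+\psi_s(r) = \tau$ and $|\rho - r| \le |\xi| \le \rho + r$, with $\rho,r > s$.

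The second step is to parametrize $D(\xi,\tau)$. The constraint $\psi_s(\rho) + \psi_s(r) = \tau$ is a curve; substituting $u = \psi_s(\rho)$, so $\rho = \phi_s(u) = \sqrt{u^2+s^2}$ and $\rho\,\d\rho = u\,\d u$, turns $\frac{\rho\,\d\rho}{\psi_s(\rho)}$ into $\d u$, and similarly on the $r$ side. Thus along the curve $u + v = \tau$ (with $u,v \ge 0$) the measure becomes simply $\d u$ (arclength-type), and $|\xi|$ must lie between $|\phi_s(u) - \phi_s(v)|$ and $\phi_s(u)+\phi_s(v)$. So the whole computation reduces to: for which $u \in [0,\tau]$ does $|\xi|$ lie in the interval $[\,|\phi_s(u)-\phi_s(\tau-u)|,\ \phi_s(u)+\phi_s(\tau-u)\,]$, and the answer is $\frac{2\pi}{|\xi|}$ times the length of that $u$-set. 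Now $\phi_s(u)+\phi_s(\tau-u)$ is maximized at the endpoints $u\in\{0,\tau\}$, giving $\tau + s$... wait, giving $\phi_s(0)+\phi_s(\tau) = s + \sqrt{\tau^2+s^2}$, and minimized at $u = \tau/2$, giving $2\phi_s(\tau/2) = \sqrt{\tau^2+4s^2}$; meanwhile $|\phi_s(u)-\phi_s(\tau-u)|$ ranges over $[0, \sqrt{\tau^2+s^2}-s]$. Analyzing the three cases according to where $|\xi|$ sits relative to the thresholds $\sqrt{\tau^2+s^2}-s$, $\sqrt{\tau^2+4s^2}$, and $\sqrt{\tau^2+s^2}+s$ gives exactly the three indicator regions in \eqref{eq:alternative-conv-formula}.

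The third step is to carry out the length computation in each case. In the regime $|\xi| < \sqrt{\tau^2+s^2}-s$, only the "upper" constraint $|\xi| \le \phi_s(u)+\phi_s(\tau-u)$ can fail; one solves $\phi_s(u)+\phi_s(\tau-u) = |\xi|$ for the two symmetric roots $u_\pm$ around $\tau/2$ and computes $u_+ - u_-$, which after simplification produces the factor $|\xi|(1 + \tfrac{4s^2}{\tau^2-|\xi|^2})^{1/2}$ being subtracted from the full length $\tau$ — except the excluded set is where the constraint holds, so one gets precisely $|\xi|(1+\tfrac{4s^2}{\tau^2-|\xi|^2})^{1/2}$ as the admissible length (the algebra: squaring $\phi_s(u)+\phi_s(\tau-u) = |\xi|$ twice and solving the resulting quadratic in $u$). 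In the middle regime the entire interval $[0,\tau]$ is admissible and the length is $\tau$. In the top regime $|\xi| > \sqrt{\tau^2+4s^2}$ the "lower" constraint $|\xi|\le\phi_s(u)+\phi_s(\tau-u)$ now excludes a neighborhood of $u=\tau/2$, and the admissible length is $\tau$ minus that excluded piece, which is again $|\xi|(1+\tfrac{4s^2}{\tau^2-|\xi|^2})^{1/2}$ by the same quadratic. Part (i), the support statement, falls out for free: the density is nonzero exactly when the admissible $u$-set is nonempty, i.e. when $\tau \ge 0$ and $|\xi| \le \sqrt{\tau^2+s^2}+s$.

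The main obstacle is purely the case analysis and the bookkeeping of the quadratic in the first and third regimes — making sure the roots are real, lie in $[0,\tau]$, and that one is correctly computing \emph{admissible} length versus \emph{excluded} length in each case, including the boundary thresholds where regimes meet. A secondary technical point worth stating carefully is the justification that $\mu_s\ast\mu_s$ is absolutely continuous with an honest locally integrable density (already sketched in the excerpt via the $L^2$ bound \eqref{adjoint-restriction-one-sheeted}), so that the pointwise formula \eqref{eq:alternative-conv-formula} is meaningful; note the density blows up like $|\xi|^{-1}$ near $\xi = 0$ and like $(\tau^2-|\xi|^2)^{-1/2}$ near the light cone, consistent with local integrability in $\R^4$.
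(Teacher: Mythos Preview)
Your approach is essentially the paper's: both reduce $\mu_s\ast\mu_s(\xi,\tau)$ to $\tfrac{2\pi}{|\xi|}$ times the length of the set of $u\in[0,\tau]$ for which $(\phi_s(u),\phi_s(\tau-u),|\xi|)$ are the sides of a triangle, via the same substitution $u=\psi_s(\rho)$, $v=\psi_s(r)$ on the line $u+v=\tau$. The paper reaches this reduction through the change of variable $\varsigma=|\xi-\eta|$ (so that $(\rho,\varsigma,|\xi|)$ form a triangle) rather than by citing the sphere--sphere convolution density, but these are the same computation. The paper then splits cases by $|\xi|\le 2s$ versus $|\xi|>2s$ and analyzes the planar region $R_s$ geometrically, whereas you split directly by the $\tau$-dependent thresholds; your organization is more direct and already aligned with the form of \eqref{eq:alternative-conv-formula}.

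There is one mix-up in your first regime $|\xi|<\sqrt{\tau^2+s^2}-s$. Here the \emph{sum} constraint $|\xi|\le\phi_s(u)+\phi_s(\tau-u)$ is automatic (the right side is at least $\sqrt{\tau^2+4s^2}>|\xi|$); it is the \emph{difference} constraint $|\phi_s(u)-\phi_s(\tau-u)|\le|\xi|$ that is binding. The equation to solve is $\phi_s(u)-\phi_s(\tau-u)=|\xi|$, not the sum equation (which has no real roots in this regime). Fortunately your ``square twice'' algebra is sign-insensitive and yields the same quadratic $w^2=|\xi|^2\bigl(1+\tfrac{4s^2}{\tau^2-|\xi|^2}\bigr)$ in $w=2u-\tau$ either way, so the length $|\xi|\bigl(1+\tfrac{4s^2}{\tau^2-|\xi|^2}\bigr)^{1/2}$ is correct; but the interpretation is that this \emph{is} the admissible interval (symmetric about $u=\tau/2$, where the difference is small), not the complement of an excluded set---your ``except'' clause is compensating for having the roles of the two constraints swapped. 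Your analysis of the middle and third regimes is correct as written. With this bookkeeping fixed the argument goes through exactly as you outline.
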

When $\xi=0$ and $\tau> 0$ we understand that in \eqref{eq:alternative-conv-formula}
$\mu_s\ast\mu_s(0,\tau)=2\pi(1+\frac{4s^2}{\tau^2})^{1/2}$.

\noindent We postpone the proof of Proposition \ref{prop:formula-double-convolution} and study the behavior of 
$\mu_s*\mu_s(\xi,\tau)$ for large $\tau$.
\begin{lemma}\label{lem:behavior-infinity}
	For all $\tau>0$,
	\[ 2\pi\Bigl(1+\frac{4s^2}{\tau^2}\Bigr)^{1/2}\leq \sup_{\xi\in\R^3}\mu_s\ast\mu_s(\xi,\tau)\leq  
			2\pi\Bigl(1+\frac{2s}{\tau}\Bigr). \]
	In particular
	\[ \lim_{\tau\to\infty} \sup_{\xi\in\R^3}\mu_s\ast\mu_s(\xi,\tau)=2\pi.\]
\end{lemma}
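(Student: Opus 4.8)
The plan is to read both inequalities directly off the explicit formula \eqref{eq:alternative-conv-formula} in Proposition \ref{prop:formula-double-convolution}. For the lower bound I would simply evaluate at $\xi=0$: by the convention recorded immediately after Proposition \ref{prop:formula-double-convolution} one has $\mu_s\ast\mu_s(0,\tau)=2\pi(1+4s^2/\tau^2)^{1/2}$, and $(0,\tau)$ lies in the support by part (i), so $\sup_{\xi\in\R^3}\mu_s\ast\mu_s(\xi,\tau)$ is at least this quantity. Nothing more is needed for that half.

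For the upper bound, fix $\tau>0$. By part (i) of Proposition \ref{prop:formula-double-convolution} the density $\mu_s\ast\mu_s(\cdot,\tau)$ vanishes outside $\{\ab{\xi}\leq\sqrt{\tau^2+s^2}+s\}$, so it suffices to estimate it on that range, and I would do so separately on each of the three sets appearing in \eqref{eq:alternative-conv-formula}, using in each case that the displayed expression is monotone in $\ab{\xi}$. On $\{\ab{\xi}<\sqrt{\tau^2+s^2}-s\}$ we have $\mu_s\ast\mu_s(\xi,\tau)=2\pi(1+4s^2/(\tau^2-\ab{\xi}^2))^{1/2}$ with $\tau^2-\ab{\xi}^2>0$ (since $\sqrt{\tau^2+s^2}-s\leq\tau$); this increases with $\ab{\xi}$, so it is dominated by its boundary limit, which after rationalizing $4s^2/(\tau^2-\ab{\xi}^2)$ at $\ab{\xi}=\sqrt{\tau^2+s^2}-s$ equals $2\pi(1+2s(\sqrt{\tau^2+s^2}+s)/\tau^2)^{1/2}$, and squaring, the desired bound $2\pi(1+2s/\tau)$ reduces to $\sqrt{\tau^2+s^2}\leq 2\tau+s$, which is clear. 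On $\{\sqrt{\tau^2+s^2}-s\leq\ab{\xi}\leq\sqrt{\tau^2+(2s)^2}\}$ we have $\mu_s\ast\mu_s(\xi,\tau)=2\pi\tau/\ab{\xi}$, which decreases with $\ab{\xi}$ and hence is at most $2\pi(\sqrt{\tau^2+s^2}+s)/\tau$; here $\leq 2\pi(1+2s/\tau)$ reduces to $\sqrt{\tau^2+s^2}\leq\tau+s$. On $\{\sqrt{\tau^2+(2s)^2}<\ab{\xi}\leq\sqrt{\tau^2+s^2}+s\}$ the subtracted quantity $\ab{\xi}(1+4s^2/(\tau^2-\ab{\xi}^2))^{1/2}$ is nonnegative, so $\mu_s\ast\mu_s(\xi,\tau)\leq 2\pi\tau/\ab{\xi}<2\pi$ because $\ab{\xi}>\sqrt{\tau^2+(2s)^2}>\tau$. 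Taking the maximum of the three estimates gives $\sup_{\xi}\mu_s\ast\mu_s(\xi,\tau)\leq 2\pi(1+2s/\tau)$.

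The limiting assertion then follows by squeezing: dividing the two-sided bound by $2\pi$ yields $1\leq(1+4s^2/\tau^2)^{1/2}\leq(2\pi)^{-1}\sup_{\xi}\mu_s\ast\mu_s(\xi,\tau)\leq 1+2s/\tau$, and both outer quantities converge to $1$ as $\tau\to\infty$.

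There is no genuine obstacle here: all the substance sits in Proposition \ref{prop:formula-double-convolution}, which is used as a black box, and what remains is the elementary bookkeeping of monotonicity and endpoint values across the three regimes. The only mildly delicate step is the first regime, where one must compute the boundary value of $2\pi(1+4s^2/(\tau^2-\ab{\xi}^2))^{1/2}$ and verify the resulting polynomial inequality; should one wish to lighten the algebra, one may first reduce to $s=1$ via the scaling identity $\mu_s\ast\mu_s(\xi,\tau)=\mu_1\ast\mu_1(\xi/s,\tau/s)$, which follows from the substitution $(x,t)\mapsto(x/s,t/s)$ in the definition of the convolution.
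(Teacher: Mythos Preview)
Your proof is correct and follows essentially the same approach as the paper: both read off the explicit formula \eqref{eq:alternative-conv-formula}, analyze monotonicity on the three $\ab{\xi}$-regimes, and arrive at the common upper bound $2\pi(\sqrt{\tau^2+s^2}+s)/\tau\leq 2\pi(1+2s/\tau)$, with the lower bound coming from $\xi=0$. The only cosmetic difference is that the paper first rescales to $s=1$ (the option you mention at the end) before running the case analysis, whereas you keep $s$ general throughout.
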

\begin{proof}
	We start by noting that 
	\[ \mu_s\ast \mu_s(s\xi,s\tau)=\mu\ast\mu(\xi,\tau),\]
	hence it is enough to consider the case $s=1$. We analyze the different cases in formula 
	\eqref{eq:alternative-conv-formula}.
	
	\textbf{Case 1.} $\ab{\xi}<\sqrt{\tau^2+1}-1$. Then
	\[ \Bigl(1+\frac{4}{\tau^2}\Bigr)^{1/2}\leq \Bigl(1+\frac{4}{\tau^2-\ab{\xi}^2}\Bigr)^{1/2}\leq 
	\Bigl(\frac{\sqrt{\tau^2+1}+1}{\sqrt{\tau^2+1}-1}\Bigr)^{1/2}=\frac{\tau}{\sqrt{\tau^2+1}-1}.\]
	
	\textbf{Case 2.} $\sqrt{\tau^2+1}-1\leq \ab{\xi}\leq \sqrt{\tau^2+4}$. Then
	\[ \frac{\tau}{\sqrt{\tau^2+4}}\leq 
	\frac{\tau}{\ab{\xi}}\leq\frac{\tau}{\sqrt{\tau^2+1}-1}. \]
	
	\textbf{Case 3.} $\sqrt{\tau^2+4}< \ab{\xi}\leq \sqrt{\tau^2+1}+1$. Then $\ab{\xi}^2-\tau^2>4$ and
	\[ \ab{\xi}\mapsto\frac{\tau}{\ab{\xi}}-\Bigl(1+\frac{4}{\tau^2-\ab{\xi}^2}\Bigr)^{1/2} \]
	is a decreasing function of $\ab{\xi}$. Then
	\[ \frac{\tau}{\ab{\xi}}-\Bigl(1+\frac{4}{\tau^2-\ab{\xi}^2}\Bigr)^{1/2}\leq 
	\frac{\tau}{\sqrt{\tau^2+4}},\]
	and 
	\[ \frac{\tau}{\ab{\xi}}-\Bigl(1+\frac{4}{\tau^2-\ab{\xi}^2}\Bigr)^{1/2}\geq
	\frac{\tau}{\sqrt{\tau^2+1}+1}- 
	\Bigl(1-\frac{2}{\sqrt{\tau^2+1}+1}\Bigr)^{1/2}=0. \]
	As a conclusion, for all $\tau>0$ and $x\in\R^3$
	\[ \mu\ast\mu(\xi,\tau)\leq \frac{2\pi 
	\tau}{\sqrt{\tau^2+1}-1}=2\pi\Bigl(\Bigl(1+\frac{1}{\tau^2}\Bigr)^{1/2}+\frac{1}{\tau}\Bigr)
	\leq 2\pi\Bigl(1+\frac{2}{\tau}\Bigr),\]
	and for $\tau>0$
	\[ \sup_{\xi\in\R^3}\mu\ast\mu(\xi,\tau)\geq 2\pi\Bigl(1+\frac{4}{\tau^2}\Bigr)^{1/2}.\]
\end{proof}

\noindent We now turn to the proof of Proposition \ref{prop:formula-double-convolution}.

\begin{proof}[Proof of Proposition \ref{prop:formula-double-convolution}]
Part (i) is a simple calculation and is left to the reader. For part 
(ii) we start by discussing a change of coordinates that was used in the proof of \cite{Fo}*{Lemma 5.1} in the
arxiv's 
second version of 
\cite{Fo}; see also Appendix 3 on the arxiv's version of 
\cite{RQ2} 
where an outline of
the computation of the double convolution of the Lorentz invariant measure on the two-sheeted hyperboloid was given using the same technique. 

For each fixed $\xi\neq 0$ we 
consider a spherical coordinate system with axis $\xi$, that is, each 
$\eta\in\R^3$ is described as $\eta=(\rho\cos\te\sin\vphi,\rho\sin\te\sin\vphi,\rho\cos\vphi)$, where 
$\rho=\ab{\eta}\geq 0$,  
$\vphi\in[0,\pi]$ is the angle between $\xi$ and $\eta$ and $\te\in[0,2\pi]$ is a polar coordinate angle on the plane orthogonal to $\xi$. Then 
$\d\eta=\rho^2\sin\vphi\d\rho\d\te\d\vphi$. 

 Define the new variable
$\varsigma=\ab{\xi-\eta}$, which corresponds to the size of the side opposite to the origin, $0$, in the 
triangle 
whose vertices are located at 
$0,\,\xi$ and $\eta$. Then
\[ \varsigma^2=\ab{\xi}^2+\rho^2-2\ab{\xi}\rho\cos\vphi. \]
Changing variables from $\vphi$ to $\varsigma$, gives 
$\varsigma\d\varsigma=\ab{\xi}\rho\sin\vphi\d\vphi$,
so that in the variables $(\rho,\varsigma,\te)$ we have
$\d\eta=\frac{\rho\varsigma}{\ab{\xi}}\d\rho\d\varsigma\d\te$. The range of $\varsigma$ can be 
seen by using that $\varsigma$, $\ab{\xi}$ and $\rho$ are the sizes of the sides of a triangle, so 
$\ab{\rho-\varsigma}\leq \ab{\xi}\leq \rho+\varsigma$, which translates into $\ab{\ab{\xi}-\rho}\leq\varsigma\leq \ab{\xi}+\rho$.

Using delta calculus (see for instance the survey article \cite{FoD}) and the previous change of 
variables we have
\begin{align*}
 \mu_s*\mu_s(\xi,\tau)&=\int_{\substack{\eta\in\R^3\\ \ab{\eta}\geq s\\ \ab{\xi-\eta}\geq 
 s}}\frac{\ddirac{\tau-\sqrt{\ab{\xi-\eta}^2-s^2}-
 \sqrt{\ab{\eta}^2-s^2}}}{\sqrt{\ab{\xi-\eta}^2-s^2}\sqrt{\ab{\eta}^2-s^2}} \d\eta\\
 &=\frac{2\pi}{\ab{\xi}}\int_{\substack{\ab{\rho-\varsigma} \leq \ab{\xi} \\
        \rho + \varsigma \geq \ab{\xi}\\
        \rho\geq s,\,\varsigma\geq s}} \frac{\ddirac{\tau-\sqrt{\varsigma^2-s^2}
        -\sqrt{\rho^2-s^2}}}{\sqrt{\varsigma^2-s^2}\sqrt{\rho^2-s^2}}\rho\varsigma\d\rho 
        \d\varsigma\\
 &=\frac{2\pi}{\ab{\xi}} \int_{R_s} \ddirac{\tau-u-v}\d u\d v,
\end{align*}
where we changed variables $u=\sqrt{\rho^2-s^2},\, v=\sqrt{\varsigma^2-s^2}$ and $R_s=R_s(\xi)$ is the image of the 
region 
$\{(\rho,\varsigma)\colon\ab{\rho-\varsigma} \leq \ab{\xi},\rho
+ \varsigma \geq \ab{\xi},\rho\geq s,\varsigma\geq s\}$ under the transformation $(\rho,\varsigma)\mapsto(u,v)$. Using the change of
variables $a=u-v,\, b=u+v$, so that $2\d u\d v=\d a\d b$, we obtain
\begin{equation}\label{eq:preliminar-formula-conv}
\mu_s*\mu_s(\xi,\tau)=\frac{\pi}{\ab{\xi}} \int_{\widetilde R_{s}} \ddirac{\tau-b}\d a\d 
b=\frac{\pi}{\ab{\xi}}\ab{\widetilde R_s\cap
	\tilde \ell_\tau}=\frac{\pi}{\ab{\xi}}\sqrt{2}\ab{R_s\cap
	\ell_\tau}.
\end{equation}
where $\widetilde R_s=\widetilde R_s(\xi)$ is the image of $R_s(\xi)$ under the map 
$(u,v)\mapsto(a,b)$, $\tilde 
\ell_\tau$ is 
the 
horizontal line
$\{(a,b)\in\R^2:b=\tau\}$, $\ell_\tau$ is the line $\{(u,v)\in\R^2:u+v=\tau\}$ and $\ab{R_s\cap \ell_\tau}$ denotes the measure of $R_s\cap \ell_\tau$ as a subset of $\ell_\tau$ with the induced Lebesgue measure.
In order to calculate $\ab{R_s\cap\ell_\tau}$ we divide the analysis into two cases.

\textbf{Case 1:} $\ab{\xi}\leq 2s$.
The boundary of the region 
$$\{(\rho,\varsigma)\colon\ab{\rho-\varsigma} \leq \ab{\xi},\rho
+ \varsigma \geq \ab{\xi},\rho\geq s,\varsigma\geq s\}$$
consists of two (bounded) line segments 
and two half lines. Its image in the $(u,v)$-plane, $R_s$, is bounded by two line segments and two curves and is 
symmetric with respect to the diagonal $u=v$. The line 
segments have equations 
\begin{align*}
\{(u,v)\colon u=0,\,0\leq v\leq\sqrt{(\ab{\xi}+s)^2-s^2}\},
\{(u,v)\colon 0\leq u\leq\sqrt{(\ab{\xi}+s)^2-s^2},\,v=0\}
\end{align*}
and the curves have equations
\begin{gather}\label{eq:equation-curve-side}
\begin{split}
&\bigl\{(u,v)\colon u\geq 0,\, v=\bigl((\sqrt{u^2+s^2}+\ab{\xi})^2-s^2\bigr)^{1/2}\bigr \},\\
\bigl\{(u,v)\colon u&\geq \bigl((\ab{\xi}+s)^2-s^2\bigr)^{1/2},\, 
v=\bigl((\sqrt{u^2+s^2}-\ab{\xi})^2-s^2\bigr)^{1/2}\bigr \}.
\end{split}
\end{gather}
Then $\ab{R_s\cap \ell_\tau}$ is given by
\[ \ab{R_s\cap \ell_\tau}=\begin{cases}
\sqrt{2}\tau &\text{, if }0\leq \tau\leq \bigl((\ab{\xi}+s)^2-s^2\bigr)^{1/2}\\
\sqrt{2}\ab{u-v} &\text{, if } \tau>\bigl((\ab{\xi}+s)^2-s^2\bigr)^{1/2},
\end{cases} \]
where in the last expression $u$ and $v$ are related to $(\xi,\tau)$ by the equations $u+v=\tau$ and
$v=\bigl((\sqrt{u^2+s^2}+\ab{\xi})^2-s^2\bigr)^{1/2}$. Therefore

\begin{align*}
 \sqrt{2}\ab{R_s\cap \ell_\tau}&=2\tau\one_{\{\tau\leq\sqrt{(\ab{\xi}+s)^2-s^2}\}}\\
&\quad+2((\sqrt{u_1(\xi,\tau)^2+s^2}+\ab{\xi})^2-s^2)^{\frac{1}{2}}
 -u_1(\xi,\tau))\one_{\{\tau>\sqrt{(\ab{\xi}+s)^2-s^2}\}},
\end{align*}
where $u_1(\xi,\tau)$ and $(\xi,\tau)$ are related by the expression 
\begin{equation}\label{eq:u1-tau}
\tau=u_1(\xi,\tau)+\bigl((\sqrt{u_1(\xi,\tau)^2+s^2}+\ab{\xi})^2-s^2\bigr)^{1/2},
\end{equation}
and $0\leq u_1(\xi,\tau)\leq \frac{\tau}{2}$.

\textbf{Case 2:} $\ab{\xi}>2s$. Now the boundary of the region 
$\{(\rho,\varsigma)\colon\ab{\rho-\varsigma} \leq \ab{\xi},\rho
+ \varsigma \geq \ab{\xi},\rho\geq s,\varsigma\geq s\}$ consists of three (bounded) line segments 
and two half lines and the region $R_s$ is now bounded by two line segments and three curves. The 
line 
segments have equations
\begin{gather*}
\{(u,v)\colon u=0,\, \sqrt{(\ab{\xi}-s)^2-s^2}\leq v\leq\sqrt{(\ab{\xi}+s)^2-s^2} \},\\
\{(u,v)\colon \sqrt{(\ab{\xi}-s)^2-s^2}\leq u\leq\sqrt{(\ab{\xi}+s)^2-s^2},\, v=0 \}.
\end{gather*}
The next two curves have equations as in \eqref{eq:equation-curve-side}. The last boundary curve is 
the image of the segment $\{(\rho,\varsigma)\colon \rho+\varsigma=\ab{\xi},\, s\leq\rho\leq 
\ab{\xi}-s \}$. Its equation is
\[ \{(u,v)\colon 0\leq u\leq \bigl((\ab{\xi}-s)^2-s^2\bigr)^{1/2} 
,\,v=\bigl((\ab{\xi}-\sqrt{u^2+s^2})^2-s^2\bigr)^{1/2}  \}, \]
and note that it is the graph of a strictly decreasing and concave function of $u$. It follows that
\[ \ab{R_s\cap \ell_\tau}=\begin{cases}
\sqrt{2}(\tau-\ab{u_2-v_2}) &\text{, if }\sqrt{(\ab{\xi}-s)^2-s^2}\leq \tau\leq 
\sqrt{\ab{\xi}^2-(2s)^2},\\
\sqrt{2}\tau &\text{, if }\sqrt{\ab{\xi}^2-(2s)^2}\leq \tau\leq \sqrt{(\ab{\xi}+s)^2-s^2}\\
\sqrt{2}\ab{u_1-v_1} &\text{, if } \tau\geq \bigl((\ab{\xi}+s)^2-s^2\bigr)^{1/2},
\end{cases}, \]
where $(u_1,v_1),\,(u_2,v_2)$ are the solutions to the equations $u_1+v_1=\tau,\,u_2+v_2=\tau$,  
$v_1=\bigl((\sqrt{u_1^2+s^2}+\ab{\xi})^2-s^2\bigr)^{1/2}$ and 
$v_2=\bigl((\ab{\xi}-\sqrt{u_2^2+s^2})^2-s^2\bigr)^{1/2}$.

Then 
\begin{align*}
  \sqrt{2}\ab{R_s\cap \ell_\tau}&=2\bigl(\tau-\bigl(((\ab{\xi}-\sqrt{u_2(\xi,\tau)^2+s^2})^2-s^2)^{1/2}\\
&\qquad\qquad-u_2(\xi,\tau)\bigr)\bigr)
\one_{\{\sqrt{(\ab{\xi}-s)^2-s^2 } \leq  \tau<\sqrt{\ab{\xi}^2-(2s)^2}\}}\\
  &\quad+2\tau\one_{\{\sqrt{\ab{\xi}^2-(2s)^2}\leq\tau\leq\sqrt{(\ab{\xi}+s)^2-s^2}\}}\\
  &\quad+2\bigl(((\sqrt{u_1(\xi,\tau)^2+s^2}+\ab{\xi})^2-s^2)^{1/2}-u_1(\xi,\tau)\bigr)\one_{\{\tau>\sqrt{(\ab{\xi}+s)^2-s^2}\}},
\end{align*}
where $u_1(\xi,\tau)$ is as in \eqref{eq:u1-tau} and $u_2(\xi,\tau)$ and $(\xi,\tau)$ are related by 
the 
expression
\[\tau=u_2(\xi,\tau)+\bigl((\sqrt{u_2(\xi,\tau)^2+s^2}-\ab{\xi})^2-s^2\bigr)^{1/2},\]
and $0\leq u_2(\xi,\tau)\leq \frac{\tau}{2}$. Algebraic manipulation shows that for $(\xi,\tau)$ in 
their respective domains of definition
\begin{equation}\label{eq:u1andu2}
\tau-2u_i(\xi,\tau)=\ab{\xi}\Bigl(1+\frac{4s^2}{\tau^2-\ab{\xi}^2}\Bigr)^{1/2},\quad i=1,2.
\end{equation}
Collecting all in one expression we have
\begin{align*}
 \sqrt{2}\ab{R_s\cap \ell_\tau}&=2\tau\one_{\{\tau\leq\sqrt{(\ab{\xi}+s)^2-s^2}\}}\one_{\{\ab{\xi}\leq 
 2s\}}\\
 &\qquad+4u_2(\xi,\tau)\one_{\{\sqrt{(\ab{\xi}-s)^2-s^2}\leq 
 \tau<\sqrt{\ab{\xi}^2-(2s)^2}\}}\one_{\{\ab{\xi}>2s\}}\\
 &\qquad+2\tau\one_{\{\sqrt{\ab{\xi}^2-(2s)^2}\leq\tau\leq\sqrt{(\ab{\xi}+s)^2-s^2}\}}\one_{\{\ab{\xi}>2s\}}\\
 &\qquad+2(\tau-2u_1(\xi,\tau))\one_{\{\tau>\sqrt{(\ab{\xi}+s)^2-s^2}\}}.
\end{align*}
Replacing $u_1(\xi,\tau)$ and $u_2(\xi,\tau)$ using \eqref{eq:u1andu2} we 
obtain using \eqref{eq:preliminar-formula-conv}
 	 \begin{equation}
 	 \label{eq:expresion-double-convolution}
 	 \begin{split}
 	 \mu_s\ast\mu_s(\tau,\xi)=\frac{2\pi}{\ab{\xi}}\biggl(&\tau\one_{\{\tau\leq\sqrt{(\ab{\xi}+s)^2-s^2}\}}
 	 \one_{\{\ab{\xi}\leq 2s\}}\\
 	 &+\Bigl(\tau-\ab{\xi}\Bigl(1+\frac{4s^2}{\tau^2-\ab{\xi}^2}\Bigr)^{1/2}\Bigr)\one_{\{\sqrt{(\ab{\xi}-s)^2-s^2}
 	 	\leq \tau<\sqrt{\ab{\xi}^2-(2s)^2}\}}\one_{\{\ab{\xi}>2s\}}\\
 	 &+\tau\one_{\sqrt{\ab{\xi}^2-(2s)^2}\leq\tau\leq\sqrt{(\ab{\xi}+s)^2-s^2}}\one_{\{\ab{\xi}>2s\}}\\
 	 &+\ab{\xi}\Bigl(1+\frac{4s^2}{\tau^2-\ab{\xi}^2}\Bigr)^{1/2}\one_{\{\tau>\sqrt{(\ab{\xi}+s)^2-s^2}\}}\biggr).
 	 \end{split}
 	 \end{equation}
Rearranging \eqref{eq:expresion-double-convolution} we find that $\mu_s*\mu_s$ can be written in the 
equivalent form \eqref{eq:alternative-conv-formula}.
\end{proof}
More generally, the same method used in the proof of Proposition \ref{prop:formula-double-convolution} 
allows 
us to write an explicit formula for $\mu_s\ast \mu_t$, for any $s,\,t\geq 0$. For instance, as it 
will be useful in Section \ref{sec:conv-to-cone}, we have
\begin{equation}\label{eq:formula-pre-mixed-conv}
\mu_s\ast\mu_0(\xi,\tau)=\frac{\pi}{\ab{\xi}}\int_{\widetilde{Q}_s(\xi)}\ddirac{\tau-b}\d a\d b,
\end{equation}
where $\widetilde{Q}_s(\xi)$ is the image of the set $\{(\rho,\varsigma)\colon 
\ab{\rho-\varsigma}\leq\ab{\xi},\,\rho+\varsigma\geq \ab{\xi},\,\rho\geq 0,\,\varsigma\geq s \}$ 
under the transformations $(\rho,\varsigma)\mapsto (u,v)=(\rho,\sqrt{\varsigma^2-s^2})\mapsto 
(a,b)=(u-v,u+v)$. Here $\mu_0$ equals $\sigma_c$, the Lorentz invariant measure on the 
cone. A calculation similar to the one for $\mu_s\ast\mu_s$ gives the following explicit formula
\begin{align}\label{eq:formula-mixed-conv}
\begin{split}
\mu_s\ast 
\sigma_c(\xi,\tau)=\frac{2\pi}{\ab{\xi}}\biggl(\frac{\ab{\xi}(\tau^2-\ab{\xi}^2+s^2)}{\tau^2-\ab{\xi}^2}
&\one_{\{\tau\geq s \}}\one_{\{\ab{\xi}<\tau-s \}}\\
+ \frac{(\tau+\ab{\xi})^2-s^2}{2(\tau+\ab{\xi})}
&\one_{\{\tau\geq 0 \}}\one_{\{\ab{\tau-s}\leq\ab{\xi}< \sqrt{\tau^2+s^2} \}}\\
+\frac{s^2-(\ab{\xi}-\tau)^2}{2(\ab{\xi}-\tau)}
&\one_{\{\tau\geq 0 \}}\one_{\{\sqrt{\tau^2+s^2}\leq\ab{\xi}\leq \tau+s \}}\biggr).
\end{split}
\end{align}
Using \eqref{eq:formula-mixed-conv} we see that for each $\tau\geq 0$
\begin{equation}\label{eq:infinity-norm-mixed-conv}
\sup_{\xi\in\R^3}\mu_s\ast\sigma_c(\xi,\tau)=2\pi\cdot
\begin{cases}
\frac{\tau}{\sqrt{\tau^2+s^2}}&, 0\leq \tau\leq \frac{s}{2}\sqrt{2(\sqrt{5}-1)}\\
1+\frac{(s-\sqrt{s^2-\tau^2})^2}{\tau^2}&,\frac{s}{2}\sqrt{2(\sqrt{5}-1)}\leq\tau\leq s\\
1+\frac{s}{2\tau-s}&,\tau\geq s
\end{cases}
\end{equation}
and $\norma{\mu_s\ast\sigma_c}_{L^\infty(\R^4)}=4\pi$.

\section{Comparison with the cone}\label{sec:compare_cone}

Recall that $\sigma_c$ denotes the scale and Lorentz invariant measure on the cone $\Gamma^3$ and $T_c$ denotes its associated adjoint Fourier restriction operator. From 
\cite{Ca} we know the value of the sharp constant
\begin{equation}\label{eq:sharp-cone-estimate}
\sup_{0\neq f\in L^2(\sigma_c)}\frac{\norma{f\sigma_c\ast 
		f\sigma_c}_{L^4(\R^4)}^2}{\norma{f}_{L^2(\sigma_c)}^4}= 2\pi.
\end{equation}
We had defined the 
numerical constants
\begin{align*}
\mathbf{C}_{4}&=\sup_{0\neq f\in 
	L^2(\sigma_c)}\frac{\norma{T_cf}_{L^4(\R^4)}}{\norma{f}_{L^2(\sigma_c)}}=2\pi\sup_{0\neq 
	f\in L^2(\sigma)}\frac{\norma{f\sigma_c\ast 
		f\sigma_c}_{L^4(\R^4)}^{1/2}}{\norma{f}_{L^2(\sigma_c)}},\\
\mathbf{H}_{4}&=\sup_{0\neq f\in 
	L^2(\mu)}\frac{\norma{Tf}_{L^4(\R^4)}}{\norma{f}_{L^2(\mu)}}=2\pi\sup_{0\neq 
	f\in L^2(\mu)}\frac{\norma{f\mu\ast 
		f\mu}_{L^4(\R^4)}^{1/2}}{\norma{f}_{L^2(\mu)}}.
\end{align*}

The next proposition gives a comparison between $\mathbf{C}_{4}$ and $\mathbf{H}_4$ and its role is the 
analog 
of the comparison of the best constant for the sphere and the paraboloid in $\R^3$ as used in 
\cite{CS} where an strict inequality was needed to rule out concentration at a pair of 
antipodal points. In our present case, an strict inequality will rule out 
concentration at infinity.

\begin{prop} \label{prop:comparison-cone}
	$\qquad\ds \mathbf{H}_{4}>\mathbf{C}_{4}.$
\end{prop}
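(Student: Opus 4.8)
The plan is to exploit that the cone $\Gamma^3$ is the blow‑down at infinity of the one‑sheeted hyperboloid, together with the fact that cone extremizers are explicitly known: transplanting a cone extremizer onto $\mathcal{H}^3_s$ for small $s$ produces a competitor whose $L^4$–to–$L^2$ ratio strictly exceeds $\mathbf{C}_4$, the strict improvement being detected by the pointwise inequality $\mu_s\ast\mu_s\geq\sigma_c\ast\sigma_c$. First I would reduce to a convenient scale: from \eqref{eq:sharp-cone-estimate} and \eqref{eq:convolution-form} one reads off $\mathbf{C}_4=(2\pi)^{5/4}$, while the isotropic rescaling $\mathcal{H}^3_s=s\,\mathcal{H}^3_1$ (with $\mu_s$ corresponding to $s^2\mu_1$) shows that the sharp constant for $T_s$ on $\mathcal{H}^3_s$ equals $\mathbf{H}_4$ for every $s>0$. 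Hence it suffices to exhibit, for some $s>0$, a function $0\neq g\in L^2(\mu_s)$ with $\norma{g\mu_s\ast g\mu_s}_{L^2(\R^4)}^2>2\pi\norma{g}_{L^2(\mu_s)}^4$.

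Let $\psi\geq0$ be an extremizer for the cone, normalized so that $\norma{\psi}_{L^2(\sigma_c)}=1$ and hence $\norma{\psi\sigma_c\ast\psi\sigma_c}_{L^2(\R^4)}^2=2\pi$; such a $\psi$ exists, with explicit form, by \cite{Ca}, and after a harmless approximation we may take it bounded, continuous, positive near the origin, and rapidly decaying. Define $g_s=\psi\,\one_{\{\ab{y}>s\}}$, regarded as a function on $\mathcal{H}^3_s$ through the vertical projection $y\mapsto(y,\sqrt{\ab{y}^2-s^2})$. Since $(y,\sqrt{\ab{y}^2-s^2})$ differs from the null point $(y,\ab{y})$ by $O(s^2\ab{y}^{-1})$ and the density $(\ab{y}^2-s^2)^{-1/2}$ from $\ab{y}^{-1}$ by $\tfrac12 s^2\ab{y}^{-3}+O(s^4)$, every quantity attached to $g_s$ is an $O(s^2)$ perturbation of the corresponding cone quantity: in particular $\norma{g_s}_{L^2(\mu_s)}^2=1+O(s^2)$ by dominated convergence, and $g_s\mu_s\ast g_s\mu_s\to\psi\sigma_c\ast\psi\sigma_c$ pointwise, with enough uniform local integrability (from boundedness of $\psi$ and the explicit structure of the convolutions) to pass to the $L^2$ norm, so $\norma{g_s\mu_s\ast g_s\mu_s}_{L^2}^2=2\pi+\mathcal{E}(s)$ with $\mathcal{E}(s)\to0$ as $s\to0$.

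The crux, which I expect to be the main obstacle, is to show that $\mathcal{E}(s)>0$ for small $s$ and that it dominates the $O(s^2)$ correction in the denominator, so that $\norma{g_s\mu_s\ast g_s\mu_s}_{L^2}^2/\norma{g_s}_{L^2(\mu_s)}^4>2\pi$. The conceptual source of positivity is the pointwise bound $\mu_s\ast\mu_s\geq\sigma_c\ast\sigma_c$ on $\R^4$, strict on the bulk region $\{\ab{\xi}<\sqrt{\tau^2+s^2}-s\}$: indeed $\sigma_c\ast\sigma_c=2\pi\,\one_{\{\ab{\xi}<\tau\}}$ (put $s=0$ in \eqref{eq:formula-mixed-conv}), whereas by \eqref{eq:alternative-conv-formula} one has $\mu_s\ast\mu_s=2\pi(1+\tfrac{4s^2}{\tau^2-\ab{\xi}^2})^{1/2}>2\pi$ on the bulk, $\mu_s\ast\mu_s=\tfrac{2\pi\tau}{\ab{\xi}}\geq2\pi$ on the intermediate region $\{\sqrt{\tau^2+s^2}-s\leq\ab{\xi}\leq\tau\}$, and $\sigma_c\ast\sigma_c=0$ off $\{\ab{\xi}\leq\tau\}$; equivalently, the transplanted weighted measure $g_s\mu_s$ dominates the vertical pushforward of $\psi\sigma_c$ and convolution of nonnegative measures is monotone. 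Turning this into a strict lower bound on $\mathcal{E}(s)$ requires care, since the two weighted double convolutions live over slightly different fibres; concretely one checks, either by a direct first‑order computation using the explicit form of $\psi$, or by tracking the strictly positive contribution of the bulk region — which, because $(\tau^2-\ab{\xi}^2)^{-1}$ fails to be integrable against a bounded positive density as $\ab{\xi}\uparrow\tau$, is of order strictly larger than $s^2$ (one expects $\asymp s^2\log(1/s)$) — that $\mathcal{E}(s)>0$ with $\mathcal{E}(s)/s^2\to+\infty$. Granting this, the reduction of the first paragraph yields $\mathbf{H}_4>\mathbf{C}_4$.
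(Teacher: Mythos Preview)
Your strategy --- compare the hyperboloid to the cone by transplanting a cone extremizer and examining the small-$s$ asymptotics --- is the right idea and is essentially what the paper does. But your execution has two genuine gaps.

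\textbf{The denominator correction is not $O(s^2)$.} With any cone extremizer $\psi$ (all of which are positive at the origin, being of the form $e^{-c\ab{y}}$ up to symmetries) one has
\[
\norma{g_s}_{L^2(\mu_s)}^2 - \norma{\psi}_{L^2(\sigma_c)}^2 = \int_{\ab{y}>s}\ab{\psi(y)}^2\Bigl(\frac{1}{\sqrt{\ab{y}^2-s^2}}-\frac{1}{\ab{y}}\Bigr)\d y + O(s^2),
\]
and since the bracket behaves like $\tfrac{s^2}{2\ab{y}^3}$ for $\ab{y}\gg s$, the integral picks up a $\log(1/s)$ from the region $s<\ab{y}<1$. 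So the denominator correction is $\asymp s^2\log(1/s)$, the \emph{same} order you expect for the numerator; your ``$s^2\log(1/s)$ beats $O(s^2)$'' mechanism collapses.

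\textbf{The pointwise inequality $\mu_s\ast\mu_s\geq\sigma_c\ast\sigma_c$ does not transfer to the weighted convolutions with your choice of $g_s$.} The two convolutions $g_s\mu_s\ast g_s\mu_s$ and $\psi\sigma_c\ast\psi\sigma_c$ integrate the weight over \emph{different} fibres of $\R^4$, so monotonicity of the unweighted kernels says nothing here; your remark about ``$g_s\mu_s$ dominating the vertical pushforward of $\psi\sigma_c$'' is not a comparison of measures on $\R^4$. This is precisely where the paper's choice of trial function is decisive: instead of $g_s(y)=\psi(y)=e^{-\ab{y}}$, the paper uses (after rescaling) $h_s(y)=e^{-\frac12\sqrt{\ab{y}^2-s^2}}$, i.e.\ the restriction to $\mathcal H^3_s$ of the exponential of a \emph{linear} functional $(\xi,\tau)\mapsto -\tau/2$. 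This forces the factorization $h_s\mu_s\ast h_s\mu_s=e^{-\tau/2}\,\mu_s\ast\mu_s$ and likewise on the cone, and \emph{now} the pointwise kernel inequality transfers directly. Even so, the leading $s^2\log(1/s)$ corrections in numerator and denominator cancel exactly (in the paper's variable $a$, the first and second derivatives of the ratio $I(a)/II(a)$ vanish at $a=0$), and one must compute to order $a^3$ to see strict positivity; this is the content of the appendix, and it is not avoidable.

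In short: your approach is the paper's approach, but with a trial function that destroys the key factorization, and with an incorrect accounting of the denominator that hides the real difficulty. The strict inequality lives at third order, not at the order you identified.
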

\begin{proof}
We consider the family of trial functions $f_a(y)=e^{-\frac{a}{2}\sqrt{\ab{y}^2-s^2}}$, $a>0$, and 
claim that
\[\sup_{a>0}\frac{\norma{T_sf_a}_{L^4(\R^4)}}{\norma{f_a}_{L^2(\mathcal H^3_s)}}> \sup_{0\neq f\in
L^2(\sigma_c)}\frac{\norma{T_cf}_{L^4(\R^4)}}{\norma{f}_{L^2(\sigma_c)}}.\]

\noindent Using spherical coordinates, the $L^2(\mathcal H^3_s)$-norm of $f_a$ is given by the 
expression
\begin{align*}
 \norma{f_a}_{L^2(\mathcal H^3_s)}^2&=\int_{\R^3} 
 e^{-a\sqrt{\ab{x}^2-s^2}}\frac{\d x}{\sqrt{\ab{x}^2-s^2}}=4\pi\int_s^\infty
e^{-a\sqrt{r^2-s^2}}\frac{r^2}{\sqrt{r^2-s^2}}\d r\\
 &=4\pi\int_0^\infty e^{-a\tau}\sqrt{\tau^2+s^2}\d \tau.
\end{align*}
It is easier to estimate $\norma{T_sf_a}_{L^4(\R^4)}$ if we use the convolution form \eqref{eq:convolution-form},
$$\norma{T_sf_a}_{L^4(\R^4)}=2\pi\norma{f_a\mu_s*f_a\mu_s}_{L^2(\R^4)}^{1/2}.$$
As in  
\cite{RQ2}*{Appendix 2}, using that $f_a$ is the
restriction to $\mathcal H^3_s$ of the exponential of the linear function in $\R^4$, 
$(\xi,\tau)\mapsto e^{-\frac{a}{2}\tau}$, 
we obtain
\[f_a\mu_s*f_a\mu_s(\xi,\tau)=e^{-\frac{a}{2}\tau}\bigl(\mu_s*\mu_s(\xi,\tau)\bigr).\]
It will be 
enough for our 
purpose to use 
\begin{align*}
 \mu_s*\mu_s(\xi,\tau)\geq &\frac{2\pi}{\ab{\xi}}\biggl(
\ab{\xi}\Bigl(1+\frac{4s^2}{\tau^2-\ab{\xi}^2}\Bigr)^{\frac{1}{2}}\one_{\{\ab{\xi}<\sqrt{\tau^2+s^2}-s\}}+\tau
\one_{\{\sqrt{\tau^2+s^2}-s\leq \ab{\xi}\leq \sqrt{\tau^2+(2s)^2}\}}\biggr),
\end{align*}
as obtained from \eqref{eq:alternative-conv-formula}. In this way
\begin{multline*}
f_a\mu_s*f_a\mu_s(\xi,\tau)\geq \\
\frac{2\pi }{\ab{\xi}}e^{-\frac{a}{2}\tau} \biggl(
\ab{\xi}\Bigl(1+\frac{4s^2}{\tau^2-\ab{\xi}^2}\Bigr)^{\frac{1}{2}}\one_{\{\ab{\xi}<\sqrt{\tau^2+s^2}-s\}}+\tau
\one_{\{\sqrt{\tau^2+s^2}-s\leq \ab{\xi}\leq \sqrt{\tau^2+(2s)^2}\}}\biggr),
\end{multline*}
so that using spherical coordinates we obtain
\begin{align*}
 \norma{f_a\mu_s*f_a\mu_s&}_{L^2(\R^4)}^2
 \geq (2\pi)^2\int_{\R^3\times\R} e^{-a\tau}\biggl(
\ab{\xi}^2\Bigl(1+\frac{4s^2}{\tau^2-\ab{\xi}^2}\Bigr)\one_{\{\ab{\xi}<\sqrt{\tau^2+s^2}-s\}}\\
&\qquad\hspace{4.5cm}+\tau^2\one_{\{\sqrt{\tau^2+s^2}-s\leq \ab{\xi}\leq \sqrt{\tau^2+(2s)^2}\}}
 \biggr)\d\tau\frac{\d\xi}{\ab{\xi}^2}\\
&= 16\pi^3\int_0^\infty
e^{-a\tau}\biggl(\tau^2(\sqrt{\tau^2+(2s)^2}-\sqrt{\tau^2+s^2}+s)+\frac{1}{3}(\sqrt{\tau^2+s^2}-s)^3\\
&\qquad\qquad-4s^2(\sqrt{\tau^2+s^2}-s)+2s^2\tau\,\log\Bigl(\frac{\tau+\sqrt{\tau^2+s^2}}{s}\Bigr)\biggr)\d\tau\\
&=16\pi^3 
\int_0^{\infty}e^{-a\tau}\biggl(\tau^2\sqrt{\tau^2+4s^2}-\frac{2}{3}(\tau^2+4s^2)\sqrt{\tau^2+s^2}
 +\frac{8s^3}{3}\\
 &\qquad\qquad+ 2s^2\tau \log\Bigl(\frac{\tau+\sqrt{\tau^2+s^2}}{s}\Bigr)\biggr)\d\tau.
\end{align*}
Since by scaling it is enough to consider $s=1$ (see Section \ref{sec:scaling}) we let 
\begin{align*}
&I(a)=16\pi^3 
\int_0^{\infty}e^{-a\tau}\Bigl(\tau^2\sqrt{\tau^2+4}-\tfrac{2}{3}(\tau^2+4)\sqrt{\tau^2+1}
+\tfrac{8}{3}\\
&\qquad\qquad\qquad\qquad+ 2\tau \log\bigl(\tau+\sqrt{\tau^2+1}\bigr)\Bigr)\d\tau,\\
&II(a)=16\pi^2\biggl(\int_0^\infty e^{-a\tau}\sqrt{\tau^2+s^2}\d \tau\biggr)^2,
\end{align*}
then
\[ 
\frac{\norma{f_a\mu*f_a\mu}_{L^2(\R^4)}^2}{\norma{f_a}_{L^2(\mu)}^4}
\geq\frac{I(a)}{II(a)}.
 \]
Form Lemma \ref{lem:asymptotics-hyp} in the appendix, we conclude that for all $a>0$ small enough
\begin{equation}\label{eq:good_a_comparison}
\frac{\norma{f_a\mu*f_a\mu}_{L^2(\R^4)}^2}{\norma{f_a}_{L^2(\mu)}^4}>2\pi.
\end{equation}
This finishes the proof in view of \eqref{eq:sharp-cone-estimate}.
\end{proof}

\begin{remark}
	The easy lower bound we can obtain for 
	$\norma{f_a\mu*f_a\mu}_{L^2(\R^4)}^2\norma{f_a}_{L^2(\mu)}^{-4}$ using the analog of
	\cite{OSQ}*{Lemma 6.1} is not good enough in this case to obtain \eqref{eq:good_a_comparison}.
\end{remark}

Let us now move to the full one-sheeted hyperboloid $\Hyp$. Recall that $\overline{T}_c$ denotes the adjoint Fourier 
restriction operator on the double cone 
$\overline{\Gamma}^3$. An argument in 
\cite{Fo} can be used to 
show that
\begin{equation}\label{eq:best-double-cone}
\norma{\overline{T}_c}=\Bigl(\frac{3}{2}\Bigr)^{\frac{1}{4}}\norma{T_c},
\end{equation}
see for instance \cite{RQ2}*{Proposition 7.3}. We now compare the best constant for $\overline{T}$ and 
 $\overline{T}_c$.
\begin{prop}\label{prop:comparison-double-cone}
	For the one-sheeted hyperboloid $\Hyp$ and its adjoint Fourier restriction operator $\overline{T}$ we 
	have
	\[ \norma{\overline{T}}>\norma{\overline{T}_c}. \]
\end{prop}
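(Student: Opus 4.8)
The plan is to imitate the proof of Proposition~\ref{prop:comparison-cone}, now testing $\overline{T}$ against the even extension to $\Hyp$ of the exponential trial functions used there. For $a>0$ let $f_a(y)=e^{-\frac a2\sqrt{\ab y^2-1}}$ on $\hyp$, and let $g_a$ be the restriction to $\Hyp$ of $(x,t)\mapsto e^{-\frac a2\ab t}$, so that $(g_a)_+=(g_a)_-=f_a$ under the orthogonal projection and $\norma{g_a}_{L^2(\Hyp)}^2=2\norma{f_a}_{L^2(\hyp)}^2$. By the convolution form \eqref{eq:convolution-form}, $\norma{\overline{T}g_a}_{L^4(\R^4)}^4=(2\pi)^4\norma{g_a\bar\mu\ast g_a\bar\mu}_{L^2(\R^4)}^2$, and since $g_a\bar\mu=f_a\mu_++f_a\mu_-$ one has $g_a\bar\mu\ast g_a\bar\mu=A_a+2B_a+C_a$ with $A_a:=f_a\mu_+\ast f_a\mu_+$, $B_a:=f_a\mu_+\ast f_a\mu_-$, $C_a:=f_a\mu_-\ast f_a\mu_-$. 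All three are nonnegative $L^2(\R^4)$ functions (by the absolute continuity discussed before Proposition~\ref{prop:formula-double-convolution}, together with $\widehat{f_a\mu_+}\in L^4$ from \eqref{sharp_L4_hyp}), and $C_a=\widetilde{A_a}$ is the reflection of $A_a$.

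The next step is a clean lower bound for $\norma{A_a+2B_a+C_a}_{L^2}^2$. By Proposition~\ref{prop:formula-double-convolution}(i) one has $\supp A_a\subseteq\supp(\mu\ast\mu)\subseteq\{\tau\geq 0\}$, hence $\supp C_a\subseteq\{\tau\leq 0\}$ and $\langle A_a,C_a\rangle_{L^2}=0$; expanding the square, using $\norma{C_a}_{L^2}=\norma{A_a}_{L^2}$, and discarding the nonnegative cross term $\langle A_a+C_a,B_a\rangle$, I get $\norma{g_a\bar\mu\ast g_a\bar\mu}_{L^2}^2\geq 2\norma{A_a}_{L^2}^2+4\norma{B_a}_{L^2}^2$. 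The key observation is that $\norma{B_a}_{L^2}=\norma{A_a}_{L^2}$: since $f_a$ (equivalently $g_a$) is even one checks $f_a\mu_-=\widetilde{f_a\mu_+}$, so $\widehat{f_a\mu_-}=\overline{\widehat{f_a\mu_+}}$ because $f_a\mu_+$ is a (real) nonnegative measure, whence $\widehat{B_a}=\ab{\widehat{f_a\mu_+}}^2$ and $\widehat{A_a}=(\widehat{f_a\mu_+})^2$ have the same pointwise modulus, and Plancherel gives the claim. Consequently $\norma{g_a\bar\mu\ast g_a\bar\mu}_{L^2}^2\geq 6\norma{A_a}_{L^2}^2=6\norma{f_a\mu\ast f_a\mu}_{L^2}^2$.

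Putting the pieces together,
\[ \frac{\norma{\overline{T}g_a}_{L^4(\R^4)}^4}{\norma{g_a}_{L^2(\Hyp)}^4}\geq\frac{6(2\pi)^4\norma{f_a\mu\ast f_a\mu}_{L^2}^2}{4\norma{f_a}_{L^2(\mu)}^4}=\frac32(2\pi)^4\cdot\frac{\norma{f_a\mu\ast f_a\mu}_{L^2}^2}{\norma{f_a}_{L^2(\mu)}^4}. \]
By \eqref{eq:good_a_comparison} the last factor is strictly larger than $2\pi$ for all sufficiently small $a>0$, so for such $a$ the left-hand side exceeds $\frac32(2\pi)^5$, which equals $\overline{\mathbf{C}}_4^4=\norma{\overline{T}_c}^4$ by \eqref{eq:best-double-cone} and \eqref{eq:sharp-cone-estimate}. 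Since $\norma{\overline{T}}\geq\norma{\overline{T}g_a}_{L^4}/\norma{g_a}_{L^2}$ for every $a$, this yields $\norma{\overline{T}}>\norma{\overline{T}_c}$.

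I do not expect a serious analytic obstacle: the genuinely nontrivial input — that the one-sheeted hyperboloid strictly beats the cone, i.e.\ $\norma{f_a\mu\ast f_a\mu}_{L^2}^2/\norma{f_a}_{L^2(\mu)}^4>2\pi$ for small $a$ — has already been proved in Proposition~\ref{prop:comparison-cone}, and the rest is elementary once one notices the two structural facts: that $A_a$ and $C_a=\widetilde{A_a}$ are supported in the opposite half-spaces $\{\tau\geq0\}$, $\{\tau\leq0\}$ (from the support of $\mu\ast\mu$), and that the cross term $B_a=f_a\mu_+\ast\widetilde{f_a\mu_+}$ has the same $L^2$ norm as $A_a=f_a\mu_+\ast f_a\mu_+$ (because their Fourier transforms agree in modulus). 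The only points requiring a line of verification are that $A_a,B_a,C_a$ are genuine $L^2$ functions, so that the inner products above are legitimate, and that $f_a\mu_-$ is literally the reflection of $f_a\mu_+$; both are routine.
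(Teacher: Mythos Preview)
Your proof is correct and is essentially the paper's argument, recast entirely on the convolution side. The paper tests the same even trial function $g_a$, expands $\norma{\overline{T}g_a}_{L^4}^4$, discards the nonnegative cross terms, and uses the identity $Tf_-(\cdot,-\cdot)=\overline{Tf_+}$ (equivalent to your $\widehat{B_a}=\ab{\widehat{f_a\mu_+}}^2$) to obtain $\norma{\overline{T}g_a}_{L^4}^4\geq 6\norma{Tf_a}_{L^4}^4$; the conclusion then follows from \eqref{eq:good_a_comparison} and \eqref{eq:best-double-cone} exactly as you do. Your explicit use of $\supp A_a\subseteq\{\tau\geq0\}$ to get $\langle A_a,C_a\rangle=0$ is a small clarification the paper leaves implicit.
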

\begin{proof}
	Let $f_a(y)=e^{-\frac{a}{2}\sqrt{\ab{y}^2-1}}$ be as in the proof of Proposition \ref{prop:comparison-cone} and set $g_a=f_{a,+}+f_{a,-}$, where $f_{a,+}=cf_a$ and $f_{a,-}=cf_a$ (here 
	there are domain identifications through projections to $\R^3$), in other words, 
	$g_a(\xi,\tau)=ce^{-\frac{a}{2}\ab{\tau}}\mathbbm{1}_{\overline{\mathcal H}^3}(\xi,\tau)$, where $c$ is 
	such that $g_a$ is $L^2$ normalized. Expanding and using the positivity of $f_{a,+}$ and $f_{a,-}$ (which for brevity we simply call $f_+$ and $f_-$, respectively) we see that
	\begin{align*}
	\norma{\overline{T}g_a}_{L^4}^4&= 
	\norma{Tf_+}_{L^4(\R^3)}^4+\norma{Tf_-}_{L^4(\R^3)}^4+4\norma{(Tf_+)(Tf_-(\cdot,-\cdot))}_{L^2}^2\\
	&\qquad+4(2\pi)^4\langle f_+\mu\ast f_+\mu,f_+\mu\ast f_-\mu_-\rangle\\
	&\qquad+4(2\pi)^4\langle f_+\mu\ast f_-\mu_-,f_-\mu_-\ast f_-\mu_-\rangle\\
	&\geq \norma{Tf_+}_{L^4(\R^3)}^4+\norma{T_-f_-}_{L^4(\R^3)}^4+4\norma{(Tf_+)(Tf_-(\cdot,-\cdot))}_{L^2}^2.
	\end{align*}
	On the other hand $Tf_-(\cdot,-\cdot)=\overline{Tf_+}$, the complex conjugate, since $f_-(y)=f_+(-y)$. Then 
	$\norma{(Tf_+)(Tf_-(\cdot,-\cdot))}_{L^2}^2=\norma{Tf_+}_{L^4(\R^3)}^4=\norma{Tf_-}_{L^4(\R^3)}^4$ and we 
	obtain
	\[ \norma{\overline{T}g_a}_{L^4}^4\geq 6\norma{Tf_{a,+}}_{L^4(\R^3)}^4. \]
	If $a>0$ is small enough, then from \eqref{eq:good_a_comparison} in the proof of Proposition 
	\ref{prop:comparison-cone} and using $\norma{f_{a,+}}_{L^2(\mu)}=\sqrt{2}/2$, we obtain
	\[ \norma{\overline{T}g_a}_{L^4}^4\geq 
	6\norma{Tf_{a,+}}_{L^4(\R^3)}^4>6\norma{T_c}^4\norma{f_{a,+}}_{L^2(\mu)}^4=\frac{3}{2}\norma{T_c}^4. 
	\]
	The conclusion follows using \eqref{eq:best-double-cone}.
\end{proof}

\section{The upper half of the one-sheeted hyperboloid}\label{sec:upper-half}

In this section we present the proof of Theorem \ref{thm:main-theorem}. The proof of precompactness of extremizing sequences, modulo multiplication by characters, is much 
simpler for the upper half of the one-sheeted hyperboloid as the full Lorentz invariance of 
$\overline{\mathcal H}^3$ is absent for $\mathcal{H}^3$. 

In what follows we collect the necessary results to invoke Proposition \ref{prop:key-prop-from-fvv} and 
the first such step is to show that, with enumeration as in Proposition \ref{prop:key-prop-from-fvv}, (i) and (iii) imply (iv), possibly after passing to a 
subsequence.

\begin{prop}\label{prop:pointwise-convergence}
	Let $\{f_n\}_{n}$ be a sequence in $L^2(\mathcal H^3)$ satisfying
	$\sup_n\norma{f_n}_{L^2(\mathcal H^3)}<\infty$.
	Suppose that there
	exists $f\in L^2(\mathcal H^3)$ such that $f_n\rightharpoonup f$ as $n\to\infty$. Then, there 
	exists a subsequence
	$\{f_{n_k}\}_{k}$
	such that $Tf{_{n_k}}\to Tf$ a.e. in $\R^4$.
\end{prop}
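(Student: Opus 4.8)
The plan is to establish an $L^2\to L^p$-type improvement for the extension operator that is strong enough to promote weak convergence in $L^2(\hyp)$ to a.e. convergence of the extensions. Concretely, I would first record that $\{f_n\}_n$ is bounded in $L^2(\hyp)$, so after passing to a subsequence we may assume $f_n\rightharpoonup f$, $\norma{f_n}_{L^2}\to M$ for some finite $M$, and (renaming) that the subsequence is the whole sequence. The goal is then to show $Tf_n\to Tf$ pointwise a.e. after a further subsequence. The natural route, following \cite{RQ1}*{Prop. 8.3} and \cite{FVV2}, is to use a local compactness (Rellich--Kondrachov-type) statement: for fixed $R>0$, the truncated extension $(\xi,\tau)\mapsto \int_{\{1<\ab{y}\leq R\}} e^{i\xi\cdot y}e^{i\tau\sqrt{\ab{y}^2-1}} f_n(y)\frac{\d y}{\sqrt{\ab{y}^2-1}}$ is, on any bounded region of $\R^4$, a family that converges to the corresponding truncated extension of $f$ in $L^2_{\mathrm{loc}}$ (even uniformly on compacts), because truncating $y$ to a compact annulus makes the integral kernel smooth and the weight $1/\sqrt{\ab{y}^2-1}\in L^1(\{1<\ab{y}\le R\})$, so weak $L^2$ convergence of $f_n$ gives pointwise convergence of these truncated extensions, and an equicontinuity argument (differentiating under the integral sign in $(\xi,\tau)$, with bounds uniform in $n$ via Cauchy--Schwarz) upgrades this to uniform convergence on compacts. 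Hence for every fixed $R$ the truncated extensions converge a.e.

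The remaining issue is the tail: I need the contribution of $\{\ab{y}>R\}$ to $Tf_n$ to be small, uniformly in $n$, in a sense that survives the pointwise limit. Here is where a Strichartz-type refinement enters. I would prove (or quote, since it follows the same lines as the analogous statements for the cone/paraboloid and the two-sheeted hyperboloid treated in \cite{COS},\cite{RQ1}) a bound of the form $\norma{T(f\one_{\{\ab{y}>R\}})}_{L^4(\R^4)} \lesssim \omega(R)\norma{f}_{L^2(\hyp)}$ for some quantity that does not blow up, combined with the key localization lemma of \cite{FVV2}/\cite{RQ1}: since $Tf_n$ is bounded in $L^4(\R^4)$ and the truncated pieces converge a.e., one passes to a subsequence along which $\norma{T(f_n\one_{\{\ab{y}>R_j\}})}_{L^4}\to 0$ as $j\to\infty$ uniformly in $n$ along a diagonal — in fact the clean way is to use that $\{f_n\}$ is bounded in $L^2$, so $f_n\one_{\{\ab{y}>R\}}$ is bounded in $L^2$, and to invoke the abstract fact from \cite{FVV2} that for such operators pointwise a.e. convergence of $Tf_n$ follows from (a) convergence of the truncated pieces and (b) the operator $T$ being bounded $L^2\to L^4$ together with a diagonal extraction. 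I would organize the tail control via the double convolution: $\norma{T g}_{L^4}^2 = (2\pi)^2\norma{g\mu\ast g\mu}_{L^2}$, and the explicit formula for $\mu\ast\mu$ from Proposition \ref{prop:formula-double-convolution} shows $\norma{\mu\ast\mu}_{L^\infty}$ is controlled, which gives a clean bilinear handle on the cross terms.

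The actual mechanism I would write out is the standard one: for each $j\in\N$ let $R_j\to\infty$; for fixed $j$, extract a subsequence along which the $R_j$-truncated extensions converge a.e.; diagonalize over $j$ to get a single subsequence $\{f_{n_k}\}$ along which, for every $j$, the $R_j$-truncated extensions of $f_{n_k}$ converge a.e. to the $R_j$-truncated extension of $f$. Then write $Tf_{n_k} - Tf = (\text{truncated part difference}) + T(f_{n_k}\one_{\{\ab{y}>R_j\}}) - T(f\one_{\{\ab{y}>R_j\}})$; the first term $\to 0$ a.e. as $k\to\infty$ for each fixed $j$, and the last two terms are small in $L^4(\R^4)$ uniformly in $k$ as $j\to\infty$ by the boundedness of $T$ and the fact that $f_{n_k}\one_{\{\ab{y}>R_j\}}\rightharpoonup f\one_{\{\ab{y}>R_j\}}$ with $L^2$-norms that can be made small once we are past the weak limit's mass — more precisely, by a further application of Proposition \ref{prop:key-prop-from-fvv}'s style of argument, or simply by noting $\norma{f\one_{\{\ab{y}>R_j\}}}_{L^2}\to 0$ and $\sup_k\norma{f_{n_k}\one_{\{\ab{y}>R_j\}}}_{L^2}$ is bounded, one uses interpolation of the $L^4$ bound with an $L^\infty$-type bound on compacts to pass a subsequence to a.e. convergence. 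A clean Borel--Cantelli/Egorov extraction then yields $Tf_{n_k}\to Tf$ a.e. in $\R^4$.

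The main obstacle is the uniform tail estimate: one must ensure that removing the region $\{\ab{y}\le R\}$ costs something that tends to zero as $R\to\infty$ in a way compatible with the diagonal extraction, despite the lack of scale invariance of $\hyp$ (which is exactly what makes the hyperboloid different from the cone). I expect this to be handled as in \cite{RQ1}*{Prop. 8.3} by combining the global $L^2\to L^4$ bound \eqref{sharp_L4_hyp} with the local smoothing/compactness on annuli; the absence of scaling is not fatal here because we only need qualitative a.e. convergence, not a quantitative refinement — that refinement is deferred to the no-concentration-at-infinity step via Proposition \ref{prop:comparison-cone}. So in this proposition the role of the double-convolution formula is only to give boundedness of the relevant bilinear forms, and the real work is the routine but careful diagonal extraction.
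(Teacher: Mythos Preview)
Your tail control has a genuine gap. You try to make $T(f_{n_k}\one_{\{\ab{y}>R_j\}})$ small in $L^4(\R^4)$ uniformly in $k$ as $j\to\infty$, but no such uniform smallness is available: for a merely weakly convergent bounded sequence in $L^2(\hyp)$, $\sup_k\norma{f_{n_k}\one_{\{\ab{y}>R_j\}}}_{L^2}$ need not tend to $0$ as $j\to\infty$ (this is precisely the concentration-at-infinity scenario that is ruled out only \emph{later}, in the proof of Theorem~\ref{thm:main-theorem}, and not as a hypothesis here). The bound $\norma{Tg}_{L^4}\leq \mathbf{H}_4\norma{g}_{L^2}$ therefore gives you nothing, and there is no operator-norm decay $\omega(R)\to 0$ to quote. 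Your fallback ``interpolation with an $L^\infty$-type bound on compacts'' does not apply either, since $T(f_{n_k}\one_{\{\ab{y}>R_j\}})$ enjoys no pointwise bound.

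The paper's mechanism is different and does not rely on tail smallness at all. It makes a single split at a fixed radius, $f_n=f_n\one_{B(0,2)}+f_n\one_{B(0,2)^\complement}=:f_{n,1}+f_{n,2}$. The compactly supported piece $f_{n,1}$ is handled as you suggest. For the tail piece, set $\hat g_n(y)=f_{n,2}(y)/\sqrt{\ab{y}^2-1}$; since $\ab{y}\geq 2$ on the support, $\{g_n\}$ is bounded in both $\dot H^{1/2}(\R^3)$ and $L^2(\R^3)$, and $g_n\rightharpoonup g$ in $\dot H^{1/2}$. Writing $Tf_{n,2}=(2\pi)^3e^{it\sqrt{-\Delta-1}}g_n$, continuity of the propagator on $\dot H^{1/2}$ gives $e^{it\sqrt{-\Delta-1}}g_n\rightharpoonup e^{it\sqrt{-\Delta-1}}g$ for each fixed $t$, and the Rellich--Kondrachov theorem upgrades this to strong convergence in $L^2(B(0,R))$. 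Dominated convergence in $t$ (the $L^2(B(0,R))$ norms are bounded uniformly in $t$ via Sobolev embedding $\dot H^{1/2}\hookrightarrow L^3$) then yields $L^2$ convergence on space-time cylinders, and a diagonal extraction over $R\to\infty$ produces the a.e. convergent subsequence. The point is that compactness is used on the \emph{output} side (Rellich for $\dot H^{1/2}$), not via vanishing of the input tail.
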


The previous result implies an analogous one for the full two-sheeted hyperboloid $\Hyp$. Recall the Fourier multiplier notation
\begin{equation}\label{eq:multiplier_notation}
e^{it\sqrt{-\Delta-s^2}}u(x)=\frac{1}{(2\pi)^3}\int_{\{y\in\R^3\colon\ab{y}\geq s\}}e^{ix\cdot y}e^{it\sqrt{\ab{y}^2-s^2}}\hat{u}(y)\d y,
\end{equation}
and the homogeneous $\dot H^{1/2}(\R^3)$ Sobolev norm and inner product
\begin{equation}\label{eq:def_homogeneous_Sobolev}
\norma{u}_{\dot H^{1/2}(\R^3)}^2:=\int_{\R^3}\ab{\hat u(y)}^2\ab{y}\d y, \quad\langle u,v \rangle_{\dot H^{1/2}(\R^3)}:=\int_{\R^3}\hat u(y)\overline{\hat v(y)}\ab{y}\d y. 
\end{equation}

\begin{proof}[Proof of Proposition \ref{prop:pointwise-convergence}]
	The proof follows similar lines to the proofs of \cite{FVV2}*{Theorem 1.1} and 
	\cite{RQ1}*{Proposition 8.3}. We start by splitting 
	$f_n=f_n\one_{B(0,2)}+f_n\one_{B(0,2)^\complement}=:f_{n,1}+f_{n,2}$, respectively,
	and $f=f\one_{B(0,2)}+f\one_{B(0,2)^\complement}=:f_{1}+f_{2}$. The conclusion of the proposition will follow if 
	we show 
	that 
	there exists a
	subsequence $\{f_{n_k}\}_{k}$ such that $Tf_{n_k,1}\to Tf_1$ and $Tf_{n_k,2}\to Tf_2$ a.e. 
	in $\R^4$.
	
	Since $f_{n,1}\rightharpoonup f_1$ in $L^2(\mathcal H^3)$ and the support of $f_{n,1}$ is 
	contained on the compact
	set $B(0,2)$, it follows that $Tf_{n,1}(x,t)\to Tf_1(x,t)$ for all $(x,t)\in\R^4$ given that the function
	$(y,s)\mapsto e^{ix\cdot
		y}e^{its}\one_{B(0,2)}(y)$ belongs to
	$L^2(\mathcal H^3)$.
	
	To study the pointwise convergence of
	$Tf_{n,2}$ define $g_n$ and $g$ by their Fourier transforms as follows
	\[\hat g_n(y)=\frac{f_{n,2}(y)}{\sqrt{\ab{y}^2-1}},\quad\quad \hat 
	g(y)=\frac{f_{2}(y)}{\sqrt{\ab{y}^2-1}}.\]
	Because 
	\[\norma{f_{n,2}}_{L^2(\mathcal H^3)}^2=\int_{\{y\in\R^3:\ab{y}\geq 2\}}\ab{f_n(y)}^2\frac{\d 
	y}{\sqrt{\ab{y}^2-1}}\leq 
	\sup_k \norma{f_k}_{L^2(\mathcal H^3)}^2=: c,\]
	we see that the norms of the $g_n$'s in the homogeneous Sobolev space $\dot{H}^{1/2}(\R^3)$ are uniformly bounded
	\[\norma{g_n}_{\dot H^{1/2}(\R^3)}^2=\int_{\R^3}\ab{\hat g_n(y)}^2\ab{y}\d 
	y\leq\frac{2}{\sqrt{3}}\int_{\{y\in\R^3:\ab{y}\geq 2\}}\ab{f_{n,2}(y)}^2\frac{\d 
		y}{\sqrt{\ab{y}^2-1}}\leq \frac{2c}{\sqrt{3}}.\]
	The weak convergence of $\{f_{n,2}\}_n$ to $f_2$ in $L^2(\hyp)$ easily implies $g_n\rightharpoonup g$ as $n\to\infty$ in $\dot{H}^{1/2}(\R^3)$. On the other hand \[(2\pi)^{3}\norma{g_n}_{L^2(\R^3)}^2=\norma{\widehat{g_n}}_{L^2(\R^3)}^2=\int_{\{y\in\R^3:\ab{y}\geq 2\}}\frac{\ab{f_{n,2}(y)}^2}{(\sqrt{\ab{y}^2-1})^2}\frac{\d 
		y}{\sqrt{\ab{y}^2-1}}\leq \frac{c}{\sqrt{3}},\]
	so $\{g_n\}_n$ is uniformly bounded in $L^2(\R^3)$.
	
	The operator $T$ applied to 
	$f_{n,2}$ equals
	$(2\pi)^3e^{it\sqrt{-\Delta-1}}g_n$, where the operator $e^{it\sqrt{-\Delta-1}}$ is understood in the Fourier multiplier sense as in \eqref{eq:multiplier_notation}. Let
	$t\in \R$ be fixed. By the continuity of $e^{it\sqrt{-\Delta-1}}$ 
	in $\dot H^{1/2}(\R^3)$ we obtain 
	\[e^{it\sqrt{-\Delta-1}}g_n\rightharpoonup e^{it\sqrt{-\Delta-1}}g\]
	weakly in $\dot H^{1/2}(\R^3)$, as $n\to\infty$. Then, by the Rellich-Kondrashov Theorem (\cite{DNPV12}*{Theorem 7.1}), for any $R>0$
	\[e^{it\sqrt{-\Delta-1}}g_n\to e^{it\sqrt{-\Delta-1}}g\]
	strongly in $L^2(B(0,R))$, as $n\to\infty$. Denote by
	\[F_n(t):=\int_{\ab{x}<R}\abs{e^{it\sqrt{-\Delta-1}}(g_n-g)}^2\d 
	x=\norma{e^{it\sqrt{-\Delta-1}}(g_n-g)}_{ L^2(B(0,R))}^2.\]
	By H\"older's inequality and Sobolev embedding, \cite{DNPV12}*{Theorem 6.5}, we obtain
	\begin{align*}
	F_n(t)&=\norma{e^{it\sqrt{-\Delta-1}}(g_n-g)}_{L^2(B(0,R))}^2\leq CR\norma{e^{it\sqrt{-\Delta-1}}(g_n-g)}_{L^3(B(0,R))}^2\\
	&\leq CR\norma{e^{it\sqrt{-\Delta-1}}(g_n-g)}_{\dot{H}^{1/2}(\R^3)}^2\leq \frac{8}{\sqrt{3}}cCR,
	\end{align*}
	then, by the Fubini and Dominated Convergence Theorems we have that
	\[\int_{-R}^R 
	F_n(t)dt=\int_{-R}^{R}\int_{\ab{x}<R}\abs{e^{it\sqrt{-\Delta-1}}(g_n-g)}^2\d 
	x\d t\to 0,\]
	as $n\to \infty$. This implies that, up to a subsequence,
	\[e^{it\sqrt{-\Delta-1}}g_n(x)-e^{it\sqrt{-\Delta-1}}g(x)\to 0\quad\text{a.e. }(x,t)\in B(0,R)\times (-R,R).\]
	Repeating the argument on a discrete sequence of radii $R_n$ such that $R_n\to\infty$, as $n\to\infty$ we conclude, by a diagonal
	argument, that there exists a subsequence $\{g_{n_k}\}_k$ of $\{g_n\}_n$ such that
	\[e^{it\sqrt{-\Delta-1}}g_{n_k}(x)-e^{it\sqrt{-\Delta-1}}g(x)\to 0\quad\text{a.e. for 
	}(x,t)\in\R^4,\]
	or equivalently, in terms of the sequence $\{f_{n,2}\}_{n}$ and the operator $T$,
	\[Tf_{n_k,2}(x,t)\to Tf_2(x,t)\quad\text{a.e. }(x,t)\in\R^4.\]
\end{proof}

We now show that the only obstruction to precompactness of extremizing sequences is the possibility of concentration at 
infinity, as in Definition \ref{def:conv-inf}.
\begin{prop}
	\label{prop:convergence-not-concentration-infinity}
	Let $\{f_n\}_{n}\subset L^2(\mathcal H^3)$ be an $L^2$ normalized extremizing sequence for $T$. Suppose 
	that $\{f_n\}_{n}$ does not
	concentrate at infinity. Then there exist a 
	subsequence $\{f_{n_k}\}_k$ and a sequence $\{(x_k,t_k)\}_k\subset\R^4$ such that $\{e^{ix_k\cdot
		y}e^{it_k\sqrt{\ab{y}^2-1}}f_{n_k}\}_{k}$ is convergent in $L^2(\mathcal H^3)$.
\end{prop}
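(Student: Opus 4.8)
The plan is to use the concentration-compactness machinery of P.-L. Lions together with Proposition~\ref{prop:key-prop-from-fvv}. Since $\{f_n\}_n$ is $L^2$-normalized and does not concentrate at infinity, after translating in frequency (which is a symmetry of the problem, corresponding precisely to multiplication of $f_n$ by a character $e^{ix_k\cdot y}e^{it_k\sqrt{\ab{y}^2-1}}$) we want to locate a non-trivial weak limit. First I would apply a profile-decomposition or concentration-compactness argument to the sequence of measures $\ab{f_n}^2\d\mu$ on $\hyp\simeq\{\ab{y}\geq 1\}$: passing to a subsequence, one of the three alternatives holds — compactness (up to translations), vanishing, or dichotomy. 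The bulk of the work is to rule out vanishing and dichotomy using the strict inequality $\mathbf H_4>\mathbf C_4=(2\pi)^{5/4}$ from Proposition~\ref{prop:comparison-cone}.

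To rule out \emph{vanishing} I would argue that if, after frequency translations, the $L^2$ mass spreads out so that $\sup_{y_0}\norma{f_n\one_{B(y_0,R)}}_{L^2(\hyp)}\to 0$ for each fixed $R$, then the multilinear/convolution quantity $\norma{f_n\mu\ast f_n\mu}_{L^2(\R^4)}$ would have to go to zero (or at least be bounded by the cone constant in the limit), contradicting $\norma{Tf_n}_{L^4}\to\mathbf H_4>\mathbf C_4$. Concretely, one localizes $f_n$ to balls of radius $R$ in frequency; on each such ball the hyperboloid is, at the relevant scales, a small perturbation of (a piece of) the cone, so a local estimate with constant close to $\mathbf C_4$ applies, and a standard almost-orthogonality / interpolation argument (as in refined Strichartz estimates) upgrades this to the global bound $\limsup_n\norma{Tf_n}_{L^4}\leq \mathbf C_4\cdot(\text{something}\to 0)$, the contradiction. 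To rule out \emph{dichotomy}, suppose the mass splits into two pieces living at asymptotically infinite frequency separation; then $Tf_n$ essentially decouples into $Tf_n^{(1)}+Tf_n^{(2)}$ with the cross terms in the $L^4$ norm negligible, giving $\mathbf H_4^4=\lim\norma{Tf_n}_{L^4}^4\leq \lim(\norma{Tf_n^{(1)}}_{L^4}^4+\norma{Tf_n^{(2)}}_{L^4}^4)\leq \mathbf H_4^4(\lambda^2+(1-\lambda)^2)<\mathbf H_4^4$ for the mass-split parameter $\lambda\in(0,1)$, a contradiction. (Alternatively, and more in the spirit of the paper, one can avoid a full dichotomy analysis here by noting that concentration at infinity is the \emph{only} remaining obstruction once one has the strict comparison with the cone, which is exactly the content of the reduction stated before Definition~\ref{def:conv-inf}.)

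So concretely I would proceed as follows. After passing to a subsequence, choose frequency translation parameters $(x_k,t_k)$ realizing, for each $k$, the supremum (over centers) of the mass of $f_k$ in a ball of a suitably chosen radius; set $\tilde f_k(y):=e^{ix_k\cdot y}e^{it_k\sqrt{\ab{y}^2-1}}f_k(y)$. Since $\{f_n\}_n$ does not concentrate at infinity, the mass that $\tilde f_k$ retains near the origin is bounded below, so the non-vanishing step gives $\liminf_k \norma{\tilde f_k\one_{B(0,R)}}_{L^2}>0$ for some fixed $R$; hence a weak-$L^2$ limit $f$ of (a further subsequence of) $\{\tilde f_k\}_k$ is nonzero, giving hypothesis~(iii) of Proposition~\ref{prop:key-prop-from-fvv}. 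Hypothesis~(iv), the a.e.\ convergence $T\tilde f_{n_k}\to Tf$, follows from Proposition~\ref{prop:pointwise-convergence} (after a further subsequence). Hypotheses~(i) and (ii) are automatic since $\norma{\tilde f_k}_{L^2}=\norma{f_k}_{L^2}=1$ and $\norma{T\tilde f_k}_{L^4}=\norma{Tf_k}_{L^4}\to\mathbf H_4$. Proposition~\ref{prop:key-prop-from-fvv} then yields $\tilde f_{n_k}\to f$ strongly in $L^2(\hyp)$, which is the desired conclusion (and $f$ is an extremizer).

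The main obstacle is the non-vanishing step: establishing that a concentration-compactness dichotomy with all mass escaping to small scales in frequency — equivalently, $\tilde f_n$ becoming ever flatter/more spread out — forces $\norma{T\tilde f_n}_{L^4}$ to be asymptotically governed by the \emph{cone} constant $\mathbf C_4$ rather than $\mathbf H_4$. This requires a quantitative local comparison between the hyperboloid $\mathcal H^3_s$ at frequency scale $\ab{y}\sim R$ and the cone $\Gamma^3$, which is where the rescaling identities ($\mu_s\ast\mu_s(s\xi,s\tau)=\mu\ast\mu(\xi,\tau)$, and $\mu_{ts}(t\cp)=t^2\mu_s(\cp)$) and the explicit double-convolution formula of Proposition~\ref{prop:formula-double-convolution}, together with Lemma~\ref{lem:behavior-infinity} ($\sup_\xi \mu_s\ast\mu_s(\xi,\tau)\to 2\pi$ as $\tau\to\infty$), do the heavy lifting: they say the hyperboloid looks like the cone precisely in the regime where mass would have to escape. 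Assembling these into a clean ``either compact (up to translation) or the $L^4$ norm is $\le \mathbf C_4+o(1)$'' statement is the technical heart of the argument.
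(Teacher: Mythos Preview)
Your proposal contains a fundamental confusion about the symmetry group. Multiplication by the character $e^{ix_k\cdot y}e^{it_k\sqrt{\ab{y}^2-1}}$ is \emph{not} a frequency translation: it leaves $\ab{f_n(y)}$ (and hence the mass distribution in $y$) completely unchanged. What it does is translate $Tf_n$ in space-time, since $T\bigl(e^{ix_0\cdot y}e^{it_0\sqrt{\ab{y}^2-1}}f\bigr)(x,t)=Tf(x+x_0,t+t_0)$. Consequently your step ``choose $(x_k,t_k)$ realizing the supremum over centers of the mass of $f_k$ in a ball'' is vacuous, and the Lions trichotomy you set up (with respect to translations in $y$) cannot be repaired by these characters. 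For the upper half $\hyp$ there is no translation or boost symmetry moving $y$-mass around; that is precisely why the paper's treatment of $\hyp$ is so much shorter than that of $\Hyp$.

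The paper's actual argument is far more direct and uses neither concentration-compactness nor the strict inequality $\mathbf{H}_4>\mathbf{C}_4$. The hypothesis ``does not concentrate at infinity'' immediately yields $\eps,R>0$ and a subsequence with $\norma{f_n\one_{B(0,R)}}_{L^2}\geq\eps$. Writing $f_{n,1}=f_n\one_{B(0,R)}$, the triangle inequality gives $\norma{Tf_{n,1}}_{L^4}\geq\norma{Tf_n}_{L^4}-\mathbf{H}_4\sqrt{1-\eps^2}\geq c>0$ for large $n$. Now the parameters $(x_n,t_n)$ are produced by the Fanelli--Vega--Visciglia argument applied to $f_{n,1}$: interpolating $L^4$ between $L^{\bar p}$ (some $\bar p\in[\tfrac{10}{3},4)$) and $L^\infty$ forces $\norma{Tf_{n,1}}_{L^\infty}\gtrsim 1$, so one picks $(x_n,t_n)$ where $\ab{Tf_{n,1}(x_n,t_n)}\geq C>0$. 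Compact support of $f_{n,1}$ gives uniform $C^1$ bounds on $Tf_{n,1}$, and Arzel\`a--Ascoli plus weak compactness produce a nonzero weak limit for the modulated sequence. From there your final paragraph is correct: Proposition~\ref{prop:pointwise-convergence} gives (iv) and Proposition~\ref{prop:key-prop-from-fvv} finishes. The comparison with the cone enters only \emph{after} this proposition, in Lemma~\ref{lem:no-concentration-infinity} and the proof of Theorem~\ref{thm:main-theorem}, to rule out concentration at infinity itself.
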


\begin{proof}
	If $\{f_n\}_{n}$
	does not concentrate at infinity, then there exist 
	$\eps,R>0$ with the property that for all 
	$N\in\N$ there exists $n\geq N$ such that
	\[\norma{f_n\one_{B(0,R)}}_{L^2(\hyp)}\geq \eps.\]
	We can generate a subsequence, $\{f_{n_k}\}_{k}$, such that 
	$\norma{f_{n_k}\one_{B(0,R)}}_{L^2(\hyp)}\geq \eps$ for all $k\in\N$. Rename 
	the subsequence as $\{f_n\}_n$, if necessary. Writing 
	$f_n=f_n\one_{B(0,R)}+f_n\one_{B(0,R)^\complement}=:f_{n,1}+f_{n,2}$, respectively, we have
	\begin{align}
	\norma{Tf_{n,1}}_{L^4(\R^4)}&=\norma{T(f_n-f_{n,2})}_{L^4(\R^4)}\geq
	\norma{Tf_n}_{L^4(\R^4)}-\norma{Tf_{n,2}}_{L^4(\R^4)}\nonumber\\
	&\geq \norma{Tf_n}_{L^4(\R^3)}-\mathbf{H}_{4}\norma{f_{n,2}}_{L^2(\mathcal H^3)}\nonumber\\
	&=
	\norma{Tf_n}_{L^4(\R^3)}-\mathbf{H}_{4}(1-\norma{f_{n,1}}_{L^2(\mathcal H^3)}^2)^{1/2}\nonumber\\
	\label{eq:last-inequality}
	&\geq \norma{Tf_n}_{L^4(\R^3)}-\mathbf{H}_{4}\sqrt{1-\eps^2}.
	\end{align}
	As the right hand side in \eqref{eq:last-inequality} converges to 
	$c:=\mathbf{H}_{4}-\mathbf{H}_{4}\sqrt{1-\eps^2}>0$ as $n\to\infty$ we see that
	\begin{equation}
	\label{eq:lower-bound-truncated-sequence}
	\norma{Tf_{n,1}}_{L^4(\R^4)}\geq \frac{c}{2}>0,
	\end{equation}
	for all large $n$.
	
	We may use the argument in the proof of \cite{FVV}*{Theorem 1.1} to construct the sequence $\{(x_n,t_n)\}_n$. In brief, the argument goes as follows. Taking any $\bar p\in[\frac{10}{3},4)$, interpolating the $L^4$ norm of $Tf_{n,1}$ between $L^{\bar p}$ and $L^\infty$ and using \eqref{eq:lower-bound-truncated-sequence} together with the boundedness of $T$ in $L^{\bar{p}}$ imply that $\norma{Tf_{n,1}}_{L^\infty(\R^4)}\gtrsim 1$, so that there exists a sequence $\{(x_n,t_n)\}_n\subset\R^4$ such that $\ab{Tf_{n,1}(x_n,t_n)}\geq C>0$, that is, $\ab{(T(e^{ix_n\cdot y}e^{it_n\sqrt{\ab{y}^2-1}}f_{n,1}))(0,0)}\geq C>0$. The compact support of $f_{n,1}$ implies that $Tf_{n,1}$ belongs to $C^\infty(\R^4)$ and $\norma{Tf_{n,1}}_{L^\infty(\R^4)}\lesssim \norma{f_{n,1}}_{L^1}\lesssim 1$, $\norma{\nabla_{x,t} Tf_{n,1}}_{L^\infty(\R^4)}\lesssim \norma{f_{n,1}}_{L^1}\lesssim 1$. By the Arzel\'a--Ascoli Theorem, it follows that $\{T(e^{ix_n\cdot y}e^{it_n\sqrt{\ab{y}^2-1}}f_{n,1})\}_n$ is precompact in the space of continuous functions on the unit ball of $\R^4$. On the other hand, passing to a subsequence, we may assume $F_n:=e^{ix_n\cdot y}e^{it_n\sqrt{\ab{y}^2-1}}f_{n,1}\rightharpoonup f_1$ weakly in $ L^2(\hyp)$, for some $f_1\in L^2(\hyp)$, and then $T(F_{n})(x,t)\to Tf_1(x,t)$ for all $(x,t)\in\R^4$. Moreover, $T(F_n)\to Tf_1$ uniformly in the unit ball, so that $\ab{Tf_1(0,0)}\geq C>0$, which implies that $f_1\neq 0$.
	
	Compactness of the unit ball in $L^2(\mathcal H^3)$ in the weak topology implies that, after 
	passing 
	to a further subsequence,
	$e^{ix_n\cdot y}e^{it_n\sqrt{\ab{y}^2-1}}f_n\rightharpoonup f$, for some $f\in L^2(\mathcal 
	H^3)$. 
	Since $f_1=f\one_{B(0,R)}$ a.e.
	in $\R^3$ we conclude that $f\neq 0$. Therefore condition (iii) of Proposition 
	\ref{prop:key-prop-from-fvv} is satisfied. Proposition
	\ref{prop:pointwise-convergence} implies that condition (iv) is also satisfied. As (i) and (ii) are 
	immediate, we conclude that
	$e^{ix_n\cdot y}e^{it_n\sqrt{\ab{y}^2-1}}f_n\to f$ in $L^2(\mathcal H^3)$, and we are done.
\end{proof}

To conclude the precompactness of extremizing sequences we need to discard the possibility of concentration at infinity. For this we use a comparison argument with the cone
where the upper bound for $\mu_s*\mu_s$ 
as found in Lemma \ref{lem:behavior-infinity} will be useful.

\begin{lemma}\label{lem:no-concentration-infinity}
	Let $a>1$ and $f\in L^2(\mathcal{H}^3)$ be supported in the region where $\ab{y}\geq a$. Then
	\[ \norma{f\mu\ast f\mu}_{L^2(\R^4)}^2\leq 
	2\pi\Bigl(1+\frac{1}{\sqrt{a^2-1}}\Bigr)\norma{f}_{L^2(\mu)}^4. 
	\]
\end{lemma}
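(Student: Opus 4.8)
The plan is to estimate $\norma{f\mu\ast f\mu}_{L^2(\R^4)}^2$ fiber by fiber, exploiting that the support of $f\mu\ast f\mu$ is pushed out to large $\tau$, where Lemma~\ref{lem:behavior-infinity} says the density $\mu\ast\mu$ is close to $2\pi$.

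First I would record the support property. Identifying $f$ with a function on $\{y\in\R^3\colon\ab y>1\}$ as usual, the delta calculus of Section~\ref{sec:calculation_convolution} represents $f\mu\ast f\mu(\xi,\tau)$, for a.e.\ $(\xi,\tau)\in\R^4$, as the integral over $\eta\in\R^3$ of $f(\eta)f(\xi-\eta)$ against the positive measure
\[
\d\nu_{\xi,\tau}(\eta):=\one_{\{\ab\eta>1,\,\ab{\xi-\eta}>1\}}\,\ddirac{\tau-\sqrt{\ab\eta^2-1}-\sqrt{\ab{\xi-\eta}^2-1}}\,\frac{\d\eta}{\sqrt{\ab\eta^2-1}\,\sqrt{\ab{\xi-\eta}^2-1}},
\]
while $\mu\ast\mu(\xi,\tau)=\nu_{\xi,\tau}(\R^3)$ is the total mass of this measure. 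Since $f$ vanishes where $\ab y<a$, the integrand is nonzero only if $\ab\eta\geq a$ and $\ab{\xi-\eta}\geq a$, and the delta constraint then forces $\tau=\sqrt{\ab\eta^2-1}+\sqrt{\ab{\xi-\eta}^2-1}\geq 2\sqrt{a^2-1}$. Hence $f\mu\ast f\mu$ is supported in $\{(\xi,\tau)\colon\tau\geq 2\sqrt{a^2-1}\}$.

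Next I would apply the Cauchy--Schwarz inequality to the fiber integral against $\nu_{\xi,\tau}$, obtaining the pointwise bound
\[
\ab{f\mu\ast f\mu(\xi,\tau)}^2\leq \bigl(\mu\ast\mu(\xi,\tau)\bigr)\,h(\xi,\tau),\qquad h(\xi,\tau):=\int_{\R^3}\ab{f(\eta)}^2\ab{f(\xi-\eta)}^2\,\d\nu_{\xi,\tau}(\eta),
\]
valid for a.e.\ $(\xi,\tau)$. On the support of $f\mu\ast f\mu$ we have $\tau\geq 2\sqrt{a^2-1}$, so Lemma~\ref{lem:behavior-infinity} with $s=1$ gives $\mu\ast\mu(\xi,\tau)\leq 2\pi\bigl(1+\tfrac{2}{\tau}\bigr)\leq 2\pi\bigl(1+\tfrac{1}{\sqrt{a^2-1}}\bigr)$ there. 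Integrating in $(\xi,\tau)$, and using Tonelli's theorem (integrating out the delta, then the change of variable $\eta'=\xi-\eta$) to compute
\[
\int_{\R^4}h(\xi,\tau)\,\d\xi\,\d\tau=\Bigl(\int_{\{\ab\eta>1\}}\ab{f(\eta)}^2\,\frac{\d\eta}{\sqrt{\ab\eta^2-1}}\Bigr)^2=\norma{f}_{L^2(\mu)}^4,
\]
one gets exactly $\norma{f\mu\ast f\mu}_{L^2(\R^4)}^2\leq 2\pi\bigl(1+\tfrac{1}{\sqrt{a^2-1}}\bigr)\norma{f}_{L^2(\mu)}^4$.

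There is no genuine obstacle here; the estimate is soft once the behavior of $\mu\ast\mu$ near infinity from Lemma~\ref{lem:behavior-infinity} is in hand. The only point that needs care is the rigorous justification of the fiber representation of $f\mu\ast f\mu$ (and hence of the pointwise Cauchy--Schwarz step), which is done exactly as in the proof of Proposition~\ref{prop:formula-double-convolution} — passing through the variables $(\rho,\varsigma)$ there, or, equivalently, approximating $\boldsymbol{\delta}$ by smooth bumps and passing to the limit.
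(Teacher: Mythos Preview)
Your proof is correct and follows essentially the same route as the paper: observe the support constraint $\tau\geq 2\sqrt{a^2-1}$, apply the pointwise Cauchy--Schwarz bound $\ab{f\mu\ast f\mu}^2\leq(\mu\ast\mu)\,(\ab{f}^2\mu\ast\ab{f}^2\mu)$ (your $h$ is exactly $\ab{f}^2\mu\ast\ab{f}^2\mu$), invoke Lemma~\ref{lem:behavior-infinity}, and integrate. The paper's write-up is terser but the argument is identical.
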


\begin{proof}
	If $f$ is supported where $\ab{y}\geq a$, then the support of $f\mu\ast f\mu$ is contained in 
	the region $\{(\xi,\tau)\in\R^4\colon \tau\geq 2\sqrt{a^2-1}\}$.
	The Cauchy-Schwarz inequality provides the a.e. pointwise bound
	\[ \ab{f\mu\ast f\mu}^2(\xi,\tau)\leq 
	\bigl(\ab{f}^2\mu\ast\ab{f}^2\mu\bigr)(\xi,\tau)\bigl(\mu\ast\mu\bigr)(\xi,\tau), \]
	which together with the upper bound in Lemma \ref{lem:behavior-infinity} imply
	\[ \ab{f\mu\ast f\mu}^2(\xi,\tau)\leq 
	2\pi\Bigl(1+\frac{1}{\sqrt{a^2-1}}\Bigr)\bigl(\ab{f}^2\mu\ast\ab{f}^2\mu\bigr)(\xi,\tau), \]
	for a.e. $(\xi,\tau)\in\R^4$. Integrating over $(\xi,\tau)\in\R^4$ yields the result.
\end{proof}

It is now direct to prove our first main theorem.
\begin{proof}[Proof of Theorem \ref{thm:main-theorem}]
	We start by noting that if an $L^2$-normalized sequence $\{f_n\}_n$ concentrates at infinity, 
	then  for any 
	$a>1$,
	$\norma{f_n\one_{B(0,a)}}_{L^2(\mu)}\to 0$ as $n\to\infty$, therefore, for such a sequence we
	obtain, using Lemma \ref{lem:no-concentration-infinity}, that 
	\[\limsup_{n\to\infty}\frac{\norma{f_n\mu\ast 
	f_n\mu}_{L^2(\R^4)}^2}{\norma{f_n}_{L^2(\mu)}^4}\leq 2\pi.\]
	
	Using Proposition \ref{prop:comparison-cone} we conclude that an 
	extremizing sequence for $T$ does not concentrate at infinity. We apply Proposition 
	\ref{prop:convergence-not-concentration-infinity} to conclude.
\end{proof}

\section{The full one-sheeted hyperboloid}\label{sec:full_hyperboloid}

Our task in the sections to come is to prove Theorem \ref{thm:main-theorem-2}, the existence of extremals for the adjoint Fourier restriction inequality on the one-sheeted 
hyperboloid 
$\Hyp$. In the $L^4$ case, there is an argument
available for the cone $\Gamma^3$ that allows to relate the best constant and extremizers for $\Gamma^3$ with that for the double cone $\overline{\Gamma}^3$. It relies on the observation that
the algebraic sums $\Gamma^3+\Gamma^3$ and $\Gamma^3+(-\Gamma^3)$ 
intersect on a null set of $\R^3$, namely, 
$(\Gamma^3+\Gamma^3)\cap(\Gamma^3+(-\Gamma^3))=\Gamma^3$, so that for any $f_+,g_+,h_+\in L^2(\Gamma^3)$ and $f_-\in L^2(-\Gamma^3)$ one has
\[ \bigl\langle f_+\sigma_c\ast g_+\sigma_c, h_+\sigma_c\ast f_-\widetilde{\sigma}_c \bigr\rangle_{L^2(\R^4)}=0, \]
where $\widetilde{\sigma}_c$ denotes the reflection of $\sigma_c$, supported on $-\Gamma^3$.
An analogous property in the $L^4$ case applies to the two-sheeted hyperboloid in $\R^4$ and allows one to obtain its best constant from that of the upper 
sheet only (see \cite{RQ2}*{Proposition 7.3, Corollary 7.4}).
This approach is not applicable to $\Hyp$ because here $\hyp+\hyp$ and 
$\hyp+(-\hyp)$ intersect on a set of infinite Lebesgue measure.

The argument we use to prove precompactness of extremizing sequences (modulo 
multiplication by characters and Lorentz transformations) is close to that of 
Christ and Shao \cite{CS} and of \cite{RQ1} using a 
concentration-compactness argument, a refined
cap estimate, comparison to the cone and the use of Lorentz invariance. 

In the next section we review a cap refinement for the Tomas--Stein inequality for $\Sph^2$ that will be used in the subsequent section to obtain a corresponding cap refinement for the adjoint Fourier restriction inequality on the hyperboloid via a lifting method. In later sections we consider the concentration-compactness argument.

\section{The Tomas--Stein inequality for \texorpdfstring{$\Sph^2$}{S2} and refinements}\label{sec:Tomas_Stein_sphere}

The sharp convolution form of the Tomas--Stein inequality for $\Sph^2$ states that for all $f\in L^2(\Sph^2)$ we have
\begin{equation}\label{eq:convolution_ts}
\norma{f\sigma\ast f\sigma}_{L^2(\R^3)}\leq \mathbf{S}^2\norma{f}_{L^2(\Sph^2)}^2,
\end{equation}
where $\mathbf{S}=(2\pi)^{1/4}$ is the sharp constant, as obtained in \cite{Fo}.

In this section we review some refinements of \eqref{eq:convolution_ts} that will be useful in the next section. The exposition here follows that of \cite{CS}*{Section 6}.
We start by setting things up to define the $X_p$ spaces, $p\in[1,\infty)$, and the first step is to generate increasingly finer "grids" for the sphere $\Sph^2$. With this in mind, for each integer $k\geq 0$ choose a maximal subset $\{y_k^j\}_j\subset \Sph^2$ satisfying $\ab{y_k^j-y_k^l}\geq 2^{-k}$, for all $j\neq l$. Then, for each $k\geq 0$, the spherical caps $\sphcp_k^j:=\sphcp(y_k^j,2^{-k+1})$ have finite overlap and cover $\Sph^2$, that is, $\cup_j\sphcp_k^j=\Sph^2$, and there exists a constant $C$, independent of $k$, such that any point in $\Sph^2$ belongs to no more than $C$ caps in  $\{\sphcp_k^j\}_j$, for every $k\geq 0$. For $p\in[1,\infty)$, the $X_p$ norm of $f$ is defined by the expression
\begin{equation}\label{eq:xp_norm}
\norma{f}_{X_p}^4=\sum_{k\geq 0}\sum_{j}2^{-4k}\biggl(\frac{1}{\ab{\sphcp_k^j}}\int_{\sphcp_k^j}\ab{f}^p\d\sigma\biggr)^{4/p}.
\end{equation}
Moyua, Vargas and Vega showed in \cite{MVV} that there is a continuous inclusion $L^2(\Sph^2)\subset X_p$ for all $p\in(1,2)$ and that for any $p\geq \frac{12}{7}$ there exists $C<\infty$ such that for all $f\in L^ 2(\Sph^2)$
\begin{equation}\label{eq:MVVrefinement}
\norma{\widehat{f\sigma}}_{L^4(\R^3)}\leq C\norma{f}_{X_p}.
\end{equation}
Let us define
\[ \Lambda_{k,j}(f)=\Bigl(\ab{\sphcp_k^j}^{-1}\int_{\sphcp_k^j}\ab{f}\d\sigma\Bigr)\Bigl(\ab{\sphcp_k^j}^{-1}\int_{\Sph^2}\ab{f}^2\d\sigma\Bigr)^{-1/2}, \]
which by H\"older's inequality satisfies $\La_{k,j}(f)\leq 1$. It was shown in \cite{CS}*{Lemma 6.1} that for any $p\in[1,2)$, there exists $C<\infty$ and $\gamma>0$ such that for any $f\in L^2(\Sph^2)$,
\begin{equation}\label{eq:CSXprefinement}
\norma{f}_{X_p}\leq C\norma{f}_{L^2(\Sph^2)}\sup_{k,j}(\La_{k,j}(f))^\gamma.
\end{equation}
Combining the two results, \eqref{eq:MVVrefinement} and \eqref{eq:CSXprefinement}, by choosing any $p\in[\frac{12}{7},2)$, we obtain the following "cap refinement" of \eqref{eq:convolution_ts}: there exists $C<\infty$ and $\gamma>0$ such that for all $f\in L^2(\Sph^2)$
\begin{equation}\label{eq:cap_refinement_TS}
\norma{\widehat{f\sigma}}_{L^4(\R^3)}\leq C\norma{f}_{L^2(\Sph^2)}\sup_{k,j}(\La_{k,j}(f))^\gamma.
\end{equation}

A $\delta$-\textit{quasi-extremal} for the sphere is a function $f\neq 0$ that satisfies $\norma{f\sigma\ast f\sigma}_{L^4(\R^3)}\geq \delta^2\mathbf{S}^2\norma{f}_{L^2(\Sph^2)}^2$. With the aid of the previous inequality, Christ and Shao proved the following result regarding $\delta$-quasi-extremals.

\begin{lemma}[\cite{CS}*{Lemma 2.9}]
	\label{lem:quasiextremals-sphere-CS}
	For any $\delta>0$ there exists $C_{\delta}>0$ and $\eta_\delta>0$ with the 
	following property. If $f\in
	L^2(\Sph^2)$ satisfies $\norma{f\sigma*f\sigma}_2\geq 
	\delta^2\mathbf{S}^2\norma{f}_2^2$ then 
	there exist a decomposition $f=g+h$ and a spherical cap $\sphcp\subseteq\Sph^2$ satisfying
	\begin{align}
	\label{eq:pointwise}
	&0\leq \ab{g},\ab{h}\leq \ab{f},\\
	\label{eq:disjointness}
	&g,h \text{ have disjoint supports},\\
	\label{eq:upper-bound-cap}
	&\ab{g(x)}\leq C_\delta\norma{f}_2\ab{\sphcp}^{-1/2}\one_\sphcp(x)\quad\text{for all } x,\\
	\label{eq:lower-bound-l2-norm}
	&\norma{g}_2\geq \eta_\delta\norma{f}_2.
	\end{align}
	Moreover \eqref{eq:upper-bound-cap} and \eqref{eq:lower-bound-l2-norm} hold with constants that 
	satisfy
	$C_\delta\asymp \delta^{-1/\gamma}$ and $\eta_\delta\asymp \delta^{1/\gamma}$, where
	$\gamma>0$ is a universal constant\footnote{The power dependence of $C_\delta$ and $\eta_\delta$ on $\delta$ can be found in the proof of the lemma in \cite{CS}*{pp. 277-278}}.
\end{lemma}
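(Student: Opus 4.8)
The plan is to obtain the lemma as a direct consequence of the cap refinement \eqref{eq:cap_refinement_TS}, following the argument of Christ and Shao \cite{CS}; all the genuine analysis already sits in \eqref{eq:cap_refinement_TS}, which is built from the $X_p$-estimate \eqref{eq:MVVrefinement} of Moyua--Vargas--Vega and the concentration bound \eqref{eq:CSXprefinement}, and which I would take as given.

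\emph{Step 1: locate a cap carrying a definite share of the mass of $f$.} Since $\widehat{f\sigma\ast f\sigma}=(\widehat{f\sigma})^2$, Plancherel on $\R^3$ gives $\norma{\widehat{f\sigma}}_{L^4(\R^3)}^2=(2\pi)^{3/2}\norma{f\sigma\ast f\sigma}_{L^2(\R^3)}$. Feeding the hypothesis $\norma{f\sigma\ast f\sigma}_2\ge\delta^2\mathbf S^2\norma f_2^2$ into this identity and then applying \eqref{eq:cap_refinement_TS} yields
\[ (2\pi)^{3/2}\,\delta^2\mathbf S^2\norma f_2^2\ \le\ \norma{\widehat{f\sigma}}_{L^4(\R^3)}^2\ \le\ C^2\norma f_2^2\,\bigl(\sup_{k,j}\La_{k,j}(f)\bigr)^{2\gamma}, \]
hence $\sup_{k,j}\La_{k,j}(f)\ge c_0\,\delta^{1/\gamma}$, where $c_0=\bigl((2\pi)^{3/2}\mathbf S^2C^{-2}\bigr)^{1/(2\gamma)}$ depends only on the constant of \eqref{eq:cap_refinement_TS}. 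I would then pick $(k,j)$ with $\La_{k,j}(f)\ge\tfrac12 c_0\delta^{1/\gamma}$ and set $\sphcp:=\sphcp_k^j$; unwinding the definition of $\La_{k,j}$, this reads
\[ \int_{\sphcp}\ab f\,\d\sigma\ \ge\ \tfrac12\,c_0\,\delta^{1/\gamma}\,\ab\sphcp^{1/2}\norma f_2. \]

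\emph{Step 2: truncate.} With $M=M_\delta>0$ to be fixed below, set $\lambda:=M\ab\sphcp^{-1/2}\norma f_2$ and define $g:=f\,\one_{\sphcp\cap\{\ab f\le\lambda\}}$ and $h:=f-g$. Then $g$ and $h$ satisfy $gh\equiv0$, $0\le\ab g,\ab h\le\ab f$, and $\ab g\le\lambda\one_\sphcp=M\norma f_2\ab\sphcp^{-1/2}\one_\sphcp$, which is exactly \eqref{eq:upper-bound-cap} with $C_\delta=M$. For \eqref{eq:lower-bound-l2-norm} I would split
\[ \int_\sphcp\ab f\,\d\sigma=\int_{\sphcp\cap\{\ab f\le\lambda\}}\ab f\,\d\sigma+\int_{\sphcp\cap\{\ab f>\lambda\}}\ab f\,\d\sigma\ \le\ \ab\sphcp^{1/2}\norma g_2+\lambda^{-1}\norma f_2^2, \]
using Cauchy--Schwarz on the first integral (whose integrand coincides with $g$) and $\ab f\le\lambda^{-1}\ab f^2$ on the super-level set for the second; since $\lambda^{-1}\norma f_2^2=M^{-1}\ab\sphcp^{1/2}\norma f_2$, comparing with Step 1 gives $\norma g_2\ge\bigl(\tfrac12 c_0\delta^{1/\gamma}-M^{-1}\bigr)\norma f_2$. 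Choosing $M=M_\delta:=4c_0^{-1}\delta^{-1/\gamma}$ produces $\norma g_2\ge\tfrac14 c_0\delta^{1/\gamma}\norma f_2$, i.e. \eqref{eq:lower-bound-l2-norm} with $\eta_\delta=\tfrac14 c_0\delta^{1/\gamma}$, and at the same time $C_\delta=M_\delta\asymp\delta^{-1/\gamma}$, $\eta_\delta\asymp\delta^{1/\gamma}$ with the same universal $\gamma$ as in \eqref{eq:cap_refinement_TS}.

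I do not anticipate a real obstacle: the only substantive input is \eqref{eq:cap_refinement_TS}, quoted from the literature, and the rest is the elementary pigeonholing of Step 1 and the Calder\'on--Zygmund-type truncation of Step 2. The one point to keep in mind is that $\La_{k,j}(f)\le1$ always holds by Cauchy--Schwarz, so the conclusion is only meaningful for $\delta$ bounded above (equivalently $c_0\delta^{1/\gamma}\le1$), which is harmless for the applications.
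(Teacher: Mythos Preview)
Your proposal is correct and follows essentially the same approach as the paper. The paper does not give a formal proof of this cited lemma but includes, in the paragraph immediately after its statement, exactly the construction you describe: use \eqref{eq:cap_refinement_TS} to find a cap $\sphcp=\sphcp_k^j$ with $\int_\sphcp\ab f\,\d\sigma\gtrsim\delta^{1/\gamma}\ab\sphcp^{1/2}\norma f_2$, then set $g=f\one_{\{x\in\sphcp:\ab f\le R\}}$ for a threshold $R\asymp\delta^{-1/\gamma}\ab\sphcp^{-1/2}\norma f_2$; your Step~2 simply spells out the elementary computation (Cauchy--Schwarz plus the Chebyshev-type bound on the super-level set) that the paper leaves as ``a simple exercise''.
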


It will be our task in the next section to obtain an analogous result for the hyperboloid and for this it will be convenient to briefly discuss the construction of the function $g$ and the cap $\sphcp$ in the conclusion of the previous lemma. Letting $f\in L^2(\Sph^2)$ be a $\delta$-quasi-extremal, inequality \eqref{eq:cap_refinement_TS} implies that there is a constant $c_0\in(0,\infty)$, independent of $f$, such that
\[ \sup_{k,j}\Lambda_{k,j}(f)\geq 2c_0\delta^{1/\gamma}. \]
It follows that there exist $k$ and $j$ such that $\Lambda_{k,j}(f)\geq c_0\delta^{1/\la}$. Let $\sphcp:=\sphcp_k^j$. Then,
\begin{equation}\label{eq:lowerBoundCap}
\int_\sphcp \ab{f}\d\sigma\geq c_0\delta^{1/\gamma}\ab{\sphcp}^{1/2}\norma{f}_{L^2(\Sph^2)}.
\end{equation}
Let $R=(\frac{1}{2}c_0\delta^{1/\gamma}\ab{\sphcp}^{1/2}\norma{f}_{L^2(\Sph^2)})^{-1}$, $A=\{x\in\sphcp\colon\ab{f}\leq R \}$, $g=f\one_A$ and $h=f-f\one_A$. It is now a simple exercise to prove that $g,\,h$ and $\sphcp$ satisfy the conditions stated in the lemma.

\begin{remark}\label{rem:measurability}
Let us consider the following scenario: a measurable set $E\subseteq \R$ and a measurable function $F:E\times\Sph^2\to\mathbb{C}$ that satisfies $F\in L^2(E\times\Sph^2)$, $\norma{F_r\sigma\ast F_r\sigma}_{L^2(\R^3)}\geq \delta^2\mathbf{S}^2\norma{F_r}_{L^2(\Sph^2)}^2>0$ for all $r\in E$, where $F_r(x)=F(r,x)$, $(r,x)\in E\times\Sph^2$. Applying Lemma \ref{lem:quasiextremals-sphere-CS} to $F_r$ for each $r\in E$ generates caps $\sphcp_r\subseteq\Sph^2$ and functions $G_r$ and $H_r$, and in this way functions $G,H:E\times\Sph^2\to\mathbb{C}$, which a priori may not be measurable in the product space $E\times\Sph^2$. This can be overcome if we are careful with the choice of the caps as we now proceed to explain. For a collection of spherical caps $\{\sphcp_r\}_{r\in E}$ satisfying \eqref{eq:lowerBoundCap} with $\sphcp=\sphcp_r$ and $f=F_r$, for all $r\in E$, denote
\begin{align*}
\mathcal{G}_0&=\{(r,x)\colon r\in E,\, x\in\sphcp_r \},\\ \mathcal{G}_1&=\Bigl\{(r,x)\in\mathcal{G}_0\colon\ab{F_r(x)}\leq \big(\tfrac{1}{2}c_0\delta^{1/\gamma}\ab{\sphcp_r}^{1/2}\norma{F_r}_{L^2(\Sph^2)}\big)^{-1} \Bigr\}.
\end{align*}
Then, as explained following \eqref{eq:lowerBoundCap}, we can take $G=F\one_{\mathcal{G}_1}$ and $H=F-F\one_{\mathcal{G}_1}$. We need to argue that we can have $\mathcal G_0$ and $\mathcal{G}_1$ measurable, by choosing the collection $\{\sphcp_r\}_{r\in E}$ appropriately.
When $r\in E$, then $\sup_{k,j}\Lambda_{k,j}(F_r)\geq 2c(\delta)$, for some universal constant $c(\delta)$. The cap 
$\sphcp_r=\sphcp_k^j$ is to be chosen so that
$\Lambda_{k,j}(F_r)\geq c(\delta)$,
that is,
\[ \ab{\sphcp_r}^{-1/2}\int_{\sphcp_r}\ab{F_r}\d\sigma\geq
c(\delta)\norma{F_r}_{L^2(\Sph^2)}. \]

The set of caps $\{\sphcp_k^j\colon k,j\}$
in $\Sph^2$ is parametrized by indices $k$ and $j$ where $k\in\N$ and $j\in\{1,2,\dotsc,J_k\}$, for some $J_k<\infty$. Let $\mathcal{Z}=\{(k,j)\colon k\in\N,\, j\in\{1,\dotsc,J_k\} \}$ and
define the function $\Theta\colon E\times \mathcal{Z}\to \R$ by
\[ \Theta(r,k,j)= \ab{\sphcp_k^j}^{-1/2}\norma{F_r}_{L^2(\Sph^2)}^{-1}\int_{\sphcp_k^j}\ab{F_r}\d\sigma.\]

By Fubini's theorem, for each fixed 
$(k,j)\in E\times \mathcal{Z}$, 
$\Theta(\cdot,k,j)$ is a measurable function. By assumption, for each $r\in E$, 
$\sup_{k,j}\Theta(r,k,j)\geq 
2c(\delta)$. We want to find a 
measurable function $\tau:E\to \mathcal{Z}$ such that 
\[\Theta(r,\tau(r))\geq \sup_{k,j}\Theta(r,k,j)-c(\delta)\geq c(\delta),\]
a so called $c(\delta)$-maximizer.
That this is possible 
is a consequence of measurable selection theorems, see for instance \cite{Ri}*{Theorem 4.1}. 

For such a measurable selection function $\tau$ write $\tau(r)=(k(r),j(r))\in\mathcal Z$, then the function $E\to \Sph^2$, $r\mapsto y_{k(r)}^{j(r)}$, is measurable and we can write $\mathcal G_0=\{(x,r):r\in E, \ab{x-y_{k(r)}^{j(r)}}\leq 2^{-k(r)+1} \}$. We can 
therefore 
assume that the sets $\mathcal G_0$ and $\mathcal G_1$ are measurable sets in $E\times\Sph^2$, so that $G$ and $H$ are measurable functions. In this way, we have the following lemma.

\begin{lemma}\label{lem:measurable_Sel}
Let $E\subseteq \R$ be a measurable set and $F:E\times\Sph^2\to\mathbb{C}$ be a measurable function satisfying $F\in L^2(E\times\Sph^2)$, $\norma{F_r\sigma\ast F_r\sigma}_{L^2(\R^3)}\geq \delta^2\mathbf{S}^2\norma{F_r}_{L^2(\Sph^2)}^2>0$ for all $r\in E$, where $F_r(x)=F(r,x)$, $(r,x)\in E\times\Sph^2$. Then, there are spherical caps $\{\sphcp_r\}_{r\in E}$ and measurable functions $G,H$ satisfying: $F=G+H$, $G$ and $H$ have disjoint supports, $0\leq \ab{G},\ab{H}\leq \ab{F}$, and for all $r\in E$:
\[ \ab{G_r(x)}\leq C_\delta \norma{F_r}_2\ab{\sphcp_r}^{-1/2}\one_{\sphcp_r}(x), \, x\in\Sph^2
\text{ and } \norma{G_r}_2\geq \eta_\delta\norma{F_r}_2. \]
\end{lemma}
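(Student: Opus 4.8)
The plan is to run the pointwise construction from Lemma~\ref{lem:quasiextremals-sphere-CS} for each $r\in E$ and then verify that, with a judicious choice of caps, the resulting splitting depends measurably on $r$. Recall that the proof of that lemma associates to a $\delta$-quasi-extremal $f$ a cap $\sphcp=\sphcp_k^j$ with $\Lambda_{k,j}(f)\geq c(\delta)$, a threshold $R=(\tfrac12 c_0\delta^{1/\gamma}\ab{\sphcp}^{1/2}\norma{f}_{2})^{-1}$, the set $A=\{x\in\sphcp\colon\ab{f}\leq R\}$, and the splitting $g=f\one_A$, $h=f-g$, for which properties \eqref{eq:pointwise}--\eqref{eq:lower-bound-l2-norm} hold by direct verification. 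Applying this with $f=F_r$ for each $r\in E$ yields caps $\sphcp_r$ and functions $G_r,H_r$, so the only thing requiring care is that $G(r,x):=G_r(x)$ and $H(r,x):=H_r(x)$ define measurable functions on $E\times\Sph^2$; this measurable selection of caps is really the only non-bookkeeping point, and it turns out to be elementary because the grid $\{\sphcp_k^j\}$ is countable.

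First I would handle the choice of caps. Since $F\in L^2(E\times\Sph^2)$, Fubini's theorem gives that $r\mapsto\norma{F_r}_{L^2(\Sph^2)}$ is measurable, and likewise, for each fixed $(k,j)\in\mathcal{Z}$, so is $r\mapsto\Theta(r,k,j)=\ab{\sphcp_k^j}^{-1/2}\norma{F_r}_{2}^{-1}\int_{\sphcp_k^j}\ab{F_r}\d\sigma$, where we use the hypothesis $\norma{F_r}_2>0$ for all $r\in E$. Enumerating $\mathcal{Z}=\{z_1,z_2,\dots\}$, I would let $\tau(r)$ be $z_n$ for the least $n$ with $\Theta(r,z_n)\geq c(\delta)$; by hypothesis such an $n$ always exists, and $\{r\colon\tau(r)=z_n\}=\{r\colon\Theta(r,z_n)\geq c(\delta)\}\setminus\bigcup_{m<n}\{r\colon\Theta(r,z_m)\geq c(\delta)\}$ is measurable, so $\tau\colon E\to\mathcal{Z}$ is measurable. (One could instead invoke a measurable selection theorem such as \cite{Ri}*{Theorem 4.1}, but countability of $\mathcal Z$ makes this step unnecessary.) Set $\sphcp_r:=\sphcp_{\tau(r)}$.

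Next I would verify measurability of $G$ and $H$. Writing $\tau(r)=(k(r),j(r))$, the indicator of $\mathcal{G}_0=\{(r,x)\colon r\in E,\, x\in\sphcp_r\}$ equals $\sum_{(k,j)\in\mathcal Z}\one_{\{\tau(r)=(k,j)\}}(r)\,\one_{\sphcp_k^j}(x)$, a countable sum of products of measurable functions, so $\mathcal{G}_0$ is measurable; similarly $r\mapsto\ab{\sphcp_r}$ is measurable, hence $R(r):=(\tfrac12 c_0\delta^{1/\gamma}\ab{\sphcp_r}^{1/2}\norma{F_r}_2)^{-1}$ is measurable and $\mathcal{G}_1=\{(r,x)\in\mathcal{G}_0\colon\ab{F(r,x)}\leq R(r)\}$ is measurable. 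Then $G:=F\one_{\mathcal{G}_1}$ and $H:=F-G$ are measurable, $G$ and $H$ have disjoint supports, and $0\leq\ab{G},\ab{H}\leq\ab{F}$ by construction. Finally, for each fixed $r\in E$ the pair $(G_r,\sphcp_r)$ is precisely the output of the construction in Lemma~\ref{lem:quasiextremals-sphere-CS} applied to $F_r$, so the bounds $\ab{G_r(x)}\leq C_\delta\norma{F_r}_2\ab{\sphcp_r}^{-1/2}\one_{\sphcp_r}(x)$ and $\norma{G_r}_2\geq\eta_\delta\norma{F_r}_2$ hold for all $r\in E$, which completes the proof.
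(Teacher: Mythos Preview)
Your proof is correct and follows essentially the same approach as the paper (which gives the argument in Remark~\ref{rem:measurability}): apply the construction behind Lemma~\ref{lem:quasiextremals-sphere-CS} pointwise in $r$, then argue that a measurable choice $\tau\colon E\to\mathcal{Z}$ of caps makes $\mathcal{G}_0$, $\mathcal{G}_1$, and hence $G,H$, measurable. The only difference is that the paper invokes a general measurable selection theorem \cite{Ri}*{Theorem~4.1} to obtain $\tau$, whereas you exploit the countability of $\mathcal{Z}$ to give an explicit ``first index that works'' construction; your argument is more elementary and self-contained, while the paper's citation would cover situations where the parameter set is not countable.
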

\end{remark}
 
We now prove a slight improvement of Lemma \ref{lem:quasiextremals-sphere-CS} that adds one more restriction to the function $g$. It tells us 
that we can replace a $\delta$-quasi-extremal for the sphere for a better 
controlled one at the 
expense of powers of $\delta$.

\begin{lemma}
	\label{lem:quasiextremals-sphere}
	For any $\delta>0$ there exists $C_{\delta}>0$, $\eta_\delta>0$ and $\la_\delta>0$ with the 
	following property. If $f\in
	L^2(\Sph^2)$ satisfies $\norma{f\sigma*f\sigma}_2\geq \delta^2\mathbf{S}^2\norma{f}_2^2$ then 
	there 
	exist a decomposition $f=g+h$
	and a spherical cap $\sphcp$ satisfying \eqref{eq:pointwise}, \eqref{eq:disjointness}, \eqref{eq:upper-bound-cap}, 
	\ref{eq:lower-bound-l2-norm} 
	and 
	\begin{equation}
	\label{eq:lower-bound-functional}
	\norma{g\sigma*g\sigma}_2\geq \la_\delta\mathbf{S}^2\norma{f}_2^2.
	\end{equation}
	Moreover \eqref{eq:upper-bound-cap}, \eqref{eq:lower-bound-l2-norm} and 
	\eqref{eq:lower-bound-functional} hold with constants that satisfy
	$C_\delta\asymp \delta^{-1/\gamma},\,\eta_\delta\asymp \delta^{1+1/\gamma}$ and 
	$\la_\delta\asymp 
	\delta^{6+4/\gamma}$, where
	$\gamma>0$ is a universal constant.
\end{lemma}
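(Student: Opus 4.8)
The plan is to replace a single application of Lemma~\ref{lem:quasiextremals-sphere-CS} by a greedy iteration and then harvest a fixed power-of-$\delta$ share of the bilinear functional by a telescoping estimate. Normalise $\norma{f}_{L^2(\Sph^2)}=1$, set $f^{(0)}=f$, and while $\norma{f^{(m)}\sigma\ast f^{(m)}\sigma}_2\ge\tfrac12\delta^2\mathbf{S}^2$ apply Lemma~\ref{lem:quasiextremals-sphere-CS} to $f^{(m)}$ --- whose quasi-extremal parameter is then $\ge\delta/\sqrt2$ since $\norma{f^{(m)}}_2\le1$ --- obtaining a cap $\sphcp_m$ (of radius $\rho_m$) and a bubble $g_m=f^{(m)}\one_{A_m}$ with $A_m\subseteq\sphcp_m$, $\ab{g_m}\le C_\delta\ab{\sphcp_m}^{-1/2}\one_{\sphcp_m}$ ($C_\delta\asymp\delta^{-1/\gamma}$) and $\norma{g_m}_2\gtrsim\delta^{1/\gamma}\norma{f^{(m)}}_2$; put $f^{(m+1)}=f^{(m)}-g_m=f^{(m)}\one_{A_m^\complement}$. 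Since $\norma{f^{(m+1)}}_2^2=\norma{f^{(m)}}_2^2-\norma{g_m}_2^2\le(1-c\delta^{2/\gamma})\norma{f^{(m)}}_2^2$, the $L^2$ mass decays geometrically, so the loop halts after $M^\ast\lesssim\delta^{-2/\gamma}\log(1/\delta)$ steps; moreover $\norma{f^{(m)}}_2\ge\delta/\sqrt2$ for every $m<M^\ast$, because $\tfrac12\delta^2\mathbf{S}^2\le\norma{f^{(m)}\sigma\ast f^{(m)}\sigma}_2\le\mathbf{S}^2\norma{f^{(m)}}_2^2$. This last fact is exactly what promotes the lower bound on the final bubble to $\norma{g}_2\gtrsim\delta^{1/\gamma}\cdot\delta=\delta^{1+1/\gamma}$, accounting for the extra factor of $\delta$ in $\eta_\delta$ relative to Lemma~\ref{lem:quasiextremals-sphere-CS}.

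Next, telescope: with $F_m:=f^{(m)}\sigma\ast f^{(m)}\sigma$ and $f^{(m)}=g_m+f^{(m+1)}$ one has $F_m-F_{m+1}=g_m\sigma\ast g_m\sigma+2\,g_m\sigma\ast f^{(m+1)}\sigma=:D_m$, so
\begin{equation*}
\tfrac12\delta^2\mathbf{S}^2\ \le\ \norma{F_0}_2-\norma{F_{M^\ast}}_2\ \le\ \norma{\textstyle\sum_{m=0}^{M^\ast-1}D_m}_2\ \le\ \sum_{m=0}^{M^\ast-1}\norma{D_m}_2,
\end{equation*}
hence some $m_0<M^\ast$ satisfies $\norma{D_{m_0}}_2\gtrsim\delta^2/M^\ast\gtrsim\delta^{2+2/\gamma}/\log(1/\delta)$. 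If $\norma{g_{m_0}\sigma\ast g_{m_0}\sigma}_2\ge\tfrac12\norma{D_{m_0}}_2$ we are done: take $g:=g_{m_0}$, $h:=f-g_{m_0}=f\one_{B^\complement}$ (where $g_{m_0}=f\one_B$), $\sphcp:=\sphcp_{m_0}$; then \eqref{eq:pointwise}, \eqref{eq:disjointness}, \eqref{eq:upper-bound-cap}, \eqref{eq:lower-bound-l2-norm} follow from the previous paragraph (using $R_{m_0}\lesssim\delta^{-1/\gamma}\ab{\sphcp_{m_0}}^{-1/2}$ for \eqref{eq:upper-bound-cap} and $\norma{g_{m_0}}_2\gtrsim\delta^{1/\gamma}\norma{f^{(m_0)}}_2\gtrsim\delta^{1+1/\gamma}$ for \eqref{eq:lower-bound-l2-norm}), and \eqref{eq:lower-bound-functional} holds with a $\la_\delta$ that, after a deliberately lossy bookkeeping of constants, exceeds $\delta^{6+4/\gamma}$.

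The remaining case is the obstacle: $\norma{g_{m_0}\sigma\ast f^{(m_0+1)}\sigma}_2\gtrsim\norma{D_{m_0}}_2$, i.e.\ the extracted bubble interacts strongly with the residue but need not be self-interacting --- and for complex $f$ one cannot pass to $\ab{f}$ to force a lower bound on $\norma{g_{m_0}\sigma\ast g_{m_0}\sigma}_2$, since the naive estimate $\norma{g_{m_0}\sigma\ast g_{m_0}\sigma}_1\ge\ab{\int g_{m_0}\,\d\sigma}^2$ is destroyed by phase cancellation. The plan here is to split $f^{(m_0+1)}=f^{(m_0+1)}\one_{3\sphcp_{m_0}}+f^{(m_0+1)}\one_{(3\sphcp_{m_0})^\complement}$, tile the second piece by caps of radius $\rho_{m_0}$, and use the flat-cap bilinear bound $\norma{F\sigma\ast G\sigma}_2\lesssim\rho^{1/2}\norma{F}_2\norma{G}_2$ for $F,G$ supported in caps of radius $\lesssim\rho$ (from the sharp Tomas--Stein inequality, parabolic rescaling, and Cauchy--Schwarz on the Fourier side) together with the finite overlap of the resulting convolutions, to obtain $\norma{g_{m_0}\sigma\ast f^{(m_0+1)}\sigma}_2\lesssim\rho_{m_0}^{1/2}\norma{g_{m_0}}_2$; this forces $\ab{\sphcp_{m_0}}\gtrsim_\delta1$ up to a fixed power of $\delta$, so $\sphcp_{m_0}$ is a large cap.

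Finally, with the cap size bounded below one expects to recover \eqref{eq:lower-bound-functional} by converting the strong interaction of $g_{m_0}$ with the near part $f^{(m_0+1)}\one_{3\sphcp_{m_0}}$ into a genuine self-interaction on the enlarged cap $3\sphcp_{m_0}$: using the curvature of $\Sph^2$, $g_{m_0}\sigma\ast g_{m_0}\sigma$ (and likewise $(\,\cdot\,)\sigma\ast(\,\cdot\,)\sigma$ for functions on $3\sphcp_{m_0}$) is supported in a slab of volume $\asymp\ab{\sphcp_{m_0}}^{2}$ (the Minkowski sum $\sphcp_{m_0}+\sphcp_{m_0}$ has dimensions $\rho_{m_0}\times\rho_{m_0}\times\rho_{m_0}^{2}$), so that $\norma{\,\cdot\,}_2\gtrsim\ab{\sphcp_{m_0}}^{-1}\norma{\,\cdot\,}_1$, and an $L^1$ lower bound is extracted from the flatness $\ab{g_{m_0}}\le R_{m_0}$, from $\norma{g_{m_0}}_2$, and from the a priori lower bound on $\ab{\sphcp_{m_0}}$. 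Making this last step rigorous for complex $g_{m_0}$ --- i.e.\ either ruling out the cross-term case or genuinely producing the self-interacting bubble on $3\sphcp_{m_0}$ --- is the delicate point and the main obstacle of the proof; everything else is bookkeeping of the constants $C_\delta,\eta_\delta,\la_\delta$ (with the displayed powers obtained with room to spare).
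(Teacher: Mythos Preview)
Your proposal is incomplete: you yourself flag the cross-term case $\norma{g_{m_0}\sigma\ast f^{(m_0+1)}\sigma}_2\gtrsim\norma{D_{m_0}}_2$ as ``the delicate point and the main obstacle,'' and the sketch you give for it (forcing $\ab{\sphcp_{m_0}}\gtrsim_\delta 1$ via flat-cap bilinear bounds, then converting a cross interaction into a self interaction via $L^1$--$L^2$ on a slab) does not close. In particular, the passage from a strong interaction between $g_{m_0}$ and the near part of the residue to a lower bound on $\norma{g_{m_0}\sigma\ast g_{m_0}\sigma}_2$ is not justified for complex $f$, and your $L^1$ lower bound on the convolution uses $\bigl(\int g_{m_0}\d\sigma\bigr)^2$, which can vanish.

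The paper avoids this case entirely by a one-line observation you are missing: by Plancherel, $\norma{u\sigma\ast u\sigma}_2^{1/2}$ is a constant multiple of $\norma{\widehat{u\sigma}}_{L^4}$, which is a \emph{norm} of a \emph{linear} functional of $u$. Hence from $f=G_N+\sum_{k=1}^N f_k$ one gets, with no cross terms at all,
\[
\norma{G_N\sigma\ast G_N\sigma}_2^{1/2}\ \ge\ \norma{f\sigma\ast f\sigma}_2^{1/2}\ -\ \sum_{k=1}^{N}\norma{f_k\sigma\ast f_k\sigma}_2^{1/2}.
\]
Now fix $\la_\delta=(\delta^3\eta_\delta^2/8)^2\asymp\delta^{6+4/\gamma}$ and $N_\delta=\lceil 2\eta_\delta^{-2}\delta^{-2}\rceil$. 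If no bubble $f_k$ with $k\le N\le 2N_\delta$ satisfies $\norma{f_k\sigma\ast f_k\sigma}_2\ge\la_\delta\mathbf{S}^2\norma{f}_2^2$, the display gives $\norma{G_N\sigma\ast G_N\sigma}_2^{1/2}\ge(\delta-2N_\delta\la_\delta^{1/2})\mathbf{S}\norma{f}_2\ge\tfrac12\delta\mathbf{S}\norma{f}_2$, so the iteration can always be continued; but your own $L^2$-mass argument shows it must halt by step $N_\delta$. This contradiction forces some $f_k$ to be a win, and that $f_k$ (with its cap $\sphcp_k$) is the desired $g$. No bilinear cap estimates, no case split, and the stated powers $C_\delta\asymp\delta^{-1/\gamma}$, $\eta_\delta\asymp\delta^{1+1/\gamma}$, $\la_\delta\asymp\delta^{6+4/\gamma}$ fall out directly.
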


\begin{remark}\label{rem:lower_L1}
	It is not difficult to see (e.g. \cite{RQ1}*{Lemma 6.2}) that for a function $g$ satisfying \eqref{eq:upper-bound-cap} and 
	\eqref{eq:lower-bound-l2-norm}
	there is a lower bound for the $L^1$ norm of the form
	\begin{equation}
	\label{eq:lower-bound-l1-norm-sphere}
	\int_{\sphcp}\ab{g}\d\sigma\geq \frac{\eta_\delta^2}{C_\delta}\norma{f}_2\ab{\sphcp}^{1/2}.
	\end{equation}
	Note that the sharp estimate \eqref{eq:convolution_ts} for $\Sph^2$ implies that for $g$ 
	satisfying 
	\eqref{eq:lower-bound-functional} 
	we have
	\[\mathbf{S}\norma{g}_{2}\geq \norma{g\sigma*g\sigma}_2^{1/2}\geq 
	\la_\delta^{1/2}\mathbf{S}\norma{f}_2,\]
	so that
	\begin{equation}\label{eq:lower-bound:9:15}
	\norma{g}_{L^2(\Sph^2)}\geq\la_\delta^{1/2}\norma{f}_{L^2(\Sph^2)}.
	\end{equation}
\end{remark}

\begin{proof}[Proof of Lemma \ref{lem:quasiextremals-sphere}]
	Take $C_\delta$ and $\eta_\delta$ as in the 
	conclusion of Lemma \ref{lem:quasiextremals-sphere-CS}. We claim that the 
	lemma at hand holds with respective constants $C_\delta$, $\delta\eta_\delta/\sqrt{2}$ and
	$\la_\delta=(\delta^3\eta_\delta^2/8)^2$. To see this we employ a 
	decomposition algorithm, 
	reminiscent of that in \cite{CS}*{Section 8, step 6A}, 
	defined in the following inductive way. 
	
	Let $G_0=f$ and $f_0=0$ 
	and suppose that for $N\geq 0$ we have defined $G_{N}$ and 
	$f_k$, for $0\leq k\leq N$, satisfying:
	\begin{align}
	\label{eq:suma}
	&f=G_N+f_0+\dotsb+f_{N},\\
	\label{eq:disjoint-support}
	&\supp(G_{N}),\supp(f_0),\dots,\supp(f_{N})\text{ are pairwise disjoint,}\\
	\label{eq:lower-bound-convolution}
	&\norma{G_N\sigma*G_N\sigma}_2\geq \frac{1}{2}\delta^2\mathbf{S}^2\norma{f}_2^2.
	\end{align}
	The previous conditions are satisfied if $N=0$. We now define the inductive step of 
	the 
	algorithm. If \eqref{eq:suma}, \eqref{eq:disjoint-support} and 
	\eqref{eq:lower-bound-convolution} hold for $N$ we define $G_{N+1}$ and $f_{N+1}$ in the
	following way. 
	
	Given that $\norma{G_N\sigma*G_N\sigma}_2\geq \frac{1}{2}\delta^2\mathbf{S}^2\norma{f}_2^2\geq 
	\frac{1}{2}\delta^2\mathbf{S}^2\norma{G_N}_2^2$ we can
	apply 
	Lemma \ref{lem:quasiextremals-sphere-CS} to $G_N$ to
	obtain a decomposition $G_N=g_N+h_N$ and a cap $\sphcp_{N}$. Define $G_{N+1}=h_N$ and $f_{N+1}=g_N$. 
	The 
	functions $G_{N+1}$ and $f_{N+1}$ therefore have 
	disjoint supports and satisfy
	\begin{gather}
	\label{eq:cond1_f_N_plus_1}
	\ab{f_{N+1}(x)}\leq C_\delta\norma{G_N}_2\ab{\sphcp_{N}}^{-1/2}\one_{\sphcp_{N}}(x)\leq 
	C_\delta\norma{f}_2\ab{\sphcp_{N}}^{-1/2}\one_{\sphcp_{N}}(x)\;\text{ for all } x,\\
	\label{eq:cond2_f_N_plus_1}
	\norma{f_{N+1}}_2\geq 
	\eta_\delta\norma{G_N}_2\geq \frac{1}{\sqrt{2}}\eta_\delta\delta\norma{f}_2,
	\end{gather}
	where the second inequality in \eqref{eq:cond2_f_N_plus_1} follows as in \eqref{eq:lower-bound:9:15}.

	The algorithm terminates at $N\geq 1$ when either $\norma{f_{N}\sigma*f_{N}\sigma}_2\geq 
	\la_\delta\mathbf{S}^2\norma{f}_2^2$ or $\norma{G_N\sigma*G_N\sigma}_2< 
	\frac{1}{2}\delta^2\mathbf{S}^2\norma{f}_2^2$. In the former case we say the algorithm stops in 
	a 
	win and  set $g=f_{N}$, $h=G_N+f_0+\dotsb+f_{N-1}$, $\sphcp=\sphcp_{N}$ and the Lemma is proved.
	
	Let $N_\delta:=\lceil2\eta_\delta^{-2}\delta^{-2}\rceil$. We 
	claim 
	that the 
	algorithm stops in a win for some $N\leq N_\delta$. We first show that the algorithm can not 
	run 
	for more than $N_\delta$ steps, otherwise, using \eqref{eq:cond2_f_N_plus_1} we have
	\[\norma{f}_2\geq \biggl(\suma{k=1}{N_\delta+1}\norma{f_k}_2^2\biggr)^{1/2}\geq 
	\frac{1}{\sqrt{2}}(N_\delta+1)^{1/2}\eta_\delta\delta\norma{f}_2>\norma{f}_2,\]
	which is impossible.
	
	Second, we show that if the algorithm has not stopped in a win during the first $N$ 
	steps for some $N\leq 
	2N_\delta$, then we can perform the step $N+1$. More precisely, if 
	$\norma{f_k\sigma*f_k\sigma}_2<\la_\delta\mathbf{S}^2\norma{f}_2^2$ for all 
	$1\leq k\leq N$, for some $N\leq 2N_\delta$, then $\norma{G_N\sigma*G_N\sigma}_2\geq
	\frac{1}{2}\delta^2\mathbf{S}^2\norma{f}_2^2$. Indeed, using Plancherel's theorem and then the 
	triangle inequality we obtain
	\begin{align*}
	\norma{G_{N}\sigma*G_{N}\sigma}_2^{1/2}&\geq
	\norma{f\sigma*f\sigma}_2^{1/2}-\suma{k=1}{N}\norma{f_k\sigma*f_k\sigma}_2^{1/2}
	\geq \delta\mathbf{S}\norma{f}_2-N\la_\delta^{1/2}\mathbf{S}\norma{f}_2\\
	&\geq (\delta-2N_\delta\la_\delta^{1/2})\mathbf{S}\norma{f}_2\\
	&\geq \frac{1}{2}\delta\mathbf{S}\norma{f}_2.
	\end{align*}
	
	If follows that the algorithm stops in a win for some $N\leq N_\delta$. This finishes the proof.
\end{proof}

The next topic we review is that of "weak interaction between distant caps". For spherical caps $\sphcp,\,\sphcp'\subseteq\Sph^2$ there is a notion of distance. Let 
$(y,a),\,(y',a')\in\Sph^2\times(0,\infty)$ denote 
the centers and radii of the spherical caps $\sphcp,\,\sphcp'$,
\[ \sphcp=\{x\in\Sph^2\colon \ab{x-y}\leq a \},\quad\sphcp'=\{x\in\Sph^2\colon \ab{x-y'}\leq a' 
\}.\]
The distance between $\sphcp$ and $\sphcp'$ is defined by the expression
\begin{equation}\label{eq:def_distance}
\varrho(\sphcp,\sphcp')= \min(\operatorname{d}(\sphcp,\sphcp'),\operatorname{d}(\sphcp,-\sphcp')),
\end{equation}
where, as in \cite{OeS14}, we can take $\operatorname{d}$ to be the hyperbolic distance between $(y,a)$ and $(y',a')$ in the upper half space model, that is \footnote{We point out that for the two lemmas that follow we don't need $\operatorname{d}$ to be a distance. It would be perfectly fine to consider instead the expression
\[ \frac{(a-a')^2}{aa'}+\frac{\ab{y-y'}^2}{a^2}+\frac{\ab{y-y'}^2}{(a')^2}, \]
so that caps are far apart if either $a/a'$ or $a'/a$ is large or the distance from $y$ to $y'$ is much larger than either $a$ or $a'$.}
\[ \operatorname{d}(\sphcp,\sphcp')=\operatorname{arccosh}\Bigl(1+\frac{(a-a')^2+\ab{y-y'}^2}{2aa'}\Bigr). \]
The following lemma quantifies the notion of weak interaction between distant caps.

\begin{lemma}[\cite{CS}*{Lemma 7.6}]\label{lem:weak-interaction-caps}
	For any $\eps>0$ there exists $\rho<\infty$ such that 
	\[ 
	\norma{\one_\sphcp\sigma\ast\one_{\sphcp'}\sigma}_{L^2(\R^3)}<\eps\ab{\sphcp}^{1/2}\ab{\sphcp'}^{1/2},
	\quad\text{whenever}\quad\varrho(\sphcp,\sphcp')>\rho. \]
\end{lemma}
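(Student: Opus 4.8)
The plan is to move to the autocorrelation picture and reduce everything to the classical identity $\sigma\ast\sigma(\zeta)=2\pi|\zeta|^{-1}\one_{\{0<|\zeta|<2\}}$ together with one elementary estimate on the overlap of two thin bands of $\Sph^2$; this is in essence \cite{CS}*{Lemma 7.6}, which one may also simply cite. Throughout, write $a,a'$ for the chordal radii and $y,y'\in\Sph^2$ for the centres of $\sphcp,\sphcp'$, assume $a\le a'$ without loss of generality, and put $\Phi_\sphcp:=\one_\sphcp\sigma\ast\one_{-\sphcp}\sigma\ge 0$, likewise $\Phi_{\sphcp'}$. Since $\widehat{\Phi_\sphcp}=\ab{\widehat{\one_\sphcp\sigma}}^2$ is real and nonnegative, Plancherel and the multiplication formula give the identity
\[
\norma{\one_\sphcp\sigma\ast\one_{\sphcp'}\sigma}_{L^2(\R^3)}^2=\int_{\R^3}\Phi_\sphcp(\zeta)\,\Phi_{\sphcp'}(\zeta)\,\d\zeta .
\]

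Next I would record pointwise bounds for $\Phi_\sphcp$. From the co-area description of a convolution of two surface measures, $\Phi_\sphcp(\zeta)$ is the integral of a Jacobian weight $\asymp|\zeta|^{-1}$ over the arc $\{x\in\Sph^2:\ x\cdot\zeta=\tfrac12|\zeta|^2\}\cap\sphcp\cap(\zeta-\sphcp)$; this arc has length $\lesssim a$ (it lies in a set of diameter $\le 2a$), and membership of $\zeta$ in $\sphcp-\sphcp$ pins, in tangent coordinates at $y$, the bound $\ab{\zeta\cdot y}\lesssim a\,\ab{\zeta}$. Hence, with an absolute constant $C_0$,
\[
0\le\Phi_\sphcp(\zeta)\lesssim \frac{a}{\ab{\zeta}}\,\one_{C_y}(\zeta),\qquad C_y:=\bigl\{\zeta:\ \ab{\zeta}\le 2a,\ \ab{\zeta\cdot y}\le C_0\,a\,\ab{\zeta}\bigr\},
\]
and similarly for $\Phi_{\sphcp'}$; also $\sigma(\sphcp)\asymp a^2$. (For $a\gtrsim 1$ these are trivial consequences of $\Phi_\sphcp\le\sigma\ast\sigma$, so there is nothing to check; in our situation $a$ will anyway be forced small.)

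Now the geometric core. Inserting the two pointwise bounds and writing $\zeta=\rho\omega$ with $\rho>0$, $\omega\in\Sph^2$, the $\rho$-integral runs over $(0,2a]$ only, while the angular region $S:=\{\omega\in\Sph^2:\ \ab{\omega\cdot y}\le C_0 a,\ \ab{\omega\cdot y'}\le C_0 a'\}$ is independent of $\rho$, so
\[
\norma{\one_\sphcp\sigma\ast\one_{\sphcp'}\sigma}_{L^2}^2\ \lesssim\ aa'\int_0^{2a}\sigma(S)\,\d\rho\ =\ 2a^2a'\,\sigma(S).
\]
The set $S$ is the intersection of the band of half-width $C_0a$ about the great circle $y^\perp$ with the band of half-width $C_0a'$ about $(y')^\perp$; Archimedes' zone formula $\sigma(\{\ab{\omega\cdot y}\le s\})=4\pi s$ together with a local computation near the two points where the great circles $y^\perp$ and $(y')^\perp$ cross, at angle $\alpha:=\min\bigl(\angle(y,y'),\pi-\angle(y,y')\bigr)$, yields
\[
\sigma(S)\ \lesssim\ \min\Bigl(a,\ \frac{aa'}{\sin\alpha}\Bigr).
\]
Since $\sigma(\sphcp)\sigma(\sphcp')\asymp a^2a'^2$, the desired inequality $\norma{\one_\sphcp\sigma\ast\one_{\sphcp'}\sigma}_{L^2}^2<\eps^2\,\sigma(\sphcp)\sigma(\sphcp')$ will follow once $\min\bigl(a/a',\ a/\sin\alpha\bigr)<c\,\eps^2$ for a suitable absolute $c$.

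Finally, this last inequality is exactly what $\varrho(\sphcp,\sphcp')>\rho$ provides once $\rho$ is large. Unwinding $\varrho>\rho$ gives
\[
\cosh\rho-1\ \le\ \frac{(a-a')^2+\min\bigl(\ab{y-y'}^2,\ab{y+y'}^2\bigr)}{2aa'},
\]
and $\min(\ab{y-y'},\ab{y+y'})=2\sin(\alpha/2)\asymp\sin\alpha$. If $\min(a/a',\,a/\sin\alpha)\ge c\eps^2$, then $a'\le a/(c\eps^2)$ and $\sin\alpha\le a/(c\eps^2)$, whence (using $aa'\ge a^2$) both $(a-a')^2/(2aa')$ and $\min(\ab{y-y'}^2,\ab{y+y'}^2)/(2aa')$ are $\lesssim(c\eps^2)^{-2}$; the displayed inequality then forces $\rho\le\operatorname{arccosh}(1+C(\eps))$ for a finite $C(\eps)$. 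Taking $\rho$ larger than this threshold therefore guarantees the conclusion for every pair of caps with $\varrho(\sphcp,\sphcp')>\rho$. The only genuinely non-formal steps are the band-overlap estimate for $\sigma(S)$ and, upstream of it, the arc-length/cone bound for $\Phi_\sphcp$ from the co-area formula; everything else is bookkeeping.
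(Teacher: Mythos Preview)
The paper does not prove this lemma; it is quoted directly from Christ--Shao \cite{CS}*{Lemma 7.6} and used as a black box (the only additional comment is that inspection of the Christ--Shao argument yields the mixed-radii variant, Lemma~\ref{lem:weak-interaction-mixed-radii}). Your self-contained argument is correct: the autocorrelation identity $\norma{\one_\sphcp\sigma\ast\one_{\sphcp'}\sigma}_{L^2}^2=\int\Phi_\sphcp\Phi_{\sphcp'}$, the pointwise bound $\Phi_\sphcp(\zeta)\lesssim (a/\ab{\zeta})\one_{C_y}(\zeta)$ coming from the arc-length restriction and the co-area Jacobian, and the band-overlap estimate $\sigma(S)\lesssim\min(a,aa'/\sin\alpha)$ combine to give $\norma{\one_\sphcp\sigma\ast\one_{\sphcp'}\sigma}_{L^2}^2\lesssim a^2a'^2\min(a/a',a/\sin\alpha)$; the final contrapositive, bounding $\cosh\rho-1$ in terms of $(c\eps^2)^{-2}$ when $\min(a/a',a/\sin\alpha)\geq c\eps^2$, is also sound. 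This is essentially the approach of \cite{CS}, so there is nothing materially different to compare.
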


An inspection of the proof of the previous statement in \cite{CS}*{Lemma 7.6} shows that an analog 
result 
holds if we have caps $\sphcp\subseteq\Sph_r^2$ 
and 
$\sphcp'\subseteq\Sph_t^2$, with $r,\,t\in[1,2]$, that is, denoting $\frac{1}{r}\sphcp$ the rescale of 
$\sphcp$ to $\Sph^2$,
\[ \tfrac{1}{r}\sphcp=\{x\in\R^3\colon rx\in\sphcp \}, \]
we have the following lemma.

\begin{lemma}\label{lem:weak-interaction-mixed-radii}
	Let $r,t\in[1,2]$, $\sphcp\subseteq\Sph_r^2$ 
	and 
	$\sphcp'\subseteq\Sph_t^2$. Then for any $\eps>0$ there exists $\rho<\infty$ 
	such that 
	$\norma{\one_\sphcp\sigma_r\ast\one_{\sphcp'}\sigma_t}_{L^2(\R^3)}<\eps\ab{\sphcp}^{1/2}\ab{\sphcp'}^{1/2}$, whenever $\varrho(\frac{1}{r}\sphcp,\frac{1}{t}\sphcp')>\rho$.
\end{lemma}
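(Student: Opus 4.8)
The plan is to deduce Lemma~\ref{lem:weak-interaction-mixed-radii} directly from the unit-sphere statement of Lemma~\ref{lem:weak-interaction-caps} by a rescaling argument, taking care of the bounded distortion introduced by the parameters $r,t\in[1,2]$. First I would record the scaling identity for the surface measures. Writing $D_r:x\mapsto rx$ for the dilation, the definition \eqref{eq:sigma_r} gives, for any $g$ on $\R^3$,
\[
\int_{\Sph_r^2}g\,\d\sigma_r=\int_{\Sph^2}g(r\omega)\,r\,\d\sigma(\omega)=r\int_{\Sph^2}(g\circ D_r)\,\d\sigma,
\]
so that $\one_\sphcp\sigma_r=r\,(D_r)_\#\big(\one_{\frac1r\sphcp}\sigma\big)$ as measures; here $\frac1r\sphcp\subseteq\Sph^2$ is the rescaled cap. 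Consequently, using $(D_r)_\#\nu\ast(D_t)_\#\nu'$ is not a single dilation of a convolution when $r\neq t$, I would instead pass through the Fourier side: $\widehat{\one_\sphcp\sigma_r}(x)=r\,\widehat{\one_{\frac1r\sphcp}\sigma}(rx)$, and similarly for $t$, so by Plancherel
\[
\norma{\one_\sphcp\sigma_r\ast\one_{\sphcp'}\sigma_t}_{L^2(\R^3)}
=(2\pi)^{-3/2}\norma{\,\widehat{\one_\sphcp\sigma_r}\cdot\widehat{\one_{\sphcp'}\sigma_t}\,}_{L^2(\R^3)}
=(2\pi)^{-3/2}\,rt\,\Norma{\,\widehat{\one_{\frac1r\sphcp}\sigma}(r\cdot)\,\widehat{\one_{\frac1t\sphcp'}\sigma}(t\cdot)\,}_{L^2(\R^3)}.
\]

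Next I would remove the mismatch of the dilation factors $r$ and $t$. Since $r,t\in[1,2]$, the ratio $t/r$ lies in $[\tfrac12,2]$, so one can absorb one of the two dilations into the other at the cost of a bounded constant: after the change of variables $x\mapsto x/r$ the product becomes $\widehat{\one_{\frac1r\sphcp}\sigma}(x)\,\widehat{\one_{\frac1t\sphcp'}\sigma}\big(\tfrac{t}{r}x\big)$, and $\widehat{\one_{\frac1t\sphcp'}\sigma}(\tfrac{t}{r}\,\cdot)$ is, up to the harmless constant $(t/r)^{3}$ in $L^2$ and a dilation of the cap $\frac1t\sphcp'$ by the factor $t/r$, the Fourier transform of $\one_{\sphcp''}\sigma_{t/r}$ for a cap $\sphcp''$ on $\Sph^2_{t/r}$. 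Equivalently, and more cleanly, I would just invoke the remark immediately preceding the lemma: the proof of \cite{CS}*{Lemma 7.6} goes through verbatim for caps lying on spheres of radii in a fixed compact subinterval of $(0,\infty)$, because all the geometric estimates there (stationary phase bounds for $\widehat{\one_\sphcp\sigma}$ and the volume computations) depend on the radius only through quantities that are comparable uniformly over $r,t\in[1,2]$. This reduces matters to checking that the distance functional used in \cite{CS} is, up to bounded multiplicative factors, the quantity $\varrho(\frac1r\sphcp,\frac1t\sphcp')$ appearing in the statement.

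The one genuinely substantive point — and the step I expect to be the main obstacle — is matching the two notions of ``distance'' after rescaling: the hypothesis is phrased in terms of $\varrho$ applied to the rescaled caps $\frac1r\sphcp\subseteq\Sph^2$ and $\frac1t\sphcp'\subseteq\Sph^2$, whereas the natural output of the CS argument for caps on $\Sph_r^2$, $\Sph_t^2$ is a largeness condition on the ``mixed-radius'' separation (including the factor $(r-t)^2/(rt)$ measuring the radial gap). Because $r,t\in[1,2]$, this radial term is $O(1)$, hence it can only \emph{decrease} the separation by a bounded amount; so a largeness hypothesis on $\varrho(\frac1r\sphcp,\frac1t\sphcp')$ — which sees the angular separation of the projected centers and the ratio of the cap radii — forces the mixed-radius separation to be large as well, once $\rho$ is taken large enough depending on $\eps$. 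I would make this precise by showing $\operatorname{d}(\sphcp,\sphcp')\gtrsim \operatorname{d}(\frac1r\sphcp,\frac1t\sphcp') - C$ for an absolute constant $C$ (using $r/t,t/r\in[\tfrac12,2]$ and the explicit $\operatorname{arccosh}$ formula, together with the elementary bound $\ab{r\omega-t\omega'}\geq c(\ab{\omega-\omega'}+\ab{r-t})$ for unit vectors $\omega,\omega'$), and similarly with $-\sphcp'$ in place of $\sphcp'$; taking the minimum gives $\varrho(\sphcp,\sphcp')\gtrsim \varrho(\frac1r\sphcp,\frac1t\sphcp')-C$. Then, given $\eps>0$, apply the (rescaled) CS estimate with $\eps/4$ to obtain a threshold $\rho_0$, and set $\rho=\rho_0+C$; whenever $\varrho(\frac1r\sphcp,\frac1t\sphcp')>\rho$ the mixed-radius separation exceeds $\rho_0$, and the estimate $\norma{\one_\sphcp\sigma_r\ast\one_{\sphcp'}\sigma_t}_{L^2(\R^3)}<(\eps/4)\cdot 4\,\ab{\sphcp}^{1/2}\ab{\sphcp'}^{1/2}$ follows after tracking the bounded constants from the two rescalings and from $\sigma_r(\Sph^2_r)=r\,\sigma(\Sph^2)$. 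Absorbing the $O(1)$ factors into the choice of $\rho$ (or shrinking $\eps$ at the outset) yields exactly the claimed bound.
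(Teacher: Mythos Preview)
Your approach is essentially the same as the paper's: the paper does not give a self-contained proof of this lemma but simply asserts, in the sentence immediately preceding it, that ``an inspection of the proof of the previous statement in \cite{CS}*{Lemma 7.6} shows that an analog result holds'' when the two caps lie on spheres of radii $r,t\in[1,2]$. Your proposal lands on precisely this after the initial Fourier-side detour (which, as you note, does not reduce cleanly to a single convolution when $r\neq t$ and is best abandoned).

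One comment: your ``main obstacle'' paragraph introduces a mixed-radius separation $\operatorname{d}(\sphcp,\sphcp')$ for caps on $\Sph_r^2$ and $\Sph_t^2$, which is never defined in the paper, and then compares it to $\varrho(\tfrac1r\sphcp,\tfrac1t\sphcp')$. This layer is unnecessary. The CS argument estimates $\norma{\one_\sphcp\sigma\ast\one_{\sphcp'}\sigma}_{L^2}$ directly in terms of the cap parameters (centers and apertures), and those geometric quantities change only by bounded factors under the dilations $D_r,D_t$ with $r,t\in[1,2]$; so the smallness is governed by exactly the same combination of parameters that $\varrho(\tfrac1r\sphcp,\tfrac1t\sphcp')$ measures. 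No intermediate ``mixed'' distance needs to be introduced or compared. With that simplification your write-up matches the paper's intended justification.
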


\section{Lifting to the hyperboloid the inequality for the sphere}\label{sec:lifting_the_sphere}

The aim of this section is to use the Tomas--Stein inequality for the sphere $\Sph^2$ to obtain qualitative 
properties of $\delta$-quasi-extremals for the hyperboloid.
The connection here between the hyperboloid and the sphere is that the latter corresponds to horizontal traces of the former. This connection between the adjoint Fourier restriction operator on a hypersurface and on its traces appears, for instance, in the work of Nicola \cite{Ni}. An alternative approach to the methods in this section can be developed using refined bilinear estimates, but we choose to give a different argument. The main result of this section is the following lemma.

\begin{lemma}
	\label{lem:nearly-extremals-hyp}
	Let $0\leq s\leq \frac{1}{2}$. For any $\delta>0$ there exists $C_{\delta}>0$, $\eta_\delta>0$ and $\nu_\delta>0$ with the 
	following property. If $f(x,t)\in
	L^2(\hyp_s)$ supported where $1\leq \ab{x}\leq 2$ satisfies $\norma{f\mu_s\ast 
		f\mu_s}_{L^2(\R^4)}\geq \delta^2{\mathbf{H}}_4^2\norma{f}_{L^2}^2$ then there exist a decomposition $f=g+h$, a spherical cap $\sphcp\subseteq\Sph^2$
	and a cap $\cp=[1,2]\times\sphcp\subset \hyp_s$ satisfying
	\begin{align}
	&0\leq \ab{g},\ab{h}\leq \ab{f},\label{eq:hyp_cap_refinement_1}\\
	&g,h \text{ have disjoint supports},\label{eq:hyp_cap_refinement_2}\\
	&\supp(g)\subseteq \cp,\label{eq:hyp_cap_refinement_3}\\
	&\ab{g(x,t)}\leq C_\delta\norma{f}_{L^2}\mu_s({\cp})^{-1/2}\one_\cp(x,t)\;\text{ for all }(x,t),\label{eq:hyp_cap_refinement_4}\\
	&\norma{g}_{L^2}\geq \eta_\delta\norma{f}_{L^2},\label{eq:hyp_cap_refinement_5}\\
	&\norma{g}_{L^1}\geq \nu_\delta\mu_s({\cp})^{1/2}\norma{f}_{L^2}.\label{eq:hyp_cap_refinement_6}
	\end{align}
	The constants $C_\delta,\,\eta_\delta$ and $\nu_\delta$ are uniform in $s\leq \frac{1}{2}$.
\end{lemma}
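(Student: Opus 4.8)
\textit{Setup and slicing.} The plan is to ``lift'' the cap refinements for $\Sph^2$ from Section \ref{sec:Tomas_Stein_sphere} by decomposing $\mathcal H^3_s$ into its horizontal traces $\Sph^2_r$, $r\in[1,2]$. Since $\ab{f\mu_s\ast f\mu_s}\le\ab f\mu_s\ast\ab f\mu_s$ pointwise and $\norma{\ab f}_{L^2}=\norma f_{L^2}$, we may assume $f\ge 0$; a splitting of $\ab f$ transfers to $f$ by intersecting with supports. Parametrizing a point of $\mathcal H^3_s$ lying over $1\le\ab x\le 2$ as $(r\omega,\psi_s(r))$, $r\in[1,2]$, $\omega\in\Sph^2$, and writing $f_r\in L^2(\Sph^2)$ for the trace (so $f_r(\omega)=f(r\omega)$) and $f^{(r)}$ for the same function regarded on $\Sph^2_r$, the spherical-coordinate computation and delta calculus of the proof of Proposition \ref{prop:formula-double-convolution} yield the fibered identity
\[ f\mu_s\ast f\mu_s(\xi,\tau)=\int w_s(r_1)\bigl(f^{(r_1)}\sigma_{r_1}\ast f^{(r_2)}\sigma_{r_2}\bigr)(\xi)\,\d r_1,\qquad r_2=\phi_s\!\bigl(\tau-\psi_s(r_1)\bigr), \]
where $w_s(r)=r/\sqrt{r^2-s^2}$ and $r_1$ runs over $\{r_1\in[1,2]\colon r_2\in[1,2]\}$. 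The crucial point is that $w_s$ and the Jacobian $\ab{\partial_\tau r_2}=w_s(r_2)^{-1}$ of the change of variables $(\tau,r_1)\leftrightarrow(r_1,r_2)$ stay between two positive absolute constants for $s\le\tfrac12$ and $r_1,r_2\in[1,2]$; so by Minkowski's inequality in $r_1$, the Cauchy--Schwarz inequality over the admissible $r_1$-interval (length $\le1$), and that change of variables,
\[ \norma{f\mu_s\ast f\mu_s}_{L^2(\R^4)}^2\lesssim\iint_{[1,2]^2}\norma{f^{(r_1)}\sigma_{r_1}\ast f^{(r_2)}\sigma_{r_2}}_{L^2(\R^3)}^2\,\d r_1\,\d r_2, \]
while $\norma f_{L^2(\mu_s)}^2\asymp\int_1^2 a(r)^2\,\d r$ with $a(r):=\norma{f^{(r)}}_{L^2(\sigma_r)}$, all uniformly in $s\le\tfrac12$.

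\textit{Detecting spherical concentration.} After rescaling to $\Sph^2$, one has $\norma{\widehat{f^{(r)}\sigma_r}}_{L^4}\lesssim a(r)$, so Tomas--Stein gives $\norma{f^{(r_1)}\sigma_{r_1}\ast f^{(r_2)}\sigma_{r_2}}_{L^2}\le\norma{\widehat{f^{(r_1)}\sigma_{r_1}}}_{L^4}\norma{\widehat{f^{(r_2)}\sigma_{r_2}}}_{L^4}\lesssim a(r_1)a(r_2)$. Splitting the last double integral at the threshold $\norma{f^{(r_1)}\sigma_{r_1}\ast f^{(r_2)}\sigma_{r_2}}_{L^2}\ge\delta_1 a(r_1)a(r_2)$, with $\delta_1\asymp\delta^2$ chosen small enough, and bounding the complementary region by the plain estimate, the hypothesis $\norma{f\mu_s\ast f\mu_s}_{L^2}\ge\delta^2\mathbf H_4^2\norma f_{L^2}^2$ forces a set $B\subseteq[1,2]^2$ with $\iint_B a(r_1)^2a(r_2)^2\gtrsim\delta^4\bigl(\int_1^2 a^2\bigr)^2$ on which $\norma{f^{(r_1)}\sigma_{r_1}\ast f^{(r_2)}\sigma_{r_2}}_{L^2}\ge\delta_1 a(r_1)a(r_2)$. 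Since $\norma{\widehat{f^{(r)}\sigma_r}}_{L^4}\lesssim a(r)$ always, a point of $B$ makes each of $f_{r_1},f_{r_2}$ a $\delta'$-quasi-extremal for $\Sph^2$ with $\delta'\asymp\delta^2$; hence $B\subseteq\mathcal E'\times\mathcal E'$ with $\mathcal E':=\{r\in[1,2]\colon f_r\text{ is a }\delta'\text{-quasi-extremal for }\Sph^2\}$, and projecting $B$ gives $\int_{\mathcal E'}a(r)^2\,\d r\gtrsim\delta^2\int_1^2 a(r)^2\,\d r\asymp\delta^2\norma f_{L^2}^2$.

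\textit{Measurable selection and clustering (the crux).} Applying the measurable form of Lemma \ref{lem:quasiextremals-sphere} on $\mathcal E'\times\Sph^2$ (through the measurable-selection argument of Remark \ref{rem:measurability} and Lemma \ref{lem:measurable_Sel}, in the refined version) produces, measurably in $r$, a splitting $f_r=g_r+h_r$ and a cap $\sphcp_r\subseteq\Sph^2$ obeying the slicewise analogues of \eqref{eq:pointwise}--\eqref{eq:lower-bound-l2-norm} and \eqref{eq:lower-bound-functional}, together with the $L^1$ lower bound $\norma{g_r}_{L^1}\ge\nu_{\delta'}\norma{f_r}_2\ab{\sphcp_r}^{1/2}$ of Remark \ref{rem:lower_L1}, all constants being fixed powers of $\delta$. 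The hard point is that $\sphcp_r$ depends on $r$ while \eqref{eq:hyp_cap_refinement_3} demands one cap, and one cannot merely enlarge to a common cap because \eqref{eq:hyp_cap_refinement_6} then fails unless the common cap is comparable to the $\sphcp_r$'s. To force the clustering I would, as in \cite{CS} and \cite{RQ1}, iterate the sphere decomposition so that each $f_r$ ($r\in\mathcal E'$) is a sum of $N_\delta$ cap-bumps ($N_\delta$ depending only on $\delta$) plus a non-quasi-extremal error; expanding the product $\widehat{f^{(r_1)}\sigma_{r_1}}\,\widehat{f^{(r_2)}\sigma_{r_2}}$ for $(r_1,r_2)\in B$, discarding the error cross-terms (dominated via the plain Tomas--Stein bound and a small constant), and pigeonholing over the $N_\delta^2$ bump pairs, one finds a bump of $f_{r_1}$ interacting non-negligibly with a bump of $f_{r_2}$, whence Lemma \ref{lem:weak-interaction-caps} (after rescaling) bounds their $\varrho$-distance by some $\rho_\delta<\infty$. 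A second pigeonhole over bump indices together with a Vitali-type covering of the caps extracts $\mathcal E''\subseteq\mathcal E'$ with $\int_{\mathcal E''}a(r)^2\,\d r\gtrsim\delta^{c}\norma f_{L^2}^2$ (some fixed $c>0$) on which all the selected caps are mutually comparable in both location and size.

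\textit{Assembly.} Finally, fixing a grid cap $\sphcp\subseteq\Sph^2$ comparable to those caps and $\cp=[1,2]\times\sphcp\subset\mathcal H^3_s$, let $g$ be $f$ restricted to the lift of $\mathcal E''\times\sphcp$ intersected with the region where the selected slice pieces live, and $h=f-g$. Since $\mu_s(\cp)\asymp\ab{\sphcp}$ uniformly in $s\le\tfrac12$ (by \eqref{eq:measure_cap}), properties \eqref{eq:hyp_cap_refinement_1}--\eqref{eq:hyp_cap_refinement_3} are immediate, and \eqref{eq:hyp_cap_refinement_4}, \eqref{eq:hyp_cap_refinement_5}, \eqref{eq:hyp_cap_refinement_6} follow by integrating the slicewise pointwise, $L^2$, and $L^1$ bounds over $r\in\mathcal E''$ and collecting the powers of $\delta$ into $C_\delta,\eta_\delta,\nu_\delta$; the uniformity in $s\le\tfrac12$ is automatic since every constant above is independent of $s$. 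I expect the clustering of the slicewise caps to be the substantial obstacle; the rest is the fibered bookkeeping of the sphere results recalled in Section \ref{sec:Tomas_Stein_sphere}.
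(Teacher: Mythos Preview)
Your overall scheme---slicing into spheres, identifying quasi-extremal traces, applying the measurable cap lemma, and then clustering via weak interaction---is the right one and matches the paper's strategy. But there is a genuine gap that the paper handles and you do not.

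You establish only the \emph{weighted} bound $\int_{\mathcal E'} a(r)^2\,\d r \gtrsim \delta^2 \|f\|_2^2$ on the good set of radii, without controlling $a(r)=\|f_r\|_{L^2(\Sph^2)}$ from above. The slicewise pointwise bound from Lemma~\ref{lem:quasiextremals-sphere} is $|g_r(\omega)|\le C_{\delta'} \|f_r\|_2\,|\sphcp_r|^{-1/2}$, and lifting this to \eqref{eq:hyp_cap_refinement_4} requires $a(r)\lesssim_\delta \|f\|_2$ on your final set $\mathcal E''$. Nothing in your detection step prevents the mass $\int_{\mathcal E'}a^2$ from concentrating on a set of tiny Lebesgue measure where $a(r)\gg\|f\|_2$; in that scenario \eqref{eq:hyp_cap_refinement_4} fails outright, and \eqref{eq:hyp_cap_refinement_6} becomes problematic as well (you would need $\int_{\mathcal E''}a(r)\,\d r\gtrsim_\delta\|f\|_2$, which does not follow from the weighted $L^2$ bound alone). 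The paper fixes this at the outset by working on the set $E_{\gamma,\lambda}$ where both $\|f_{\phi_s(t)}\|_2\ge\gamma\delta\mathbf H_4\|f\|_2$ \emph{and} $\|f_{\phi_s(t)}\|_2\le\lambda\delta\mathbf H_4\|f\|_2$, using Chebyshev to show that excising $\{a>\lambda\delta\mathbf H_4\|f\|_2\}$ costs only a small fraction of \emph{plain} Lebesgue measure; this is why the paper tracks $|E|\gtrsim\delta^4$ rather than a weighted bound.

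Two further differences worth noting. First, the paper begins by reducing to $f$ supported on $[1,2]\times\sphcp_k$ with $\sphcp_k$ of radius $\tfrac14$, precisely to avoid the antipodal ambiguity in the distance $\varrho$ of \eqref{eq:def_distance} when invoking Lemma~\ref{lem:weak-interaction-mixed-radii}; you omit this and it matters. Second, for the clustering step the paper does \emph{not} iterate the sphere decomposition of each slice into $N_\delta$ bumps and pigeonhole over bump pairs. Instead it iterates the quasi-cap decomposition at the hyperboloid level (rerunning the algorithm of Lemma~\ref{lem:quasiextremals-sphere}) to secure $\|g\mu_s\ast g\mu_s\|_2\ge c_\delta\|f\|_2^2$ for the already-assembled $g$; since each $g_{\phi_s(t)}$ is now supported on a single cap $\sphcp_t$, splitting the sliced convolution at the threshold $\varrho(\sphcp_{t'},\sphcp_{t-t'})\le\rho$ and applying Lemma~\ref{lem:weak-interaction-mixed-radii} directly yields $|A_\rho|\gtrsim_\delta 1$, from which a single reference cap is extracted by Fubini. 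Your route through iterated per-slice bumps and a Vitali-type covering may be salvageable, but it is more intricate and, as written, the ``second pigeonhole over bump indices'' does not obviously produce a single cap that works simultaneously for a large-measure set of $r$.
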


\begin{remark}\label{rem:hyp_equiv_Hyp}
	The previous lemma is equivalent to the analog result for $\Hyp_s$. Indeed, that the result for $\Hyp_s$ implies a similar one for $\hyp_s$ is immediate. On the other direction, if $f\in L^2(\Hyp_s)$ is a $\delta$-quasi-extremal for \eqref{sharp_L4_double_hyp}, that is
	\[\norma{\overline{T}_sf}_{L^4(\R^4)}^4=(2\pi)^{4}\norma{f\bar{\mu}_s\ast f\bar{\mu}_s}_{L^2(\R^4)}^2\geq (2\pi)^{4}\delta^4\overline{\mathbf{H}}_4^4\norma{f}_{L^2(\Hyp_s)}^4,\]
	then, writing $f=f_++f_-$ so that $\overline{T}_sf=T_sf_++T_sf_-(\cdot,-\cdot )$ and $\norma{f}_{L^2(\Hyp_s)}^2=\norma{f_+}_{L^2(\hyp_s)}^2+\norma{f_-}_{L^2(\hyp_s)}^2$ we obtain that 
	\[\norma{f_\epsilon{\mu}_s\ast f_\epsilon{\mu}_s}_{L^2(\R^4)}^2\geq 2^{-4}\delta^4\overline{\mathbf{H}}_4^4\norma{f_\epsilon}_{L^2(\hyp_s)}^4\]
	for $\epsilon=+$ or for $\epsilon=-$, 
	so that if both $\norma{f_+}_{L^2(\hyp_s)}^2\geq \delta^2\norma{f}_{L^2(\Hyp_s)}^2$ and $\norma{f_-}_{L^2(\hyp_s)}^2\geq \delta^2\norma{f}_{L^2(\Hyp_s)}^2$, then we obtain the conclusions in Lemma \ref{lem:nearly-extremals-hyp} for $f$ from the ones for $f_+$ or $f_-$, as it corresponds. On the other hand, if say $\norma{f_-}_{L^2(\hyp_s)}^2< \delta^2\norma{f}_{L^2(\Hyp_s)}^2$, then $\norma{f_+}_{L^2(\hyp_s)}^2\geq (1-\delta^2)\norma{f}_{L^2(\Hyp_s)}^2$ and 
	\[ \norma{Tf_+}_{L^4}\geq \norma{\overline{T}f}_{L^4}-\norma{Tf_-}_{L^4}\geq 2\pi\delta(\overline{\mathbf{H}}_4-\mathbf{H}_4)\norma{f}_{L^2(\Hyp_s)} \geq c\delta\mathbf{H}_4\norma{f_+}_{L^2(\hyp_s)},\]
	so that Lemma \ref{lem:nearly-extremals-hyp} applied to $f_+$ yields the result for $f$.
	
	The support condition $1\leq \ab{x}\leq 2$ can be changed to $a\leq \ab{x}\leq b$ for any $a\geq s$ and 
	$b<\infty$, understanding that the implicit constants may depend on $a,b$. We can alternatively state the previous lemma for $f\in L^2(\hyp)$ supported where $2^{N}\leq \ab{x}\leq 2^{N+1}$, $N\geq 1$, the implicit constants independent of $N$, as can be easily checked by the use of scaling.
\end{remark}

Recall that we write $\psi_s(r)=\sqrt{r^2-s^2}\one_{\{r\geq s\}}$ and $\phi_s(t)=\psi_s^{-1}(t)=\sqrt{t^2+s^2}\one_{\{t\geq 0\}}$ and for $f\in 
\Sh(\R^3)$ and $r>0$ we denote by
$f\sigma_r$ the measure supported on $\Sph_r^2:=\{y\in\R^3:\ab{y}=r\}$ given by
\[\langle f\sigma_r,\varphi\rangle=\int_{\Sph^2}f(ry)\varphi(ry)r\d\sigma(y).\]
We denote $f_r$ the function $x\mapsto f(rx)$, which we consider as a function from $\Sph^2$ to $\mathbb{C}$.

In the next lemma we show that we can write the double convolution of functions on the hyperboloid $\hyp_s$ as an integral of convolutions 
of sliced spheres.

\begin{lemma}
	\label{lem:sliced-convolution}
	Let $s\geq 0$. For $f,g\in L^2(\hyp_s)$ we have the representation formula
	\begin{equation}
	\label{eq:representation-sliced-convolution}
	\bigl(f\mu_s*g\mu_s\bigr)(x,t)=\int_0^t \bigl(f\sigma_{\phi_s(t')}*g\sigma_{\phi_s(t-t')}\bigr)(x)\d t',
	\end{equation}
for a.e. $(x,t)\in\R^3\times\R_+$.
\end{lemma}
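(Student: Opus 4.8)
The plan is to prove the representation formula by unwinding the definition of the convolution $\mu_s \ast \mu_s$ as a push-forward measure and then foliating the resulting integral by horizontal slices. First I would write, for $f, g \in \Sh(\R^4)$ (or rather their restrictions to $\hyp_s$, extended to $\R^3$ via the projection), the convolution in the form
\[
\bigl(f\mu_s \ast g\mu_s\bigr)(x,t) = \int_{\R^3}\int_{\R^3} f(y)\,g(x-y)\,\frac{\ddirac{t - \psi_s(\ab{y}) - \psi_s(\ab{x-y})}}{\psi_s(\ab{y})\,\psi_s(\ab{x-y})}\one_{\{\ab{y}\ge s\}}\one_{\{\ab{x-y}\ge s\}}\d y,
\]
where I am identifying $f$ on $\hyp_s$ with its projection to $\R^3$ and writing $\psi_s(r) = \sqrt{r^2 - s^2}\one_{\{r \ge s\}}$. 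This uses $\d\mu_s(y,s') = \one_{\{\ab{y}>s\}}\ddirac{s' - \psi_s(\ab{y})}\frac{\d y\,\d s'}{\psi_s(\ab{y})}$ and the fact that on the support of both measures the vertical coordinates add to $t$, which forces $t \ge 0$ and pins the delta in the $(x,t)$-variable.

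Next I would introduce the new variable $t' = \psi_s(\ab{y}) \in [0,t]$, i.e. $\ab{y} = \phi_s(t') = \sqrt{(t')^2 + s^2}$, so that $t - t' = \psi_s(\ab{x-y})$ means $\ab{x-y} = \phi_s(t-t')$. Concretely, I would slice the $y$-integral into spheres of radius $\ab{y} = \rho$ and change from $\rho$ to $t' = \psi_s(\rho)$, noting $\d\rho = \frac{\phi_s(t')}{t'}\,\d t' $ wait — more carefully, $t' = \sqrt{\rho^2 - s^2}$ gives $t'\,\d t' = \rho\,\d\rho$, hence $\rho^2\,\d\rho\,\d\sigma$-type factors rearrange to cancel the $\psi_s$ in the denominator exactly as in the measure $\sigma_{\phi_s(t')}$ defined by $\int_{\Sph^2} f(\phi_s(t')\omega)\,\phi_s(t')\,\d\sigma(\omega)$. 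The delta $\ddirac{t - \psi_s(\ab y) - \psi_s(\ab{x-y})}$ becomes, after this substitution, an honest Lebesgue integral in $t'$ over $[0,t]$ of the slice convolution $\bigl(f\sigma_{\phi_s(t')} \ast g\sigma_{\phi_s(t-t')}\bigr)(x)$, which is the claimed identity \eqref{eq:representation-sliced-convolution}.

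The step requiring the most care is the bookkeeping of Jacobian factors: one must check that the $\frac{1}{\psi_s(\ab y)}$ weight in $\mu_s$ combined with the $\rho^2\,\d\rho$ from spherical coordinates, after the change of variables $\rho \mapsto t' = \psi_s(\rho)$ and the factoring of the delta in the $t$-variable, produces precisely the weight $r = \phi_s(t')$ appearing in the definition \eqref{eq:sigma_r} of $\sigma_r$ — and likewise for the $(x-y)$ factor — with no leftover constants. I would do this cleanly by first writing $\int_{\R^3}(\cdots)\d y = \int_s^\infty \int_{\Sph^2}(\cdots)\rho^2\,\d\sigma(\omega)\,\d\rho$, isolating the inner sphere integral against $\ddirac{t - \psi_s(\rho) - \psi_s(\ab{x - \rho\omega})}$ as a slice convolution at fixed $\rho$, and only then substituting $t' = \psi_s(\rho)$, using $\rho\,\d\rho = t'\,\d t'$ and $\rho = \phi_s(t')$ so $\rho^2\,\d\rho/\psi_s(\rho) = \rho^2\,\d\rho/t' = \rho\,\d\rho\cdot(\rho/t') = \phi_s(t')\,\d t' \cdot$ (the remaining $\rho/t'\cdot$ ... ) — the precise matching is exactly that $\rho^2\d\rho/\psi_s(\rho)\,\d\sigma(\omega)$ equals $\d t'$ times $\phi_s(t')\d\sigma(\omega) = \d\sigma_{\phi_s(t')}$. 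A minor point is justifying that the formula, proved first for Schwartz $f,g$, extends to all $f,g \in L^2(\hyp_s)$ and holds for a.e. $(x,t)$: this follows from the boundedness of $T_s$ on $L^2$ (so both sides depend continuously on $f,g$ in the appropriate norms, the left side via \eqref{eq:convolution-form} and the right side via Minkowski's integral inequality together with the Tomas--Stein bound on each slice) and a density argument.
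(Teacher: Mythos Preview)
Your proposal is correct and follows essentially the same approach as the paper: pass to spherical coordinates, change the radial variable to the height variable $t'=\psi_s(\rho)$, and identify the slices as convolutions on spheres of radius $\phi_s(t')$ and $\phi_s(t-t')$. The only cosmetic difference is that the paper works by duality against a test function $\vphi\in C_c^\infty(\R^4)$ and puts \emph{both} integration variables into spherical coordinates simultaneously (writing $y=r\omega$, $y'=r'\omega'$, then substituting $u=\psi_s(r)$, $u'=\psi_s(r')$, then $(t,t')=(u+u',u)$); this makes the Jacobian bookkeeping symmetric and avoids ever having to interpret the second $\delta$-constraint as pinning $\ab{x-y}$ to $\phi_s(t-t')$. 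Your direct delta-calculus computation reaches the same endpoint, but the step you flag as ``and likewise for the $(x-y)$ factor'' hides one more application of the chain rule for $\delta$ (converting $\ddirac{t-t'-\psi_s(\ab{x-y})}/\psi_s(\ab{x-y})$ into $\ddirac{\ab{x-y}-\phi_s(t-t')}/\phi_s(t-t')$, which is exactly the density of $\sigma_{\phi_s(t-t')}$); once that is written out, the two arguments are identical.
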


\begin{proof}
	Let $\varphi\in C_c^\infty(\R^4)$. Using spherical coordinates we have
	\begin{align*}
	&\langle f\mu_s*g\mu_s,\vphi\rangle=\int_{\ab{x},\ab{y}\geq s}
	\vphi(x+y,\psi_s(x)+\psi_s(y))f(x)g(y)\frac{\d x\d y}{\sqrt{\ab{x}^2-s^2}\sqrt{\ab{y}^2-s^2}}\\
	&=\int_s^\infty\int_s^\infty\int_{\Sph^2}\int_{\Sph^2}\vphi(r\omega+r'\omega',\psi_s(r)+\psi_s(r'))
	f(r\omega)g(r'\omega')\frac{r^2r'^2\d\omega \d\omega' \d r \d 
	r'}{\sqrt{r^2-s^2}\sqrt{r'^2-s^2}}.
	\end{align*}
	We change variables $(r,r')\mapsto (u,u')=(\psi_s(r),\psi_s(r'))=(\sqrt{r^2-s^2},\sqrt{r'^2-s^2})$ and 
	obtain
	\begin{multline*}
	\langle
	f\mu_s*g\mu_s,\vphi\rangle=\int_0^\infty\int_0^\infty\int_{\Sph^2}\int_{\Sph^2}\vphi(\phi_s(u)\omega+\phi_s(u')\omega',
	u+u')\\
	\cdot f(\phi_s(u)\omega)g(\phi_s(u')\omega')\phi_s(u)\phi_s(u')\d\omega \d\omega' \d u \d u'.
	\end{multline*}
	We change variables $(u,u')\mapsto(t,t')=(u+u',u)$ and obtain
	\begin{align*}
	\langle
	f\mu_s*g\mu_s,\vphi\rangle&=\int_0^\infty\int_0^t\int_{\Sph^2}\int_{\Sph^2}\vphi(\phi_s(t')\omega+\phi_s(t-t')\omega',
	t)\\
	&\qquad\cdot f(\phi_s(t')\omega)g(\phi_s(t-t')\omega')\phi_s(t')\phi_s(t-t')\d\omega \d\omega' \d t'
	\d 
	t\\
	&=\int_0^\infty \int_0^t \Bigl(\int_{\R^3} 
	\vphi(x,t)\bigl(f\sigma_{\phi_s(t')}*g\sigma_{\phi_s(t-t')}\bigr)(x)dx\Bigr) \d t' \d t\\
	&=\Bigl\langle \int_0^t\bigl(f\sigma_{\phi_s(t')}*g\sigma_{\phi_s(t-t')}\bigr)(x)\d t', \vphi\Bigr\rangle,
	\end{align*}
	where we used Fubini's Theorem and that for any $r,r'>0$,
	\begin{align*}
	\langle f\sigma_r*g\sigma_{r'},\vphi(\cdot,t)\rangle&=\int_{\R^3} 
	\vphi(x,t)\bigl(f\sigma_{r}*g\sigma_{r'}\bigr)(x)\d x\\
	&=\int_{\Sph^2_r\times 
		\Sph^2_s}\vphi(x+x',t)f(x)g(x')\d\sigma_r(x)\d\sigma_{r'}(x')\\
	&=\int_{\Sph^2\times 
		\Sph^2}\vphi(r\omega+r'\omega',t)f(r\omega)g(r'\omega')rr' \d\sigma(\omega)\d\sigma(\omega').
	\end{align*}
\end{proof}

Next, we record a formula for the $L^p(\hyp_s)$ norm in terms of the $L^p$ norm of the slices.

\begin{lemma}\label{lem:sliced-lp-norm}
	Let $f\in L^p(\mathcal H^3_s)$. Then
	\begin{equation}\label{eq:lp-norm-spherical}
	\norma{f}_{L^p(\hyp_s)}^p=\int_0^\infty\norma{f_{\phi_s(t)}}_{L^p(\Sph^2)}^p\phi_s(t)\d t.
	\end{equation}
\end{lemma}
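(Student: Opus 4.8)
The plan is to unwind the definition of the measure $\mu_s$ and reduce everything to a one-dimensional change of variables. First I would use the density of $\mu_s$ given in \eqref{measure-mu-s} to write, for $f\in L^p(\hyp_s)$ identified (as per our convention) with a function on $\{x\in\R^3\colon\ab{x}\ge s\}$,
\[
\norma{f}_{L^p(\hyp_s)}^p=\int_{\{\ab{x}>s\}}\ab{f(x)}^p\frac{\d x}{\sqrt{\ab{x}^2-s^2}},
\]
and then pass to spherical coordinates $x=r\omega$, with $r=\ab{x}\ge s$ and $\omega\in\Sph^2$, so that $\d x=r^2\,\d r\,\d\sigma(\omega)$ and
\[
\norma{f}_{L^p(\hyp_s)}^p=\int_s^\infty\int_{\Sph^2}\ab{f(r\omega)}^p\,\d\sigma(\omega)\,\frac{r^2}{\sqrt{r^2-s^2}}\,\d r.
\]
Since the integrand is nonnegative, Tonelli's theorem justifies every interchange of integrals and the identity holds in $[0,\infty]$ for any measurable $f$, so no integrability hypothesis beyond membership in $L^p(\hyp_s)$ is needed.

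Next I would perform the substitution $t=\psi_s(r)=\sqrt{r^2-s^2}$, whose inverse on $r\ge s$ is $r=\phi_s(t)=\sqrt{t^2+s^2}$; differentiating gives $\d t=\frac{r}{\sqrt{r^2-s^2}}\,\d r$, hence $\frac{r^2}{\sqrt{r^2-s^2}}\,\d r=r\,\d t=\phi_s(t)\,\d t$. Substituting and recalling the notation $f_{\phi_s(t)}(\omega)=f(\phi_s(t)\omega)$ for the slice, one obtains
\[
\norma{f}_{L^p(\hyp_s)}^p=\int_0^\infty\Bigl(\int_{\Sph^2}\ab{f(\phi_s(t)\omega)}^p\,\d\sigma(\omega)\Bigr)\phi_s(t)\,\d t=\int_0^\infty\norma{f_{\phi_s(t)}}_{L^p(\Sph^2)}^p\,\phi_s(t)\,\d t,
\]
which is exactly \eqref{eq:lp-norm-spherical}. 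The degenerate case $s=0$ (the cone $\Gamma^3$, for which $\mu_0=\sigma_c$) is handled identically: then $\psi_0(r)=r$, $\phi_0(t)=t$, and the computation reduces to $\int_{\Gamma^3}\ab f^p\,\d\sigma_c=\int_0^\infty\int_{\Sph^2}\ab{f(r\omega)}^p\,r\,\d\sigma(\omega)\,\d r$, which is \eqref{eq:lp-norm-spherical} with $\phi_0(t)=t$.

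There is essentially no obstacle in this argument: the only substantive step is the one-dimensional change of variables $r\mapsto\sqrt{r^2-s^2}$, and the nonnegativity of $\ab f^p$ removes any concern about the validity of Tonelli. The formula will be used together with Lemma \ref{lem:sliced-convolution} to transfer the cap refinement for $\Sph^2$ (Lemmas \ref{lem:quasiextremals-sphere-CS}, \ref{lem:quasiextremals-sphere} and \ref{lem:measurable_Sel}) to the hyperboloid in the proof of Lemma \ref{lem:nearly-extremals-hyp}.
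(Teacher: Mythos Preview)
Your proof is correct and follows exactly the same route as the paper's: pass to spherical coordinates to get $\int_s^\infty\int_{\Sph^2}\ab{f(r\omega)}^p\frac{r^2}{\sqrt{r^2-s^2}}\,\d\sigma(\omega)\,\d r$, then apply the one-dimensional change of variables $t=\psi_s(r)$ so that $\frac{r^2}{\sqrt{r^2-s^2}}\,\d r=\phi_s(t)\,\d t$. Your additional remarks on Tonelli and the $s=0$ case are fine but not needed for the statement as written.
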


\begin{proof} 
	Using spherical coordinates we have
	\begin{align*}
	\norma{f}_{L^p(\hyp_s)}^p&=\int_s^\infty\int_{\Sph^2}\ab{f(r\omega)}^p\frac{r^2}{\sqrt{r^2-s^2}}\d\omega
	\d r=\int_0^\infty\int_{\Sph^2}\ab{f(\phi_s(t)\omega)}^p\phi_s(t)\d\omega \d t\\
	&=\int_0^\infty\norma{f_{\phi_s(t)}}_{L^p(\Sph^2)}^p\phi_s(t)\d t.
	\end{align*}
\end{proof}

We now
analyze the dependence of $\norma{f\sigma_{r}*g\sigma_{r'}}_{L^2(\R^3)}$ in $(r,r')$. We start with
the
scaling property of $\widehat{f\sigma_r}$ as a function of $r$. We have 
\[ (\widehat{f\sigma_r})(x)=\int_{\Sph_r^2}e^{-ix\cdot y}f(y)\d\sigma_r(y)=\int_{\Sph^2}e^{-irx\cdot y}f(ry)r\d\sigma(y)=r(\widehat{f_r\sigma})(rx). \]
Thus
\[\norma{\widehat{f\sigma_r}}_{L^4(\R^3)}=r^{1/4}\norma{\widehat{f_r\sigma}}_{L^4(\R^3)}\leq
(2\pi)^{3/4}r^{1/4}\mathbf{S}\norma{f_r}_{L^2(\Sph^2)}.\]
Then, the Cauchy--Schwarz inequality implies that for any $r,r'>0$
\[\norma{\widehat{f\sigma_r}\,\widehat{g\sigma_{r'}}}_{L^2(\R^3)}\leq
\norma{\widehat{f\sigma_r}}_{L^4}\norma{\widehat{g\sigma_{r'}}}_{L^4}\leq
(2\pi)^{3/2}\mathbf{S}^2(rr')^{1/4}\norma{f_r}_{L^2(\Sph^2)}\norma{g_{r'}}_{L^2(\Sph^2)},\]
so that
\begin{equation}\label{eq:scaling_convolution_L2}
\norma{f\sigma_r\ast g\sigma_{r'}}_{L^2(\R^3)}\leq
\mathbf{S}^2(rr')^{1/4}\norma{f_r}_{L^2(\Sph^2)}\norma{g_{r'}}_{L^2(\Sph^2)}
\end{equation}
and in particular, when $r=r'$ we obtain
\begin{equation}\label{eq:scaling_convolution_same_radius}
\norma{f\sigma_r\ast g\sigma_{r}}_{L^2(\R^3)}=r^{1/2}\norma{f_r\sigma\ast g_r\sigma}_{L^2(\R^3)}\leq
\mathbf{S}^2r^{1/2}\norma{f_r}_{L^2(\Sph^2)}\norma{g_r}_{L^2(\Sph^2)}.
\end{equation}

\begin{define}
A quasi-cap of $\mathcal{H}^3_s$ 
is a measurable set 
$\mathcal G\subseteq \mathcal{H}^3_s$ 
for which there exist $E\subseteq \R$ and spherical caps $\sphcp_{t}\subseteq 
\Sph^2$, for 
$t\in E$, such that 
\begin{equation}\label{eq:Def_quasi_cap}
\mathcal G=\{(x,t)\in\R^4:\,t\in E,\,x\in\phi_s(t)\sphcp_{t}\}.
\end{equation}
\end{define}

We note that a cap is also a quasi-cap; the difference in a generic quasi-cap is that the spherical caps 
may not be the same as in the case of a cap, and the set $E$ may not be an interval.

In our main result of the section, Lemma \ref{lem:nearly-extremals-hyp}, we want to obtain an analog of Lemma \ref{lem:quasiextremals-sphere-CS} for a compact 
subset of the hyperboloid. The idea is to use the cap Lemma \ref{lem:quasiextremals-sphere-CS} for the 
sphere on horizontal slices of the hyperboloid via \eqref{eq:representation-sliced-convolution} in a measurable way (recall Remark \ref{rem:measurability}), and 
show that there are 
enough aligned sliced caps of similar size to obtain a cap for the hyperboloid. We do it for the 
upper sheet as the full one-sheeted hyperboloid follows from this as already noted in Remark \ref{rem:hyp_equiv_Hyp}. The proof of Lemma \ref{lem:nearly-extremals-hyp} is accomplished in the following way. First, we 
show that on a large subset of $t$'s in $[\psi_s(1),\psi_s(2)]$ we can apply Lemma 
\ref{lem:quasiextremals-sphere} to the function $x\in\Sph^2\mapsto f(\phi_s(t)x)$ in a measurable way. This will allow us 
to prove a version of Lemma \ref{lem:nearly-extremals-hyp} where instead of a cap we have a quasi-cap. 
Next, we show that a subset of the quasi-cap of large relative measure is comparable to a cap and satisfies the 
requirements of Lemma \ref{lem:nearly-extremals-hyp}, which then are shown to be satisfied by the cap itself. To prove this last point, we will make use of the 
quantitative version of the statement that "distant spherical caps interact weakly" as stated in Lemmas \ref{lem:weak-interaction-caps} and \ref{lem:weak-interaction-mixed-radii}.

\begin{proof}[Proof of Lemma \ref{lem:nearly-extremals-hyp}]
	In what follows, $c(\delta)$ denotes a constant that depends only on $\delta$ and is 
	allowed to change from line to line\footnote{Reviewing the argument one can see that such constants can be taken to depend only on powers, positive and negative, of $\delta$.}. Recall from Remark \ref{rem:lower_L1} that \eqref{eq:hyp_cap_refinement_6} can be obtained from \eqref{eq:hyp_cap_refinement_4} and \eqref{eq:hyp_cap_refinement_5} with $\nu_\delta=\eta_\delta^2/C_\delta$.
	
	We first argue that we can assume that the support of $f(\cdot,t)$ does not contain antipodal points for each $t\in[\psi_s(1),\psi_s(2)]$. We can cover $\Sph^2$ as the union of finitely many spherical caps 
	$\{\sphcp_k\}_{k=1,\dotsc,\kappa}$ 
	each of 
	radius $\frac{1}{4}$, whose 
	centers form a maximally $\frac{1}{4}$-separated set on $\Sph^2$, and induce a decomposition 
	of $\mathcal 
	H_s^3$ as 
	the union of the caps $\{[s,\infty)\times \sphcp_k \}_{k=1,\dotsc,\kappa}$. By the triangle 
	inequality 
	we can therefore assume that $f$ is supported on the cap $[s,\infty)\times \sphcp_k$, for some 
	$k$, at the expense of changing $\delta$ by $\delta/\kappa$. 
	The 
	reason for doing this is to ensure that there are no nearly antipodal spherical caps later on. 
	
	Let us start by noting that for $(x,t)$ in the support of $f$ and $s\in[0,\frac{1}{2}]$ we have $\ab{x}\in[1,2]$ and $t=\psi_s(x)\in[\psi_s(1),\psi_s(2)]=[\sqrt{1-s^2},\sqrt{4-s^2}]\subseteq[\frac{\sqrt{3}}{2},2]$, and that from Lemma \ref{lem:sliced-lp-norm}
	\[ \int_{\psi_s(1)}^{\psi_s(2)}\norma{f_{\phi_s(t)}}_{L^2(\Sph^2)}^2\d t\leq \norma{f}_{L^2(\hyp_s)}^2\leq 2 \int_{\psi_s(1)}^{\psi_s(2)}\norma{f_{\phi_s(t)}}_{L^2(\Sph^2)}^2\d t. \]
	On the other hand $(f\mu_s*f\mu_s)(x,t)$ is supported 
	where $2\psi_s(1)\leq t\leq	2\psi_s(2)$. From Lemma \ref{lem:sliced-convolution} for a.e. $(x,t)\in\R^4$ we have
	\begin{equation}\label{eq:equality_convolution}
	f\mu_s*f\mu_s(x,t)=\int_{\psi_s(1)}^{\psi_s(2)} (f\sigma_{\phi_s(t')}*f\sigma_{\phi_s(t-t')})(x)\d t',
	\end{equation}
	(recall that $\phi_s(\tau)=0$ for $\tau<0$). Let
	\begin{equation*}
	E_{\gamma}=\Biggl\{t\in[{\psi_s(1)},{\psi_s(2)}]\colon
	\begin{aligned}
	\norma{f\sigma_{\phi_s(t)}*f\sigma_{\phi_s(t)}}_{L^2(\R^3)}&\geq
	\gamma^2\delta^2\mathbf{H}_{4}^2\mathbf{S}^2\norma{f_{\phi_s(t)}}_2^2,\\
	\norma{f_{\phi_s(t)}}_2&\geq
	\gamma\delta\mathbf{H}_{4}\norma{f}_2
	\end{aligned}
	\Biggr\}
	\end{equation*}
	and
	\begin{equation*}
	E_{\gamma,\la}=\Biggl\{t\in[{\psi_s(1)},{\psi_s(2)}]\colon\,
	{\begin{aligned}
	\norma{f\sigma_{\phi_s(t)}*f\sigma_{\phi_s(t)}}_{L^2(\R^3)}&\geq
	\gamma^2\delta^2\mathbf{H}_{4}^2\mathbf{S}^2\norma{f_{\phi_s(t)}}_2^2,\\
	\la\delta\mathbf{H}_{4}\norma{f}_2\geq \norma{f_{\phi_s(t)}}_2&\geq
	\gamma\delta\mathbf{H}_{4}\norma{f}_2
	\end{aligned}}
	\Biggr\}.
	\end{equation*}
	Here, $\norma{f_{\phi_s(t)}}_2=\norma{f({\phi_s(t)}\,\cdot,t)}_{L^2(\mathbb S^3)}$, while $\norma{f}_2=\norma{f}_{L^2(\hyp_s)}$. We claim that $\ab{E_{\gamma}}\geq c(\delta)$ and $\ab{E_{\gamma,\la}}\geq c(\delta)$ if 
	$\gamma$ 
	and $\la$ are chosen small and large enough depending on $\delta$, respectively. 
	Let us first analyze $\ab{E_\gamma}$. From \eqref{eq:equality_convolution}, using Fubini's theorem and Minkowski's integral inequality we have
	\begin{align*}
	\delta^2\mathbf{H}_4^2\norma{f}_2^2&\leq \Norma{\int_{\psi_s(1)}^{\psi_s(2)} (f\sigma_{\phi_s(t')}*f\sigma_{\phi_s(t-t')})(x)\d t'}_{L^2_{t,x}}\\
	&\leq
	\Norma{\int_{\psi_s(1)}^{\psi_s(2)}\norma{f\sigma_{\phi_s(t')}*f\sigma_{\phi_s(t-t')}}_{L^2_x}\one_{E_{\gamma}^\complement}(t')\d 
		t'}_{L^2_t}\\
	&\quad+\Norma{\int_{\psi_s(1)}^{\psi_s(2)}
		(f\sigma_{\phi_s(t')}*f\sigma_{\phi_s(t-t')})(x)\one_{E_{\gamma}}(t')\d t'}_{L^2_{x,t}}.
	\end{align*}
	Plancherel's theorem and the Cauchy--Schwarz 
	inequality 
	give 
	\[ \norma{f\sigma_{\phi_s(t')}*f\sigma_{\phi_s(t-t')}}_{L^2_x}\leq
	\norma{f\sigma_{\phi_s(t')}*f\sigma_{\phi_s(t')}}_{L^2_x}^{1/2}\norma{f\sigma_{\phi_s(t-t')}*f\sigma_{\phi_s(t-t')}}_{L^2_x}^{1/2}, \]
	 so that using the sharp estimate for $\norma{f\sigma_{\phi_s(t-t')}*f\sigma_{\phi_s(t-t')}}_{L^2_x}$ as in \eqref{eq:scaling_convolution_same_radius}, recalling that $\phi_s(t'),\,\phi_s(t-t')\in[1,2]$, we obtain 
	\begin{align*}
	\Norma{\int_{\psi_s(1)}^{\psi_s(2)}\norma{f\sigma_{\phi_s(t')}*f\sigma_{\phi_s(t-t')}}_{L^2_x}&\one_{E_\gamma^\complement}(t')\d t'}_{L^2_t}\\
	&\leq
	2\gamma\delta\mathbf{H}_{4}\mathbf{S}^2
	\Norma{\int_{\psi_s(1)}^{\psi_s(2)}\norma{f_{\phi_s(t')}}_2\norma{f_{\phi_s(t-t')}}_2\d t'}_{L^2_t}\\
	&\qquad+2\gamma\delta\mathbf{H}_{4}\mathbf{S}^2\norma{f}_2\Norma{\int_{\psi_s(1)}^{\psi_s(2)}\norma{f_{\phi_s(t-t')}}_2\d t'}_{
		L^2_t}\\
	&\leq 8\gamma\delta\mathbf{H}_{4}\mathbf{S}^2\norma{f}_2^2.
	\end{align*}
	Therefore, choosing $\gamma=\delta\mathbf{H}_{4}/(16\mathbf{S}^2)$ we obtain
	\[\Norma{\int_{\psi_s(1)}^{\psi_s(2)} (f\sigma_{\phi_s(t')}*f\sigma_{\phi_s(t-t')})(x)\one_{E_\gamma}(t')\d t'}_{L^2_{x,t}}\geq 
	\frac{1}{2}\delta^2\mathbf{H}_{4}^2\norma{f}_2^2.\]
	For this choice of $\gamma$ we then obtain
	\begin{equation*}
	\begin{split}
	\frac{1}{2}\delta^2\mathbf{H}_{4}^2\norma{f}_2^2&\leq \Norma{\int_{\psi_s(1)}^{\psi_s(2)} 
		(f\sigma_{\phi_s(t')}*f\sigma_{\phi_s(t-t')})(x)\one_{E_\gamma}(t')\d t'}_{L^2_{x,t}}\\
	&\leq \Norma{\int_{\psi_s(1)}^{\psi_s(2)}\norma{f\sigma_{\phi_s(t')}*f\sigma_{\phi_s(t-t')}}_
		{L^2_x}\one_{E_\gamma}(t')\d t'}_{L^2_t}\\
	&\leq 2\mathbf{S}^2\Norma{\int_{\psi_s(1)}^{\psi_s(2)}\norma{f_{\phi_s(t')}}_{L^2_x}\norma{f_{\phi_s(t-t')}}_{L^2_x}
		\one_{E_\gamma}(t')\d t'}_{L^2_t}\\
	&\leq 2\mathbf{S}^2\ab{E_\gamma}^{1/2}\int_{\psi_s(1)}^{\psi_s(2)}\norma{f_{\phi_s(t)}}_{L^2_x}^2\d t
	\leq 2\mathbf{S}^2\norma{f}_2^2\ab{E_\gamma}^{1/2},
	\end{split}
	\end{equation*}
	and therefore $\ab{E_\gamma}\geq \mathbf{H}_{4}^4\delta^4/(16\mathbf{S}^4)$.

	To analyze $\ab{E_{\gamma,\la}}$ we use
	\[ E_{\gamma,\la}=E_\gamma\cap \{t\in[{\psi_s(1)},{\psi_s(2)}]:\norma{f_{\phi_s(t)}}_2\leq 
	\la\delta\mathbf{H}_{4}\norma{f}_2\}. \]
	Chebyshev's and H\"older's inequalities imply
	\begin{align*}
	\ab{\{t\in[{\psi_s(1)},{\psi_s(2)}]:\norma{f_{\phi_s(t)}}_2>\la\delta\mathbf{H}_{4}\norma{f}_2\}}&\leq
	\frac{1}{\la\delta\mathbf{H}_{4}\norma{f}_2}\int_{\psi_s(1)}^{\psi_s(2)}\norma{f_{\phi_s(t)}}_2\d t\\
	&\leq\frac{2}{\la\delta\mathbf{H}_{4}}.
	\end{align*}
	Therefore, choosing $\la=64\mathbf{S}^4/(\mathbf{H}_4^5\delta^5)$ we obtain
	\begin{align*}
	\ab{E_{\gamma,\la}}\geq \ab{E_\gamma}- \ab{\{t\in[{\psi_s(1)},{\psi_s(2)}]:\norma{f_{\phi_s(t)}}_2>
		\la\delta\mathbf{H}_{4}\norma{f}_2\}}
	\geq \frac{\mathbf{H}_4^4}{32\mathbf{S}^4}\delta^4.
	\end{align*}
	From now on, let us fix such values of $\gamma$ and $\la$ and 
	let $E:=E_{\gamma,\la}$. From the definition of $E$ and \eqref{eq:scaling_convolution_same_radius}, we have that for $t\in E$
	\[ \norma{f_{\phi_s(t)}\sigma\ast f_{\phi_s(t)}\sigma}_{L^2(\R^3)}\geq c\phi_s(t)^{-1/2}\delta^{4}\norma{f_{\phi_s(t)}}_{L^2(\Sph^2)}^2, \]
 	so that Lemma \ref{lem:quasiextremals-sphere-CS} imply that for $t\in E$ there are caps $\sphcp_t\subseteq\Sph^2$ and a decomposition $f_{{\phi_s(t)}}=G_{{\phi_s(t)}}+H_{{\phi_s(t)}}$. 
	In this 
	way we obtain a decomposition $f=g+h$, where
	$g(\phi_s(t)x,t)=G_{\phi_s(t)}(x)\one_{E}(t)$, $x\in\Sph^2$, $t\in[{\psi_s(1)},{\psi_s(2)}]$. As argued in Remark \ref{rem:measurability} and recorded in Lemma  \ref{lem:measurable_Sel}, by using a measurable selection theorem we can perform this decomposition in such a way that $g$ and $h$ are measurable functions and $\mathcal G_0:=\{(x,t)\in\R^4:t\in E,\, x\in 
	\phi_s(t)\sphcp_t\}$ is a measurable subset of $\hyp_s$, so that $\mathcal{G}_0$ is a quasi-cap. According to Lemma \ref{lem:quasiextremals-sphere-CS}, $g$ and $h$ satisfy the following conditions: $f=g+h$, $0\leq \ab{g},\ab{h}\leq \ab{f}$, $g$ and $h$ have disjoint supports, $g(x,t)=0$ if $t\notin E$, 
	\begin{gather}
	\ab{g(\phi_s(t)x,t)}\leq C_\delta \norma{f_{\phi_s(t)}}_2\ab{\sphcp_t}^{-1/2}\one_{\sphcp_t}(x),\text{ for all } t\in E,\, x\in\Sph^2,\nonumber\\
	\label{eq:pointwise-L2}
	\norma{g_{\phi_s(t)}}_{2}\geq\eta_\delta\norma{f_{\phi_s(t)}}_2,\,\norma{g_{\phi_s(t)}}_{1}\geq \frac{\eta_\delta^2}{C_\delta}\ab{\sphcp_t}^{1/2}\norma{f_{\phi_s(t)}}_2, \text{ for all } t\in E.
	\end{gather}
	Note that Lemma \ref{lem:sliced-lp-norm} and \eqref{eq:pointwise-L2} imply
	\[ \norma{g}_{2}\geq\eta_\delta\norma{f}_2. \]
	Given that 
	for $t\in E$ we have 
	$\delta^2\mathbf{H}_4\norma{f}_2\lesssim\norma{f_{\phi_s(t)}}_2\lesssim \delta^{-4}\mathbf{H}_4\norma{f}_2$ we conclude, 
	possibly by changing the constants that depend on $\delta$, that the function $g$ satisfies 
	\begin{equation}\label{eq:pointwise_Bound_g}
	\ab{g(\phi_s(t)x,t)}\leq
	C_\delta\norma{f}_2\ab{\sphcp_t}^{-1/2}\one_{\sphcp_t}(x)\one_{E}(t),\text{ for all }t\in[{\psi_s(1)},{\psi_s(2)}],\,x\in\Sph^2 
	\end{equation}
	and
	\begin{equation}\label{eq:lower_Bound_gphis}
	\norma{g_{\phi_s(t)}}_{L^2(\Sph^2)}\geq \eta_\delta\norma{f}_2\,\text{ and }\,
	\norma{g_{\phi_s(t)}}_{L^1(\Sph^2)}\geq \frac{\eta_\delta^2}{C_\delta}\ab{\sphcp_t}^{1/2}\norma{f}_2,\text{  for each }t\in E.
	\end{equation}
	
	Summing up, we can restate what has been done so far in the following way: If $f\in L^2(\mathcal{H}_s^3)$ 
	satisfies $\norma{f\mu_s\ast 
		f\mu_s}_2\geq
	\delta^2\mathbf{H}_4^2\norma{f}_2^2$ and is supported where $1\leq \ab{x}\leq 2$ then there 
	exist a 
	decomposition $f=g+h$, a
	set $E\subseteq[{\psi_s(1)},{\psi_s(2)}]$ 
	satisfying $\ab{E}\gtrsim \delta^4$ and a quasi-cap $\mathcal G_0$ (associated to $E$ as in \eqref{eq:Def_quasi_cap})
	such that $g$ and $h$ have disjoint supports,
	\[\ab{g(x,t)}\leq
	C_\delta\norma{f}_2\ab{\sphcp_t}^{-1/2}\one_{\mathcal G_0}(x,t),\text{ for all }(x,t)\in\hyp_s\]
	and \eqref{eq:lower_Bound_gphis} holds. This is the analog of Lemma \ref{lem:nearly-extremals-hyp} with a quasi-cap instead of a cap.
		
	Using the quasi-cap analog of Lemma \ref{lem:nearly-extremals-hyp}, as described in the previous paragraph, we can argue exactly as in Lemma \ref{lem:quasiextremals-sphere} for the sphere to ensure, 
	possibly after changing the constants that depend on $\delta$, that there 
	exist a quasi-cap, which we continue to denote
	$\mathcal G_0$, associated to a set $E\subseteq[\psi_s(1),\psi_s(2)]$ with $\ab{E}\gtrsim \delta^4$, and functions $g$ and $h$ with the properties of the previous paragraph and additionally 
	\begin{equation}\label{eq:lower_bound_g_ast_g}
	\norma{g\mu_s*g\mu_s}_{L^2(\R^4)}\geq c_\delta\norma{f}_2^2.
	\end{equation}
	The next and final step is to show that the caps $\sphcp_t$, $t\in E$, which define $\mathcal G_0$ are aligned 
	for a large fraction of the $t$'s, and by this we mean that they
	have close radii and centers, up to powers of $\delta$.
		
	Recall that for
	caps $\sphcp,\,\sphcp'\subseteq\Sph^2$ there is a distance function $\varrho(\sphcp,\sphcp')$, defined in \eqref{eq:def_distance}, that is relevant in Lemmas \ref{lem:weak-interaction-caps} and  \ref{lem:weak-interaction-mixed-radii}. 
	For $\rho>0$ define
	\[A_{\rho}=\{(t,t')\in 
	E\times E\colon\varrho(\sphcp_{t},\sphcp_{t'})\leq\rho\}.\]
	Then, starting from \eqref{eq:lower_bound_g_ast_g} we have the estimate
	\begin{align*}
	&c_\delta\norma{f}_2^2\leq \norma{g\mu_s*g\mu_s}_2=\Norma{\int_{\psi_s(1)}^{\psi_s(2)}(g\sigma_{\phi_s(t')}*g\sigma_{\phi_s(t-t')})(x)\d t'}_{L^2_{x,t}}\\
	&\leq 
	\Norma{\int_{\psi_s(1)}^{\psi_s(2)}\norma{g\sigma_{\phi_s(t')}*g\sigma_{\phi_s(t-t')}}_{L_x^2}\one_{A_{\rho}}(t',t-t')\d 
	t'}_{L^2_t}\\
	&\qquad+\Norma{\int_{\psi_s(1)}^{\psi_s(2)}\norma{g\sigma_{\phi_s(t')}*g\sigma_{\phi_s(t-t')}}_{L_x^2}\one_{A_{\rho}^\complement}(t',t-t')\d
		t'}_{L^2_t}\\
	&\leq 
	\Norma{\int_{\psi_s(1)}^{\psi_s(2)}\norma{g\sigma_{\phi_s(t')}*g\sigma_{\phi_s(t-t')}}_{L_x^2}\one_{A_{\rho}}(t',t-t')\d 
		t'}_{L^2_t}
	+C_\delta^2\norma{f}_2^2\Norma{\int_{\psi_s(1)}^{\psi_s(2)} 
		\ab{\sphcp_{t'}}^{-1/2}\\
	&\qquad\cdot\ab{\sphcp_{t-t'}}^{-1/2}\norma{\one_{\phi_s(t')\sphcp_{t'}}\sigma_{\phi_s(t')}\ast\one_{\phi_s(t-t')\sphcp_{t-t'}}\sigma_{\phi_s(t-t')}}_{L^2_x}\one_{(E\times E)\cap A_{\rho}^\complement}(t',t-t')
		\d t'}_{L^2_t}\\
	&\leq 
	\Norma{\int_{\psi_s(1)}^{\psi_s(2)}\norma{g\sigma_{\phi_s(t')}*g\sigma_{\phi_s(t-t')}}_{L_x^2}\one_{A_{\rho}}(t',t-t')\d 
		t'}_{L^2_t}+\frac{c_\delta}{2}\norma{f}_2^2,
	\end{align*}
	where in the second to last line we used \eqref{eq:pointwise_Bound_g} and the last line holds if $\rho$ is large enough as a function of\footnote{From the proof of Lemma 7.6 in \cite{CS} one can see that $\cosh\rho$ can be taken to be a power of $\delta^{-1}$.} $\delta$, by the use of Lemma \ref{lem:weak-interaction-mixed-radii}. For such choice 
	of $\rho$ we can therefore ensure that
	\begin{equation}\label{eq:lower_bound_g_ast_g_A_rho}
	\Norma{\int_{\psi_s(1)}^{\psi_s(2)}\norma{g\sigma_{\phi_s(t')}*g\sigma_{\phi_s(t-t')}}_{L_x^2}\one_{A_{\rho}}(t',t-t')\d 
		t'}_{L^2_t}\geq \frac{c_\delta}{2}\norma{f}_2^2.
	\end{equation}
	Note that \eqref{eq:pointwise_Bound_g} implies $\norma{g_{\phi_s(t)}}_2\leq C_\delta\norma{f}_2$ for all $t\in E$. This and \eqref{eq:lower_bound_g_ast_g_A_rho} imply that 
	\begin{align*}
	\frac{c_\delta}{2}\norma{f}_2^2&\leq
	2\mathbf{S}^2\Norma{\int_{\psi_s(1)}^{\psi_s(2)}\norma{g_{\phi_s(t')}}_2\norma{g_{\phi_s(t-t')}}_2\one_{A_\rho}(t',t-t')\d t'}_{L^2_t} \\
	&\leq 2\mathbf{S}^2C_\delta^2\norma{f}_2^2\int_{\psi_s(1)}^{\psi_s(2)}\norma{\one_{A_\rho}(t',t-t')}_{L^2_t} \d t'\\
	&\leq 4\mathbf{S}^2C_\delta^2\norma{f}_2^2\,\ab{A_\rho}^{1/2},
	\end{align*}
	where 
	$\rho=\rho(\delta)$ is the already fixed function of $\delta$ and $\ab{A_\rho}$ denotes the Lebesgue measure of $A_\rho\subseteq\R^2$. As $\ab{A_\rho}\leq 2$ we conclude that $\ab{A_\rho}\asymp c(\delta)$. By Fubini's theorem, the fibers $A_\rho(t):=\{t'\in E:(t,t')\in A_\rho \}=\{t'\in E:\varrho(\sphcp_t,\sphcp_{t'})\leq\rho \}$ are a.e. measurable, the function $t\in E\mapsto\ab{A_\rho(t)}=\ab{\{t'\in E:\varrho(\sphcp_t,\sphcp_{t'})\leq\rho \}}$ is measurable and $\ab{A_\rho}\leq 2\esssup_{t\in E}\ab{A_\rho(t)}$. We then obtain the following estimate
	\[c(\delta)\asymp\esssup_{t\in E}\ab{\{t'\in E:\varrho(\sphcp_t,\sphcp_{t'})\leq\rho \}}\leq \sup_{(y,a)\in\Sph^2\times (0,\infty)} \ab{\{t'\in E:\varrho(\sphcp(y,a),\sphcp_{t'})\leq\rho \}}, \]
	from where we conclude the existence of a spherical cap $\sphcp(y_0, a_0)$ such that 
	\[\ab{\{t\in E:\varrho(\sphcp(y_0,a_0),\sphcp_{t})\leq\rho \}}\asymp c(\delta).\]
	Denote $\sphcp_0=\sphcp(y_0,a_0)$ and $B_\rho=\{t\in E:\varrho(\sphcp_0,\sphcp_{t})\leq\rho \}$.
	For $t\in B_\rho$, 
	the radii and the distance between the centers of the caps $\sphcp_0$ and $\sphcp_t$ are of the same order modulo 
	powers 
	of $\delta$. More precisely, if we let $(y,a)$ denote the center and radius of a cap $\sphcp_t$, $t\in B_\rho$, then the definition of the distance function $\varrho$ ensures that
	\begin{equation}\label{eq:close-caps}
	c(\delta)a_0\leq a\leq c'(\delta)a_0,\,\text{ and } \ab{y_0-y}\leq 
	c''(\delta)a_0.
	\end{equation}
	This is the only place where we used the assumption that $f$ is supported on a cap 
	$[1,2]\times 
	\sphcp$, were the radius of $\sphcp$ is $\frac{1}{4}$, because this implies that the centers of 
	the caps associated to $g_{\phi_s(t)}$, $t\in E$, can be chosen to be at distance at most $\frac{1}{2}$ from each other and therefore 
	any two caps $\sphcp_t,\,\sphcp_{t'}$ for $t,\,t'\in E$ are not nearly antipodal.
	
	From \eqref{eq:close-caps} we conclude that for $t\in B_\rho$ we have $\ab{\sphcp_{t}}\asymp_\delta\ab{\sphcp_0}$ and there exists $c(\delta)\geq 1$ such that the 
	$c(\delta)$-enlargement of 
	$\sphcp_{0}$, 
	denoted $\sphcp_{0}^\delta$ and defined by
	\[ \sphcp_{0}^\delta:=\{x\in \Sph^2\colon \ab{x-y_0}\leq c(\delta)a_0 \},\]
	contains $\sphcp_t$ for all $t\in B_\rho$, and hence the cap $\cp:=[1,2]\times 
	\sphcp_{0}^\delta\subseteq \hyp_s$ contains the quasi-cap $\mathcal{G}_1:=\{(x,t)\in\mathcal G_0\colon t\in 
	B_\rho\}$. Note also that $\ab{\sphcp_{t}}\asymp_\delta\ab{\sphcp_{0}^\delta}$, for all $t\in B_\rho$.
	
	Now, for each $t\in E$, $g_{\phi_s(t)}$ is supported on $\sphcp_t$ and $\int_{\sphcp_t} \ab{g_{\phi_s(t)}}\d\sigma\geq 
	c(\delta)\ab{\sphcp_t}^{1/2}\norma{f}_2$, as stated in \eqref{eq:lower_Bound_gphis}. If in addition $t\in
	B_\rho$, then 
	\[\int_{\sphcp_{0}^\delta} \ab{g_{\phi_s(t)}}\d\sigma=\int_{\sphcp_t} \ab{g_{\phi_s(t)}}\d\sigma\geq 
	c(\delta)\ab{\sphcp_t}^{1/2}\norma{f}_2\geq 
	c'(\delta)\ab{\sphcp_{0}^\delta}^{1/2}\norma{f}_2,\]
	and so integrating in $t\in B_\rho$ and using that $\phi_s(t)\geq 1$ if $t\geq \psi_s(1)$ gives 
	\[\int_{ B_\rho}\int_{\sphcp_{0}^\delta} \ab{g_{\phi_s(t)}} \phi_s(t)\d\sigma\d t\geq
	c(\delta)\ab{\sphcp_{0}^\delta}^{1/2}\ab{
		B_\rho}\norma{f}_2\geq c'(\delta)\ab{\sphcp_{0}^\delta}^{1/2}\norma{f}_2. \]
	Given that $\mu_s(\cp)=\mu_s([1,2]\times \sphcp_{0}^\delta)\asymp \ab{\sphcp_{0}^\delta}$
	we obtain
	\[\int_{\cp}\ab{g\one_{\mathcal G_1}} \d\mu_s=\int_{ B_\rho}\int_{\sphcp_{0}^\delta} \ab{g_{\phi_s(t)}}\phi_s(t)\d\sigma \d t\geq  c(\delta)\mu_s(\cp)^{1/2}\norma{f}_2.\]
	Then $g\one_{\mathcal G_1},\, f-g\one_{\mathcal G_1}$ and $\cp$ satisfy all of our 
	requirements, given that $\operatorname{supp}(g\one_{\mathcal G_1})\subseteq\overline{\mathcal G_1}\subseteq\cp$, $\mathcal G_1\subseteq \mathcal G_0$, $\ab{\sphcp_t}\asymp_\delta\mu_s(\cp)$ for all $t\in B_\rho$, and thus
	\begin{align*}
	&g\one_{\mathcal G_1}(x,t)\leq c(\delta)\norma{f}_{L^2(\hyp_s)}\mu_s(\cp)^{1/2}\one_{\cp}(x,t),\text{ for all }(x,t),\\
	&\norma{g\one_{\mathcal G_1}}_{L^2(\hyp_s)}\geq c(\delta)\norma{f}_{L^2(\hyp_s)},\\
	&\norma{g\one_{\mathcal G_1}}_{L^1(\hyp_s)}\geq c(\delta)\mu_s(\cp)^{1/2}\norma{f}_{L^2(\hyp_s)}.
	\end{align*}
\end{proof}

\section{A concentration-compactness lemma}\label{sec:concentration_compactness}

The result of this section is stated for $\Hyp_s$ but a similar statement and proof also hold for $\hyp_s$. 

\begin{lemma}
 \label{lem:concentration-compactness}
 Let $\{\rho_n\}_{n}$ be a sequence in $L^2(\Hyp_s)$ satisfying 
 \[\int_{\Hyp_s}\ab{\rho_n}^2\d\bar\mu_s=\la,\]
 where $\la>0$ is fixed. Then there exists a subsequence $\{\rho_{n_k}\}_{k}$ such that $\{\ab{\rho_{n_k}}^2\}_k$ satisfies one of the following three
possibilities:
\begin{enumerate}
 \item [(i)] (compactness) there exists $\ell_k\in\N$ such that
 \[\forall \eps>0,\, \exists R<\infty,\int_{s2^{\ell_k-R}\leq\ab{y}\leq s2^{\ell_k+R}}\ab{\rho_{n_k}}^2 
 \d\bar\mu_s\geq \la-\eps;\]
 \item [(ii)] (vanishing) 
 $\ds\lim\limits_{k\to\infty}\sup_{\ell\in\N}\int_{s2^{\ell-R}\leq\ab{y}\leq 
s2^{\ell+R}}\ab{\rho_{n_k}}^2\d\bar\mu_s=0$,
for all $R<\infty$;
 \item [(iii)] (dichotomy) There exists $\alpha\in(0,\la)$ such that for all $\eps>0$, there exist 
 $R\in\N$, $k_0\geq 1$ and
nonnegative functions $\rho_{k,1},\rho_{k,2}\in L^2(\Hyp_s)$ satisfying for $k\geq k_0$:
\begin{gather}\label{eq:conditions_dichotomy-1}
 \norma{\rho_{n_k}-(\rho_{k,1}+\rho_{k,2})}_{L^2(\Hyp_s)}\leq \eps,\\
  \abs{\int_{\Hyp_s}\ab{\rho_{k,1}}^2\d\bar\mu_s-\alpha}\leq
\eps,\,\abs{\int_{\Hyp_s}\ab{\rho_{k,2}}^2\d\bar\mu_s-(\la-\alpha)}\leq\eps,\\
\supp(\rho_{k,1})\subseteq \{y\in\R^3\colon s2^{\ell_k-R}\leq\ab{y}\leq s2^{\ell_k+R}\},\\
\label{eq:conditions_dichotomy-4}
\supp(\rho_{k,2})\subseteq \{y\in\R^3\colon\ab{y}\leq s2^{\ell_k-R_k}\}\cup 
\{y\in\R^3\colon \ab{y}\geq s2^{\ell_k+R_k}\},
\end{gather}
for certain sequences $\{\ell_k\}_k$ and $\{R_k \}_k$, where $R_k\to\infty$ as $k\to\infty$.
\end{enumerate}
\end{lemma}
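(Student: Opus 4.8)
The statement is the classical Lions concentration-compactness dichotomy, here adapted to the dyadic annular decomposition of $\Hyp_s$ in the variable $\ab{y}$. The plan is to run the standard concentration function argument. For each $n$ and each $R\in\N$, define the dyadic concentration function
\[
Q_n(R):=\sup_{\ell\in\N}\int_{s2^{\ell-R}\leq\ab{y}\leq s2^{\ell+R}}\ab{\rho_n}^2\d\bar\mu_s.
\]
Each $Q_n$ is nondecreasing in $R$ and bounded above by $\la$, hence $Q_n(R)\uparrow \alpha_n\leq\la$ as $R\to\infty$ for some $\alpha_n$; by a diagonal argument we may pass to a subsequence (not relabeled) so that $Q_{n}(R)\to Q(R)$ for every fixed $R\in\N$, where $Q$ is again nondecreasing with $Q(R)\uparrow\alpha$ for some $\alpha\in[0,\la]$ as $R\to\infty$. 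The three cases of the lemma then correspond exactly to $\alpha=\la$ with the supremum essentially attained along a subsequence (compactness), $\alpha=0$ (vanishing), and $0<\alpha<\la$ (dichotomy).

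**Carrying out the trichotomy.** If $\alpha=0$, then $Q(R)=0$ for all $R$, which is precisely statement (ii). If $\alpha=\la$, I would argue as follows: fix $\eps>0$, choose $R$ with $Q(R)\geq\la-\eps/2$, then choose $k$ large and $\ell_k$ achieving (within $\eps/2$) the supremum defining $Q_{n_k}(R)$; this gives (i) after passing to the subsequence indexed by $k$ (the index $\ell_k$ in the statement being exactly this near-maximizing dyadic scale, possibly depending on $\eps$, which one absorbs into the quantifier order as written). The substantive case is $0<\alpha<\la$. Here, given $\eps>0$, pick $R_0$ with $Q(R_0)>\alpha-\eps/4$; for $k$ large we have $\alpha-\eps/2<Q_{n_k}(R_0)$ and $Q_{n_k}(2R_k)<\alpha+\eps/2$ along a sequence $R_k\to\infty$ chosen to grow slowly (e.g. so that $Q_{n_k}(2R_k)-Q_{n_k}(R_0)<\eps/2$ can be arranged by the uniform-in-$k$ convergence $Q_{n_k}(R)\to Q(R)$ on the finite set of relevant radii, refining the subsequence once more). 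Let $\ell_k$ be a near-maximizer for $Q_{n_k}(R_0)$. Set
\[
\rho_{k,1}:=\ab{\rho_{n_k}}\one_{\{s2^{\ell_k-R_0}\leq\ab{y}\leq s2^{\ell_k+R_0}\}},\qquad
\rho_{k,2}:=\ab{\rho_{n_k}}\one_{\{\ab{y}\leq s2^{\ell_k-R_k}\}\cup\{\ab{y}\geq s2^{\ell_k+R_k}\}},
\]
with the "annular gap" region $\{s2^{\ell_k-R_k}<\ab{y}<s2^{\ell_k-R_0}\}\cup\{s2^{\ell_k+R_0}<\ab{y}<s2^{\ell_k+R_k}\}$ carrying the remainder. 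The mass of $\rho_{k,1}$ is within $\eps$ of $\alpha$ by construction. The mass in the gap region is $Q_{n_k}(2R_k)-Q_{n_k}(R_0)$ up to harmless bookkeeping, hence $<\eps$ after our choices, which controls $\norma{\rho_{n_k}-(\rho_{k,1}+\rho_{k,2})}_{L^2}$; and then the mass of $\rho_{k,2}$ is within $O(\eps)$ of $\la-\alpha$ by subtraction. Replacing $\rho_n$ by $\ab{\rho_n}$ is legitimate since the statement only concerns $\ab{\rho_{n_k}}^2$.

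**Main obstacle and bookkeeping.** The conceptual content is entirely standard; the only real care needed is the order of quantifiers, which in the statement is "$\forall\eps$ there exist $R,k_0,\{\rho_{k,i}\}$" — this means the decomposition (and the radii $R$, though not the sequences $\ell_k,R_k$ which are fixed once and for all) is allowed to depend on $\eps$, so I do \emph{not} need a single decomposition working for all $\eps$ simultaneously; this is what makes the gap-region estimate painless, since I can afford to take $R_k$ as slowly growing as I like relative to each fixed target $\eps$. The one technical point worth stating carefully is the uniform convergence $Q_{n_k}(R)\to Q(R)$ needed to pick $R_k\to\infty$ with $Q_{n_k}(2R_k)\to\alpha$: since each $Q_{n_k}$ is nondecreasing and bounded by $\la$, and $Q_{n_k}(R)\to Q(R)$ pointwise in $R\in\N$, a standard diagonal extraction produces a subsequence along which $Q_{n_k}(R_k)\to\alpha$ for a suitable $R_k\to\infty$. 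I expect this extraction to be the most delicate step to write cleanly, but it is routine. The remaining verifications — measurability of the truncated functions, the elementary inequality $\norma{u\one_A}_{L^2}^2+\norma{u\one_{A^\complement}}_{L^2}^2=\norma{u}_{L^2}^2$, and the triangle inequality bounding the remainder — are immediate.
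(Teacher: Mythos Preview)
Your proposal is correct and follows exactly the approach the paper indicates: the paper's proof consists entirely of defining the same concentration function $Q_n(t)=\sup_{\ell\in\N}\int_{s2^{\ell-t}\leq\ab{y}\leq s2^{\ell+t}}\ab{\rho_n}^2\d\bar\mu_s$ and citing Lions' Lemma~I.1, omitting all further details. One small caveat: in case (i) the sequence $\{\ell_k\}_k$ must be chosen independently of $\eps$ (the quantifier order is $\exists\,\ell_k\,\forall\eps\,\exists R$), so your parenthetical about ``absorbing into the quantifier order'' is not quite right---but this is handled in the standard way by fixing $\ell_k$ as a near-maximizer at one reference scale and using that larger annuli only gain mass, which is routine and implicit in the Lions argument you are invoking.
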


\begin{proof}
 The proof is identical to the proof of Lemma I.1 in \cite{Li}, by defining the sequence of functions
 \[Q_n\colon [0,\infty)\to\R_+,\quad Q_n(t)=\sup\limits_{\ell\in\N}\int_{s2^{\ell-t}\leq \ab{y}\leq s2^{\ell+t}}\ab{\rho_n(y)}^2 
 \d\bar\mu_s(y).\]
We omit the details.
\end{proof}

In the forthcoming sections, we will be working with an $L^2$ normalized extremizing sequence $\{f_n\}_n$ and will apply the preceding lemma to with $\la=1$. We will slightly abuse notation and say that $\{f_n\}_n$ satisfies either \textit{concentration, vanishing} or \textit{dichotomy}, when the sequence $\{\ab{f_n}^2\}_n$ satisfies the respective alternative.

\section{Bilinear estimates and discarding dichotomy}\label{sec:no-dichotomy}

In this section we show that an extremizing sequence for $\overline{T}$ can not satisfy the dichotomy condition (iii) of Lemma \ref{lem:concentration-compactness}. This will be a consequence of bilinear estimates at dyadic scales.

\begin{prop}\label{prop:bilinear-estimate-hyp}
 There exists a constant $C<\infty$ with the following property. Let $s>0$, $k,k'\in\N$ and $f,g\in 
 L^2(\mathcal H_s^3)$ supported where
$2^{k}s\leq \ab{y}\leq 2^{k+1}s$ and $2^{k'}s\leq \ab{y}\leq 2^{k'+1}s$ respectively. Then
 \[\norma{T_sf\cdot T_sg}_{L^2(\R^4)}\leq C2^{-\frac{1}{4}\ab{k-k'}}\norma{f}_{L^2(\mathcal 
 H_s^3)}\norma{g}_{L^2(\mathcal H_s^3)}.\]
\end{prop}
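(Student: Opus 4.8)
The plan is to prove the bilinear estimate by scaling to the case $s=1$, then using the slicing representation from Lemma \ref{lem:sliced-convolution} to reduce everything to known bilinear estimates for the sphere $\Sph^2$. First I would exploit the scaling identity. By the substitution $(x,t)\mapsto(sx,st)$ one checks that $T_sf(x,t) = s^{-2}(T_1\tilde f)(sx,st)$ for an appropriate rescaling $\tilde f$ of $f$ living on $\hyp_1$, with $f$ supported where $2^k\le\ab{y}\le 2^{k+1}$ corresponding to $\tilde f$ supported in the same dyadic annulus on $\hyp_1$; the $L^2(\R^4)$ norm of the product and the $L^2(\mathcal H^3_s)$ norms of $f,g$ all transform by matching powers of $s$, so the constant $C$ is independent of $s$. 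From now on $s=1$.

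Next I would pass to the convolution form: by Plancherel as in \eqref{eq:convolution-form}, $\norma{T_1f\cdot T_1g}_{L^2(\R^4)} = (2\pi)^2\norma{f\mu_1\ast g\mu_1}_{L^2(\R^4)}$ (up to reflecting $g$ in the $t$ variable, which is harmless). By Lemma \ref{lem:sliced-convolution},
\[
(f\mu_1\ast g\mu_1)(x,\tau) = \int_0^\tau \bigl(f\sigma_{\phi_1(t')}\ast g\sigma_{\phi_1(\tau-t')}\bigr)(x)\,\d t',
\]
where $f$ supported in $\{2^k\le\ab{y}\le 2^{k+1}\}$ forces $t'=\psi_1(\ab{y})$ to lie in the interval $I_k:=[\psi_1(2^k),\psi_1(2^{k+1})]$, an interval of length $\asymp 2^k$ sitting at height $\asymp 2^k$; similarly $\tau-t'\in I_{k'}$. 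So the $t'$-integral is over $I_k\cap(\tau-I_{k'})$ and is nonempty only for $\tau$ in an interval of length $\asymp 2^{\max(k,k')}$. Taking $L^2$ in $(x,\tau)$, applying Minkowski's integral inequality in $t'$ and then Cauchy--Schwarz in $t'$ (the $t'$-support having length $\lesssim 2^{\min(k,k')}$ for fixed $\tau$, since we integrate over $I_k$ intersected with a translate of $I_{k'}$), I get
\[
\norma{f\mu_1\ast g\mu_1}_{L^2(\R^4)}^2 \lesssim 2^{\min(k,k')}\int\!\!\int_{I_k\times I_{k'}} \norma{f\sigma_{\phi_1(t')}\ast g\sigma_{\phi_1(t'')}}_{L^2(\R^3)}^2\,\d t'\,\d t''.
\]

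Now the core input is a dyadic-separated bilinear estimate for spherical measures on $\Sph^2_r$ and $\Sph^2_{r'}$ with $r\asymp 2^k$, $r'\asymp 2^{k'}$. After rescaling both spheres to $\Sph^2$ (as in the computation leading to \eqref{eq:scaling_convolution_L2}), $f\sigma_r$ and $g\sigma_{r'}$ become functions on $\Sph^2$ but the two extension operators are $\widehat{f_r\sigma}(rx)$ and $\widehat{g_{r'}\sigma}(r'x)$ evaluated at \emph{different} dilates, which is exactly the setting of the classical Klainerman--Machedon / Foschi--Klainerman bilinear $L^2$ estimate for the sphere: for $\lambda=r/r'$ (say $\ge 1$) one has $\norma{\widehat{F\sigma_r}\,\widehat{G\sigma_{r'}}}_{L^2(\R^3)}\lesssim (r')^{-1}\min(r,r')^{?}\ldots$; more precisely the standard estimate gives a gain of $(\min(r,r')/\max(r,r'))^{1/2}$ relative to the trivial bound from \eqref{eq:scaling_convolution_L2}. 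Concretely I expect
\[
\norma{f\sigma_r\ast g\sigma_{r'}}_{L^2(\R^3)} \lesssim \Bigl(\tfrac{\min(r,r')}{\max(r,r')}\Bigr)^{1/2}(rr')^{1/4}\norma{f_r}_{L^2(\Sph^2)}\norma{g_{r'}}_{L^2(\Sph^2)} \asymp 2^{-\ab{k-k'}/2}2^{(k+k')/4}\norma{f_r}_2\norma{g_{r'}}_2.
\]
Feeding this into the displayed double integral, bounding $\norma{f_{\phi_1(t')}}_{L^2(\Sph^2)}^2\phi_1(t')\asymp 2^{-k}\norma{f_{\phi_1(t')}}_{L^2(\Sph^2)}^2 \cdot 2^{k}$ and using Lemma \ref{lem:sliced-lp-norm} to recognize $\int_{I_k}\norma{f_{\phi_1(t')}}_{L^2(\Sph^2)}^2\phi_1(t')\,\d t' \asymp \norma{f}_{L^2(\hyp_1)}^2$, a bookkeeping of the powers $2^{\min(k,k')}$, $2^{(k+k')/2}$, $2^{-\ab{k-k'}}$ and the $2^{-k-k'}$ from the Jacobians collapses to the claimed $2^{-\ab{k-k'}/2}$ in $\norma{f\mu_1\ast g\mu_1}_{L^2}^2$, i.e. $2^{-\ab{k-k'}/4}$ in $\norma{f\mu_1\ast g\mu_1}_{L^2}$.

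The main obstacle is establishing the separated-radii bilinear spherical estimate with the correct $(\min/\max)^{1/2}$ gain \emph{uniformly} in the radii and with the right normalization — this is the one genuinely non-elementary ingredient. I would either cite the transversal bilinear restriction estimate for the sphere (Klainerman--Machedon, Foschi--Klainerman, or Tao's bilinear theorem specialized to $L^2$) after checking the dilation structure matches the transversality hypothesis (two sphere pieces at different scales have angularly transverse normals on the overlap of their sum sets), or give a direct computation: write $\norma{f\sigma_r\ast g\sigma_{r'}}_{L^2}^2$ via the coarea/Fourier-support formula, note $\supp(\sigma_r\ast\sigma_{r'})$ is the annulus $\ab{r-r'}\le\ab{\xi}\le r+r'$ and the density of $\sigma_r\ast\sigma_{r'}$ is $\lesssim (\ab{\xi}\, )^{-1}\cdot(\text{bounded})$ with the crucial point that the relevant region has measure saving $\min(r,r')/\max(r,r')$. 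The rest is routine: scaling, Minkowski, Cauchy--Schwarz in the slice variable, and the power count. I would present the argument in that order, flagging the bilinear spherical estimate as a lemma whose proof is standard and citing the appropriate reference.
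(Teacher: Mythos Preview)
Your overall strategy—slice via Lemma \ref{lem:sliced-convolution}, apply Minkowski in the slicing variable, then bookkeep—is exactly the paper's. But you are working much too hard: the ``separated-radii bilinear spherical estimate'' you flag as the main obstacle is \emph{unnecessary}, and the exponent you state for it is in any case wrong. Testing on $f=g=1$ with the paper's normalization of $\sigma_r$ gives $\norma{\sigma_r\ast\sigma_{r'}}_{L^2(\R^3)}^2\asymp\min(r,r')$, so the gain over \eqref{eq:scaling_convolution_L2} is only $(\min/\max)^{1/4}$, not $(\min/\max)^{1/2}$; with your claimed gain your own power count would actually produce $2^{-3\ab{k-k'}/4}$, not $2^{-\ab{k-k'}/4}$. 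The point is that the decay $2^{-\ab{k-k'}/4}$ comes \emph{entirely} from the $t$-variable integration, not from any transversality on the spheres.

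Concretely, after Minkowski the paper applies only the trivial bound \eqref{eq:scaling_convolution_L2}, recognizes the result as a one-dimensional convolution $F\ast G$ in $t$ with $F(t')=\phi_s(t')^{1/4}\norma{f_{\phi_s(t')}}_{L^2(\Sph^2)}$ and similarly for $G$, and uses Young's inequality $\norma{F\ast G}_{L^2}\le\norma{F}_{L^1}\norma{G}_{L^2}$, putting the $L^1$ norm on the function at the smaller scale. For $k\le k'$ one gets $\norma{F}_{L^1}\lesssim(2^ks)^{1/4}\norma{f}_2$ (Cauchy--Schwarz on an interval of length $\asymp 2^ks$) and $\norma{G}_{L^2}\lesssim(2^{k'}s)^{-1/4}\norma{g}_2$, which is exactly $2^{-\ab{k-k'}/4}$. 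Your own setup reaches the same conclusion without the bilinear input: after your Cauchy--Schwarz in $t'$ producing the factor $2^{\min(k,k')}s$, insert \eqref{eq:scaling_convolution_L2} directly and use $\int_{I_k}\phi_s(t')^{1/2}\norma{f_{\phi_s(t')}}_2^2\,\d t'\lesssim (2^ks)^{-1/2}\norma{f}_2^2$ from Lemma \ref{lem:sliced-lp-norm}; the powers already collapse to $2^{-\ab{k-k'}/2}$ in $\norma{f\mu_s\ast g\mu_s}_{L^2}^2$. No preliminary reduction to $s=1$ is needed either—the argument is uniform in $s$.
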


\begin{proof}
 Without loss of generality we can assume $k'\geq k$. Using Lemma \ref{lem:sliced-convolution} we  
 write
 \[f\mu_s*g\mu_s(x,t)=\int_0^t (f\sigma_{\phi_s(t')}*g\sigma_{\phi_s(t-t')})(x)\d t',\]
 so that by Minkowski's integral inequality
 \begin{equation}\label{eq:upper_bound_for_conv}
 \norma{f\mu_s*g\mu_s}_{L^2_{x,t}}\leq \NOrma{\int_0^t
\norma{f\sigma_{\phi_s(t')}*g\sigma_{\phi_s(t-t')}}_{L^2_x}\d t'}_{L^2_t}.
\end{equation}
Recalling \eqref{eq:scaling_convolution_L2}, the right hand side of \eqref{eq:upper_bound_for_conv} satisfies
\begin{align*}
\Norma{\int_0^t\norma{f\sigma_{\phi_s(t')}*g&\sigma_{\phi_s(t-t')}}_{L^2_x}\d t'}_{L^4_t}
\\ 
&\leq C\Norma{\int_0^t
\phi_s(t')^{1/4}\norma{f_{\phi_s(t')}}_2\,\phi_s(t-t')^{1/4}\norma{g_{\phi_s(t-t')}}_2\d t'}_{L^2_t}\\
&\leq C\int_0^\infty \phi_s(t')^{1/4}\norma{f_{\phi_s(t')}}_2\Norma{\one_{\{t\geq
t'\}}(t')\phi_s(t-t')^{1/4}\norma{g_{\phi_s(t-t')}}_2}_{L^2_t}\d t'\\
&\leq
C\Norma{\phi_s(t)^{1/4}\norma{g_{\phi_s(t)}}_2}_{L^2_t}\int_{\psi_s(2^{k}s)}^{\psi_s(2^{k+1}s)}\phi_s(t')^{1/4}\norma{f_{
\phi_s(t') } }_2\d t',
\end{align*}
where in the last line we used the support condition for $f$. Recalling the support condition for $g$
\begin{align*}
\Norma{\phi_s(t)^{1/4}\norma{g_{\phi_s(t)}}_{L^2(\Sph^2)}}_{L^2_t}^2
&=\int_{\psi_s(2^{k'}s)}^{\psi_s(2^{k'+1}s)}\phi_s(t)^{1/2}
\norma{g_{\phi_s(t) }}_{L^2(\Sph^2)} ^2 \d t\\
&\leq 
\bigl(\phi_s(\psi_s(2^{k'}s))\bigr)^{-1/2}\int_0^\infty\phi_s(t)\norma{g_{\phi_s(t)}}_{L^2(\Sph^2)}^2
\d t\\
&=(2^{k'}s)^{-1/2}\norma{g}_{L^2(\mathcal{H}^3_s)}^2,
\end{align*}
where in the last line we used Lemma \ref{lem:sliced-lp-norm}. Similarly
\begin{align*}
\int_{\psi_s(2^{k}s)}^{\psi_s(2^{k+1}s)}\phi_s(t')^{1/4}\norma{f_{\phi_s(t') } }_2\d t'&\leq
\Bigl(\int_{\psi_s(2^{k}s)}^{\psi_s(2^{k+1}s)}\phi_s(t')^{1/2}\norma{f_{\phi_s(t')
	}}_2^2\d t'\Bigr)^{1/2}\\
&\quad\qquad\cdot\Bigl(\int_{\psi_s(2^{k}s)}^{\psi_s(2^{k+1}s)} 1\d t'\Bigr)^{1/2}\\
&\leq (2^ks)^{-1/4}(\psi_s(2^{k+1}s)-\psi_s(2^{k}s))^{1/2}\norma{f}_{L^2(\mathcal{H}^3_s)}\\
&\asymp (2^ks)^{-1/4}(2^{k}s)^{1/2}\norma{f}_{L^2(\mathcal{H}^3_s)}\\
&=(2^{k}s)^{1/4}\norma{f}_{L^2(\mathcal{H}^3_s)}.
\end{align*}
We conclude that
\begin{align*}
\norma{f\mu_s*g\mu_s}_{L^2_{x,t}}&\leq\Norma{\int_0^t\norma{f\sigma_{\phi_s(t')}*g\sigma_{\phi_s(t-t')}}_{L^2_x}\d
 t'}_{L^4_t}
\lesssim 2^{k/4}\norma{f}_{L^2(\mathcal{H}^3_s)}2^{-k'/4}\norma{g}_{L^2(\mathcal{H}^3_s)}\\
&=2^{-\frac{1}{4}\ab{k'-k}}\norma{f}_{L^2(\mathcal{H}^3_s)}\norma{g}_{L^2(\mathcal{H}^3_s)}.
\end{align*}
\end{proof}

\begin{prop}
 \label{prop:large-logdistance}
 Let $f,g\in L^2(\mathcal H^3)$ and suppose that their supports are separated in the sense that there 
 exist $k,k'\in\N$, $k\leq k'$, such
that $\supp(f)\subseteq\{\ab{y}\leq 2^{k}\}$ and $\supp(g)\subseteq\{\ab{y}\geq 2^{k'}\}$. Then
\[\norma{Tf\cdot Tg}_{L^2(\R^4)}\leq C2^{-\frac{1}{4}\ab{k-k'}}\norma{f}_{L^2(\mathcal 
H^3)}\norma{g}_{L^2(\mathcal H^3)}.\]
Similarly, if there exist $k,R,R'\in\N$, $R\leq R'$, such that $\supp(f)\subseteq\{2^{k-R}\leq\ab{y}\leq 2^{k+R}\}$ and $\supp(g)\subseteq\{\ab{y}\leq 2^{k-R'} \}\cup\{\ab{y}\geq 2^{k+R'}\}$, then
\[\norma{Tf\cdot Tg}_{L^2(\R^4)}\leq C2^{-\frac{1}{4}\ab{R-R'}}\norma{f}_{L^2(\mathcal 
	H^3)}\norma{g}_{L^2(\mathcal H^3)}.\]
\end{prop}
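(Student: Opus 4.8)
The plan is to deduce Proposition~\ref{prop:large-logdistance} from the dyadic bilinear estimate of Proposition~\ref{prop:bilinear-estimate-hyp} by a dyadic decomposition of the supports of $f$ and $g$, followed by summation of a geometric series. Throughout we work with $s=1$, writing $\mu=\mu_1$, $T=T_1$, and use the convolution form \eqref{eq:convolution-form}, so that $\norma{Tf\cdot Tg}_{L^2(\R^4)}=(2\pi)^2\norma{f\mu\ast g\mu}_{L^2(\R^4)}$; it therefore suffices to bound $\norma{f\mu\ast g\mu}_{L^2(\R^4)}$.

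For the first assertion, decompose $f=\sum_{j\le k}f_j$ and $g=\sum_{l\ge k'}g_l$, where $f_j:=f\one_{\{2^{j-1}\le\ab{y}<2^{j}\}}$ (with the innermost piece $f$ restricted to $\{\ab{y}<2^{0}\}$ empty since $\ab{y}\ge 1$ on $\hyp$, so in fact $j$ ranges over $0\le j\le k$) and similarly $g_l:=g\one_{\{2^{l}\le\ab{y}<2^{l+1}\}}$ for $l\ge k'$. By the triangle inequality, Proposition~\ref{prop:bilinear-estimate-hyp}, and Cauchy--Schwarz,
\begin{align*}
\norma{f\mu\ast g\mu}_{L^2(\R^4)}&\le\sum_{j\le k}\sum_{l\ge k'}\norma{f_j\mu\ast g_l\mu}_{L^2(\R^4)}\le C\sum_{j\le k}\sum_{l\ge k'}2^{-\frac14\ab{j-l}}\norma{f_j}_{L^2(\hyp)}\norma{g_l}_{L^2(\hyp)}\\
&\le C\Bigl(\sup_{j\le k}\sum_{l\ge k'}2^{-\frac14\ab{j-l}}\Bigr)^{1/2}\Bigl(\sup_{l\ge k'}\sum_{j\le k}2^{-\frac14\ab{j-l}}\Bigr)^{1/2}\norma{f}_{L^2(\hyp)}\norma{g}_{L^2(\hyp)},
\end{align*}
by Schur's test. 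Since $k\le k'$, for $j\le k$ and $l\ge k'$ we have $\ab{j-l}=l-j\ge (l-k')+(k'-k)+(k-j)\ge (k'-k)+\ab{l-k'}$, hence $\sum_{l\ge k'}2^{-\frac14\ab{j-l}}\le 2^{-\frac14(k'-k)}\sum_{m\ge0}2^{-m/4}=C'2^{-\frac14\ab{k-k'}}$; symmetrically the other sum is bounded, and in fact one of the two suprema already carries the full gain $2^{-\frac14\ab{k-k'}}$ while the other is $O(1)$. This yields $\norma{f\mu\ast g\mu}_{L^2(\R^4)}\le C2^{-\frac18\ab{k-k'}}\norma{f}_{L^2(\hyp)}\norma{g}_{L^2(\hyp)}$; if the sharp exponent $\frac14$ is wanted one simply keeps the full geometric gain from the single supremum that contains the separation and bounds the other by a constant, giving $2^{-\frac14\ab{k-k'}}$ directly. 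For the second assertion, decompose $g=g^{<}+g^{>}$ according to the two pieces $\{\ab{y}\le 2^{k-R'}\}$ and $\{\ab{y}\ge 2^{k+R'}\}$. Applying the first assertion to the pair $(f,g^{>})$ with the roles of $k,k'$ played by $k+R$ and $k+R'$ gives a bound with factor $2^{-\frac14\ab{R-R'}}$; applying it (with $f$ and $g^{<}$ interchanged, i.e. $g^{<}$ supported in $\{\ab{y}\le 2^{k-R'}\}$ and $f$ in $\{\ab{y}\ge 2^{k-R}\}$) to the pair $(g^{<},f)$ gives the same bound with factor $2^{-\frac14\ab{(k-R')-(k-R)}}=2^{-\frac14\ab{R-R'}}$. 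Adding the two contributions and using $\norma{g^{<}}_{L^2}^2+\norma{g^{>}}_{L^2}^2=\norma{g}_{L^2}^2$ finishes the proof.

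The only genuinely substantive point — and the step I expect to require the most care — is verifying that the constant in Proposition~\ref{prop:bilinear-estimate-hyp} is truly independent of $s$, so that the rescaled pieces $f_j,g_l$ (each naturally living on a hyperboloid $\hyp_{s}$ after a dilation normalizing the annulus to $\{2^k s\le\ab{y}\le 2^{k+1}s\}$) can be handled uniformly; but this uniformity is exactly what is asserted there, so for the present proof it is available as a black box. The remaining work is the elementary bookkeeping of the geometric sums above, for which Schur's test on the kernel $2^{-\frac14\ab{j-l}}$ is the clean tool, and the observation that when the index ranges $\{j\le k\}$ and $\{l\ge k'\}$ are themselves separated by $k'-k$, the double sum factors through that gap.
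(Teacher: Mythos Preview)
Your approach is essentially the same as the paper's: dyadic decomposition, application of Proposition~\ref{prop:bilinear-estimate-hyp} piece by piece, and summation of the resulting geometric series; the second assertion is deduced from the first by splitting $g$ into its inner and outer parts, exactly as you do.

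Two small points of confusion are worth correcting. First, your Schur computation is muddled: for $j\le k$ and $l\ge k'$ one has $\sup_{j\le k}\sum_{l\ge k'}2^{-\frac14(l-j)}\asymp 2^{-\frac14(k'-k)}$ \emph{and} $\sup_{l\ge k'}\sum_{j\le k}2^{-\frac14(l-j)}\asymp 2^{-\frac14(k'-k)}$, so Schur's test gives the full exponent $\tfrac14$ immediately; neither supremum is $O(1)$, and the intermediate $\tfrac18$ bound is a miscalculation. (The paper does the equivalent thing by first factoring out $2^{-\frac14(k'-k)}$ from every term, since $l-j\ge(k'-k)$, and then applying Schur to the remaining $O(1)$ kernel.) Second, your closing worry about uniformity of the constant in $s$ is a red herring here: we are on $\hyp=\hyp_1$ throughout, and each dyadic piece $f_j,g_l$ is supported where $2^m\le\ab{y}\le 2^{m+1}$ for some $m\in\N$, which is precisely the hypothesis of Proposition~\ref{prop:bilinear-estimate-hyp} with $s=1$; no rescaling to other hyperboloids is involved.
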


\begin{proof}
 We decompose $f=\sum_{m\in\N}f_m$ and $g=\sum_{m'\in\N}g_{m'}$ where $f_m,g_m$ are supported 
 where $2^{m}\leq \ab{y}\leq 2^{m+1}$, $m\geq 0$. Then
\begin{align*}
 \norma{Tf\cdot Tg}_{L^2(\R^4)}&= \Norma{\suma{m,m'}{}Tf_m\cdot Tg_{m'}}_{L^2}\leq \suma{m,m'}{}\norma{Tf_m\cdot Tg_{m'}}_{L^2}\\
 &\leq \suma{m,m'}{}2^{-\frac{1}{4}\ab{m-m'}}\norma{f_m}_{L^2}\norma{g_{m'}}_{L^2}\\
 &=2^{-\frac{1}{4}\ab{k'-k+1}}\suma{m\leq 0,m'\geq
0}{}2^{-\frac{1}{4}\ab{m-m'}}\norma{f_{m+k-1}}_{L^2}\norma{g_{m'+k'}}_{L^2}\\
&\leq C 2^{-\frac{1}{4}\ab{k'-k}}\norma{f}_{L^2(\hyp)}\norma{g}_{L^2(\hyp)}.
\end{align*}
The second part of the proposition follows from the first and the triangle inequality.
\end{proof}

Decomposing a function $f\in L^2(\Hyp)$ as the sum of a function $f_+\in L^2(\hyp)$ and $f_-\in L^2(-\hyp)$, $f=f_++f_-$, using that $\overline{T}f(\cdot,\cdot)=Tf_+(\cdot,\cdot)+Tf_-(\cdot,-\cdot)$ and the triangle inequality we can obtain a statement analog to the previous proposition for functions on the full one-sheeted hyperboloid $\Hyp$: if $f,g$ belong to $L^2(\Hyp)$ and satisfy for some $k,R,R'\in\N$, $R\leq R'$:
\[\supp(f)\subseteq\{2^{k-R}\leq\ab{y}\leq 2^{k+R}\},\,\supp(g)\subseteq\{\ab{y}\leq 2^{k-R'} \}\cup\{\ab{y}\geq 2^{k+R'}\},\]
then 
\begin{equation}\label{eq:Weak_interaction_bar_T}
\norma{\overline{T}f\cdot \overline{T}g}_{L^2(\R^4)}\leq C2^{-\frac{1}{4}\ab{R-R'}}\norma{f}_{L^2(\Hyp 
	)}\norma{g}_{L^2(\Hyp)}.
\end{equation}

\begin{prop}\label{prop:no-dichotomy}
An extremizing sequence for the adjoint Fourier restriction inequality \eqref{sharp_L4_double_hyp} on $\overline{\mathcal H}^{\,3}$ does not satisfy dichotomy.
\end{prop}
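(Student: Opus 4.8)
The plan is to argue by contradiction, the two inputs being the weak--interaction bilinear estimate \eqref{eq:Weak_interaction_bar_T} and the elementary strict inequality $\alpha^2+(1-\alpha)^2<1$ valid for $\alpha\in(0,1)$, which encodes that the fourth--power functional $f\mapsto\norma{\overline T f}_{L^4(\R^4)}^4$ (comparable to $\norma{f}_{L^2(\Hyp)}^4$) is strictly penalized by splitting $L^2$ mass into two dyadically separated pieces. Suppose an $L^2$--normalized extremizing sequence $\{f_n\}_n$ for \eqref{sharp_L4_double_hyp} satisfies \textit{dichotomy}, and let $\alpha\in(0,1)$ be the constant furnished by alternative (iii) of Lemma \ref{lem:concentration-compactness} applied with $\la=1$. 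Fix $\eps>0$ and take $R$, $k_0$, the functions $\rho_{k,1},\rho_{k,2}\in L^2(\Hyp)$ and the sequences $\{\ell_k\}_k,\{R_k\}_k$ ($R_k\to\infty$) as in (iii), recalling $\Hyp=\Hyp_1$; set $e_k=f_{n_k}-\rho_{k,1}-\rho_{k,2}$, so that $\norma{e_k}_{L^2(\Hyp)}\le\eps$ by \eqref{eq:conditions_dichotomy-1}, while $\norma{\rho_{k,1}}_{L^2(\Hyp)}^2\le\alpha+\eps$ and $\norma{\rho_{k,2}}_{L^2(\Hyp)}^2\le1-\alpha+\eps$ (also from (iii)).

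The next step is to estimate $\norma{\overline T f_{n_k}}_{L^4(\R^4)}^4$ (equivalently, one may use the convolution form \eqref{eq:convolution-form}). Writing $F_i=\overline T\rho_{k,i}$ and $E=\overline T e_k$, the pointwise bound $\ab{\overline T f_{n_k}}\le\ab{F_1}+\ab{F_2}+\ab{E}$ together with the multinomial expansion of $(\ab{F_1}+\ab{F_2}+\ab{E})^4$ reduce matters to the integrals $\int_{\R^4}\ab{F_1}^{i}\ab{F_2}^{j}\ab{E}^{l}$ with $i+j+l=4$. By \eqref{sharp_L4_double_hyp} one has $\norma{F_i}_{L^4}\le\overline{\mathbf H}_4\norma{\rho_{k,i}}_{L^2}$ (uniformly bounded) and $\norma{E}_{L^4}\le\overline{\mathbf H}_4\eps$, so every term with $l\ge1$ is $O(\eps)$ by H\"older, and every term with $l=0$ and $1\le i\le3$ is dominated by $C\overline{\mathbf H}_4^{2}\bigl(\norma{F_1F_2}_{L^2}+\norma{F_1F_2}_{L^2}^{2}\bigr)$ after peeling off a factor $\ab{F_1}\ab{F_2}$ and applying Cauchy--Schwarz, where $\norma{F_1F_2}_{L^2}=\norma{\overline T\rho_{k,1}\cdot\overline T\rho_{k,2}}_{L^2(\R^4)}\le C\,2^{-(R_k-R)/4}$ by \eqref{eq:Weak_interaction_bar_T} (applicable once $R_k\ge R$, the relevant support gap being exactly that in \eqref{eq:conditions_dichotomy-4}). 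Only the pure terms $\int\ab{F_i}^4=\norma{\overline T\rho_{k,i}}_{L^4}^4\le\overline{\mathbf H}_4^4\norma{\rho_{k,i}}_{L^2}^4$ survive, and letting $k\to\infty$, using $\norma{\overline T f_{n_k}}_{L^4}\to\overline{\mathbf H}_4$, we arrive at
\[
\overline{\mathbf H}_4^4\le\overline{\mathbf H}_4^4\bigl((\alpha+\eps)^2+(1-\alpha+\eps)^2\bigr)+C\eps.
\]

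Since $\alpha$ does not depend on $\eps$, letting $\eps\to0^+$ yields $\overline{\mathbf H}_4^4\le\overline{\mathbf H}_4^4\bigl(\alpha^2+(1-\alpha)^2\bigr)$; as $\overline{\mathbf H}_4>0$ (indeed $\overline{\mathbf H}_4>(\tfrac32)^{1/4}(2\pi)^{5/4}$ by \eqref{a_priori_bounds}) and $\alpha^2+(1-\alpha)^2=1-2\alpha(1-\alpha)<1$ for $\alpha\in(0,1)$, this is a contradiction, and so an extremizing sequence cannot satisfy dichotomy. The only points requiring a little care are the routine book--keeping of the mixed H\"older terms above and the verification that the separation of the supports in alternative (iii) --- an annulus of fixed dyadic width $2R$ versus the complement of an annulus of dyadic width $2R_k\to\infty$ --- is of the form demanded by \eqref{eq:Weak_interaction_bar_T}; neither is a genuine obstacle, the lack of exact scale invariance of $\bar\mu$ being irrelevant once the two pieces are dyadically well separated. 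The decisive ingredient is the strict gain $\alpha^2+(1-\alpha)^2<1$.
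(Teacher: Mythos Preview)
Your proof is correct and follows essentially the same approach as the paper: contradiction via the bilinear weak--interaction estimate \eqref{eq:Weak_interaction_bar_T} and the strict inequality $\alpha^2+(1-\alpha)^2<1$. The only cosmetic difference is that the paper first peels off the error $e_k$ by the triangle inequality in $L^4$ and then expands $(\overline T\rho_{k,1}+\overline T\rho_{k,2})^2$ in $L^2$, whereas you use the pointwise bound $\ab{\overline T f_{n_k}}\le\ab{F_1}+\ab{F_2}+\ab{E}$ and a full multinomial expansion; both routes lead to the same estimate $\overline{\mathbf H}_4^4\le\overline{\mathbf H}_4^4(\alpha^2+(1-\alpha)^2)$.
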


\begin{proof}
 Let us argue by contradiction. Let $\{f_n\}_n$ be an extremizing sequence such that 
 $\{\ab{f_n}^2\}_n$ satisfies
condition (iii), \textit{dichotomy}, in Lemma \ref{lem:concentration-compactness}. Let $\eps>0$ be given and $f_{n,1},f_{n,2}$, $n_0$ be as in
the conclusion of the dichotomy condition. Then, for $n\geq n_0$
\[\norma{\overline{T}f_n-\overline{T}f_{n,1}-\overline{T}f_{n,2}}_{L^4}\leq  \overline{\mathbf{H}}_4\norma{f_n-(f_{n,1}+f_{n,2})}_{L^2}\leq \overline{\mathbf{H}}_4\,\eps,\]
therefore
\begin{equation}\label{eq:dichotomy-bound-Tf}
\norma{\overline{T}f_n}_{L^4}\leq  
\overline{\mathbf{H}}_4\,\eps+\norma{\overline{T}(f_{n,1}+f_{n,2})}_{L^4},
\end{equation}
Expanding, using Proposition \ref{prop:large-logdistance} (or the comment thereafter) and the support condition for $f_{n,1}$ and $f_{n,2}$ as in \eqref{eq:conditions_dichotomy-1}--\eqref{eq:conditions_dichotomy-4}, there exists $C<\infty$ independent of $\eps$ such that 
for all $n$ large enough
\begin{align*}
 \norma{\overline{T}(f_{n,1}+f_{n,2})}_{L^4}^4&=\norma{(\overline{T}(f_{n,1}+f_{n,2}))^2}_{L^2}^2=\norma{(\overline 
 Tf_{n,1})^2+2\overline
Tf_{n,1}\cdot \overline{T}f_{n,2}+(\overline{T}f_{n,2})^2}_{L^2}^2\\
 &=\norma{\overline{T}f_{n,1}}_{L^4}^4+\norma{\overline{T}f_{n,2}}_{L^4}^4+2\langle (\overline{T}f_{n,1})^2,(\overline 
 Tf_{n,2})^2\rangle\\
 &\quad+4\langle (\overline
Tf_{n,1})^2,\overline{T}f_{n,1}\cdot \overline{T}f_{n,2}\rangle
+4\langle (\overline
Tf_{n,2})^2,\overline{T}f_{n,1}\cdot \overline{T}f_{n,2}\rangle\\
&\leq \norma{\overline{T}f_{n,1}}_{L^4}^4+\norma{\overline{T}f_{n,2}}_{L^4}^4+\eps.\\
&\leq \overline{\mathbf{H}}_4^4\norma{f_{n,1}}_2^4+\overline{\mathbf{H}}_4^4\norma{f_{n,2}}_2^4+\eps\\
&\leq\overline{\mathbf{H}}_4^4(\alpha^2+(1-\alpha)^2)+C\eps,
\end{align*}
so that using \eqref{eq:dichotomy-bound-Tf} and taking $n\to\infty$ we find that for any $\eps>0$
\[ \overline{\mathbf{H}}_4^4\leq\overline{\mathbf{H}}_4^4(\alpha^2+(1-\alpha)^2)+C\eps, \]
for some constant $C<\infty$ independent of $\eps$.

We conclude $1\leq\alpha^2+(1-\alpha)^2$. We reach a contradiction since $\alpha\in(0,1)$ and the numerical inequality $\alpha^2+(1-\alpha)^2<1$ holds. 
\end{proof}
 
The proof we just gave to discard \textit{dichotomy} can be seen in the context of the \textit{strict superaditivity condition} as proposed by Lions \cite{Li}*{Section I.2}; see for instance the comment at the end of Appendix A in \cite{OeSQ18}.

\section{Dyadic refinements and discarding vanishing}\label{sec:no-vanishing}
 
In this section we prove a dyadic improvement of the $L^2\to L^4$ inequality \eqref{adjoint-restriction-one-sheeted} that will imply 
that extremizing sequences for $\overline{T}$ do not satisfy the \textit{vanishing} condition (ii) of Lemma \ref{lem:concentration-compactness}. We start with the following 
proposition.

\begin{prop}\label{prop:dyadic-bound}
	There exists a constant $C<\infty$ with the following property. Let $f\in 
	L^2({\mathcal{H}}^3)$ and for $k\in\N$ let $f_k(y)=f(y)\one_{\{2^{k}\leq 
	\ab{y}<	2^{k+1}\}}$. Then
\begin{equation}\label{eq:improved-dyadic-bound}
\norma{Tf}_{L^4(\R^4)}\leq C\biggl(\sum_{k\geq 
0}\norma{f_k}_{L^2(\overline{\mathcal{H}}^3)}^3\biggr)^{1/3}.
\end{equation}
\end{prop}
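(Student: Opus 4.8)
The plan is to exploit the bilinear estimate of Proposition \ref{prop:bilinear-estimate-hyp} to control the cross terms in the expansion of $\norma{Tf}_{L^4}^4 = \norma{(Tf)^2}_{L^2}^2$, combined with the sharp $L^2 \to L^4$ estimate \eqref{adjoint-restriction-one-sheeted} to control the diagonal terms. Writing $f = \sum_{k\geq 0} f_k$ with $f_k$ supported in the dyadic annulus $\{2^k \leq \ab{y} < 2^{k+1}\}$, we have $Tf = \sum_k Tf_k$, so
\[
\norma{Tf}_{L^4(\R^4)}^2 = \Norma{\sum_{k,k'} Tf_k \cdot Tf_{k'}}_{L^2(\R^4)} \leq \sum_{k,k'} \norma{Tf_k\cdot Tf_{k'}}_{L^2(\R^4)}.
\]
By Proposition \ref{prop:bilinear-estimate-hyp} with $s=1$, each summand is bounded by $C 2^{-\ab{k-k'}/4}\norma{f_k}_{L^2}\norma{f_{k'}}_{L^2}$. (Strictly, one should verify the bilinear estimate applies at scale $s=1$, which it does since the proposition is stated for all $s>0$; alternatively one can absorb the factors $\psi_1(2^k)\asymp 2^k$ into the constant as in the proof there.) Setting $a_k = \norma{f_k}_{L^2(\hyp)}$, we are reduced to the purely numerical estimate
\[
\sum_{k,k'\geq 0} 2^{-\ab{k-k'}/4} a_k a_{k'} \leq C\Bigl(\sum_{k\geq 0} a_k^3\Bigr)^{2/3}.
\]

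The hard part, and the main point of the proof, is this last inequality, which is a weighted version of Young's (or Schur's) convolution inequality in a non-$L^2$ endpoint setting. I would prove it as follows: let $\ell^p$ denote sequence spaces over $\N$ and let $\omega = \{2^{-\ab{j}/4}\}_{j\in\Z}$, so $\omega \in \ell^1(\Z)$ with $\norma{\omega}_{\ell^1} =: C_0 < \infty$. Then $\sum_{k,k'} 2^{-\ab{k-k'}/4} a_k a_{k'} = \langle \omega * a, a\rangle$ where $a$ is extended by zero to $\Z$. By Young's inequality, $\norma{\omega * a}_{\ell^{3/2}} \leq \norma{\omega}_{\ell^1}\norma{a}_{\ell^{3/2}}$, and then by H\"older's inequality with exponents $3/2$ and $3$,
\[
\langle \omega * a, a\rangle \leq \norma{\omega * a}_{\ell^{3/2}}\norma{a}_{\ell^3} \leq C_0 \norma{a}_{\ell^{3/2}}\norma{a}_{\ell^3}.
\]
This is not yet of the desired form, since it involves $\norma{a}_{\ell^{3/2}}$ rather than $\norma{a}_{\ell^3}$. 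The resolution is to instead interpolate the weighted convolution operator directly: the operator $K:a \mapsto \omega * a$ is bounded on $\ell^1$ and on $\ell^\infty$ (both with norm $C_0$), hence on $\ell^p$ for all $p\in[1,\infty]$, and the bilinear form $B(a,b) = \langle Ka, b\rangle$ is bounded $\ell^1 \times \ell^\infty \to \R$ and $\ell^\infty \times \ell^1 \to \R$; but what we actually want is boundedness $\ell^3 \times \ell^3 \to \R$. For this, note that $B$ is bounded on $\ell^2 \times \ell^2$ (with constant $C_0$, since $K$ is bounded on $\ell^2$) and bounded $\ell^1 \times \ell^1 \to \R$ trivially because $\ab{B(a,b)} \leq \norma{\omega}_{\ell^\infty}\norma{a}_{\ell^1}\norma{b}_{\ell^1} \leq C_0\norma{a}_{\ell^1}\norma{b}_{\ell^1}$. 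Bilinear complex interpolation between the pairs $(\ell^1,\ell^1)$ and $(\ell^2,\ell^2)$ at parameter $\theta = 1/2$ (recalling $\frac{1}{3} = \frac{1-\theta}{1} + \frac{\theta}{2}$ with $\theta = 4/3$... — rather, solve $1/p = (1-\theta)/1 + \theta/2$ for $p = 3$, giving $\theta = 4/3 \notin [0,1]$, which fails) forces a different route. Instead I interpolate between $(\ell^1,\ell^\infty)$ and $(\ell^\infty,\ell^1)$: $B$ is bounded on both with constant $C_0$, so by bilinear interpolation $B$ is bounded on $\ell^{p_\theta}\times \ell^{q_\theta}$ where $1/p_\theta = (1-\theta) + 0 = 1-\theta$ and $1/q_\theta = 0 + \theta \cdot \text{(wrong)}$ — the clean statement is that $\ell^1 \times \ell^\infty$ and $\ell^\infty\times\ell^1$ interpolate at $\theta=1/2$ to give $\ell^2\times\ell^2$ only. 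The correct and cleanest argument is simply: by Young, $\norma{Ka}_{\ell^r} \leq C_0\norma{a}_{\ell^r}$ for every $r$; take $r = 3$, then $\ab{\langle Ka, a\rangle} \leq \norma{Ka}_{\ell^{3/2}}\norma{a}_{\ell^3}$ — but Young also gives $\norma{Ka}_{\ell^{3/2}} \leq \norma{\omega}_{\ell^{q}}\norma{a}_{\ell^3}$ with $1 + \frac{1}{3/2} = \frac{1}{q} + \frac{1}{3}$, i.e. $\frac{1}{q} = 1$, so $\norma{Ka}_{\ell^{3/2}} \leq \norma{\omega}_{\ell^1}\norma{a}_{\ell^3} = C_0 \norma{a}_{\ell^3}$. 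Therefore
\[
\sum_{k,k'\geq 0} 2^{-\ab{k-k'}/4} a_k a_{k'} = \langle Ka, a\rangle \leq \norma{Ka}_{\ell^{3/2}}\norma{a}_{\ell^{3}} \leq C_0 \norma{a}_{\ell^3}^2 = C_0\Bigl(\sum_{k\geq 0} a_k^3\Bigr)^{2/3},
\]
which is exactly \eqref{eq:improved-dyadic-bound} after taking square roots and relabeling the constant. The main obstacle is thus just getting the Young-inequality bookkeeping right: one must use Young's inequality in the form $\ell^1 * \ell^3 \subseteq \ell^3$ to pass from $\norma{Ka}_{\ell^{3/2}}$ (which appears from H\"older pairing with $\ell^3$) — more precisely from $\ell^1 * \ell^{3/2} \not\subseteq \ell^{3/2}$... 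I will instead present the genuinely correct line: pair $\langle Ka,a\rangle$ by H\"older as $\norma{Ka}_{\ell^{3/2}}\norma{a}_{\ell^3}$ is wrong-exponent; the right pairing is the symmetric one obtained by writing $2^{-\ab{k-k'}/4} = 2^{-\ab{k-k'}/8}\cdot 2^{-\ab{k-k'}/8}$ and applying H\"older with three exponents $3,3,3/... $ — concretely, by the standard Schur-test-with-weights argument (splitting the kernel symmetrically), $\sum_{k,k'} w_{kk'} a_k a_{k'} \leq \bigl(\sup_k \sum_{k'} w_{kk'}^{3/2}\bigr)^{2/3}\norma{a}_{\ell^3}^2$ when $w_{kk'}$ is symmetric, and here $\sum_{k'} 2^{-3\ab{k-k'}/8} \leq \sum_{j\in\Z} 2^{-3\ab j/8} < \infty$ uniformly in $k$. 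This gives \eqref{eq:improved-dyadic-bound} with $C$ depending only on the geometric sum. I expect no further difficulty; the bilinear estimate does all the analytic work and the rest is a standard weighted discrete Schur test.
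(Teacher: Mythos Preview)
Your approach has a genuine gap: the numerical inequality you reduce to,
\[
\sum_{k,k'\geq 0} 2^{-\ab{k-k'}/4}\, a_k a_{k'} \;\leq\; C\Bigl(\sum_{k\geq 0} a_k^3\Bigr)^{2/3},
\]
is simply \emph{false}. Take $a_k = 1$ for $0\leq k\leq N-1$ and $a_k=0$ otherwise. The left side is $\sum_{k=0}^{N-1}\sum_{k'=0}^{N-1} 2^{-\ab{k-k'}/4} \asymp N$, since for each fixed $k$ the inner sum is bounded above and below by absolute constants. The right side is $C\,N^{2/3}$. So the ratio blows up like $N^{1/3}$. None of your Young/H\"older/Schur attempts can succeed, because the target inequality itself does not hold: the bilinear form $\langle \omega\ast a, a\rangle$ with $\omega_j=2^{-\ab{j}/4}\in\ell^1$ is bounded on $\ell^p\times\ell^{p'}$ for conjugate exponents (so on $\ell^2\times\ell^2$), but \emph{not} on $\ell^3\times\ell^3$. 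Squaring $\norma{Tf}_{L^4}$ and expanding as a double sum therefore cannot yield an $\ell^3$ bound from the bilinear estimate alone.

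The paper's proof avoids this by cubing instead of squaring: write $\norma{Tf}_{L^4}^3 = \norma{Tf\cdot Tf\cdot Tf}_{L^{4/3}}$ and expand as a \emph{triple} sum over $(k,l,m)$. For each triple, apply H\"older to peel off one factor in $L^4$ and use the bilinear estimate on the remaining pair with the largest separation, yielding a bound $2^{-\max(\ab{k-l},\ab{k-m},\ab{l-m})/4}\,a_k a_l a_m$. Since the maximum dominates one third of the sum of all three pairwise distances, this is at most $2^{-\ab{k-l}/12}2^{-\ab{k-m}/12}2^{-\ab{l-m}/12}\,a_k a_l a_m$. Now the corresponding trilinear numerical inequality \emph{is} true: by AM--GM and symmetry (or a Schur-type argument), $\sum_{k,l,m}$ of this quantity is $\lesssim \sum_k a_k^3$, because for fixed $k$ the sum over $l,m$ of $2^{-\ab{k-l}/12}2^{-\ab{k-m}/12}$ is uniformly bounded. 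The cubing is not cosmetic; it is what makes the $\ell^3$ endpoint accessible.
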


\begin{proof}
	We follow \cite{RQ1}*{Proof of Proposition 3.4}. We have
	\[ \norma{Tf}_{L^4(\R^4)}^{3}=\norma{Tf\cdot Tf\cdot Tf}_{L^{4/3}}=\Norma{\sum_{k,l,m}Tf_k\cdot 
	Tf_l\cdot Tf_m}_{L^{4/3}}
	\leq \sum_{k,l,m}\norma{Tf_k\cdot Tf_l\cdot Tf_m}_{L^{4/3}}. \]
	Fix a triplet $(k,l,m)$. We can assume without loss of generality that 
	$\ab{k-l}=\max\{\ab{k-l},\ab{k-m},\ab{l-m}\}$ so that the use of H\"older's inequality and 
	Proposition \ref{prop:bilinear-estimate-hyp} give
	\begin{align*}
	\norma{Tf_k\cdot Tf_l\cdot Tf_m}_{L^{4/3}}&\leq \norma{Tf_k\cdot Tf_l}_{L^2}\norma{Tf_m}_{L^4}\\
	&\lesssim 2^{-\frac{1}{4}\ab{k-l}}\norma{f_k}_{L^2}\norma{f_l}_{L^2}\norma{f_m}_{L^2}\\
	&\leq 
	2^{-\ab{k-l}/12}2^{-\ab{k-m}/12}2^{-\ab{l-m}/12}\norma{f_k}_{L^2}\norma{f_l}_{L^2}\norma{f_m}_{L^2}.
	\end{align*}
	We conclude that
	\[ \norma{Tf}_{L^4(\R^4)}^{3}\lesssim 
	\sum_{k,l,m}2^{-\ab{k-l}/12}2^{-\ab{k-m}/12}2^{-\ab{l-m}/12}\norma{f_k}_{L^2}\norma{f_l}_{L^2}\norma{f_m}_{L^2}.
	 \]
	 Applying H\"older's inequality to the last estimate
	 we obtain
	 \[ \norma{Tf}_{L^4(\R^4)}^{3}\lesssim 
	 \sum_{k,l,m}2^{-\ab{k-l}/12}2^{-\ab{k-m}/12}2^{-\ab{l-m}/12}\norma{f_k}_{L^2}^3\lesssim 
	 \sum_{k}\norma{f_k}_{L^2}^3.\]
\end{proof}

As an application we have the following corollary.

\begin{cor}\label{cor:weak-cap-bound}
	There exists a constant $C<\infty$ with the following property. Let $f\in 
	L^2({\mathcal{H}}^3)$ and for $k\in\N$ let $f_k(y)=f(y)\one_{\{2^{k}\leq \ab{y}< 
		2^{k+1}\}}$. Then
	\begin{equation}\label{eq:improved-cap-bound}
	\norma{Tf}_{L^4(\R^4)}\leq 
	C\sup_{k\in\N}\norma{f_k}_{L^2({\mathcal{H}}^3)}^{1/3}\norma{f}_{L^2({\mathcal{H}}^3)}^{2/3}.
	\end{equation}
\end{cor}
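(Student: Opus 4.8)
The proof of Corollary \ref{cor:weak-cap-bound} follows immediately from Proposition \ref{prop:dyadic-bound} by a routine application of H\"older's inequality with the exponents $3$ and $3/2$ on the discrete sum. Here is the plan.

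First I would start from \eqref{eq:improved-dyadic-bound}, which asserts
\[ \norma{Tf}_{L^4(\R^4)}\leq C\Bigl(\sum_{k\geq 0}\norma{f_k}_{L^2(\overline{\mathcal H}^3)}^3\Bigr)^{1/3}. \]
The point is to pull out one power of the supremum from each term of the sum: write
\[ \norma{f_k}_{L^2}^3=\norma{f_k}_{L^2}^{1}\cdot\norma{f_k}_{L^2}^{2}\leq \Bigl(\sup_{j\in\N}\norma{f_j}_{L^2}\Bigr)\norma{f_k}_{L^2}^{2}, \]
so that summing in $k$ gives
\[ \sum_{k\geq 0}\norma{f_k}_{L^2}^3\leq \Bigl(\sup_{j\in\N}\norma{f_j}_{L^2}\Bigr)\sum_{k\geq 0}\norma{f_k}_{L^2}^2. \]
Since the dyadic annuli $\{2^k\leq\ab{y}<2^{k+1}\}$, $k\in\N$, are pairwise disjoint and cover $\{\ab{y}\geq 1\}$, which is (the projection of) all of $\mathcal H^3$, we have $\sum_{k\geq 0}\norma{f_k}_{L^2(\mathcal H^3)}^2=\norma{f}_{L^2(\mathcal H^3)}^2$. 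Substituting back and taking the cube root yields
\[ \norma{Tf}_{L^4(\R^4)}\leq C\Bigl(\sup_{j\in\N}\norma{f_j}_{L^2}\Bigr)^{1/3}\norma{f}_{L^2(\mathcal H^3)}^{2/3}, \]
which is exactly \eqref{eq:improved-cap-bound}.

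There is no real obstacle here; the only minor points to be careful about are (i) that the $L^2(\overline{\mathcal H}^3)$ norm appearing in \eqref{eq:improved-dyadic-bound} for a function supported on the upper sheet coincides with the $L^2(\mathcal H^3)$ norm (the dyadic pieces $f_k$ are supported where $\ab{y}\geq 1$, so the identification of $f$ with a function on $\R^3$ is consistent and $\norma{f_k}_{L^2(\overline{\mathcal H}^3)}=\norma{f_k}_{L^2(\mathcal H^3)}$), and (ii) that the supremum over $k$ is finite, which holds since each $\norma{f_k}_{L^2}\leq\norma{f}_{L^2}<\infty$. If one prefers, the same splitting can be phrased as H\"older's inequality $\sum_k a_k=\sum_k a_k^{1/3}\cdot a_k^{2/3}\leq(\sum_k a_k)^{1/3}(\sum_k a_k)^{2/3}$ applied with $a_k=\norma{f_k}_{L^2}^3$ after first extracting $\sup_k a_k^{1/3}$, but the elementary two-line estimate above is cleaner.
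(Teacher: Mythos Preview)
Your proof is correct and is essentially identical to the paper's own argument: both start from \eqref{eq:improved-dyadic-bound}, write $\norma{f_k}_{L^2}^3=\norma{f_k}_{L^2}\cdot\norma{f_k}_{L^2}^2$, bound the first factor by the supremum, and use the disjointness of the dyadic annuli to identify $\sum_k\norma{f_k}_{L^2}^2=\norma{f}_{L^2}^2$.
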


\begin{proof}
	From Proposition \ref{prop:dyadic-bound} we obtain
	\begin{align*}
	\norma{Tf}_{L^4(\R^4)}&\leq 	C\Bigl(\sum_{k\geq 		
	0}\norma{f_k}_{L^2({\mathcal{H}}^3)}^3\Bigr)^{1/3}=C\Bigl(\sum_{k\geq 		
	0}\norma{f_k}_{L^2({\mathcal{H}}^3)}\cdot 
	\norma{f_k}_{L^2({\mathcal{H}}^3)}^2\Bigr)^{1/3}\\
	&\leq C\sup_{k\in\N}\norma{f_k}_{L^2({\mathcal{H}}^3)}^{1/3}
	\Bigl(\sum_{k\geq 0}\norma{f_k}_{L^2({\mathcal{H}}^3)}^2\Bigr)^{1/3}\\
	&=C\sup_{k\in\N}\norma{f_k}_{L^2({\mathcal{H}}^3)}^{1/3}
	\norma{f}_{L^2({\mathcal{H}}^3)}^{2/3}.\qedhere
	\end{align*}
\end{proof}

The same previous argument and \eqref{eq:Weak_interaction_bar_T} gives 
\begin{equation}\label{eq:improved-cap-bound_bar_T}
	\norma{\overline Tf}_{L^4(\R^4)}\lesssim 
	\sup_{k\in\N}\norma{f_k}_{L^2(\Hyp)}^{1/3}\norma{f}_{L^2(\Hyp)}^{2/3}.
\end{equation}
and thus it is immediate that for an extremizing sequence for $\overline{T}$ the vanishing alternative does not hold.

\begin{prop}\label{prop:no-vanishing}
	Extremizing sequences for the adjoint Fourier restriction inequality \eqref{sharp_L4_double_hyp} on $\overline{\mathcal H}^{\,3}$ 
	do not satisfy vanishing.
\end{prop}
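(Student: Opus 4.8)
The plan is to derive Proposition~\ref{prop:no-vanishing} as an immediate consequence of the dyadic cap bound \eqref{eq:improved-cap-bound_bar_T}, which was already established just above the statement via the bilinear estimates and the weak-interaction inequality \eqref{eq:Weak_interaction_bar_T}. So the real content is already in place, and what remains is to spell out the contrapositive carefully.

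First I would fix an $L^2$-normalized extremizing sequence $\{f_n\}_n$ for $\overline T$, so that $\norma{f_n}_{L^2(\Hyp)}=1$ and $\norma{\overline Tf_n}_{L^4(\R^4)}\to\overline{\mathbf H}_4>0$. For each $n$ and each $\ell\in\N$ I would consider the dyadic shell pieces $f_n\one_{\{2^\ell\le\ab y<2^{\ell+1}\}}$ (adapted to $\Hyp$ in the obvious way, i.e. working with the two sheets via $f=f_++f_-$). Assume, towards a contradiction, that $\{f_n\}_n$ satisfies the vanishing alternative~(ii) of Lemma~\ref{lem:concentration-compactness} with $\la=1$; taking $R=1$ in that alternative gives
\[
\lim_{n\to\infty}\sup_{\ell\in\N}\int_{2^{\ell-1}\le\ab y\le 2^{\ell+1}}\ab{f_n}^2\d\bar\mu=0,
\]
and hence a fortiori $\sup_{k\in\N}\norma{(f_n)_k}_{L^2(\Hyp)}\to0$ as $n\to\infty$, where $(f_n)_k=f_n\one_{\{2^k\le\ab y<2^{k+1}\}}$.

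Then I would plug this into \eqref{eq:improved-cap-bound_bar_T}: there is a constant $C<\infty$ with
\[
\norma{\overline Tf_n}_{L^4(\R^4)}\le C\,\sup_{k\in\N}\norma{(f_n)_k}_{L^2(\Hyp)}^{1/3}\,\norma{f_n}_{L^2(\Hyp)}^{2/3}
= C\,\sup_{k\in\N}\norma{(f_n)_k}_{L^2(\Hyp)}^{1/3}.
\]
The right-hand side tends to $0$, which forces $\overline{\mathbf H}_4=\lim_n\norma{\overline Tf_n}_{L^4(\R^4)}=0$, contradicting $\overline{\mathbf H}_4>0$. Therefore the extremizing sequence cannot satisfy vanishing. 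I do not expect any genuine obstacle here; the only mild point of care is to confirm that the dyadic refinement used to establish \eqref{eq:improved-cap-bound_bar_T} on $\Hyp$ indeed controls $\sup_k\norma{(f_n)_k}_{L^2(\Hyp)}$ with the shell decomposition matching the one appearing in the vanishing alternative (a single passage from shells of width $2^R$ to dyadic shells, absorbing the finitely many $R=1$ shells into the constant), which is routine.
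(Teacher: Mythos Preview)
Your proposal is correct and follows exactly the route the paper indicates: the paper's ``proof'' is the single sentence that \eqref{eq:improved-cap-bound_bar_T} makes it ``immediate that for an extremizing sequence for $\overline{T}$ the vanishing alternative does not hold,'' and you have simply written out that implication. The only detail worth noting is that vanishing in Lemma~\ref{lem:concentration-compactness} is stated for a subsequence, but since any subsequence of an extremizing sequence is still extremizing, your argument applies verbatim.
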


\section{Convergence to the cone}\label{sec:conv-to-cone}

The content of this section is important in the study of the compactness alternative of Lemma \ref{lem:concentration-compactness}, in the case in which, in addition, the extremizing sequences concentrate at infinity.

Formally, we can write $\Gamma^3=\hyp_0$, $\sigma_c=\mu_0$ and $T_c=T_0$. It is natural then to study relationships between the adjoint Fourier restriction operator on cone $(\Gamma^3,\sigma_c)$ and on each member of the family $\{(\hyp_s,\mu_s)\}_{s>0}$, in the limit $s\to0^+$, and this is the content of this section (see also \cite{KSV}*{Lemma 2.9} for related results for the case of the two-sheeted hyperboloid). 

 Note that if $0\leq t\leq s$ and $\ab{y}\geq s$, then the inequality $\sqrt{\ab{y}^2-s^2}\leq \sqrt{\ab{y}^2-t^2}$ implies that for $f\in L^2(\mu_s)$
\[ \norma{f\one_{\{\ab{y}\geq s\}}}_{L^2(\sigma_c)}\leq \norma{f\one_{\{\ab{y}\geq s\}}}_{L^2(\mu_t)}\leq \norma{f}_{L^2(\mu_s)}, \]
and for $f\in L^2(\mu_s)$, extended to be zero in the region where $\ab{y}\leq s$,  
\[ \lim_{t\to 0^+}\norma{f}_{L^2(\mu_t)}=\norma{f}_{L^2(\sigma_c)}. \]

Throughout this section we will commonly encounter the situation of having $f\in L^2(\hyp_s)$ and regard it as a function in $L^2(\hyp_t)$, $0\leq t\leq s$, via the orthogonal projection to $\R^3\times\{0\}$. In this case, it will be understood that $f$ is extended by zero in the region where\footnote{Alternatively, we can think of $f$ as a function living in $L^2(\R^3,w\d x)$, for different weights $w$.} $\ab{y}\leq s$.

Let us consider the following situation. Let $a>0$, $\{s_n\}_n\subset \R$ satisfying 
$s_n\to 0$ 
as 
$n\to\infty$. Let $\{f_n\}_n$ be a family of functions with $f_n\in L^2(\mathcal H_{s_n}^3)$, supported where $\ab{y}\geq a$ and satisfying ${\sup_n\norma{f_n}_{L^2(\mu_{s_n})}<\infty}$. 
As already noted, $\norma{f_n}_{L^2(\mu_{s_n})}\geq 
\norma{f_n}_{L^2(\sigma_c)}$,
therefore $\{f_n\one_{\{\ab{y}\geq s_n\}}\}_n$ is a bounded
sequence in $L^2(\sigma_c)$. 
We can assume, 
possibly after passing to a subsequence, that $f_n\rightharpoonup f$ in
$L^2(\sigma_c)$. The aim of this section is to compare $
\norma{f\sigma_c*f\sigma_c}_2$ and the limiting behavior of
$\norma{f_n\mu_{s_n}*f_n\mu_{s_n}}_2$, as $n\to\infty$, in the case $f\neq 0$. We have some preliminary results. 

\begin{lemma}\label{lem:l4-convergence-to-cone}
	Let $a>0$ and $f\in L^2(\mathcal{H}_s^3)$ for all small $s> 0$ and supported where $\ab{y}\geq a$, then
	\begin{equation*}
	\norma{T_{s}f-T_cf}_{L^4(\R^4)}\to 0 \text{ as } s\to0^+. 
	\end{equation*}
\end{lemma}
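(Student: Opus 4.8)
The plan is to compare $T_sf$ and $T_cf$ directly through their defining integral formulas and exploit the pointwise closeness of the two phase functions and density weights on the support of $f$. Specifically, for $f$ supported where $\ab{y}\geq a$, I write
\[
(T_sf - T_cf)(x,t) = \int_{\{\ab{y}\geq a\}} e^{ix\cdot y}\Bigl(e^{it\sqrt{\ab{y}^2-s^2}}\tfrac{1}{\sqrt{\ab{y}^2-s^2}} - e^{it\ab{y}}\tfrac{1}{\ab{y}}\Bigr) f(y)\,\d y,
\]
so that $T_sf-T_cf$ is the Fourier extension, on the cone, of a modified density. The natural first move is to reduce to the case of nice $f$ (say $f \in C_c$ supported where $\ab y \geq a$, or $f$ a finite linear combination of indicators of caps) by a density argument: since $\norma{T_s}$ and $\norma{T_c}$ are uniformly bounded (by $\mathbf H_4$ applied at scale $s$ via \eqref{eq:rescaling_Measure}-type scaling, and by $\mathbf C_4$), if $f^{(j)}\to f$ in $L^2(\sigma_c)$ with each $f^{(j)}$ also controlled in $L^2(\mu_s)$ uniformly in small $s$, then $\norma{T_sf-T_cf}_{L^4}\le \norma{T_s(f-f^{(j)})}_{L^4}+\norma{T_sf^{(j)}-T_cf^{(j)}}_{L^4}+\norma{T_c(f^{(j)}-f)}_{L^4}$, and the outer terms are small uniformly in $s$. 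One must be slightly careful here because the relevant norm for the first term is $\norma{f-f^{(j)}}_{L^2(\mu_s)}$, not $L^2(\sigma_c)$; but on $\{\ab y\ge a\}$ with $a$ fixed and $s\le a/2$, say, the weights $\tfrac1{\sqrt{\ab y^2-s^2}}$ and $\tfrac1{\ab y}$ are comparable with constants independent of $s$, and in fact $\norma{g}_{L^2(\mu_s)}^2 - \norma{g}_{L^2(\sigma_c)}^2 = \int \ab{g(y)}^2\bigl(\tfrac1{\sqrt{\ab y^2-s^2}}-\tfrac1{\ab y}\bigr)\d y \to 0$ as $s\to 0$ by dominated convergence, so the density approximation transfers.

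For fixed nice $f$, I would then estimate $\norma{T_sf-T_cf}_{L^4(\R^4)}$. The cleanest route is the convolution form: by \eqref{eq:convolution-form}-type identities,
\[
\norma{T_sf-T_cf}_{L^4(\R^4)} = 2\pi\,\norma{(f\mu_s - f\sigma_c)\ast(f\mu_s+f\sigma_c)}_{L^2(\R^4)}^{1/2}
\]
after writing $A^2-B^2$ style, where here one should be careful that $\norma{Tg}_{L^4}=2\pi\norma{g\mu\ast g\mu}_{L^2}^{1/2}$ holds with the measure in question; more precisely $\norma{T_sf-T_cf}_{L^4}^2 = (2\pi)^2\norma{(f\mu_s-f\sigma_c)\ast(f\mu_s-f\sigma_c)}_{L^2}$ is false since $T_s$ and $T_c$ are extensions of different measures, but $T_sf - T_cf = \widehat{f\mu_s - f\sigma_c}(-\cdot)$, and hence $\norma{T_sf-T_cf}_{L^4(\R^4)}^2 = (2\pi)^2\norma{(f\mu_s-f\sigma_c)\ast(f\mu_s-f\sigma_c)}_{L^2(\R^4)}$. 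So it suffices to show that $\nu_s := f\mu_s - f\sigma_c$ (a signed measure on $\R^4$) satisfies $\norma{\nu_s\ast\nu_s}_{L^2(\R^4)}\to 0$. Expanding, $\norma{\nu_s\ast\nu_s}_{L^2}^2$ is a sum of terms of the form $\langle \mu_\alpha\ast\mu_\beta, \mu_\gamma\ast\mu_\delta\rangle$ (with signs) where each of $\mu_\alpha,\ldots$ is one of $f\mu_s$ or $f\sigma_c$; using the explicit formulas \eqref{eq:alternative-conv-formula} for $\mu_s\ast\mu_s$, \eqref{eq:formula-mixed-conv} for $\mu_s\ast\sigma_c$, and the known $\sigma_c\ast\sigma_c$, together with dominated convergence in $s$ (the densities converge pointwise a.e. and are dominated: e.g. by \eqref{eq:infinity-norm-mixed-conv}, $\norma{\mu_s\ast\sigma_c}_\infty = 4\pi$ for all $s$, and $\norma{\mu_s\ast\mu_s}$ is controlled similarly for bounded $f$ since $f$ is supported away from $\ab y = s$), all eight terms converge to the common value $\langle (f\sigma_c\ast f\sigma_c),(f\sigma_c\ast f\sigma_c)\rangle$ and cancel.

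The main obstacle is making the dominated convergence genuinely rigorous: one needs an integrable-in-$(\xi,\tau)$ majorant for $\ab{f\mu_\alpha\ast f\mu_\beta(\xi,\tau)}^2$ uniform in small $s$, which requires both an $L^\infty$ bound and compact-support control (in $\tau$) of the convolutions. Compact support in $\tau$ is fine when $f$ is compactly supported in $\{a\le\ab y\le b\}$ (then $f\mu_s\ast f\mu_s$ is supported in $\tau\in[0,2\sqrt{b^2-s^2}]$ and similarly for the cone and mixed terms), so the reduction above should be to $f$ compactly supported \emph{and} bounded. Granting that, the uniform $L^\infty$ bound on each density follows from the explicit formulas of Section \ref{sec:calculation_convolution} plus $\norma{f}_\infty$, and then dominated convergence applies on the (uniformly) bounded region. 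An alternative to the convolution-form route, if one prefers to avoid the explicit $\mu_s\ast\mu_s$ formulas, is to work directly with $(T_sf-T_cf)(x,t)$: split $\R^4$ into $\{\ab{(x,t)}\le \rho\}$, where pointwise convergence of the integrand and dominated convergence give $T_sf\to T_cf$ uniformly on compacta (the integrand is dominated by $\ab f\one_{\{\ab y\ge a\}}\in L^1$ since $f$ is compactly supported, and is continuous in $s$), combined with uniform decay of $T_sf, T_cf$ in $L^4(\{\ab{(x,t)}>\rho\})$ as $\rho\to\infty$ — the latter being the delicate point, obtainable from stationary phase / decay estimates for $T_cf$ and $T_sf$ with constants uniform in small $s$, or more simply from the dyadic/interpolation machinery already developed (Corollary \ref{cor:weak-cap-bound} and its analogues) applied to a fixed nice $f$. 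I expect the convolution-form argument to be the smoother of the two and would present it, with the density reduction to bounded compactly supported $f$ done first.
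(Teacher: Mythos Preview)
Your proposal is correct and follows essentially the same route as the paper: reduce by density to bounded, compactly supported $f$ using the uniform bound $\norma{T_s}=\norma{T}$, pass to the convolution form via Plancherel, and then combine pointwise convergence of the convolution densities with dominated convergence, the majorant coming from the $L^\infty$ bounds on $\mu_s\ast\mu_s$ and $\mu_s\ast\sigma_c$ (Lemma~\ref{lem:behavior-infinity} and \eqref{eq:infinity-norm-mixed-conv}) together with the uniform-in-$s$ compact support in $(\xi,\tau)$. The only organizational difference is that the paper shows the two $L^2$ convergences $f\mu_s\ast f\mu_s\to f\sigma_c\ast f\sigma_c$ and $f\mu_s\ast f\sigma_c\to f\sigma_c\ast f\sigma_c$ separately (which immediately gives $\nu_s\ast\nu_s\to 0$), rather than expanding $\norma{\nu_s\ast\nu_s}_{L^2}^2$ into cross terms; this is the same argument, and in fact slightly cleaner than your eight-term expansion.
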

One possible way to prove Lemma \ref{lem:l4-convergence-to-cone} can be to follow the outline in the proof of 
\cite{KSV}*{Lemma 2.9 (d)} for which we would need some mixed norm Strichartz estimates, but	we try a different approach using that we are working with even integers. 

\begin{proof}[Proof of Lemma \ref{lem:l4-convergence-to-cone}]
	From the uniform in $s$ bound $\norma{T_{s}}=\norma{T}$ and density arguments, it suffices to consider the case 
	when $f\in C_c^\infty(\R^3)$. Let $b\in(a,\infty)$ be such that the support of $f$ is contained in the region where $a\leq\ab{y}\leq b$. 
	
	By Plancherel's 
	theorem, to show $T_sf\to Tf$ in $L^4(\R^4)$, as $s\to 0^+$, it suffices to show that $f\mu_{s}\ast f\mu_{s}\to f\sigma_c\ast f\sigma_c$ and 
	$f\mu_{s}\ast f\sigma_c\to f\sigma_c\ast f\sigma_c$ in 
	$L^2(\R^4)$, as $s\to 0^+$. 
	
	First, we claim that there is pointwise convergence $f\mu_{s}\ast f\mu_{s}(\xi,\tau)\to 
	f\sigma_c\ast f\sigma_c(\xi,\tau)$ and $f\mu_{s}\ast f\sigma_c(\xi,\tau)\to 
	f\sigma_c\ast f\sigma_c(\xi,\tau)$, a.e. $(\xi,\tau)\in\R^4$, as $s\to 0^+$. Indeed, as in the proof of the explicit 
	formula for 
	$\mu_s\ast\mu_s$ in Section \ref{sec:calculation_convolution}, we can write integral 
	formulas 
	for $f\mu_s\ast f\mu_s$, $f\mu_s\ast f\sigma_c$ and $f\sigma_c\ast f\sigma_c$ for any $s\geq 
	0$. Unwinding the change of variables used in the proof of Proposition 
	\ref{prop:formula-double-convolution}, for $\xi\neq 0$ we let
	\begin{align*}
	\alpha_s(u,v,\te)=\frac{\ab{\xi}^2+uv}{\ab{\xi}\sqrt{(u+v)^2+4s^2}},\quad
	\beta_s(u,v,\te)=\frac{\ab{\xi}^2+uv-s^2}{\ab{\xi}(u+v)},
	\end{align*}
	\begin{align*}
	\omega_{s}(u,v,\te)=\bigl(\sqrt{1-\alpha_s(u,v,\te)^2}\cos\te,\sqrt{1-\alpha_s(u,v,\te)^2}\sin\te,
	\alpha_s(u,v,\te)\bigr),\\
	\vartheta_s(u,v,\te)=\bigl(\sqrt{1-\beta_s(u,v,\te)^2}\cos\te,\sqrt{1-\beta_s(u,v,\te)^2}\sin\te,\beta_s(u,v,\te)\bigr),
	\end{align*}
	and
	\begin{align*}
	F_s(u,v)&=\int_0^{2\pi}f\Bigl(\sqrt{(\tfrac{u+v}{2})^2+s^2}\,\omega_s(u,v,\te)\Bigr)\, 
	f\Bigl(\sqrt{(\tfrac{u-v}{2})^2+s^2}\,\omega_s(u,v,\te)\Bigr)\d\te,\\
	G_s(u,v)&=\int_0^{2\pi}f\bigl(\tfrac{u+v}{2}\,\vartheta_s(u,v,\te)\bigr)\, 
	f\Bigl(\sqrt{(\tfrac{u-v}{2})^2+s^2}\,\vartheta_s(u,v,\te)\Bigr)\d\te,\\
	H_0(u,v)&=\int_0^{2\pi}f\bigl(\tfrac{v+v}{2}\,\omega_0(u,v,\te)\bigr)\, 
	f\bigl(\tfrac{v-u}{2}\,\omega_0(u,v,\te)\bigr)\d\te.
	\end{align*}
	Recalling the sets $\widetilde R_s(\xi)$ and $\widetilde{Q}_s(\xi)$ from  \eqref{eq:preliminar-formula-conv} and \eqref{eq:formula-pre-mixed-conv} we have
	\begin{align}\label{eq:convolution-functions}
	f\mu_s\ast f\mu_s(\xi,\tau)&=\frac{1}{2\ab{\xi}}\int_{\widetilde 
		R_s(\xi)}F_s(u,v)\ddirac{\tau-v}\d u\d v,\\
	\label{eq:convolution-functions-mixed}
	f\mu_s\ast f\sigma_c(\xi,\tau)&=\frac{1}{2\ab{\xi}}\int_{ \widetilde 
		Q_s(\xi)}G_s(u,v)\ddirac{\tau-v}\d u\d v,
	\intertext{and}
	\label{eq:convolution-functions-cone}
	f\sigma_c\ast f\sigma_c(\xi,\tau)&=\frac{1}{2\ab{\xi}}\int_{-\ab{\xi}}^{\ab{\xi}}
	H_0(u,\tau)\d u\,\one_{\{\tau\geq\ab{\xi} \}}(\xi,\tau).
	\end{align}
	Given that $\widetilde R_s(\xi)$ and $\widetilde Q_s(\xi)$ are explicit, we can spell out 
	\eqref{eq:convolution-functions} and \eqref{eq:convolution-functions-mixed} and integrate the Dirac 
	delta from where it becomes clear that there is a.e. pointwise convergence to 
	$f\sigma_c\ast 
	f\sigma_c$ as $s\to 0^+$. Alternatively, note that for each fixed $\xi\neq 0$, $\one_{\widetilde 
		R_s(\xi)}(u,v)\to \one_{\{\ab{u}\leq \ab{\xi}\leq v\}}(u,v)$ and $\one_{ \widetilde 
		Q_s(\xi)}(u,v)\to \one_{\{\ab{u}\leq \ab{\xi}\leq v\}}(u,v)$ a.e. pointwise as $s\to 0^+$. We 
	omit further details.
	
	By the Dominated Convergence Theorem, to finish it suffices to show that there 
	exists $F\in L^2(\R^4)$ such 
	that $\ab{f\mu_s\ast f\mu_s(\xi,\tau)}\leq F(\xi,\tau)$ and $\ab{f\mu_s\ast 
		f\sigma_c(\xi,\tau)}\leq F(\xi,\tau)$, 
	for a.e. $(\xi,\tau)\in\R^4$. We use the inequalities
	\begin{align*}
	\ab{f\mu_{s}\ast f\mu_{s}(\xi,\tau)}^2&\leq \norma{f}_{L^\infty}^4\,\bigl(\mu_{s}\ast 
	\mu_{s}\bigr)^2(\xi,\tau),\\
	\ab{f\mu_{s}\ast f\sigma_c(\xi,\tau)}^2&\leq \norma{f}_{L^\infty}^4\,\bigl(\mu_{s}\ast 
	\sigma_c\bigr)^2(\xi,\tau).
	\end{align*}
	On the supports of $f\mu_{s}\ast f\mu_{s}$ and $f\mu_{s}\ast f\sigma_c$, the functions 
	$\mu_{s}\ast \mu_{s}$ and $\mu_{s}\ast\sigma_c$ are 
	uniformly 
	bounded in $s\in(0,1)$, as can be seen from Lemma \ref{lem:behavior-infinity} and formula
	\eqref{eq:infinity-norm-mixed-conv}. It follows that we can take
	\[F(\xi,\tau)=4\pi\norma{f}_{L^\infty}^2\bigl(1+a^{-1}\bigr)\one_{\{a\leq \tau\leq 
		2b\}}\one_{\{\ab{\xi}\leq 2b\}}(\xi,\tau).\]
\end{proof}

Recall the Fourier multiplier notation and the $\dot H^{1/2}(\R^3)$ homogeneous Sobolev norm and inner product from \eqref{eq:multiplier_notation} and \eqref{eq:def_homogeneous_Sobolev}.
We have the following lemma.

\begin{lemma}\label{lem:H-convergence-cone}
	Let $a>0$, then for each fixed $t\in\R$
	\[\lim\limits_{s\to 0}\sup_{\substack{u\in \dot H^{1/2}(\R^3)\\ \supp(\hat
			u)\subseteq\{\xi\in\R^3:\ab{\xi}\geq a\}}}\frac{\norma{e^{it\sqrt{-\Delta-s^2}}u-e^{it\sqrt{-\Delta}}u}_{\dot 
			H^{1/2}(\R^3)}}{\norma{u}_{\dot
			H^{1/2}(\R^3)}}=0.\]
\end{lemma}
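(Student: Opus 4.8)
The plan is to pass to the Fourier side, where both propagators become multiplication operators, and exploit the spectral gap $\ab{\xi}\geq a$ to make the phase discrepancy small uniformly in the symbol. Recall from \eqref{eq:multiplier_notation} and \eqref{eq:def_homogeneous_Sobolev} that for $u\in\dot H^{1/2}(\R^3)$ with $\supp(\hat u)\subseteq\{\ab{\xi}\geq a\}$ and any $0<s<a$ one has, on the support of $\hat u$,
\[\widehat{e^{it\sqrt{-\Delta-s^2}}u}(\xi)=e^{it\sqrt{\ab{\xi}^2-s^2}}\hat u(\xi),\qquad \widehat{e^{it\sqrt{-\Delta}}u}(\xi)=e^{it\ab{\xi}}\hat u(\xi)\]
(the indicator $\one_{\{\ab{y}\geq s\}}$ appearing in \eqref{eq:multiplier_notation} being identically $1$ there once $s<a$), so that by Plancherel's theorem
\[\norma{e^{it\sqrt{-\Delta-s^2}}u-e^{it\sqrt{-\Delta}}u}_{\dot H^{1/2}(\R^3)}^2=\int_{\{\ab{\xi}\geq a\}}\ab{e^{it\sqrt{\ab{\xi}^2-s^2}}-e^{it\ab{\xi}}}^2\ab{\hat u(\xi)}^2\ab{\xi}\d\xi.\]

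Next I would estimate the integrand pointwise, uniformly over the admissible frequency region. Using $\ab{e^{i\theta_1}-e^{i\theta_2}}\leq\ab{\theta_1-\theta_2}$ together with
\[\ab{\sqrt{\ab{\xi}^2-s^2}-\ab{\xi}}=\frac{s^2}{\sqrt{\ab{\xi}^2-s^2}+\ab{\xi}}\leq\frac{s^2}{\ab{\xi}}\leq\frac{s^2}{a}\qquad\text{whenever }\ab{\xi}\geq a,\]
we obtain $\ab{e^{it\sqrt{\ab{\xi}^2-s^2}}-e^{it\ab{\xi}}}\leq\ab{t}\,s^2/a$ for every $\xi$ with $\ab{\xi}\geq a$. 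Substituting this bound into the integral above and recalling the definition of the $\dot H^{1/2}$ norm gives
\[\norma{e^{it\sqrt{-\Delta-s^2}}u-e^{it\sqrt{-\Delta}}u}_{\dot H^{1/2}(\R^3)}^2\leq\frac{t^2s^4}{a^2}\int_{\R^3}\ab{\hat u(\xi)}^2\ab{\xi}\d\xi=\frac{t^2s^4}{a^2}\norma{u}_{\dot H^{1/2}(\R^3)}^2.\]

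Taking square roots and then the supremum over all such $u$ shows that the quantity in the statement is at most $\ab{t}\,s^2/a$, which tends to $0$ as $s\to0^+$ for each fixed $t$, finishing the proof. There is no genuine obstacle here — the argument reduces to a one-line symbol estimate — and the only subtlety worth highlighting is that the frequency support hypothesis $\supp(\hat u)\subseteq\{\ab{\xi}\geq a\}$ is used twice: once to make $\sqrt{-\Delta-s^2}$ unambiguously act as the stated multiplier once $s<a$, and once to replace $s^2/\ab{\xi}$ by the uniform bound $s^2/a$; without such a spectral gap the phase difference fails to be uniformly small near $\ab{\xi}=0$.
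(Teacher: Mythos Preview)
Your proof is correct and follows essentially the same approach as the paper: both pass to the Fourier side and bound the multiplier $e^{it\sqrt{\ab{\xi}^2-s^2}}-e^{it\ab{\xi}}$ pointwise on the region $\ab{\xi}\geq a$. The only cosmetic difference is in how the phase discrepancy is controlled: the paper writes $e^{it\sqrt{\ab{\xi}^2-s^2}}-e^{it\ab{\xi}}=-it\int_0^s e^{it\sqrt{\ab{\xi}^2-r^2}}\frac{r}{\sqrt{\ab{\xi}^2-r^2}}\d r$ and then applies Minkowski to get the bound $\ab{t}(a-\sqrt{a^2-s^2})$, whereas you rationalize directly and use $\ab{e^{i\theta_1}-e^{i\theta_2}}\leq\ab{\theta_1-\theta_2}$ to obtain $\ab{t}s^2/a$; since $a-\sqrt{a^2-s^2}=\frac{s^2}{a+\sqrt{a^2-s^2}}$, the two bounds are equivalent up to a factor of at most $2$.
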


\begin{proof}
	For any $s\geq 0$ we have $\norma{e^{it\sqrt{-\Delta-s^2}}u}_{\dot H^{1/2}(\R^3)}=\norma{u}_{\dot 
		H^{1/2}}$. Now
	\[e^{it\sqrt{\ab{y}^2-s^2}}-e^{it\ab{y}}=\int_0^s \frac{\d}{\d r}e^{it\sqrt{\ab{y}^2-r^2}} 
	\d 
	r=-it\int_0^s
	e^{it\sqrt{\ab{y}^2-r^2}}\frac{r}{\sqrt{\ab{y}^2-r^2}}\d r.\]
	Then,
	\begin{align*}
	\norma{(e^{it\sqrt{-\Delta-s^2}}-e^{it\sqrt{-\Delta}})u}_{\dot H^{1/2}(\R^3)}&\leq\ab{t}\int_0^s
	\Norma{e^{it\sqrt{-\Delta-r^2}}\frac{r}{\sqrt{-\Delta-r^2}}u}_{\dot
		H^{1/2}(\R^4)}\d r\\
	&=\ab{t}\int_0^s\Norma{\frac{r}{\sqrt{-\Delta-r^2}}u}_{\dot H^{1/2}(\R^3)}\d r.
	\end{align*}
	If $0\leq s<a$ and $\supp(\hat u)\subseteq \{\ab{\xi}\geq a\}$, then 
	\[\Norma{\frac{r}{\sqrt{-\Delta-r^2}}u}_{\dot H^{1/2}(\R^3)}\leq
	\frac{r}{\sqrt{a^2-r^2}}\norma{u}_{\dot 
		H^{1/2}(\R^3)},\]
	so that
	\[ \norma{(e^{it\sqrt{-\Delta-s^2}}-e^{-it\sqrt{-\Delta}})u}_{\dot H^{1/2}(\R^3)}\leq 
	\ab{t}(a-\sqrt{a^2-s^2})\norma{u}_{\dot H^{1/2}(\R^3)}, \]
	and the conclusion follows.
\end{proof}

We now address the pointwise convergence of $T_{s_n}f_n$ to $T_cf$.

\begin{lemma}\label{lem:pointwise-convergence-to-cone}
	Let $a>0$ and $\{s_n\}_n$ be a sequence of positive real numbers converging to zero. Let $f\in 
	L^2(\Gamma^3)$ and $\{f_n\}_{n}$ be a sequence satisfying $f_n\in L^2(\mathcal{H}_{s_n}^3)$, 
	$\sup_n\norma{f_n}_{L^2(\mu_{s_n})}<\infty$ and supported where $\ab{y}\geq a$, for all 
	$n$. Suppose that $f_n\rightharpoonup f$ in 
	$L^2(\Gamma^3)$, as $n\to\infty$. Then, as $n\to\infty$
	\[ T_{s_n}f_{n}(x,t)\to T_cf(x,t) \text{ for a.e. } (x,t)\in\R^4. \]
\end{lemma}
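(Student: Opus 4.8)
The plan is to combine three ingredients already at hand: the decomposition of $f_n$ into a piece supported on a compact region (where weak convergence forces everything) and a tail piece (where the Sobolev/multiplier machinery works), the convergence of the multiplier $e^{it\sqrt{-\Delta-s^2}}$ to $e^{it\sqrt{-\Delta}}$ from Lemma \ref{lem:H-convergence-cone}, and the Rellich--Kondrashov compactness argument already used in the proof of Proposition \ref{prop:pointwise-convergence}. More precisely, recall that $T_{s_n}f_n(x,t)=(2\pi)^3 e^{it\sqrt{-\Delta-s_n^2}}g_n(x)$ where $\widehat{g_n}(y)=f_n(y)\,\ab{y}^{-1}\one_{\{\ab{y}\geq a\}}$ (using that $f_n$ is supported where $\ab{y}\geq a$ and $s_n<a$ eventually, so $(\ab{y}^2-s_n^2)^{1/2}\asymp \ab{y}$ uniformly), and similarly $T_cf(x,t)=(2\pi)^3 e^{it\sqrt{-\Delta}}g(x)$ with $\widehat{g}(y)=f(y)\ab{y}^{-1}\one_{\{\ab{y}\geq a\}}$. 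The weak convergence $f_n\rightharpoonup f$ in $L^2(\sigma_c)$ translates, exactly as in the proof of Proposition \ref{prop:pointwise-convergence}, into $g_n\rightharpoonup g$ in $\dot H^{1/2}(\R^3)$ together with uniform boundedness of $\{g_n\}_n$ in both $\dot H^{1/2}(\R^3)$ and $L^2(\R^3)$ (here the support condition $\ab{y}\geq a$ is what gives the $L^2$ bound and will later give the Sobolev embedding room).

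Next I would write the difference as a sum of two terms,
\[
e^{it\sqrt{-\Delta-s_n^2}}g_n - e^{it\sqrt{-\Delta}}g = \bigl(e^{it\sqrt{-\Delta-s_n^2}}-e^{it\sqrt{-\Delta}}\bigr)g_n + e^{it\sqrt{-\Delta}}(g_n-g).
\]
For the first term, Lemma \ref{lem:H-convergence-cone} (applied with the fixed value of $t$ and with $a$ as the spectral gap, using $\supp(\widehat{g_n})\subseteq\{\ab{\xi}\geq a\}$) gives $\norma{(e^{it\sqrt{-\Delta-s_n^2}}-e^{it\sqrt{-\Delta}})g_n}_{\dot H^{1/2}(\R^3)}\le \ab{t}(a-\sqrt{a^2-s_n^2})\sup_m\norma{g_m}_{\dot H^{1/2}}\to 0$, so this term tends to zero in $\dot H^{1/2}(\R^3)$, uniformly on compact $t$-intervals, hence in $L^2_{\mathrm{loc}}(\R^3)$ after using the Sobolev embedding $\dot H^{1/2}(\R^3)\hookrightarrow L^3(\R^3)$ and H\"older on balls, exactly as in the estimate for $F_n(t)$ in the proof of Proposition \ref{prop:pointwise-convergence}. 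For the second term, fix $t$; by continuity of $e^{it\sqrt{-\Delta}}$ on $\dot H^{1/2}(\R^3)$ we have $e^{it\sqrt{-\Delta}}g_n\rightharpoonup e^{it\sqrt{-\Delta}}g$ in $\dot H^{1/2}(\R^3)$, with the $L^2$-bound preserved (the support condition $\ab{\xi}\geq a$ is frequency-translation invariant under $e^{it\sqrt{-\Delta}}$), so Rellich--Kondrashov gives $e^{it\sqrt{-\Delta}}g_n\to e^{it\sqrt{-\Delta}}g$ strongly in $L^2(B(0,R))$ for every $R$.

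Combining, for each fixed $t$ we get $e^{it\sqrt{-\Delta-s_n^2}}g_n\to e^{it\sqrt{-\Delta}}g$ in $L^2_{\mathrm{loc}}(\R^3)$. To upgrade this to a.e. pointwise convergence in $(x,t)$ I would run the same Fubini/dominated-convergence-then-subsequence argument as in Proposition \ref{prop:pointwise-convergence}: set $F_n(t):=\norma{e^{it\sqrt{-\Delta-s_n^2}}g_n-e^{it\sqrt{-\Delta}}g}_{L^2(B(0,R))}^2$, observe that $F_n(t)\lesssim_R \sup_m\norma{g_m}_{\dot H^{1/2}}^2$ is uniformly bounded (via $L^2(B(0,R))\hookrightarrow$ H\"older $\hookrightarrow L^3\hookrightarrow \dot H^{1/2}$), note $F_n(t)\to 0$ for each fixed $t$ by the two-term estimate above, so by dominated convergence $\int_{-R}^R F_n(t)\,\d t\to 0$; hence along a subsequence $e^{it\sqrt{-\Delta-s_n^2}}g_n(x)\to e^{it\sqrt{-\Delta}}g(x)$ for a.e.\ $(x,t)\in B(0,R)\times(-R,R)$, and a diagonal argument over $R=R_j\to\infty$ produces a single subsequence converging a.e.\ on $\R^4$. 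Finally, one must also handle the compactly supported low-frequency part: write $f_n = f_n\one_{\{\ab{y}\le M\}} + f_n\one_{\{\ab{y}>M\}}$, but here there is no low-frequency issue since $f_n$ is already supported where $\ab{y}\ge a$; the genuine split is instead $g_n$ versus $g$, which is what the argument above handles directly — so I do not actually need a separate compact-support piece, unlike in Proposition \ref{prop:pointwise-convergence}, because the support condition $\ab{y}\ge a$ plays the role of the frequency cutoff. The main obstacle I anticipate is bookkeeping the two parameters $s_n\to 0$ and $n\to\infty$ simultaneously: one must be careful that Lemma \ref{lem:H-convergence-cone}'s bound is uniform over the (bounded-in-$\dot H^{1/2}$) family $\{g_n\}$ and that the Rellich step, which is for fixed $s$, is applied only after the $s_n$-dependence has been absorbed into the first term — i.e.\ the triangle-inequality split must be done before taking any limits, and the "$o_n(1)$ uniform in $s$" type estimates must be tracked honestly. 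Everything else is a routine transcription of the proof of Proposition \ref{prop:pointwise-convergence} with $e^{it\sqrt{-\Delta-1}}$ replaced by $e^{it\sqrt{-\Delta-s_n^2}}$ and the extra term controlled by Lemma \ref{lem:H-convergence-cone}.
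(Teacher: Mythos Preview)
Your approach is essentially the paper's, and the strategy is correct, but one formula needs fixing: the identity $T_{s_n}f_n(x,t)=(2\pi)^3 e^{it\sqrt{-\Delta-s_n^2}}g_n(x)$ requires $\widehat{g_n}(y)=f_n(y)\,(\ab{y}^2-s_n^2)^{-1/2}$, not $f_n(y)\,\ab{y}^{-1}$. The comparability $(\ab{y}^2-s_n^2)^{1/2}\asymp\ab{y}$ on $\{\ab{y}\geq a\}$ does not make these equal, and as written your $g_n$ does not represent $T_{s_n}f_n$. The paper uses the correct denominator (there denoted $\hat v_n$), and with that definition everything you wrote goes through verbatim: the uniform $\dot H^{1/2}$ and $L^2$ bounds, the weak convergence in $\dot H^{1/2}$ (which now needs the extra observation that $\ab{y}/(\ab{y}^2-s_n^2)^{1/2}\to 1$ uniformly on $\{\ab{y}\geq a\}$), the two-term split via Lemma~\ref{lem:H-convergence-cone}, and the Rellich--Kondrashov plus dominated-convergence plus diagonal argument. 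Alternatively you could keep your $g_n$ and add a third error term $T_{s_n}f_n-(2\pi)^3 e^{it\sqrt{-\Delta-s_n^2}}g_n$, whose $L^2_x$-norm is $O_a(s_n^2)$ uniformly in $t$ (by Plancherel and the pointwise bound $\ab{(\ab{y}^2-s_n^2)^{-1/2}-\ab{y}^{-1}}\lesssim s_n^2\ab{y}^{-3}$ for $\ab{y}\geq a$), hence harmless.

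One cosmetic difference: for the dominated-convergence bound on $F_n(t)$ the paper simply uses the uniform $L^2(\R^3)$ bound on $v_n$ and $v$ (available because $\supp\hat v_n\subseteq\{\ab{y}\geq a\}$) rather than the Sobolev-embedding route you borrowed from Proposition~\ref{prop:pointwise-convergence}; both work. Your closing worry about needing a separate compactly-supported piece is correctly dismissed: the hypothesis $\ab{y}\geq a$ already plays that role, so no split is needed.
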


\begin{proof}
	Following the argument in the proof of Proposition \ref{prop:pointwise-convergence}, we start by defining $v_n$ and $v$ by their Fourier transforms
	\[\hat v_n(y)=\frac{f_n(y)}{\sqrt{\ab{y}^2-s_n^2}},\quad \hat v(y)=\frac{f(y)}{\ab{y}}.\]
	Since $\sup_n\norma{f_n}_{L^2(\Gamma^3)}\leq \sup_n\norma{f_n}_{L^2(\mu_{s_n})}<\infty$ and the functions are supported where $\ab{y}\geq a>0$ we see 
	that 
	\[\sup_n\norma{v_n}_{\dot
		H^{1/2}(\R^3)}^2=\sup_n\int_{\R^3}\ab{\hat v_n(y)}^2\ab{y}\d y\leq \sup_n\frac{a}{\sqrt{a^2-s_n^2}}\norma{f_n}_{L^2(\mu_{s_n})}^2<\infty\]
	and
	\[\sup_n\norma{v_n}_{
		L^{2}(\R^3)}^2=(2\pi)^{-3}\sup_n\int_{\R^3}\ab{\hat v_n(y)}^2\d y\leq (2\pi)^{-3}\sup_n\frac{1}{\sqrt{a^2-s_n^2}}\norma{f_n}_{L^2(\mu_{s_n})}^2<\infty.\]
	If $\vphi\in \dot H^{1/2}(\R^3)$, then $\hat\vphi(\cdot)\ab{\cdot}\in L^2(\Gamma^3)$, from where we can deduce that $v_n\rightharpoonup v$ in $\dot H^{1/2}(\R^3)$, as $n\to\infty$. The operator $T_{s_n}$ applied to 
	$f_n$ equals
	$(2\pi)^3e^{it\sqrt{-\Delta-s_n^2}}v_n$. Fix
	$t\in \R$. From Lemma \ref{lem:H-convergence-cone} we know
	$\norma{(e^{it\sqrt{-\Delta-s_n^2}}-e^{it\sqrt{-\Delta}})\mathbbm{1}_{\{\sqrt{-\Delta}\geq 
			a\}}}\to 0$ as $n\to\infty$, the norm being	as 
	operators on $\dot H^{1/2}(\R^3)$. This, added to the continuity of $e^{it\sqrt{-\Delta}}$ 
	in $\dot H^{1/2}(\R^3)$ implies 
	\[e^{it\sqrt{-\Delta-s_n^2}}v_n\rightharpoonup e^{it\sqrt{-\Delta}}v\]
	weakly in $\dot H^{1/2}(\R^3)$, as $n\to\infty$. Then, by the Rellich-Kondrashov Theorem, for any $R>0$
	\[e^{it\sqrt{-\Delta-s_n^2}}v_n\to e^{it\sqrt{-\Delta}}v\]
	strongly in $L^2(B(0,R))$, as $n\to\infty$. Denote by
	\[F_n(t):=\int_{\ab{x}<R}\abs{e^{it\sqrt{-\Delta-s_n^2}}v_n-e^{it\sqrt{-\Delta}}v}^2\d 
	x=\norma{e^{it\sqrt{-\Delta-s_n^2}}v_n-e^{
			it\sqrt{-\Delta}}v}_{ L^2(B(0,R))}^2.\]
	Since we have $\norma{\hat v_n}_{L^2(\R^3)}\lesssim_a 
	\norma{f_n}_{L^2(\mu_{s_n})}$ and $\norma{\hat v}_{L^2(\R^3)}\lesssim_a 
	\norma{f}_{L^2(\sigma_c)}$, we obtain
	\begin{align*}
	F_n(t)&\leq\norma{e^{it\sqrt{-\Delta-s_n^2}}v_n-e^{
			it\sqrt{-\Delta}}v}_{L^2(\R^3)}^2
	\leq (\norma{v_n}_{L^2(\R^3)}+\norma{v}_{L^2(\R^3)})^2\\
	&\leq C(\norma{\hat v_n}_{L^2(\R^3)}^2+\norma{\hat v}_{L^2(\R^3)}^2)\\
	&\lesssim \norma{f_n}_{L^2(\mu_s)}^2+\norma{f}_{L^2(\sigma_c)}^2.
	\end{align*}
	We can now finish exactly as in the proof of Proposition \ref{prop:pointwise-convergence} and conclude that there exists a subsequence $\{n_k\}_k$ such that
	\[T_{s_{n_k}}f_{n_k}-T_cf\to 0\quad\text{a.e. in }\R^4.\]
\end{proof}

Finally, we prove that the existence of a nonzero weak limit of an extremizing sequence that concentrates at infinity implies that the operator norm of $T$ is upper bounded by that of $T_c$ (which in the end we will pair with Proposition \ref{prop:comparison-cone} to rule out this scenario).

\begin{lemma}\label{lem:scaling-convergence-cone}
 Let $\{s_n\}_n$ be a sequence of positive real numbers converging to zero. Let $f\in 
 L^2(\Gamma^3)$ be a nonzero function and $\{f_n\}_{n}$ be a sequence satisfying $f_n\in 
 L^2(\mathcal{H}_{s_n}^3)$, for all 
 $n$. Suppose that: 
 \begin{itemize}
 	\item[(i)] $\norma{f_n}_{L^2(\mu_{s_n})}=1$,
 	\item [(ii)] $\norma{T_{s_n}f_n}_{L^4}\to \norma{T} \,(=\norma{T_1})$,
 	\item[(iii)] $f_n\rightharpoonup f\neq 0$ in $L^2(\Gamma^3)$,
\end{itemize}
	If there exists $a>0$ such that
	\begin{itemize}
		\item [(iv)] $\supp(f),\supp(f_n)\subseteq\{y\in\R^3\colon\ab{y}\geq a\}$, for all $n$,
	\end{itemize}
 then 
 \[\norma{T}\leq\norma{T_c}.\]
 If condition {\rm(iv)} is relaxed to 
 \begin{itemize}
 	\item[(v)] $\sup_{n\in\N}\norma{f_n\one_{\{\ab{y}\leq a\}}}_{L^2(\mu_{s_n})}\leq\eps$, for some $\eps>0$,
 \end{itemize}
 then
 \[ \norma{T}^2\norma{f\one_{\{\ab{y}\geq a\}}}_{L^2(\sigma_c)}^2\leq \norma{T_c}^2\norma{f\one_{\{\ab{y}\geq a\}}}_{L^2(\sigma_c)}^2+C\eps,
  \] 
  for some universal constant $C$. In particular, if we have $\sup_{n\in\N}\norma{f_n\one_{\{\ab{y}\leq a\}}}_{L^2(\mu_{s_n})}\to 0$ as $a\to0^+$, then $\norma{T}\leq \norma{T_c}$.
\end{lemma}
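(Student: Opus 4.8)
The plan is to deduce both assertions from the three preceding lemmas of this section together with the convolution form of the $L^4$ norm. First I would handle the case under hypothesis (iv). Write $T_{s_n}f_n = (2\pi)\,(f_n\mu_{s_n}\ast f_n\mu_{s_n})^{\wedge}$-type identity, or rather work directly with $\norma{T_{s_n}f_n}_{L^4(\R^4)} = 2\pi\norma{f_n\mu_{s_n}\ast f_n\mu_{s_n}}_{L^2(\R^4)}^{1/2}$ as in \eqref{eq:convolution-form}. Since $\{f_n\}_n$ is bounded in $L^2(\sigma_c)$ (because $\norma{f_n}_{L^2(\sigma_c)}\leq\norma{f_n}_{L^2(\mu_{s_n})}=1$) and $f_n\rightharpoonup f$ in $L^2(\sigma_c)$, Lemma \ref{lem:pointwise-convergence-to-cone} gives, after passing to a subsequence, $T_{s_n}f_n(x,t)\to T_cf(x,t)$ a.e. in $\R^4$. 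By (ii) we have $\norma{T_{s_n}f_n}_{L^4(\R^4)}\to\norma{T}$. I would then apply the Brezis--Lieb / Fatou-type argument: from a.e. convergence and boundedness of $\{T_{s_n}f_n\}_n$ in $L^4$, Fatou's lemma yields $\norma{T_cf}_{L^4(\R^4)}\leq\liminf_n\norma{T_{s_n}f_n}_{L^4(\R^4)}=\norma{T}$. On the other hand, the sharp inequality for the cone gives $\norma{T_cf}_{L^4(\R^4)}\leq\norma{T_c}\norma{f}_{L^2(\sigma_c)}$, but that is the wrong direction. The correct route is: the weak lower semicontinuity of the $L^2(\sigma_c)$ norm gives $\norma{f}_{L^2(\sigma_c)}\leq\liminf_n\norma{f_n}_{L^2(\sigma_c)}\leq 1$; combining the a.e.\ convergence with the missing-mass refinement is unnecessary if instead one argues as follows. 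Passing to the subsequence along which $T_{s_n}f_n\to T_cf$ a.e., one has by Fatou $\norma{T_cf}_4^4\le\liminf\norma{T_{s_n}f_n}_4^4=\norma{T}^4$, while $\norma{T_cf}_4\le\norma{T_c}\norma{f}_2\le\norma{T_c}$. To close the gap I would upgrade this: I claim in fact $\norma{T_cf}_4=\norma{T}$, by showing that no $L^4$ mass escapes, using Proposition \ref{prop:key-prop-from-fvv} with $\mathbbm H=L^2(\sigma_c)$, $S=T_c$ and the sequence $f_n\one_{\{\ab y\ge a\}}$ regarded in $L^2(\sigma_c)$ — but here condition (ii) of that proposition reads $\norma{T_cf_n}_{L^4}\to\norma{T_c}$, which we do not have; what we have is $\norma{T_{s_n}f_n}_{L^4}\to\norma T$.

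Given this, the cleanest argument is the following. Fix the subsequence along which $T_{s_n}f_n\to T_cf$ a.e. Since $\{T_{s_n}f_n\}_n$ is bounded in $L^4(\R^4)$ and converges a.e., the Brezis--Lieb lemma gives
\[
\norma{T_{s_n}f_n}_{L^4(\R^4)}^4=\norma{T_cf}_{L^4(\R^4)}^4+\norma{T_{s_n}f_n-T_cf}_{L^4(\R^4)}^4+o(1).
\]
Discarding the nonnegative middle term and letting $n\to\infty$ yields $\norma{T}^4\ge\norma{T_cf}_{L^4(\R^4)}^4$. Hmm, this is again the trivial direction. The point must instead be that $T_cf\neq 0$, so that one can \emph{normalize}: since $f\ne0$, $T_cf\ne0$, and
\[
\norma T=\lim_n\frac{\norma{T_{s_n}f_n}_{L^4}}{\norma{f_n}_{L^2(\mu_{s_n})}}
\]
while along a suitable test one compares with the cone ratio. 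The actual mechanism, which I would carry out, is: use Lemma \ref{lem:l4-convergence-to-cone} applied to $f$ itself (it is fixed, supported where $\ab y\ge a$, and lies in $L^2(\mathcal H^3_{s})$ for all small $s$) to get $\norma{T_{s_n}f-T_cf}_{L^4}\to0$; then write $f_n=f+(f_n-f)$, note $f_n-f\rightharpoonup0$, and split $\norma{T_{s_n}f_n}_{L^4}\le\norma{T_{s_n}f}_{L^4}+\norma{T_{s_n}(f_n-f)}_{L^4}$. This is still not quite it. The correct decomposition uses the Hilbert-space orthogonality behind Proposition \ref{prop:key-prop-from-fvv}: because $f_n\rightharpoonup f$ in $L^2(\sigma_c)$, one has $\norma{f_n}_{L^2(\sigma_c)}^2=\norma f_{L^2(\sigma_c)}^2+\norma{f_n-f}_{L^2(\sigma_c)}^2+o(1)$, and since $\norma{f_n}_{L^2(\sigma_c)}\le\norma{f_n}_{L^2(\mu_{s_n})}=1$, letting $c_0:=\norma f_{L^2(\sigma_c)}>0$ we get $\limsup_n\norma{f_n-f}_{L^2(\sigma_c)}^2\le1-c_0^2$. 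Combining the Brezis--Lieb splitting of the $L^4$ norm with the sharp cone bound applied to the "remainder" and the exact identification $T_{s_n}f\to T_cf$ from Lemma \ref{lem:l4-convergence-to-cone}, one arrives at $\norma T\le\norma{T_c}$.

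Concretely, under (iv): along the subsequence from Lemma \ref{lem:pointwise-convergence-to-cone}, Brezis--Lieb gives $\norma{T_{s_n}f_n}_4^4-\norma{T_cf}_4^4-\norma{T_{s_n}f_n-T_cf}_4^4\to0$. By Lemma \ref{lem:l4-convergence-to-cone}, $\norma{T_{s_n}f-T_cf}_4\to0$, so $\norma{T_{s_n}f_n-T_cf}_4=\norma{T_{s_n}(f_n-f)}_4+o(1)\le\norma{T_{s_n}}\norma{f_n-f}_{L^2(\mu_{s_n})}+o(1)$, and since $f_n-f$ is supported where $\ab y\ge a$ and $s_n\to0$, $\norma{f_n-f}_{L^2(\mu_{s_n})}=\norma{f_n-f}_{L^2(\sigma_c)}+o(1)$; using $\norma{T_{s_n}}=\norma T$ this gives $\limsup_n\norma{T_{s_n}f_n-T_cf}_4^4\le\norma T^4(1-c_0^2)^2$. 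Meanwhile $\norma{T_cf}_4\le\norma{T_c}c_0$. Hence
\[
\norma T^4=\lim_n\norma{T_{s_n}f_n}_4^4\le\norma{T_c}^4c_0^4+\norma T^4(1-c_0^2)^2.
\]
Since $0<c_0\le1$ and $1-(1-c_0^2)^2=c_0^2(2-c_0^2)\ge c_0^2$, dividing through yields $\norma T^4 c_0^2\le\norma T^4 c_0^2(2-c_0^2)\le\norma{T_c}^4 c_0^4$, which after dividing by $c_0^2>0$ gives $\norma T^4\le\norma{T_c}^4 c_0^2\le\norma{T_c}^4$, i.e. $\norma T\le\norma{T_c}$, and in fact the sharper $\norma T\le\norma{T_c}$ with equality forcing $c_0=1$. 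For the relaxed hypothesis (v), one replaces $f_n$ by $f_n\one_{\{\ab y\ge a\}}$, whose $L^2(\mu_{s_n})$-norm differs from $1$ by $O(\eps)$, reruns the same estimate keeping track of the $O(\eps)$ error terms (all the operators are uniformly bounded, so every discarded or added term is controlled by a universal constant times $\eps$), and obtains the stated inequality $\norma T^2\norma{f\one_{\{\ab y\ge a\}}}_{L^2(\sigma_c)}^2\le\norma{T_c}^2\norma{f\one_{\{\ab y\ge a\}}}_{L^2(\sigma_c)}^2+C\eps$; letting $a\to0^+$ under the hypothesis that the tail mass vanishes, together with $\norma f_{L^2(\sigma_c)}\le1$ and monotone convergence $\norma{f\one_{\{\ab y\ge a\}}}_{L^2(\sigma_c)}\to\norma f_{L^2(\sigma_c)}$, forces $\norma T\le\norma{T_c}$. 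The main obstacle is bookkeeping the passage between the measures $\mu_{s_n}$ and $\sigma_c$ — ensuring all error terms genuinely vanish or are $O(\eps)$ — and correctly invoking Brezis--Lieb for the a.e.-convergent, $L^4$-bounded sequence $\{T_{s_n}f_n\}_n$; the analytic inputs (a.e. convergence, $L^4$ convergence of $T_{s_n}f$) are exactly Lemmas \ref{lem:pointwise-convergence-to-cone} and \ref{lem:l4-convergence-to-cone}.
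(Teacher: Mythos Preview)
Your proposal (once it settles, from ``Concretely, under (iv)'' onward) is correct and follows essentially the same route as the paper: Brezis--Lieb for $\{T_{s_n}f_n\}$ using the a.e.\ convergence from Lemma~\ref{lem:pointwise-convergence-to-cone}, the $L^4$ convergence $T_{s_n}f\to T_cf$ from Lemma~\ref{lem:l4-convergence-to-cone}, the Pythagorean identity $\norma{f_n-f}_{L^2(\sigma_c)}^2=\norma{f_n}_{L^2(\sigma_c)}^2-\norma{f}_{L^2(\sigma_c)}^2+o(1)$, and the comparison $\norma{\cdot}_{L^2(\mu_{s_n})}=\norma{\cdot}_{L^2(\sigma_c)}+o(1)$ on $\{\ab{y}\ge a\}$. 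The only difference is cosmetic: the paper applies $(A^4+B^4)^{1/2}\le A^2+B^2$ to work at the squared level and obtains $\norma{T}^2\norma{f}_{L^2(\sigma_c)}^2\le\norma{T_cf}_{L^4}^2$ directly, whereas you stay at fourth powers and close via $1-(1-c_0^2)^2=c_0^2(2-c_0^2)\ge c_0^2$; the treatment of (v) by truncation and $O(\eps)$ bookkeeping is likewise the same.
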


An analog statement applies if we change $T$ and $T_c$ by $\overline{T}$ and $\overline{T}_c$,  respectively, the proof being identical.

\begin{proof}
 We argue as in \cite{FVV}. By the weak convergence condition (iii),
 \begin{equation}\label{eq:weak-conv-norm}
\norma{f_n-f}_{L^2(\sigma_c)}^2=\norma{f_n}_{L^2(\sigma_c)}^2-\norma{f}_{L^2(\sigma_c)}^2+o(1).
 \end{equation}
 Now consider that (iv) holds. By (i) and (iv), $\norma{f_n}_{L^2(\sigma_c)}^2 -\norma{f_n}_{L^2(\mu_{s_n})}^2\to 0$. Indeed,
\begin{equation}\label{eq:convergence_norm_f_n}
\begin{split}
0\leq \norma{f_n}_{L^2(\mu_{s_n})}^2-\norma{f_n}_{L^2(\sigma_c)}^2&=\int_{\ab{y}\geq a} 
 \ab{f_n(y)}^2\abs{\frac{1}{\sqrt{\ab{y}^2-s_n^2}}-\frac{1}{\ab{y}}}\d y\\
&\leq 
\norma{f_n}_{L^2(\mu_{s_n})}^2 \NOrma{\frac{\ab{y}-\sqrt{\ab{y}^2-s_n^2}}{\ab{y}}\one_{\{\ab{y}\geq 
		a\}}}_{L_y^\infty(\R^3)}\\
&=\Bigl(1-\sqrt{1-s_n^2a^{-2}}\Bigr)\to 0,
\end{split}
\end{equation}
as $n\to\infty$. Then, \eqref{eq:weak-conv-norm} implies
 \begin{equation}\label{eq:weak_conv_norms}
 \norma{f_n-f}_{L^2(\sigma_c)}^2=\norma{f_n}_{L^2(\mu_{s_n})}^2-\norma{f}_{L^2(\sigma_c)}^2+o(1).
 \end{equation}
Because of conditions (iii) and (iv) and Lemma \ref{lem:pointwise-convergence-to-cone}, $T_{s_n}f_n\to T_cf$ a.e. pointwise in $\R^4$, as $n\to\infty$, and we can apply the Br\'{e}zis-Lieb lemma to the sequence $\{T_{s_n}f_n\}_n\subset L^4(\R^4)$ to obtain
 \[\norma{T_{s_n}f_n-T_cf}_{L^4(\R^4)}^4=\norma{T_{s_n}f_n}_{L^4(\R^4)}^4-\norma{T_cf}_{L^4(\R^4)}^4+o(1).\]

 Because by scaling the norm of the operators $T_{s_n}$ is independent of $n$ (see Section \ref{sec:scaling}) and by (ii)
 $\norma{T_{s_n}f_n}_{L^4(\R^4)}\to\norma{T}$ as $n\to\infty$, we obtain
 \begin{align} 
 \label{eq:where-to-cut}
 \norma{T_{s_n}}^2=\norma{T}^2&=\frac{\norma{T_{s_n}f_n}_{L^4(\R^4)}^2}{\norma{f_n}_{L^2(\mu_{s_n})}^2}+o(1)\\
 \nonumber
&=\frac{(\norma{T_{s_n}f_n-T_cf}_{L^4}^4+\norma{T_cf}_{L^4}^4+o(1))^{1/2}}{\norma{f_n-f}_{L^2(\sigma_c)}^2+\norma
{f}_{L^2(\sigma_c)}^2+o(1)}+o(1)\\
\nonumber
&\leq \frac{\norma{T_{s_n}f_n-T_cf}_{L^4}^2+\norma{T_cf}_{L^4}^2+o(1)}{\norma{f_n-f}_{L^2(\sigma_c)}^2+\norma
{f}_{L^2(\sigma_c)}^2+o(1)}+o(1)\\
\nonumber
&\leq \frac{\norma{T_{s_n}f_n-T_{s_n}f}_{L^4}^2+\norma{T_cf}_{L^4}^2+o(1)}{\norma{f_n-f}_{L^2(\sigma_c)}^2+\norma
{f}_{L^2(\sigma_c)}^2+o(1)}+o(1),
 \end{align}
 where in the last inequality we used the triangle inequality and that $\norma{T_{s_n}f-T_cf}_{L^4}\to 0$ as $n\to\infty$, from Lemma 
 \ref{lem:l4-convergence-to-cone}.
Then
\[\norma{T_{s_n}}^2\leq
\frac{\norma{T_{s_n}}^2\norma{f_n-f}_{L^2(\mu_{s_n})}^2+\norma{T_cf}_{L^4}^2+o(1)}{\norma{f_n-f}_{L^2(\sigma_c)}^2+\norma
{f}_{L^2(\sigma_c)}^2+o(1)}+o(1)\]
and hence
\[\norma{T_{s_n}}^2(\norma{f_n-f}_{L^2(\sigma_c)}^2+\norma
{f}_{L^2(\sigma_c)}^2+o(1))\leq\norma{T_{s_n}}^2\norma{f_n-f}_{L^2(\mu_{s_n})}^2+\norma{T_cf}_{L^4}^2+o(1)\]
which is equivalent to
\[\norma{T_{s_n}}^2\norma{f}_{L^2(\sigma_c)}^2\leq
\norma{T_cf}_{L^4}^2+\norma{T_{s_n}}^2(\norma{f_n-f}_{L^2(\mu_{s_n})}^2-\norma{f_n-f}_{L^2(\sigma_c)}^2)+o(1).
\]
Arguing as in \eqref{eq:convergence_norm_f_n} we obtain $\norma{f_n-f}_{L^2(\mu_{s_n})}^2-\norma{f_n-f}_{L^2(\sigma_c)}^2\to 
0$,
and therefore,
\[\norma{T}=\norma{T_{s_n}}\leq \frac{\norma{T_c f}_{L^4}}{\norma{f}_{L^2(\sigma_c)}}\leq  \norma{T_c}.\]

Finally, if we relax the support condition (iv) to the $\eps$-small norm condition (v), it will be 
enough if in 
\eqref{eq:where-to-cut} we use
\[ \frac{\norma{T_{s_n}f_n}_{L^4(\R^4)}^2}{\norma{f_n}_{L^2(\mu_{s_n})}^2}\leq 
\frac{\norma{T_{s_n}(f_n\one_{\{\ab{y}\geq a\}})}_{L^4(\R^4)}^2}{\norma{f_n\one_{\{\ab{y}\geq 
a\}}}_{L^2(\mu_{s_n})}^2}+C\eps, \]
where $C<\infty$ is independent of $n$ and $a$, together with $f_n\one_{\{\ab{y}\geq a\}}\rightharpoonup f\one_{\{\ab{y}\geq a\}}$ in $L^2(\Gamma^3)$ and $T_{s_n}(f_n\one_{\{\ab{y}\geq a\}})\to T_c(f\one_{\{\ab{y}\geq a\}})$ a.e. in $\R^4$, as $n\to\infty$, the latter property being a consequence of the former and Lemma \ref{lem:pointwise-convergence-to-cone}.
\end{proof}

\section{Proof of Theorem \ref{thm:main-theorem-2}}\label{sec:concluding_compactness}

In previous Sections \ref{sec:no-dichotomy} and \ref{sec:no-vanishing}, we proved
that if $\{f_n\}_n$ is an extremizing sequence for $\overline{T}$, then subsequences of $\{\ab{f_n}^2\}_n$ can not satisfy vanishing nor dichotomy of Lemma \ref{lem:concentration-compactness}, which as we saw, were a consequence of bilinear estimates for $\overline{T}$. In this 
section we prove that, as a consequence of the compactness alternative and Lemma \ref{lem:scaling-convergence-cone} of the previous section, extremizing sequences posses convergent subsequences and, as a result, extremizers exist.

\begin{proof}[Proof of Theorem \ref{thm:main-theorem-2}]
Let $\{f_n\}_{n}\subset L^2(\overline{\mathcal{H}}^3)$ be an $L^2$ normalized complex valued extremizing sequence for $\overline{T}$. After passing to a subsequence if necessary we can assume that alternative (i) 
in Lemma \ref{lem:concentration-compactness} holds for $\{\ab{f_n}^2\}_{n}$, that is,
there exists $\{\ell_n\}_n\subset \N$ with the property that for all $\eps>0$ there exists $R_\eps<\infty$ such that for all $R\geq R_\eps$ and $n\in\N$
\begin{equation}\label{eq:large_dyadic_piece}
\int_{2^{\ell_n-R}\leq\ab{y}\leq 2^{\ell_n+R}}\ab{f_n(y)}^2\d\bar\mu(y)\geq 1-\eps.
\end{equation}
If
there exists a subsequence $\{n_k\}_{k}\subset \N$ such that $\{\ell_{n_k}\}_{k}$ is bounded above, 
then we 
can apply the same method provided in the proof of Proposition \ref{prop:convergence-not-concentration-infinity}
for the upper half of the hyperboloid, $\mathcal H^3$, to conclude that there exists a subsequence  $\{f_{n_k}\}_k$ that satisfies the conclusion of the theorem with all $L_{n_k}$'s equal to the identity matrix. Therefore, in what follows 
we will assume that 
$\ell_{n}\to\infty$ as $n\to\infty$.

Passing to a subsequence if necessary we can assume then that $\{f_n\}_{n}$
satisfies the following: $\norma{f_n}_{L^2}=1$,
$\norma{\overline{T}f_n}_{L^4}\to \overline{\mathbf{H}}_4$ and there exists a sequence $\{\ell_n\}_{n\in\N}\subset \N$ 
such that $\ell_n\to\infty$
as $n\to\infty$ and for any $\eps>0$ there exists $R_\eps<\infty$ such that for all $R\geq R_\eps$ and all $n\in\N$ equation \eqref{eq:large_dyadic_piece} holds.
Therefore, with $R_\eps$ as before, we have that for all $R\geq R_\eps$ there exists $N_n\in[\ell_n-R,\ell_n+R]\cap \N$ such that for all $n\in\N$
\begin{equation*}\label{eq:Nn_large_dyadic_piece}
\int_{2^{N_n}\leq\ab{y}\leq 2^{N_n+1}}\ab{f_n(y)}^2\d\bar\mu(y)\geq \frac{1-\eps}{2R}.
\end{equation*}
Denote $P_N$ the dyadic cut off at scale $2^N$, that is,
$P_Nf(y):=f(y)\mathbbm{1}_{\{2^N\leq \ab{y}< 2^{N+1}\}}$. Using the continuity of $\overline{T}$ and the triangle inequality we obtain 
\begin{align*}
\norma{\overline{T}(P_{N_n}f_n)}_{L^4}&\geq \norma{\overline{T}f_n}_{L^4}-\overline{\mathbf{H}}_4\norma{f_n-P_{N_n}f_n}_{L^2(\bar{\mu})}\geq  \norma{\overline{T}f_n}_{L^4}-\overline{\mathbf{H}}_4\Bigl(1-\frac{1-\eps}{2R}\Bigr)^{1/2}\\
&=\overline{\mathbf{H}}_4-\overline{\mathbf{H}}_4\Bigl(1-\frac{1-\eps}{2R}\Bigr)^{1/2}+o_n(1).
\end{align*}
Choosing $\eps=\eps_0$ close to $0$ and $R=R_{\eps_0}+1$, we obtain a 
sequence $\{N_n\}_{n}\subset \N$, with $\ab{N_n-\ell_n}\leq R_{\eps_0}+1$, so that $N_n\to\infty$ as $n\to\infty$, and a constant $c>0$ 
such that for all $n$ large enough \footnote{By redefining the sequence $\{f_n\}_n$ we will assume that the property holds for all $n\geq 1$.}
\[\norma{P_{N_n}f_n}_{L^2(\bar{\mu})}> c,\quad\norma{\overline{T}(P_{N_n}f_n)}_{L^4}> c.\]
We rescale $f_n$ defining $g_n$ by $g_n(y)=2^{N_n}f(2^{N_n}y)$. Letting $s_n=2^{-N_n}$ we have $s_n\to 0$ as $n\to\infty$, $g_n\in 
L^2(\overline{\mathcal H}^3_{s_n})$,
\begin{align}
&\norma{g_n}_{L^2(\bar\mu_{s_n})}=\norma{f_n}_{L^2(\bar\mu)}=1,\nonumber\\
&\norma{\overline{T}_{s_n}g_n}_{L^4}=\norma{\overline{T}f_n}_{L^4}\to\overline{\mathbf{H}}_4 \text{ as }n\to\infty,\nonumber\\
&\norma{P_1g_n}_{L^2(\bar\mu_{s_n})}=\norma{P_{N_n}f_n}_{L^2(\bar\mu)}>c\label{eq:lower_bound_P1g}\text{ and}\\
&\norma{\overline{T}_{s_n}(P_1g_n)}_{L^4}=\norma{\overline{T}(P_{N_n}f_n)}_{L^4}>c,\label{eq:lower_bound_TP1}
\end{align}
as obtained by simple rescaling (see Section
\ref{sec:scaling}). Moreover, from \eqref{eq:large_dyadic_piece} for any small $\eps>0$, $R>2R_\eps$ and $n\in\N$
\begin{equation}\label{eq:localization_gn}
\int_{2^{-R}\leq\ab{y}\leq 2^{R}}\ab{g_n(y)}^2 \d\bar\mu_{s_n}(y)\geq 1-\eps,
\end{equation}
so that the $g_n$'s are "localized at scale $1$".
Using Lemma \ref{lem:nearly-extremals-hyp} applied to 
$\overline{T}_{s_n}$ and $P_1g_n$, which is possible given \eqref{eq:lower_bound_P1g} and \eqref{eq:lower_bound_TP1}, we obtain that for 
all $n\in\N$ there 
exist caps $\cp_n\subset \Hyp_{s_n}$, which we may consider all to be contained in the upper half , $\hyp_{s_n}$,  possibly after passing to a subsequence\footnote{Otherwise we reflect the $f_n$'s and $g_n$'s with respect to the origin, as necessary, by considering the sequences $\{L^*f_n\}_n$ and $\{L^*g_n\}_n$ where $L\in\mathcal{L}$ is the reflection map $L(x,t)=(-x,-t)$}, $\cp_n =
[1,2]\times \sphcp_n\subset \hyp_{s_n}$, for some spherical caps $\sphcp_n\subseteq \Sph^2$, such that
\[ \int_{\cp_n}\ab{g_n(y)} \d\bar\mu_{s_n}(y)=\int_{\cp_n}\ab{P_1g_n(y)} \d\bar\mu_{s_n}(y)\geq c\bar\mu_{s_n}(\cp_n)^{1/2}\norma{P_1g_n}_{L^2(\bar\mu_{s_n})}\gtrsim\bar\mu_{s_n}(\cp_n)^{1/2}, \]
as a consequence of \eqref{eq:hyp_cap_refinement_6}. Equivalently
\begin{equation}\label{eq:lower_l1_fn}
\int_{2^{N_n}\cp_n}\ab{f_n(y)} \d\bar\mu(y)\gtrsim\bar\mu(2^{N_n}\cp_n)^{1/2}.
\end{equation}
Let $\alpha=\limsup_{n\to\infty} \bar\mu_{s_n}(\cp_n)$. Two 
cases arise.\\

\noindent\textbf{Case 1:} $\alpha>0$. Passing to a subsequence if necessary, we can assume that there 
exists a constant $c>0$ such that for all $n$
\[ \int_{\cp_n}\ab{g_n(y)} \d\bar\mu_{s_n}(y)\geq c>0. \]
We can view $g_n$ as a 
function on the double cone via the usual identification using the orthogonal projection onto 
$\R^3$, where we extend it to be zero in the region where $\ab{y}\leq s_n$. Since 
$\norma{g_n}_{L^2(\bar\sigma_c)}\leq \norma{g_n}_{L^2(\bar\mu_{s_n})}=1$ and
\begin{equation}\label{eq:lowerBoundL1Cone}
0<c<\int_{\cp_n}\ab{g_n(y)} \d\bar\mu_{s_n}(y)\lesssim\int_{\cp_n}\ab{g_n(y)} \d\bar\sigma_c(y),
\end{equation}
for all $n$ large enough (as a consequence of \eqref{eq:localization_gn}), there is weak 
convergence of $\{\ab{g_n}\}_n$ in 
$L^2(\bar\sigma_c)$ after the possible extraction of a subsequence,
$\ab{g_n}\rightharpoonup g$, for some $g\in L^2(\bar\sigma_c)$ which satisfies $g\neq 0$ by \eqref{eq:lowerBoundL1Cone}. Inequality \eqref{eq:localization_gn} implies that 
\[\lim_{a\to 0^+}\sup_{n\in\N}\norma{g_n\one_{\{\ab{y}\leq a\}}}_{L^2(\bar{\mu}_{s_n})}=0.\]
Because $\norma{\overline{T}_{s_n}(g_n)}_{L^4}\leq \norma{\overline{T}_{s_n}(\ab{g_n})}_{L^4}$, it is then also the case that $\norma{\overline{T}_{s_n}(\ab{g_n})}_{L^4}\to \overline{\mathbf{H}}_4$, so that we can use part (v) of Lemma \ref{lem:scaling-convergence-cone} applied to $\{\ab{g_n}\}_n$ to conclude
\[\norma{\overline{T}}\leq\norma{\overline{T}_c},\]
which is in contradiction with 
Proposition \ref{prop:comparison-double-cone}. Therefore, this case 
does not arise.\\

\noindent\textbf{Case 2:} $\alpha=0$. 
Let $\{\gamma_n\}_n\subset [0,\pi]$ and $\{R_n\}_n\subset SO(3)$ be such that
\begin{multline*}
R_n^{-1}(\cp_n)=\{(r\omega,\sqrt{r^2-s_n^2})\colon 1\leq r\leq 2,\\
\omega=(\cos\vphi,\cos\te\sin\vphi,\sin\te\sin\vphi),\te\in[0,2\pi],\,\vphi\in[0,\gamma_n] \}.
\end{multline*}
The condition $\alpha=0$ implies $\gamma_n\to 0$ as $n\to\infty$. Let $\beta=\limsup_{n\to\infty}\bar{\mu}(2^{N_n}\cp_n)=\limsup_{n\to\infty}2^{2N_n}\bar{\mu}_{s_n}(\cp_n)$. Two subcases arise.

\textbf{Subcase 2a:} $\beta<\infty$. This implies that the sequence $\{\bar{\mu}(2^{N_n}\cp_n)\}_n$ is bounded. We may assume that the angles $\gamma_n$ are less that $\pi/2$ as $\{\gamma_n\}_n$ tends to zero. Form Lemma \ref{lem:measure_bounded_cap} with $s=1$, there exists $\{t_n\}_n\subset [0,1)$ such that the caps $\{L^{-t_n}R_n^{-1}(2^{N_n}\cp_n):n\in\N\}$ are contained in a fixed bounded ball of $\R^4$.
It therefore follows from \eqref{eq:lower_l1_fn} and the Cauchy--Schwarz inequality that $\{(R_nL^{t_n})^*f_n\}_n\subset L^2(\Hyp)$  is an extremizing sequence with $L^2$ norm uniformly bounded below by a constant $c>0$ in a fixed ball. We can then conclude the precompactness modulo characters of the sequence $\{(R_nL^{t_n})^*f_n\}_n$ using the 
argument in the proof of Proposition \ref{prop:convergence-not-concentration-infinity}.

\textbf{Subcase 2b:} $\beta=\infty$. From \eqref{eq:measure_asymp} in Lemma \ref{lem:measure_bounded_cap} with $s=1$, after passing to a subsequence if necessary, $\lim_{n\to\infty}2^{2N_n}\sin^2(\gamma_n)=\infty$.
Set $t_n=\cos\gamma_n$, so that $t_n\to 1$ as 
$n\to\infty$. From Lemma \ref{scaling-cap-hyp} with $s=s_n$, the set $\widetilde \cp_n:=L_{t_n}^{-1}R_n^{-1}(\cp_n)\subset \Hyp_{s_n(1-t_n^2)^{-1/2}}$ 
satisfies, for all $n$ large enough for which $2^{2N_n}\sin^2(\gamma_n)\geq 8$ and $\gamma_n\leq\pi/3$,
\[ \bar\mu_{\frac{s_n}{\sqrt{1-t_n^2}}}(\widetilde\cp_n)\geq \frac{\pi}{2}\quad\text{ and }\quad \widetilde\cp_n\subseteq
[\tfrac{7}{16},\tfrac{33}{16}]\times \Sph^2.\]
Set $a_n=s_n(1-t_n^2)^{-1/2}=(2^{N_n}\sin\gamma_n)^{-1}\to 0$, as $n\to\infty$. Let $\tilde{f}_n=(R_nL^{t_n})^\ast f_n$ so that $\{\tilde f_n\}_n\subset L^2(\Hyp)$ is also an $L^2$-normalized extremizing sequence which satisfies
\[ \int_{a_n^{-1}\widetilde \cp_n}\ab{\tilde f_n(y)} \d\bar\mu(y)\geq c\bar\mu(a_n^{-1}\widetilde 
\cp_n)^{1/2},\quad \int_{a_n^{-1}\widetilde \cp_n}\ab{\tilde f_n(y)}^2 \d\bar\mu(y)\geq c^2. \]
and $a_n^{-1}\widetilde\cp_n\subseteq[\frac{7}{16a_n},\frac{33}{16a_n}]\times\Sph^2$. 

Define the rescale $\tilde g_n(\cdot):=a_n^{-1}\tilde f_n(a_n^{-1}\,\cdot)$, so that for each $n$ we have
$\tilde g_n\in L^2(\Hyp_{a_n})$, $\norma{\tilde g_n}_{L^2(\Hyp_{a_n})}=1$ and
\[ \int_{\widetilde \cp_n}\ab{\tilde g_n(y)} \d\bar\mu_{a_n}(y)\geq c\bar\mu_{a_n}(\widetilde 
\cp_n)^{1/2}>c'>0. 
\]
We are almost in the same situation as in Case 1, but we need the analog of \eqref{eq:localization_gn} for the sequence $\{\tilde g_n\}_n$. 
After passing to a subsequence if necessary, $\{\tilde f_n \}_n$ satisfies the \textit{compactness} alternative in Lemma \ref{lem:concentration-compactness}. Denoting $\{\tilde \ell_n\}_n$ the corresponding sequence associated to $\{\tilde f_n\}_n$ as in \eqref{eq:large_dyadic_piece} we then necessarily have that $\{{\tilde \ell_n}-\log_2(a_n^{-1})\}_n$ is bounded.
This implies the desired analog of \eqref{eq:localization_gn} for $\{\tilde g_n\}_n$. Therefore the argument in Case 1 applies showing that this subcase does not arise.

As a result, only Subcase 2a of Case 2 is possible, proving the theorem.
\end{proof}

\section{Scaling}\label{sec:scaling}

Here we record scaling properties of the family of operators $\{T_s\}_{s>0}$. Recall from Section \ref{sec:calculation_convolution} that for $s>0$, $\mathcal
H^3_s=\{(y,\sqrt{\ab{y}^2-s^2}):y\in\R^3\}$, equipped with the measure $\mu_s$ with density
$\d\mu_s(y,t)=\one_{\{\ab{y}>s\}}\delta(t-\sqrt{\ab{y}^2-s^2})\frac{\d y\d t}{\sqrt{\ab{y}^2-s^2}}$.

The operator $T_s$, defined on $\mathcal S(\R^3)$, is given by 
 \[T_sf(x,t)=\widehat{f\mu_s}(-x,-t)=\int_{y\in\R^3,\ab{y}\geq s} e^{ix\cdot
y}e^{it\sqrt{\ab{y}^2-s^2}}f(y)\frac{\d y}{\sqrt{\ab{y}^2-s^2}}.\]
We want to study the scaling of the quantity $\mathbf{H}_{p,s}$ defined by
\[ \mathbf{H}_{p,s}:=\sup_{0\neq f\in L^2(\mathcal H^3_s)}\frac{\norma{T_sf}_{L^p(\R^4)}}{\norma{f}_{L^2(\mathcal H^3_s)}}. \]
Changing variables 
$y\rightsquigarrow sy$ in the expression defining $Tf(x,t)=T_1f(x,t)$ we obtain
\begin{align*}
 Tf(x,t)&=\int_{y\in\R^3,\ab{y}\geq 1} e^{ix\cdot y}e^{it\sqrt{\ab{y}^2-1}}f(y)\frac{\d 
 y}{\sqrt{\ab{y}^2-1}}\\
 &=\int_{y\in\R^3,\ab{y}\geq s} e^{i s^{-1}x\cdot
y}e^{it\sqrt{s^{-2}\ab{y}^2-1}}f(s^{-1}y)\frac{s^{-3}\d y}{\sqrt{s^{-2}\ab{y}^2-1}}\\
 &=s^{-1}\int_{y\in\R^3,\ab{y}\geq s} e^{is^{-1}x\cdot
y}e^{is^{-1}t\sqrt{\ab{y}^2-s^2}}s^{-1}f(s^{-1}y)\frac{\d y}{\sqrt{\ab{y}^2-s^2}},
\end{align*}
from where $sTf(sx,st)=T_s(s^{-1}f(s^{-1}\cdot))(x,t)$ and it follows that
\[s^{1-4/p}\norma{Tf}_{L^p(\R^{4})}=\norma{T_ss^{-1}f(s^{-1}\cdot)}_{L^p(\R^{4})}.\]
On the other hand 
\begin{align*}
 \int_{y\in\R^3,\,\ab{y}\geq 1} \ab{f(y)}^q\frac{\d 
 y}{\sqrt{\ab{y}^2-1}}&=\int_{y\in\R^3,\,\ab{y}\geq s} 
 \ab{f(s^{-1}y)}^q 
 \frac{s^{-3}\d 
 y}{\sqrt{s^{-2}\ab{y}^2-1}}\\
 &=\int_{y\in\R^3,\,\ab{y}\geq s} \ab{s^{-2/q}f(s^{-1}y)}^q\frac{\d y}{\sqrt{\ab{y}^2-s^2}},
\end{align*}
that is $\norma{f}_{L^q(\mu)}=\norma{s^{-2/q}f(s^{-1}\cdot)}_{L^q(\mu_s)}$. Thus
\begin{align*}
 s^{1-4/p}\norma{Tf}_{L^p(\R^{4})}\norma{f}_{L^2(\mu)}^{-1}&=\norma{T_ss^{-1}f(s^{-1}\cdot)}_{L^p(\R^{4})}
\norma{s^{-1}f(s^{-1}\cdot)}_{L^2(\mu_s)}^{-1},
\end{align*}
and it follows that for all $s>0$
\begin{equation*}
 \mathbf{H}_{p,s}=s^{1-4/p}\mathbf{H}_{p}.
\end{equation*}
In particular, if $p=4$, 
\[ \mathbf{H}_{4,s}=\mathbf{H}_{4}, \]
for all $s>0$.

\appendix
\section{Computation of a limit}\label{app:appendix}

Let
\begin{align*}
I(a)&=16\pi^3 
\int_0^{\infty}e^{-a\tau}\Bigl(\tau^2\sqrt{\tau^2+4}-\frac{2}{3}(\tau^2+4)\sqrt{\tau^2+1}
+\frac{8}{3}\\
&\qquad\qquad\qquad+2\tau \log(\tau+\sqrt{\tau^2+1})\Bigr)\d\tau\\
\end{align*}
and
\[ II(a)=16\pi^2\biggl(\int_0^\infty e^{-a\tau}\sqrt{\tau^2+1}\d \tau\biggr)^2. \]

The ratio $I(a)/II(a)$ appeared in the proof of Proposition \ref{prop:comparison-cone} while 
establishing that the best 
constant for the hyperboloid $\hyp$ is 
strictly greater than the best constant for the cone $\Gamma^3$ in their respective $L^2\to L^4(\R^4)$ adjoint Fourier restriction
inequalities. The 
purpose of this appendix is to prove the 
following lemma.

\begin{lemma}\label{lem:asymptotics-hyp}
	\begin{align*}
	&\lim_{a\to 0^+}\frac{I(a)}{II(a)}=2\pi,\quad\lim_{a\to 0^+}\frac{\d}{\d 
	a}\frac{I(a)}{II(a)}=0,\quad\lim_{a\to 0^+}\frac{\d^2}{\d a^2}\frac{I(a)}{II(a)}=0
\intertext{and}
	&\lim_{a\to 0^+}\frac{\d^3}{\d a^3}\frac{I(a)}{II(a)}=8\pi.
	\end{align*}
	 Therefore there exists $a_0>0$ such that
	 \[ \frac{I(a)}{II(a)}>2\pi, \]
	  for all $0<a<a_0$.
\end{lemma}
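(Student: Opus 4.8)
The plan is to reduce both $I(a)$ and $II(a)$ to explicit expressions in a single transcendental function, namely $D(a):=\int_0^\infty \frac{e^{-a\tau}}{\sqrt{\tau^2+1}}\d\tau$, together with its first two derivatives, and then to insert the known behaviour of $D$ as $a\to0^+$. All the manipulations below are justified for $a>0$ by dominated convergence, since every integrand has at most polynomial growth in $\tau$ against the decay $e^{-a\tau}$.

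First I would record the elementary identities. Set $A(a):=\int_0^\infty e^{-a\tau}\sqrt{\tau^2+1}\d\tau$, so that $II(a)=16\pi^2A(a)^2$, and, integrating $e^{-a\tau}\tfrac{d}{d\tau}\sqrt{\tau^2+1}$ by parts, $A(a)=\tfrac{1-D'(a)}{a}$. Using $\tau^2\sqrt{\tau^2+4}=(\tau^2+4)^{3/2}-4(\tau^2+4)^{1/2}$ and $(\tau^2+4)\sqrt{\tau^2+1}=(\tau^2+1)^{3/2}+3(\tau^2+1)^{1/2}$, the scaling $\int_0^\infty e^{-a\tau}(\tau^2+\beta^2)^{1/2}\d\tau=\beta^2A(\beta a)$ (and $\int_0^\infty e^{-a\tau}(\tau^2+\beta^2)^{3/2}\d\tau=\beta^4(A''(\beta a)+A(\beta a))$, since $(\tau^2+1)^{3/2}=\tau^2(\tau^2+1)^{1/2}+(\tau^2+1)^{1/2}$), and, for the remaining term, $\int_0^\infty e^{-a\tau}\operatorname{arcsinh}\tau\,\d\tau=\tfrac{D(a)}{a}$ followed by differentiation in $a$, one obtains after cancellation
\[ \frac{I(a)}{16\pi^3}=16A''(2a)-\tfrac23 A''(a)-\tfrac83 A(a)+\frac{8}{3a}+\frac{2D(a)}{a^2}-\frac{2D'(a)}{a},\qquad II(a)=16\pi^2A(a)^2. \]
Everything is now expressed through $D$, evaluated at $a$ and at $2a$, and its derivatives.

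Next I would use the expansion of $D$ at $0$. It can be read off from the classical identity $D(a)=\tfrac{\pi}{2}\bigl(\mathbf H_0(a)-Y_0(a)\bigr)$ (Struve $\mathbf H_0$, Bessel $Y_0$) and the power series of $\mathbf H_0$ and $Y_0$; alternatively one derives it directly by splitting $\int_0^\infty=\int_0^1+\int_1^\infty$, expanding $e^{-a\tau}$ on $[0,1]$, and isolating $\int_1^\infty \tfrac{e^{-a\tau}}{\tau}\d\tau=E_1(a)=-\gamma-\log a+\sum_{k\ge1}\tfrac{(-1)^{k+1}a^k}{k\,k!}$ on $[1,\infty)$. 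Either way $D(a)=g_1(a)\log a+g_2(a)$ with $g_1,g_2$ real-analytic near $0$ and $g_1(0)=-1$; I would compute enough coefficients (through order $a^5$, say) to control $I(a)/II(a)$ modulo $o(a^3)$ after the divisions by $a,a^2$ and the substitution $a\mapsto 2a$.

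Finally I would assemble: the expansions make $a^4I(a)$ and $a^4II(a)$ finite sums of $a^j$ and $a^j\log a$ plus a controlled remainder, with $a^4II(a)\to16\pi^2\neq0$; forming $\Phi(a):=I(a)/II(a)$, the logarithmic contributions through order $a^3$ cancel — they must, since the four limits in the statement are finite — and one reads off $\Phi(a)=2\pi+\tfrac{4\pi}{3}a^3+o(a^3)$. To turn the computed limits of $\Phi^{(j)}(a)$, $j=0,1,2,3$, into this Taylor expansion I would invoke the standard fact that a function in $C^3(0,a_0)$ whose derivatives up to order $3$ all admit finite right limits at $0$ extends to $C^3[0,a_0)$ with those limits as its derivatives at $0$ (iterated mean value theorem); Taylor's theorem with Peano remainder then gives $\tfrac{I(a)}{II(a)}-2\pi=\tfrac{4\pi}{3}a^3+o(a^3)>0$ for $0<a<a_0$. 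The one real obstacle is computational: faithfully tracking the logarithmic terms in the expansion of $D$ and verifying their cancellation through order $a^3$ in the quotient, where sign and coefficient slips are easy; once the reduction to $D$ is in hand there is no conceptual difficulty.
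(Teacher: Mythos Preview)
Your reduction of $I$ and $II$ to the single function $D(a)=\int_0^\infty e^{-a\tau}(\tau^2+1)^{-1/2}\,\d\tau$ is correct---the identity $I(a)/(16\pi^3)=16A''(2a)-\tfrac23A''(a)-\tfrac83A(a)+\tfrac{8}{3a}+\tfrac{2D(a)}{a^2}-\tfrac{2D'(a)}{a}$ checks out---and inserting the expansion $D(a)=g_1(a)\log a+g_2(a)$ with $g_i$ analytic would work. This is a genuinely different route from the paper's. The paper never introduces a special function; instead, after the substitution $u=a\tau$, it makes the rescaling $N(a):=a^{4/3}I(a^{1/3})$, $\widetilde D(a):=a^{4/3}II(a^{1/3})$ (unrelated to your $D$), so that $N/\widetilde D=\Phi(a^{1/3})$ and everything collapses to the single assertion $\lim_{a\to0^+}\tfrac{\d}{\d a}(N/\widetilde D)=\tfrac{4\pi}{3}$. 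That limit is obtained by bare-hands asymptotics of about nine elementary integrals: one isolates the divergent $a^{-1/3}\log a$ pieces of $N'$ and $\widetilde D'$, shows they cancel in the combination $16\pi^2N'-32\pi^3\widetilde D'$, and reads off the finite remainder. Your approach trades that rescaling trick for the algebraic structure of $D$; it is more mechanical and in principle yields the full asymptotic series of $\Phi$, but the bookkeeping is heavier (you need $D$ through order $a^5$ since $A''$ brings in $D'''$ and there are divisions by $a^2$). The paper's route is shorter once the $a\mapsto a^{1/3}$ substitution is spotted.

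Two small points on your write-up. The remark that the logarithmic terms ``must'' cancel because the limits in the statement are finite is circular as a proof step; keep it only as a consistency check after the computation. And your last paragraph mixes directions: the expansion $\Phi(a)=2\pi+\tfrac{4\pi}{3}a^3+o(a^3)$ by itself gives $\Phi>2\pi$ but \emph{not} the individual limits $\lim\Phi^{(j)}$ for $j\ge1$; to get those you should note that $a^4I$ and $a^4II$ (hence $\Phi$, since $II>0$) lie in the ring generated by analytic functions and $\log a$, and differentiate termwise.
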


\begin{figure}
	\centering
	\includegraphics[width=12cm, height=7cm]{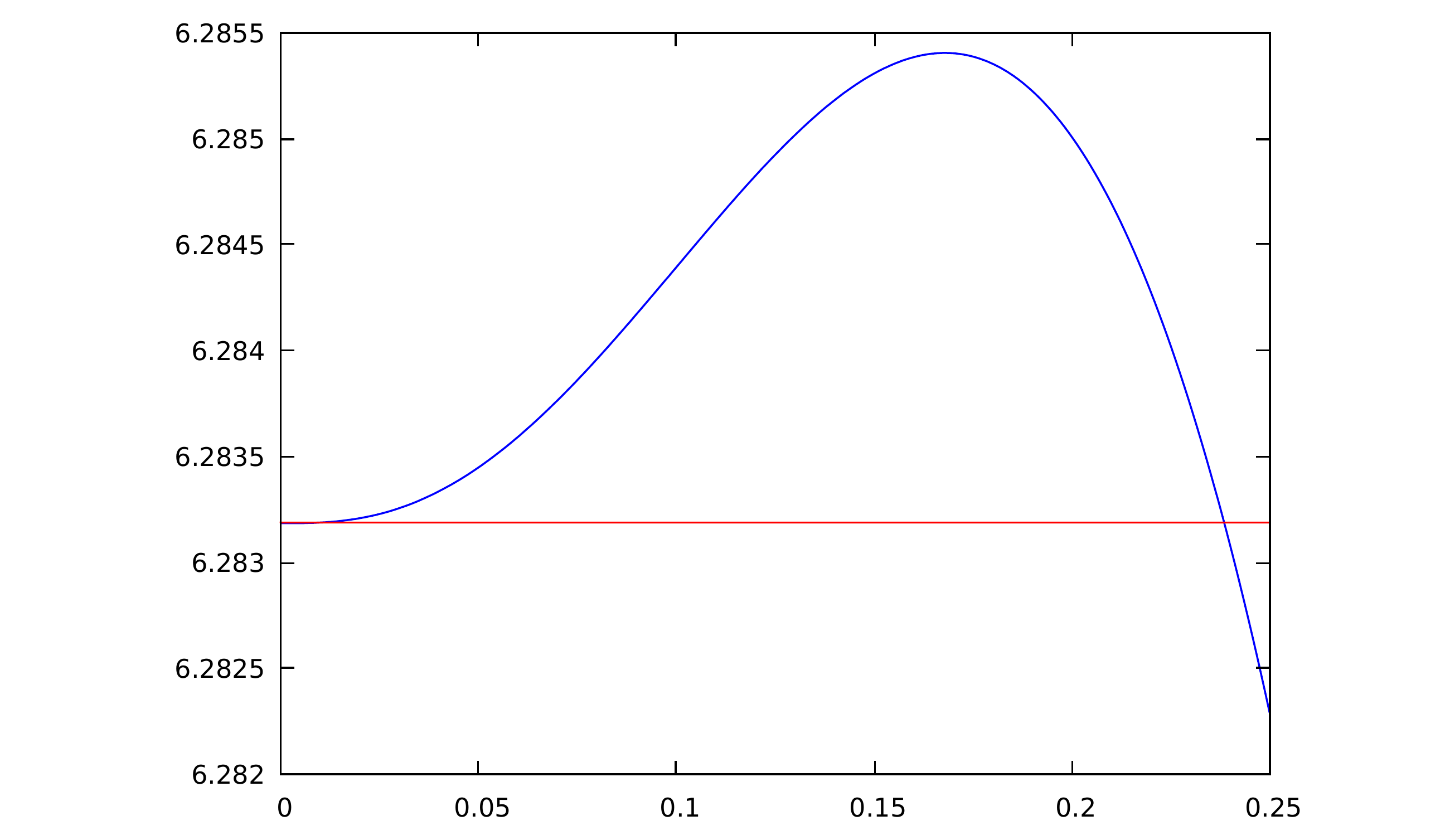}
	\caption{Graph of the ratio $I(a)/II(a)$ and the constant $2\pi$ for $0<a<0.25$, illustrating the content of Lemma \ref{lem:asymptotics-hyp}.}
	\label{fig:comparison_cone}
\end{figure}

Throughout this section we use the asymptotic notation $o_a(f(a))$ and $O_a(f(a))$ as $a\to 0^+$ in 
the usual 
way, namely $g(a)=o_a(f(a))$ if $g(a)/f(a)\to 0$ as $a\to 0^+$, and $g(a)=O_a(f(a))$ if there 
exists a constant $C$, independent of $a$, such that $\ab{g(a)}\leq C\ab{f(a)}$ for all $a>0$ small 
enough.

Changing variable $u=a\tau$ we obtain
\begin{align*}
I(a)&=\frac{16\pi^3}{a^4} 
\int_0^{\infty}e^{-u}\Bigl(u^2\sqrt{u^2+4a^2}-\frac{2}{3}(u^2+4a^2)\sqrt{u^2+a^2}
+\frac{8a^3}{3}\\
&\qquad\qquad+ 2a^2u\,\bigl( \log(u+\sqrt{u^2+a^2})-\log(a)\bigr)\Bigr)\d\tau\\
&=\frac{16\pi^3}{a^4}\biggl( 
\int_0^{\infty}e^{-u}\Bigl(u^2\sqrt{u^2+4a^2}-\frac{2}{3}(u^2+4a^2)\sqrt{u^2+a^2}\\
&\qquad\qquad+ 2a^2u\, 
\log(u+\sqrt{u^2+a^2})\Bigr)\d\tau+\frac{8a^3}{3}-2a^2\log(a)\biggr)
\intertext{and} 
II(a)
&=\frac{16\pi^2}{a^4}\biggl(\int_0^\infty 
e^{-u}\sqrt{u^2+a^2}\d u\biggr)^2.
\end{align*}
Using the Dominated Convergence Theorem it is direct to check that
\[ \lim_{a\to 0^+}a^4I(a)=32\pi^3\,\text{ and }\,\lim_{a\to 0^+}a^4II(a)=16\pi^2, \]
so that
\[ \lim_{a\to 0^+}\frac{I(a)}{II(a)}=2\pi. \]
To address the limit of the derivatives of the ratio $I(a)/II(a)$ it will be convenient to 
introduce a rescaling. Let
\begin{align*}
&N(a):=a^{4/3}I(a^{1/3})=16\pi^3\biggl( 
\int_0^{\infty}e^{-u}\Bigl(u^2\sqrt{u^2+4a^{2/3}}-\frac{2}{3}(u^2+4a^{2/3})\sqrt{u^2+a^{2/3}}\\
&\qquad\qquad\qquad\qquad\qquad\qquad+ 2a^{2/3}u\, 
\log(u+\sqrt{u^2+a^{2/3}})\Bigr)\d\tau+\frac{8a}{3}-\frac{2}{3}a^{2/3}\log(a)\biggr)
\intertext{and}
& D(a):=a^{4/3}II(a^{1/3})=16\pi^2\biggl(\int_0^\infty 
e^{-u}\sqrt{u^2+a^{2/3}}\d u\biggr)^2. 
\end{align*}
As we already know, and can readily check, $N(a)\to 32\pi^3,\, D(a)\to 16\pi^2$ and $N(a)/D(a)\to 2\pi$ as $a\to 0^+$. The remaining properties of the 
derivatives of $I(a)/II(a)$ in Lemma \ref{lem:asymptotics-hyp} will follow if we show that 
$\frac{\d}{\d a}(N(a)/D(a))\to \frac{4\pi}{3}$ as $a\to 0^+$.

In what follows we write $(\cdot)'$ as a short for the derivative with respect to $a$. Given that 
both $N'(a)$ and $D'(a)$ diverge to $+\infty$ as $a\to 0^+$ it will be convenient to write 
the 
derivative of $N(a)/D(a)$ in the following way
\begin{align}\label{eq:derivative-zero-split}
\begin{split}
\frac{\d}{\d a}\frac{N(a)}{D(a)}
&=\frac{16\pi^2N'(a)-32\pi^3 D'(a)}{D(a)^2}\\
&\qquad+\frac{(D(a)-16\pi^2)\,N'(a)-(N(a)-32\pi^3)\,D'(a)}{D(a)^2}.
\end{split}
\end{align}
We have the following lemma.

\begin{lemma}
	\mbox{}
	\begin{itemize}
		\item[(i)] $\ds\lim_{a\to 0^+}\frac{\d}{\d a}\frac{N(a)}{D(a)}=\frac{4\pi}{3}$.
		\item[(ii)] As $a\to 0^+$,
		\[ N'(a)=O_a\Bigl(\frac{\log a}{a^{1/3}}\Bigr)\,\text{ and }\, 
		D'(a)=O_a\Bigl(\frac{\log a}{a^{1/3}}\Bigr). \]
		\item[(iii)] $\ds\lim_{a\to 0^+}(N(a)-32\pi^3)\, D'(a)=0\,$  and  $\,\ds\lim_{a\to 
		0^+}(D(a)-16\pi^2)\,N'(a)=0.$
	\end{itemize}
\end{lemma}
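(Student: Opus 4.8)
The final statement is the helper Lemma with three parts (i)--(iii), concerning the limits of $N(a)/D(a)$ and its derivative, where $N(a)=a^{4/3}I(a^{1/3})$ and $D(a)=a^{4/3}II(a^{1/3})$. The plan is to analyze the two explicit integrals by extracting their Taylor expansions in the small parameter $a^{2/3}$ carefully enough to control one derivative in $a$.

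First I would prove part (ii), since (iii) follows trivially from it together with the already-established limits $N(a)\to 32\pi^3$ and $D(a)\to 16\pi^2$, and part (i) will be built on it. For $D'(a)$: writing $J(a)=\int_0^\infty e^{-u}\sqrt{u^2+a^{2/3}}\,\d u$ we have $D(a)=16\pi^2 J(a)^2$, so $D'(a)=32\pi^2 J(a)J'(a)$; since $J(a)\to 1$ it suffices to bound $J'(a)$. Differentiating under the integral sign, $J'(a)=\tfrac{1}{3}a^{-1/3}\int_0^\infty e^{-u}(u^2+a^{2/3})^{-1/2}\,\d u$, and the integral is $O_a(\log(1/a))$ because near $u=0$ the integrand is comparable to $(u^2+a^{2/3})^{-1/2}$, whose integral over $[0,1]$ is $\operatorname{arcsinh}(a^{-1/3})=O_a(\log(1/a))$, while the tail is bounded. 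This gives $D'(a)=O_a(a^{-1/3}\log a)$. The estimate for $N'(a)$ is analogous but with more terms: differentiating the three-term integrand plus the elementary terms $\tfrac{8a}{3}-\tfrac23 a^{2/3}\log a$, each contribution is either bounded or of the same $a^{-1/3}\log a$ type (the terms $u^2(u^2+4a^{2/3})^{-1/2}$, $(u^2+4a^{2/3})^{-1/2}$, $u\log(u+\sqrt{u^2+a^{2/3}})$ and its $a$-derivative, and $a^{-1/3}\log a$ from the explicit piece); collecting them yields $N'(a)=O_a(a^{-1/3}\log a)$. Then (iii) follows since $N(a)-32\pi^3=o_a(1)$ and $a^{-1/3}\log a$ multiplied by anything tending to zero fast enough — in fact one checks $N(a)-32\pi^3=O_a(a^{2/3}\log a)$ and $D(a)-16\pi^2=O_a(a^{2/3})$, so the products are $O_a(a^{1/3}(\log a)^2)\to 0$.

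For part (i) I would use the splitting \eqref{eq:derivative-zero-split} already displayed in the excerpt. The second fraction there has numerator $(D(a)-16\pi^2)N'(a)-(N(a)-32\pi^3)D'(a)=o_a(1)$ by (iii) (and $D(a)^2\to (16\pi^2)^2$), so it vanishes in the limit. Everything therefore reduces to computing $\lim_{a\to 0^+}(16\pi^2 N'(a)-32\pi^3 D'(a))$, which equals $\tfrac{4\pi}{3}\cdot(16\pi^2)^2$ after dividing by $D(a)^2\to(16\pi^2)^2$. This is where the actual computation lives: one needs the $a^{2/3}$-order term in the expansion of both $N(a)$ and $D(a)$, i.e. $N(a)=32\pi^3+c_N a^{2/3}\log a+\cdots$ and $D(a)=16\pi^2+c_D a^{2/3}\log a+\cdots$ (possibly with additional $a^{2/3}$ terms), and the claim is precisely that $16\pi^2 c_N' - 32\pi^3 c_D'$ at the level of derivatives produces $\tfrac{4\pi}{3}(16\pi^2)^2$. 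Concretely I would expand $\sqrt{u^2+a^{2/3}}=u+\tfrac{a^{2/3}}{2u}+O(a^{4/3}/u^3)$ and $\sqrt{u^2+4a^{2/3}}$ similarly (the $1/u$ singularities being integrable against $e^{-u}$ only in the combinations that appear — one must keep track that the $\int e^{-u}/u$ divergence cancels against the $\log a$ term, which it does by the design of the formula), substitute into $N(a)$ and $D(a)$, differentiate in $a$, and read off the coefficients. The main obstacle is this bookkeeping: handling the logarithmically divergent integral $\int_0^\infty e^{-u}(u^2+a^{2/3})^{-1/2}\d u$ and showing that its divergent-in-$a$ part combines with the explicit $-\tfrac23 a^{2/3}\log a$ term so that the net derivative has a finite limit, and then pinning down the numerical constant $\tfrac{4\pi}{3}$ exactly. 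A clean way to organize this is to write $\int_0^\infty e^{-u}(u^2+b)^{-1/2}\d u = -\log b + (\text{bounded, convergent as }b\to 0)$ via splitting at $u=1$ and using $\int_0^1(u^2+b)^{-1/2}\d u=\operatorname{arcsinh}(1/\sqrt b)=\tfrac12\log(1/b)+O(1)$, so that all $\log a$ dependence becomes explicit and differentiation is transparent. Once $\lim \tfrac{\d}{\d a}(N/D)=\tfrac{4\pi}{3}$ is in hand, the corresponding statements for $\tfrac{\d}{\d a}$, $\tfrac{\d^2}{\d a^2}$, $\tfrac{\d^3}{\d a^3}$ of $I(a)/II(a)$ in Lemma \ref{lem:asymptotics-hyp} follow from the chain rule applied to $I(a)/II(a)=a^{-4/3}\cdot a^{4/3}I(a)/(a^{-4/3}\cdot a^{4/3}II(a))$ — more precisely from $N(a^3)/D(a^3)=I(a)/II(a)$, so $\tfrac{\d}{\d a}(I/II)=3a^2(N/D)'(a^3)\to 0$, and similarly the second derivative tends to $0$ while the third picks up the nonzero factor giving $8\pi$ — and finally the strict inequality $I(a)/II(a)>2\pi$ for small $a>0$ follows from Taylor's theorem with the first nonvanishing derivative (the third) being positive.
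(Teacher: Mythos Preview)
Your structure matches the paper's exactly: prove (ii) first by differentiating under the integral, deduce (iii), then use the splitting \eqref{eq:derivative-zero-split} and compute $\lim(16\pi^2 N'(a)-32\pi^3 D'(a))$ for (i). Two small corrections and one substantive remark.

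First, your bound $D(a)-16\pi^2=O_a(a^{2/3})$ is slightly too optimistic: the integral $\int_0^\infty e^{-u}\frac{a^{2/3}}{u+\sqrt{u^2+a^{2/3}}}\,\d u$ picks up a $\log a$ factor from the region $a^{1/3}\lesssim u\lesssim 1$, so in fact $D(a)-16\pi^2=O_a(a^{2/3}\log a)$ (this is \eqref{eq:asymp_8} in the paper). This does not affect (iii), since $a^{2/3}\log a\cdot a^{-1/3}\log a=a^{1/3}(\log a)^2\to 0$ anyway. Second, your asymptotic $\int_0^\infty e^{-u}(u^2+b)^{-1/2}\,\d u=-\log b+O(1)$ is off by a factor: the $\operatorname{arcsinh}(1/\sqrt b)$ you invoke gives $-\tfrac12\log b+O(1)$, and indeed the paper records this as $-\tfrac13\log a+O(1)$ when $b=a^{2/3}$ (equation \eqref{eq:explicit_asymp_7}).

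The more substantive point concerns the constant $\tfrac{4\pi}{3}$ in (i). Your plan of Taylor-expanding $\sqrt{u^2+a^{2/3}}$ and tracking the divergent $\int e^{-u}/u$ against the explicit $\log a$ could be made to work, but it is delicate exactly where you say it is. The paper bypasses this: after writing out $16\pi^2 N'(a)-32\pi^3 D'(a)$ explicitly (your formulas for $N'$ and $D'$ already give this), the combination that survives is
\[
\frac{(32)^2\pi^5}{3a^{1/3}}\int_0^\infty e^{-u}\Bigl((u-1)\log\bigl(u+\sqrt{u^2+a^{2/3}}\bigr)-1\Bigr)\,\d u + \text{(bounded)},
\]
and a single integration by parts turns $\int e^{-u}(u-1)\log(u+\sqrt{u^2+a^{2/3}})\,\d u$ into $\int e^{-u}\,u(u^2+a^{2/3})^{-1/2}\,\d u$, from which the limit $-1$ (hence the finite answer $(32)^2\pi^5/3$, giving $\tfrac{4\pi}{3}$ after dividing by $D(a)^2$) follows by the substitution $u\mapsto a^{1/3}u$ and dominated convergence. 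This integration-by-parts step is the one idea your sketch is missing; without it, extracting the exact constant from your expansion scheme requires substantially more bookkeeping.
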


\begin{proof}
In the course of the proof of this lemma we will make repeated use of the asymptotic behavior of some integrals as contained in Lemma \ref{lem:asymptotic_integrals} below. We start with property (ii). For $a>0$ the derivative of $N$ is as follows,
\begin{align}
\nonumber
N'(a)&=16\pi^3\biggl( 
\int_0^{\infty}e^{-u}\Bigl(u^2\frac{4}{3a^{1/3}\sqrt{u^2+4a^{2/3}}}-\frac{16}{9a^{1/3}}\sqrt{u^2+a^{2/3}}\\
\nonumber
&\qquad-\frac{2}{9}(u^2+4a^{2/3})\frac{1}{a^{1/3}\sqrt{u^2+a^{2/3}}}+ \frac{4}{3a^{1/3}}u\, 
\log(u+\sqrt{u^2+a^{2/3}})\\
\nonumber
&\qquad+\frac{2}{3}a^{1/3}u\,\frac{1}{(u+\sqrt{u^2+a^{2/3}})\sqrt{u^2+a^{2/3}}}\Bigr)
\d u+\frac{8}{3}-\frac{4}{9a^{1/3}}\log(a)-\frac{2}{3a^{1/3}}\biggr)\\
\label{eq:formula-N-prima}
&=16\pi^3\biggl( \frac{8}{3}-\frac{4}{3a^{1/3}}-\frac{4}{9a^{1/3}}\log(a)+
\frac{4}{3a^{1/3}}\int_0^\infty e^{-u}u\log(u+\sqrt{u^2+a^{2/3}})\d u\biggr)\\
\nonumber
&\qquad+o_a(1)\\
\nonumber
&=O_a\Bigl(\frac{\log a}{a^{1/3}}\Bigr),
\end{align}
where we used \eqref{eq:asymp_2},\eqref{eq:asymp_3},\eqref{eq:asymp_1},\eqref{eq:asymp_4} and \eqref{eq:asymp_5}.
The derivative of the function $D$ is as follows
\begin{equation*}
D'(a)=\frac{32\pi^2}{3}\int_{0}^{\infty} e^{-u}\sqrt{u^2+a^{2/3}}\d u\cdot 
\int_{0}^{\infty}e^{-u}\frac{1}{a^{1/3}\sqrt{u^2+a^{2/3}}}\d u,
\end{equation*}	
so that \eqref{eq:asymp_7} and \eqref{eq:asymp_2} imply
\begin{equation*}
D'(a)=O_a\Bigl(\frac{1}{a^{1/3}}\Bigr)\,O_a(\log a)=O_a\Bigl(\frac{\log a}{a^{1/3}}\Bigr),
\end{equation*}
and more explicitly using \eqref{eq:explicit_asymp_7}, as we will need later, 
\begin{equation}\label{eq:formula-D-prima}
D'(a)=\frac{32\pi^2}{3a^{1/3}}\biggl(\int_{0}^{\infty}e^{-u}\log(u+\sqrt{u^2+a^{2/3}})\d 
u-\frac{1}{3}\log a\biggr)
+o_a(1).
\end{equation}
We now turn to the proof of part (iii). Using that $\int_0^\infty e^{-u}u^3\d u=6$ we can write 
\begin{align*}
N(a)-32\pi^3&=16\pi^3\biggl( 
\int_0^{\infty}e^{-u}\Bigl(u^2(\sqrt{u^2+4a^{2/3}}-u)-\frac{2}{3}u^2(\sqrt{u^2+a^{2/3}}-u)\\
&\qquad\qquad-\frac{8}{3}a^{2/3}\sqrt{u^2+a^{2/3}}+ 2a^{2/3}u\, 
\log(u+\sqrt{u^2+a^{2/3}})\Bigr)\d u\\
&\qquad\qquad+\frac{8a}{3}-\frac{2}{3}a^{2/3}\log(a)\biggr)\\
&=16\pi^3a^{1/3}\biggl( 
\int_0^{\infty}e^{-u}\Bigl(u^2\frac{4a^{1/3}}{\sqrt{u^2+4a^{2/3}}+u}
-\frac{2}{3}u^2\frac{a^{1/3}}{\sqrt{u^2+a^{2/3}}+u}\\
&\qquad\qquad-\frac{8}{3}a^{1/3}\sqrt{u^2+a^{2/3}}+ 2a^{2/3}u\, 
\log(u+\sqrt{u^2+a^{2/3}})\Bigr)\d u\\
&\qquad\qquad+\frac{8a^{2/3}}{3}-\frac{2}{3}a^{1/3}\log(a)\biggr)\\
&=O_a(a^{2/3}\log a).
\end{align*}
Then
\begin{equation*}
(N(a)-32\pi^3)\cdot D'(a)=O_a(a^{2/3}\log a)\,O_a\Bigl(\frac{\log a}{a^{1/3}}\Bigr) 
=O_a(a^{1/3}\log^2 a)=o_a(1).
\end{equation*}
On the other hand
\begin{align*}
D(a)-16\pi^2&=16\pi^2\biggl(\int_0^\infty 
e^{-u}\sqrt{u^2+a^{2/3}}\d u+1\biggr)\biggl(\int_0^\infty 
e^{-u}\sqrt{u^2+a^{2/3}}\d u-1\biggr)\\
&=O_a(1)\biggl(\int_0^\infty 
e^{-u}(\sqrt{u^2+a^{2/3}}-u)\d u\biggr)\\
&=O_a(1)\biggl(\int_0^\infty 
e^{-u}\frac{a^{2/3}}{\sqrt{u^2+a^{2/3}}+u}\d u\biggr)\\
&=O_a(a^{2/3}\log a),
\end{align*}
where in the last line we used \eqref{eq:asymp_8}.
Then
\begin{equation*}
(D(a)-16\pi^2)\cdot N'(a)=O_a(a^{2/3}\log a)\, O_a\Bigl(\frac{\log 
a}{a^{1/3}}\Bigr)=O_a(a^{1/3}\log 
^2 a)=o_a(1).
\end{equation*}	
We now turn to the proof of (i). By (iii), the limit as $a\to 0^+$ of the second summand on the right hand side of \eqref{eq:derivative-zero-split} equals zero. We proceed to calculate the limit of the first summand. 
Combining \eqref{eq:formula-N-prima} and 
\eqref{eq:formula-D-prima} we obtain 
\begin{align*}
 16\pi^2 N'(a)&-32\pi^3D'(a)\\
&=\frac{8(16)^2\pi^5}{3}-\frac{4(16)^2\pi^5}{3a^{1/3}}\\
 &\qquad+\frac{(32)^2\pi^5}{3a^{1/3}}
\int_{0}^{\infty}e^{-u}(u-1)\log(u+\sqrt{u^2+a^{2/3}})\d u+o_a(1)\\
&=\frac{2(32)^2\pi^5}{3}+\frac{(32)^2\pi^5}{3a^{1/3}}
\int_{0}^{\infty}e^{-u}\Bigl((u-1)\log(u+\sqrt{u^2+a^{2/3}})-1\Bigr)\d u\\
&\qquad+o_a(1).
\end{align*}
Using \eqref{eq:asymp_9} to treat the integral in the previous expression we obtain
\begin{align*}
\lim_{a\to 0^+}(16\pi^2N'(a)-32\pi^3D'(a))=\frac{(32)^2\pi^5}{3},
\end{align*}
therefore
\[ \lim_{a\to 0^+}\frac{\d}{\d 
a}\frac{N(a)}{D(a)}=\frac{(32)^2\pi^5}{3(16\pi^2)^2}=\frac{4\pi}{3}. \]
\end{proof}

Finally, we state the asymptotic behavior of the many integrals used during the proof of the previous lemma.

\begin{lemma}\label{lem:asymptotic_integrals}
	We have the following identities as $a\to0^+$ 
	\begin{align}
	\label{eq:asymp_7}
	\int_{0}^{\infty}e^{-u}\frac{1}{\sqrt{u^2+a^{2/3}}}\d u&=O_a(\log a),\\
		\label{eq:asymp_2}
	\int_0^{\infty}e^{-u}\frac{\sqrt{u^2+a^{2/3}}}{a^{1/3}}\d u
	&=\frac{1}{a^{1/3}}+O_a(a^{1/3}\log a),\\
	\label{eq:asymp_6}
	\int_0^{\infty}e^{-u}\frac{u\sqrt{u^2+a^{2/3}}}{a^{1/3}}\d u
	&=\frac{2}{a^{1/3}}+O_a(a^{1/3}),\\
	\label{eq:asymp_1}
	\int_0^{\infty}e^{-u}\frac{u^2}{a^{1/3}\sqrt{u^2+4a^{2/3}}}\d u
	&=\frac{1}{a^{1/3}}+O_a(a^{1/3}\log a),\\
	\label{eq:asymp_3}
	\int_0^{\infty}e^{-u}\frac{u^2+4a^{2/3}}{a^{1/3}\sqrt{u^2+a^{2/3}}}\d u
	&=\frac{1}{a^{1/3}}+O_a(a^{1/3}\log a),\\
	\label{eq:asymp_8}
	\int_0^\infty e^{-u}\frac{a^{2/3}}{u+\sqrt{u^2+a^{2/3}}}\d u
	&=O_a(a^{2/3}\log a),\\
	\label{eq:asymp_4}
	\int_0^{\infty}e^{-u}\,\frac{a^{1/3}u}{(u+\sqrt{u^2+4a^{2/3}})\sqrt{u^2+4a^{2/3}}}\d u &=O_a(a^{1/3}\log a),\\
	\label{eq:asymp_5}
	\int_0^{\infty}e^{-u}\frac{u}{a^{1/3}}\, \log(u+\sqrt{u^2+a^{2/3}})\d u &=O_a\Bigl(\frac{1}{a^{1/3}}\Bigr),\\
	\label{eq:asymp_9}
	\frac{1}{a^{1/3}}\int_{0}^{\infty}e^{-u}\big((u-1)\log (u+\sqrt{u^2+a^{2/3}})-1\big)\d u&=-1+o_a(1).
	\end{align}
\end{lemma}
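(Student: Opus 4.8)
The final statement to prove is Lemma~\ref{lem:asymptotic_integrals}, a list of nine asymptotic identities for integrals of the form $\int_0^\infty e^{-u}\,\Phi(u,a^{1/3})\,\d u$ as $a\to0^+$. The plan is to reduce everything to a small number of elementary estimates and then dispatch each line. The underlying principle is that for these integrands the only source of non-analytic behavior in $a$ comes from the region $u\lesssim a^{1/3}$, where the integrand behaves like a negative power of $a$ times something, and an integrable logarithmic singularity appears exactly when the integrand looks like $1/\sqrt{u^2+a^{2/3}}$ near $u=0$. So the first step is to record the two model integrals
\[
\int_0^\infty e^{-u}\frac{\d u}{\sqrt{u^2+a^{2/3}}}\quad\text{and}\quad \int_0^\infty e^{-u}\log\!\big(u+\sqrt{u^2+a^{2/3}}\big)\,\d u,
\]
and prove $\int_0^\infty e^{-u}(u^2+a^{2/3})^{-1/2}\d u = O_a(\log a)$, which gives \eqref{eq:asymp_7} directly. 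The cleanest way is to split at $u=1$: on $[1,\infty)$ the integrand is bounded, while on $[0,1]$ one bounds $e^{-u}$ by $1$ and computes $\int_0^1 (u^2+a^{2/3})^{-1/2}\d u = \operatorname{arcsinh}(a^{-1/3}) = -\tfrac13\log a + O(1)$, using $\operatorname{arcsinh}(x)=\log(2x)+O(x^{-2})$. This simultaneously gives the refined statement needed in \eqref{eq:explicit_asymp_7}: $\int_0^\infty e^{-u}\log(u+\sqrt{u^2+a^{2/3}})\,\d u$, after integrating by parts or directly, differs from $-\tfrac13\log a$ by an $O_a(1)$ (in fact convergent) term.

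Next I would handle the ``$\tfrac{1}{a^{1/3}}\times(\text{something that tends to }1\text{ or }2)$'' identities \eqref{eq:asymp_2}, \eqref{eq:asymp_6}, \eqref{eq:asymp_1}, \eqref{eq:asymp_3}. These all follow the same template: write $\sqrt{u^2+ca^{2/3}} = u + (\sqrt{u^2+ca^{2/3}}-u) = u + \dfrac{ca^{2/3}}{u+\sqrt{u^2+ca^{2/3}}}$, so that $a^{-1/3}\sqrt{u^2+ca^{2/3}} = a^{-1/3}u + \dfrac{ca^{1/3}}{u+\sqrt{u^2+ca^{2/3}}}$. The first piece integrated against $e^{-u}$ (or $e^{-u}u$, etc.) gives the main term $a^{-1/3}\int_0^\infty e^{-u}u\,\d u = a^{-1/3}$ (resp. $2a^{-1/3}$ with an extra $u$), and the second piece is an error term of size $O_a(a^{1/3}\log a)$ because $\dfrac{1}{u+\sqrt{u^2+ca^{2/3}}}\le \dfrac{1}{\sqrt{u^2+ca^{2/3}}}$ and we invoke \eqref{eq:asymp_7}. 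For \eqref{eq:asymp_1} and \eqref{eq:asymp_3} one writes $\dfrac{u^2}{\sqrt{u^2+4a^{2/3}}} = \sqrt{u^2+4a^{2/3}} - \dfrac{4a^{2/3}}{\sqrt{u^2+4a^{2/3}}}$ and similarly $\dfrac{u^2+4a^{2/3}}{\sqrt{u^2+a^{2/3}}} = \sqrt{u^2+a^{2/3}} + \dfrac{3a^{2/3}}{\sqrt{u^2+a^{2/3}}}$, reducing each to \eqref{eq:asymp_2} plus an $a^{2/3}\cdot a^{-1/3}\cdot O_a(\log a)$ error. The remaining ``small'' identities \eqref{eq:asymp_8}, \eqref{eq:asymp_4}, \eqref{eq:asymp_5} are even softer: \eqref{eq:asymp_8} is $a^{2/3}\int_0^\infty e^{-u}(u+\sqrt{u^2+a^{2/3}})^{-1}\d u = O_a(a^{2/3})\cdot O_a(\log a)$ by the same $\le (u^2+a^{2/3})^{-1/2}$ bound; \eqref{eq:asymp_4} is $a^{1/3}\int_0^\infty e^{-u}u(u+\sqrt{u^2+4a^{2/3}})^{-1}(u^2+4a^{2/3})^{-1/2}\d u$, and since $u(u+\sqrt{u^2+4a^{2/3}})^{-1}\le 1$ this is $O_a(a^{1/3})\cdot O_a(\log a)$; and \eqref{eq:asymp_5} is $a^{-1/3}\int_0^\infty e^{-u}u\log(u+\sqrt{u^2+a^{2/3}})\,\d u$ where the integral is bounded uniformly in $a$ (the logarithm is, for $u\in(0,1]$, dominated by $|\log u|+O(1)$ which is $L^1(e^{-u}\d u)$, and for $u\ge1$ it is $O(\log u)$), giving $O_a(a^{-1/3})$.

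The genuine work — and the step I expect to be the main obstacle — is \eqref{eq:asymp_9}:
\[
\frac{1}{a^{1/3}}\int_0^\infty e^{-u}\big((u-1)\log(u+\sqrt{u^2+a^{2/3}})-1\big)\,\d u = -1+o_a(1).
\]
Here the $1/a^{1/3}$ prefactor means I need the integral itself to equal $-a^{1/3}+o_a(a^{1/3})$, i.e. I need an \emph{exact} leading term, not just an order of magnitude, and crucially the naive limit of the integrand is $(u-1)\log(2u)-1$, whose integral against $e^{-u}$ must vanish for the claim to even be consistent. So the plan for \eqref{eq:asymp_9} is: (1) verify the algebraic identity $\int_0^\infty e^{-u}\big((u-1)\log(2u)-1\big)\d u = 0$, using $\int_0^\infty e^{-u}\log u\,\d u=-\gamma$, $\int_0^\infty e^{-u}u\log u\,\d u = 1-\gamma$, $\int_0^\infty e^{-u}u\,\d u=1$, $\int_0^\infty e^{-u}\d u=1$, and $\log 2\cdot(\int e^{-u}u - \int e^{-u}) = 0$ — indeed $(1-\gamma)-(-\gamma) - 1 + \log2(1-1) = 0$; (2) write the difference $\log(u+\sqrt{u^2+a^{2/3}}) - \log(2u) = \log\!\big(\tfrac{1+\sqrt{1+a^{2/3}/u^2}}{2}\big)$ and show that $\int_0^\infty e^{-u}(u-1)\big[\log(u+\sqrt{u^2+a^{2/3}})-\log(2u)\big]\d u = -a^{1/3}+o_a(a^{1/3})$. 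For (2), substitute $u = a^{1/3}v$ in the region where $u$ is comparable to $a^{1/3}$: the factor $(u-1)$ becomes $\approx -1$, $e^{-u}\approx 1$, and $\log(u+\sqrt{u^2+a^{2/3}})-\log(2u) = \log\big(\tfrac{v+\sqrt{v^2+1}}{2v}\big)$, contributing $a^{1/3}\cdot(-1)\cdot\int_0^\infty \log\big(\tfrac{v+\sqrt{v^2+1}}{2v}\big)\d v$; and one checks $\int_0^\infty \log\big(\tfrac{v+\sqrt{v^2+1}}{2v}\big)\d v = 1$ (integrate by parts: the antiderivative is $v\log\tfrac{v+\sqrt{v^2+1}}{2v} + \operatorname{arcsinh}$-type terms, and the boundary terms vanish since $\log\tfrac{v+\sqrt{v^2+1}}{2v}\sim \tfrac{1}{4v^2}$ at $\infty$ and $\sim \log\tfrac1{2v}\to$ is killed by $v\to0$... careful bookkeeping needed here), while the contribution from $u\gtrsim 1$ is $O(a^{2/3}\log a)=o(a^{1/3})$ since there $\log(u+\sqrt{u^2+a^{2/3}})-\log(2u) = O(a^{2/3}/u^2)$. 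The delicate point throughout is uniform integrable domination to justify dominated convergence after the rescaling — one needs the bound $0\le \log\tfrac{v+\sqrt{v^2+1}}{2v}$ together with an $L^1(\d v)$ majorant near $v=0$ (where it is $\sim|\log v|$) and near $v=\infty$ (where it is $\sim v^{-2}$). I would assemble these estimates carefully and then simply read off each of the nine displayed identities.
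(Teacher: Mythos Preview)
Your approach to \eqref{eq:asymp_7}--\eqref{eq:asymp_5} is correct and essentially coincides with the paper's: the same algebraic decompositions (e.g.\ $u^2/\sqrt{u^2+4a^{2/3}}=\sqrt{u^2+4a^{2/3}}-4a^{2/3}/\sqrt{u^2+4a^{2/3}}$) and the same reduction to \eqref{eq:asymp_7}. One small point: your blanket statement that the remainder in the $\sqrt{u^2+ca^{2/3}}=u+\dots$ template is $O_a(a^{1/3}\log a)$ is too coarse for \eqref{eq:asymp_6}, where the claimed error is $O_a(a^{1/3})$; but there the extra factor of $u$ in the remainder gives $a^{1/3}\int_0^\infty e^{-u}\frac{u}{u+\sqrt{u^2+a^{2/3}}}\,\d u\le a^{1/3}$ directly, so this is easily repaired.

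The one place you and the paper genuinely diverge is \eqref{eq:asymp_9}. The paper exploits the happy accident that $e^{-u}(u-1)$ has antiderivative $-e^{-u}u$: a single integration by parts gives
\[
\int_0^\infty e^{-u}(u-1)\log\!\bigl(u+\sqrt{u^2+a^{2/3}}\bigr)\d u
=\int_0^\infty e^{-u}\frac{u}{\sqrt{u^2+a^{2/3}}}\,\d u,
\]
so the quantity in \eqref{eq:asymp_9} becomes $-a^{-1/3}\int_0^\infty e^{-u}\bigl(1-u/\sqrt{u^2+a^{2/3}}\bigr)\d u$, and after the rescaling $u\mapsto a^{1/3}u$ and dominated convergence this is $-\int_0^\infty(1-u/\sqrt{u^2+1})\,\d u=-1$. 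Your route---subtracting $\log(2u)$, verifying $\int_0^\infty e^{-u}\bigl((u-1)\log(2u)-1\bigr)\d u=0$ via the $\Gamma'$ values, then rescaling the difference and evaluating $\int_0^\infty\log\frac{v+\sqrt{v^2+1}}{2v}\,\d v=1$---is correct and in fact lands on the \emph{same} limiting integral (an integration by parts on your $\log$-integral yields exactly $\int_0^\infty(1-v/\sqrt{v^2+1})\,\d v$). But it costs you the Euler-constant identity and some extra bookkeeping; the paper's integration by parts bypasses all of that in one line and never needs to name $\gamma$.
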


\begin{proof}
	The identities are elementary but we choose to give details for the sake of completeness. 
	
	\underline{Verification of \eqref{eq:asymp_7} and \eqref{eq:asymp_2}:} Integration by parts shows that
	\begin{equation}\label{eq:explicit_asymp_7}
	\int_{0}^{\infty}e^{-u}\frac{1}{\sqrt{u^2+a^{2/3}}}\d u
	=\int_{0}^{\infty}e^{-u}\log(u+\sqrt{u^2+a^{2/3}})\d u-\frac{1}{3}\log a=O_a(1)+O_a(\log a),
	\end{equation} 
	and
	\begin{align*}
	\frac{1}{a^{1/3}}\int_{0}^{\infty} &e^{-u}\sqrt{u^2+a^{2/3}}\d u
	=\frac{1}{2a^{1/3}}\int_{0}^{\infty} e^{-u}(a^{2/3}\log(u+\sqrt{u^2+a^{2/3}})
	\\
	&\qquad\qquad\qquad\qquad\qquad\qquad+u\sqrt{u^2+a^{2/3}}-\frac{1}{3}a^{2/3}\log a)\d u\\
	&=\frac{1}{2}\int_{0}^{\infty} e^{-u}(a^{1/3}\log(u+\sqrt{u^2+a^{2/3}})
	+\frac{u}{a^{1/3}}\sqrt{u^2+a^{2/3}}\\
	&\qquad\qquad-\frac{1}{3}a^{1/3}\log a)\d u\\
	&=O_a(a^{1/3})+O_a(a^{1/3}\log a)+\frac{1}{a^{1/3}}+\frac{1}{2}\int_{0}^{\infty} e^{-u}\frac{u}{a^{1/3}}(\sqrt{u^2+a^{2/3}}-u)\d u\\
	&=\frac{1}{a^{1/3}}+O_a(a^{1/3}\log a)+\frac{a^{1/3}}{2}\int_{0}^{\infty} e^{-u}\frac{u}{\sqrt{u^2+a^{2/3}}+u}\d u\\
	&=\frac{1}{a^{1/3}}+O_a(a^{1/3}\log a)+O_a(a^{1/3}).
	\end{align*}

	\noindent\underline{Verification of \eqref{eq:asymp_6}:} Using that $\int_0^\infty e^{-u}u^2\d u=2$ we have
	\begin{align*}
	\int_0^{\infty}e^{-u}\frac{u\sqrt{u^2+a^{2/3}}}{a^{1/3}}\d u&=\frac{2}{a^{1/3}}+\frac{1}{a^{1/3}}\int_0^{\infty}e^{-u}u(\sqrt{u^2+a^{2/3}}-u)\d u\\
	&=\frac{2}{a^{1/3}}+a^{1/3}\int_0^{\infty}e^{-u}\frac{u}{\sqrt{u^2+a^{2/3}}+u}\d u\\
	&=\frac{2}{a^{1/3}}+O_a(a^{1/3}).
	\end{align*}

	\noindent\underline{Verification of \eqref{eq:asymp_1}:}
	\begin{align*}
	\int_0^{\infty}e^{-u}\frac{u^2}{a^{1/3}\sqrt{u^2+4a^{2/3}}}\d u&=\int_0^{\infty}e^{-u}\frac{\sqrt{u^2+4a^{2/3}}}{a^{1/3}}\d u-4a^{1/3}\int_0^{\infty}e^{-u}\frac{1}{\sqrt{u^2+4a^{2/3}}}\d u\\
	&=\frac{1}{a^{1/3}}+O_a(a^{1/3}\log a)+a^{1/3}O_a(\log a),
	\end{align*}
	where we used \eqref{eq:asymp_7} and \eqref{eq:asymp_2}.

	\noindent\underline{Verification of \eqref{eq:asymp_3}:}
	\begin{align*}
	\int_0^{\infty}e^{-u}\frac{u^2+4a^{2/3}}{a^{1/3}\sqrt{u^2+a^{2/3}}}\d u&=\frac{1}{a^{1/3}}\int_0^{\infty}e^{-u}\sqrt{u^2+a^{2/3}}\d u+3a^{1/3}\int_0^{\infty}e^{-u}\frac{1}{\sqrt{u^2+a^{2/3}}}\d u\\
	&=\frac{1}{a^{1/3}}+O_a(a^{1/3}\log a)+O_a(a^{1/3}),
	\end{align*}
	where in the last line we used \eqref{eq:asymp_7} and \eqref{eq:asymp_2}. 
	
	\noindent\underline{Verification of \eqref{eq:asymp_8}:}
	\begin{align*}
	\int_0^\infty e^{-u}\frac{a^{2/3}}{u+\sqrt{u^2+a^{2/3}}}\d u&= \int_0^\infty 
	e^{-u}(\sqrt{u^2+a^{2/3}}-u)\d u\\
	&=1+a^{1/3}O_a(a^{1/3}\log a)-1\\
	&=O_a(a^{2/3}\log a),
	\end{align*}	
	where we used \eqref{eq:asymp_2}.

	\noindent\underline{Verification of \eqref{eq:asymp_4}:} 
	\begin{align*}
	\int_0^{\infty}e^{-u}\,\frac{a^{1/3}u}{(u+\sqrt{u^2+4a^{2/3}})\sqrt{u^2+4a^{2/3}}}\d u 
	&=\int_0^{\infty}e^{-u}\,\frac{a^{1/3}}{\sqrt{u^2+4a^{2/3}}}\d u\\
	&\qquad-\int_0^{\infty}e^{-u}\,\frac{a^{1/3}}{u+\sqrt{u^2+4a^{2/3}}}\d u\\
	&=O_a(a^{1/3}\log a),
	\end{align*}
	where we used \eqref{eq:asymp_7} and \eqref{eq:asymp_8}.

	\noindent\underline{Verification of \eqref{eq:asymp_5}:} The identity is immediate since $e^{-u}u\log(u)\in L^p([0,\infty))$ for all $p\in [1,\infty]$. 

	\noindent\underline{Verification of \eqref{eq:asymp_9}:} For $a>0$, integration by parts shows
	\[ \int_{0}^{\infty}e^{-u}(u-1)\log (u+\sqrt{u^2+a^{2/3}})\d u=\int_0^\infty  
	e^{-u}\frac{u}{\sqrt{u^2+a^{2/3}}}\d u, \]
	so that to prove the last identity we need to show
	\[ \lim_{a\to 0^+}\frac{1}{a}\int_{0}^{\infty}e^{-u}\biggl(1-\frac{u}{\sqrt{u^2+a^2}}\biggr)\d 
	u=1. \]
	Changing variable $u\rightsquigarrow au$ gives
	\begin{align*}
	\frac{1}{a}\int_{0}^{\infty}e^{-u}\biggl(1-\frac{u}{\sqrt{u^2+a^2}}\biggr)\d u&=
	\int_{0}^{\infty}e^{-au}\biggl(1-\frac{u}{\sqrt{u^2+1}}\biggr)\d u\\
	&=\int_{0}^{\infty}e^{-au}\frac{1}{(u+\sqrt{u^2+1})\sqrt{u^2+1}}\d u,
	\end{align*}
	hence
	\[ 	\lim_{a\to 0^+}\frac{1}{a}\int_{0}^{\infty}e^{-u}\biggl(1-\frac{u}{\sqrt{u^2+a^2}}\biggr)\d 
	u=\int_{0}^{\infty}\frac{1}{(u+\sqrt{u^2+1})\sqrt{u^2+1}}\d u.\]
	Changing variable $u=\sinh t$ we obtain
	\begin{align*}
	\int_{0}^{\infty}\frac{1}{(u+\sqrt{u^2+1})\sqrt{u^2+1}}\d u=\int_0^\infty \frac{1}{\sinh 
		t+\cosh t}\d t=\int_0^\infty e^{-t} \d t=1
	\end{align*} 	
\end{proof}

\section*{Acknowledgments}

We thank Michael Christ for comments and suggestions during the initial stage of this project (2012), and 
Diogo Oliveira e Silva for comments on a preliminary version of this manuscript. Part of this work was carried out at Universidad de los Lagos (Osorno, Chile).

\end{document}